\DeclareFontFamily{U}{rsfs}{\skewchar\font127 }
\DeclareFontShape{U}{rsfs}{m}{n}{%
   <-6> rsfs5
   <6-8> rsfs7
   <8-> rsfs10
}{}
\newcommand*{\be}[1]{\begin{equation}\label{#1}}
\newcommand*{\ee}{\end{equation}}
\newtheorem{theorem}{Theorem}[section]
\newtheorem{claim}{Claim}
\newtheorem{lemma}{Lemma}[section]
\newtheorem{proposition}{Proposition}[section]
\newtheorem{corollary}{Corollary}[section]
\theoremstyle{remark}
\newtheorem{remark}{Remark}[section]
\definecolor{pink}{RGB}{255,45,115}
\DeclareMathOperator{\grad}{grad}
\DeclareMathOperator{\hess}{hess}
\DeclareMathOperator{\curl}{curl}
\DeclareMathOperator{\inc}{inc}
\DeclareMathOperator{\cott}{cott}
\DeclareMathOperator{\Def}{def}
\DeclareMathOperator{\dev}{dev}
\DeclareMathOperator{\sym}{sym}
\DeclareMathOperator{\diverenge}{div}
\DeclareMathOperator{\tr}{tr}
\DeclareMathOperator{\Tr}{Tr}
\newcommand\vskw{\operatorname{vskw}}
\newcommand\mskw{\operatorname{mskw}}
\newcommand\skw{\operatorname{skw}}
\newcommand{\bS}{\mathbb S}
\newcommand{\bT}{\mathbb T}
\newcommand{\bST}{\mathbb S \cap \mathbb T}
\renewcommand{\div}{\diverenge}
\renewcommand\ker{\mathcal{N}}
 \newcommand\ran{\mathcal{R}}
 \numberwithin{equation}{section}
\title{Finite elements for symmetric and traceless tensors in three dimensions}
\author{Kaibo Hu}
\address{Mathematical Institute, University of Oxford,
Radcliffe Observatory, Andrew Wiles Building,
Woodstock Rd, Oxford OX2 6GG, UK.}
\email{kaibo.hu@maths.ox.ac.uk}
\author{Ting Lin}
\address{School of Mathematical Sciences, Peking University, Beijing 100871, P. R. China.}
\email{lintingsms@pku.edu.cn}
\author{Bowen Shi}
\address{Oden Institute for Computational Engineering \& Sciences, University of Texas at Austin, Austin, TX 78712}
\email{bowenshi@utexas.edu}
\begin{document}
\begin{abstract}
We construct a family of finite element sub-complexes of the conformal complex on tetrahedral meshes and show their exactness on contractible domains. This complex includes vector fields and symmetric and traceless tensor fields, connected through the conformal Killing operator, the linearized Cotton-York operator, and the divergence operator, respectively. This leads to discrete versions of transverse traceless (TT) tensors, i.e., symmetric, traceless and divergence-free matrix fields, in continuum mechanics and general relativity. We also show the inf-sup stability of the $H(\operatorname{div})$-conforming finite element symmetric and traceless tensors paired with discontinuous vectors.
\end{abstract}
\maketitle 
\section{Introduction}
\label{sec:introduction}

Hilbert complexes and their discretizations are fundamental for the construction and analysis of numerical algorithms within Finite Element Exterior Calculus (FEEC) \cite{arnold2018finite,arnold2006finite}. A Hilbert complex consists of a sequence of linear spaces interconnected by a series of linear operators that possess the property that the composition of any two consecutive operators vanishes, along with various other analytical properties \cite{arnold2006finite,bru1992hilbert}. The de~Rham complex is a basic example, encoding differential structures in electromagnetism and other vector-valued problems. Other examples can be derived from de~Rham complexes using the Bernstein-Gelfand-Gelfand (BGG) construction \cite{arnold2021complexes,vcap2023bgg,vcap2001bernstein}, encoding structures of problems involving tensors with various kinds of symmetries, such as symmetric matrix fields (stress \cite{arnold2006defferential,eastwood2000complex,hu2023nonlinear}, metric, Einstein and Ricci tensors \cite{christiansen2011linearization,hu2025finite,berchenko2025finite,gawlik2025finite}) and traceless matrix fields \cite{gopalakrishnan2020mass}. 

Many problems involve matrix fields which are both symmetric and traceless ($\mathbb{S}\cap \mathbb{T}$).  For incompressible Stokes flows, a stress-variable (symmetric gradient) $\boldsymbol \sigma:=\Def(\boldsymbol u)$ was introduced in    \cite{gopalakrishnan2020mass,gopalakrishnan2020mass2}. The new variable $\boldsymbol \sigma$ is symmetric by definition; the incompressibility of the flow ($\div \boldsymbol u=0$, mass conservation) is translated to the algebraic condition of $\boldsymbol \sigma$ being traceless. In the Einstein-Bianchi formulation \cite{quenneville2015new,choquet2009general} of the Einstein equations, the main variables are Transverse-Traceless (TT), i.e.  symmetric, traceless and divergence-free.  In differential geometry, the Einstein tensor $G_{ij}$ and the Ricci tensor $R_{ij}$ differ by a trace: $G_{ij}=R_{ij}-\tfrac{1}{2}Rg_{ij}$, where the scalar curvature $R:=g^{k\ell}R_{k\ell}$ is the trace of the Ricci tensor.  In the construction of numerical discretizations, the challenge of translating results for $G_{ij}$, which appears in the elasticity complex in three dimensions \cite{arnold2007mixed,arnold2006defferential,eastwood2000complex,hu2023nonlinear}, to $R_{ij}$ often implicitly requires understanding the matrix trace of the symmetric $G_{ij}$. This calls for clarifying discrete versions of $\mathbb{S}\cap \mathbb{T}$ tensors, although tracelessness does not appear explicit.

Symmetric and traceless tensor fields also fit within complexes. 
 In this paper, we focus on the {\it conformal deformation complex} in three dimensions:
\begin{equation} 
\boldsymbol{CK} \xrightarrow{\subset} {H}^1(\Omega;\mathbb{R}^3)\xrightarrow{\dev \Def} {H}(\cott,\Omega;\bST) \xrightarrow[]{\cott} {H}(\div,\Omega; \bST) \xrightarrow[]{\div} {L}^2(\Omega;\mathbb R^3) \xrightarrow{}0.\label{eq:sec1:conformal-complex}
\end{equation}

 In this complex, $\bST$ represents symmetric and traceless matrices. The operator $\dev\Def$ is the conformal Killing operator (traceless part of the symmetric gradient), and its kernel, $\boldsymbol{CK}:=\ker(\dev \Def)$, is the 10-dimensional space of conformal Killing fields \cite{neff2009new, dain2006generalized}.

We use the standard Sobolev space notations to define the spaces in the complex as:
$${H}(\cott,\Omega;\bST):=\{\boldsymbol{\sigma}\in L^2(\Omega;  \bST)   :\cott\boldsymbol{\sigma}\in L^2(\Omega;  \bST) \},$$
$${H}(\div,\Omega;\bST):=\{\boldsymbol{\sigma}\in L^2(\Omega;  \bST)  :\div\boldsymbol{\sigma}\in L^2(\Omega;\mathbb{R}^3)\}.$$
The divergence operator applies row-wise and the $\cott$ operator is a third order differential operator (see precise definition in \eqref{def:cott} in Section \ref{sec:notations}). In geometry, if $g$ is the metric, then $\cott(g)$ is the linearized Cotton-York tensor (around the Euclidean metric). Note that $\cott$ maps symmetric and traceless tensors to symmetric and traceless tensors. 
 
 The notion of the conformal complex, originally introduced by Gasqui and Goldschmidt in their investigation of infinitesimal deformations of conformally flat structures in manifolds of dimensions $n\geq3$ \cite{Gasqui_conformal}, extends its influence to general relativity (GR). In the context of GR, the conformal complex encodes the transverse-traceless (TT) gauge (symmetric, traceless and divergence-free tensor fields, \cite{maggiore2007gravitational}) and York splits. In fact,  TT tensors are ${H}(\div,\Omega; \bST)\cap \ker(\div)$, which is $\ran (\cott)$ if the sequence \eqref{eq:sec1:conformal-complex} is exact, meaning that 
 TT tensors can be expressed by a potential in the form of the linearized Cotton-York tensor \cite{beig1996tt}.
 Beig and Chrusciel extended the use of Cotton-York potentials for the Einstein constraint equations \cite{beig2020linearised} and gravity shielding \cite{beig2017shielding}.  The Hodge decomposition inherent to the conformal complex is referred to as York splits in GR literature \cite{york1973conformally,deser1967covariant}. These splits are related to the study of conformal diffeomorphisms on the space of Riemannian metrics \cite{fischer_marsden_1977}.  Moreover, the conformal complex is relevant to Cosserat elasticity \cite{neff2009new,jeong2009numerical} and the trace-free Korn inequality \cite{feireisl2009singular,fuchs2009application,fuchs2010generalizations} in the context of continuum mechanics.

The conformal complex is a special case of the BGG construction \cite{vcap2001bernstein}, and the cohomology is isomorphic to the de~Rham version. Explicit forms with Sobolev spaces on bounded Lipschitz domains in $\mathbb{R}^3$ such as \eqref{eq:sec1:conformal-complex} can be found in \cite{vcap2023bgg,arnold2021complexes}. 
Beig \cite{beig1996tt} proved the exactness of the conformal complex on conformally flat 3-manifolds when the underlying manifold is simply connected and has vanishing second cohomology. The proof was ad hoc without using the BGG approach.

 
 A major idea in FEEC is to discretize the entire complex and preserve the cohomological structures, rather than discretizing individual spaces. This guarantees the stability, convergence and structure-preserving properties of the discrete problems, and facilitates the construction of solvers. Systematic finite element discretizations exist for the de~Rham complex, generalizing the classical Raviart--Thomas \cite{raviart2006mixed}, N\'ed\'elec \cite{nedelec1980mixed}, Brezzi--Douglas--Marini \cite{brezzi1985two} elements to discrete differential forms \cite{hiptmair1999canonical}, leading to a periodic table \cite{arnold2006finite,arnold2014periodic}. Conforming finite element discretizations for the BGG complexes (including the elasticity, Hessian, and divdiv complexes, incorporating matrix fields that are either symmetric or traceless) are much more challenging. The construction of inf-sup stable finite element pairs for the Hellinger–Reissner principle, which involves symmetric stress tensors paired with vector displacements, was regarded as a major challenge around the 2000s \cite{arnold2002icm}. Subsequently, the idea of discretizing the entire complex has inspired considerable progress in addressing this elasticity problem \cite{arnold2002mixed}, as well as other BGG complexes. For the three fundamental examples of the BGG complexes, i.e., the Hessian, elasticity, and divdiv complexes, conforming finite elements on simplicial meshes in both 2D and 3D have been discussed in various works, such as \cite{arnold2008finite,chen2022finitedivdiv2dim,chen2022finitedivdiv,chen2022finite,christiansen2020discrete,christiansen2023finite,gong2023discrete,hu2021conforming,hu2015family,hu2022conformingdivdiv,hu2023nonlinear,chen2025new,chen2024finite,chen2025complexes}. There have also been results in arbitrary dimensions \cite{bonizzoni2023discrete,chen2022divdivanddiv}. These results can be viewed as an extension of the study of multivariate (simplicial) splines \cite{lai2007spline} from a homological perspective.
 
As reviewed earlier, symmetric traceless tensors and conformal complexes play a crucial role in problems arising from fluid mechanics and general relativity. Relevant numerical applications include discretizing transverse-traceless (TT) tensors within the Einstein-Bianchi formulation \cite{quenneville2015new} of general relativity, as well as mass-conserving schemes for incompressible Stokes flow \cite{gopalakrishnan2020mass,gopalakrishnan2020mass2}. However, to the best of our knowledge, conforming finite element discretizations of symmetric traceless tensors and conformal complexes do not currently exist in the literature. In existing works  \cite{quenneville2015new,gopalakrishnan2020mass,gopalakrishnan2020mass2}, the conditions of symmetry or tracelessness of the matrix fields are either disregarded or imposed weakly via Lagrange multipliers. One may expect that imposing these algebraic conditions strongly on the algebraic level will improve stability and enforce structure-preservation in computation. Moreover, discretizing the entire conformal complex will lead to a discrete version of the York split \cite{york1973conformally}. This inspires us to investigate a conforming finite element discretization of \eqref{eq:sec1:conformal-complex}.

In this paper, we establish the first conforming finite element sub-complex of \eqref{eq:sec1:conformal-complex}. We show the exactness of the finite element complex. As a special instance of the construction, we construct
   a conforming finite element pair $\boldsymbol{\Sigma}_{k,h}^{\div}\times\boldsymbol{V}_{k-1,h}\subset{H}(\div,\Omega; \bST)\times {L}^2(\Omega;\mathbb R^3)$ 
that satisfies the balance condition
\begin{align}
\div\boldsymbol{\Sigma}_{k,h}^{\div}=\boldsymbol{V}_{k-1,h},\label{sec1:eq:balance}
\end{align}
and the discrete inf-sup (Ladyzhenskaya–Babu\v{s}ka–Brezzi) condition \cite{boffi2013mixed,brezzi1974existence} 
\begin{align} 
\label{sec1:eq:inf-sup}\inf_{\boldsymbol{v} \in \boldsymbol{V}_{k-1,h}\backslash\{0\}} \sup _{\boldsymbol{\sigma} \in \boldsymbol{\Sigma}_{k,h}^{\div} \backslash\{0\}} \frac{\int_{\Omega}\operatorname{div} \boldsymbol{\sigma}\cdot \boldsymbol{v}}{\|\boldsymbol{\sigma}\|_{H(\div,\Omega)}\|\boldsymbol{v}\|_{L^2(\Omega)}} \geq C>0,
\end{align} 
with a positive constant $C$ independent of the mesh size $h$. A finite element pair satisfying these conditions would preserve the TT structure exactly in numerical computations. The discrete inf-sup condition \eqref{sec1:eq:inf-sup} generalizes results for well-known questions in Stokes and elasticity problems. 

Moreover, we develop techniques to address challenges in extending existing constructions of conforming finite elements for vectors 
and symmetric or traceless tensors. This includes handling supersmoothness, bubble functions, bubble complexes and trace complexes. These results can be applied to other problems. 

The rest of the paper is organized as follows. In Section \ref{sec:overview}, we provide an overview of the construction in this paper. In Section \ref{sec:notations}, we provide notations and various identities on vector- and matrix-valued functions. We introduce some useful results concerning BGG diagrams and geometric decompositions for $P^{(s)}_k$ (polynomial space with vanishing derivatives at vertices). In Section \ref{trace section}, we identify the traces of the linearized Cotton-York operator $\cott$, establish trace complexes and a sufficient condition for the ${H}(\cott)$-conformity. Section \ref{bubble section} is dedicated to demonstrating the exactness of the smoothest bubble conformal complex via BGG construction.
In Section \ref{div section}, we use the exactness of the bubble complex to conduct a dimension count and prove Theorem~\ref{thm:div-surjective property}. Based on Theorem~\ref{thm:div-surjective property}, we construct a balanced pair of ${H}(\div,\bST)$-$L^2(\mathbb R^3)$ conforming finite element spaces and show it satisfies the inf-sup condition. 
In Section \ref{cinc section}, we present bubble conformal complexes with less smoothness and construct ${H}(\cott)$-conforming finite elements for symmetric and traceless tensors.
Finally, in Section \ref{finite element complex section}, we demonstrate that our finite element spaces can be linked to form an exact sequence. Some technical proofs are given in Appendix \ref{technical proofs appendix}.

\section{Technical Challenges and Overview of the Construction}\label{sec:overview}

{Due to the technical nature of this topic, we provide an overview of our construction. }
A conforming finite element sub-complex of \eqref{eq:sec1:conformal-complex} must simultaneously satisfy the following constraints:
\begin{itemize}
\item Unisolvency: degrees of freedom should match local polynomials.
\item Algebraic constraints: matrix fields should be both symmetric and traceless.
\item Conformity: the finite element spaces are subspaces of certain Sobolev spaces, reflected in the fact that the piecewise polynomials should have certain interelement continuity.
\item Sub-complex: the discrete finite element complex should be a sub-complex of the continuous complex, in the sense that the differential operators map one finite element space to another.
\item Cohomology: the cohomology of the discrete sequence should be isomorphic to the continuous version. 
\end{itemize}
{We start by discussing some general issues in constructing conforming finite elements that satisfy these conditions. These issues already arise in well-known problems from incompressible flows and linear elasticity. We discuss the challenges in generalizing these results to symmetric and traceless tensors and our solutions. In the discussions below, we start with the divergence operator and the spaces it connects, i.e., the last part of the complex. This is because the divergence pair corresponds to widely discussed questions in incompressible flows and linear elasticity; many technical issues, such as bubble spaces and supersmoothness, already appear.  Then we discuss how to extend the results for the divergence pair to the entire complex. This involves the trace complexes. }


\subsection{Review: technical aspects in constructing conforming finite elements}
\subsubsection{Conformity and supersmoothness}
The divergence operator and corresponding inf-sup stable finite element pairs in the form of \eqref{sec1:eq:inf-sup} appear in several classical problems in the finite element theory: 
\begin{enumerate}
\item  In classical de~Rham complex,  $\boldsymbol{\sigma}\in H(\div, \Omega)$ is a vector and ${v}\in L^{2}(\Omega)$ is a scalar. 
\item In Stokes problems, $\boldsymbol{\sigma}\in  H^{1}(\Omega; \mathbb{R}^{3})$ is a vector and ${v}\in L^{2}(\Omega)$ is a scalar.
\item In linear elasticity, $\boldsymbol{\sigma}\in H(\div, \Omega; \mathbb{S})$ is a symmetric matrix field and $\boldsymbol{v}\in L^{2}(\Omega; \mathbb R^{3})$ is a vector.
\item In the setup concerned in this paper, i.e., in the conformal complex, $\boldsymbol{\sigma}\in H(\div, \Omega; \mathbb{S}\cap \mathbb{T})$ is a $\mathbb{S}\cap \mathbb{T}$ tensor and $\boldsymbol{v}\in L^{2}(\Omega; \mathbb R^{3})$ is a vector.
\end{enumerate}
 Except for (1), for which there exists a canonical construction, all other cases present significant challenges. For the Stokes problem, the $H^{1}$-conformity requires each component of $\boldsymbol{\sigma}$ to be continuous across the boundaries of cells; for the elasticity problem, the shape functions should be matrices with algebraic symmetries. From a differential complex perspective, these two problems result in a common challenge as the complexes that they sit in, for example, in 2D, 
\begin{equation}\label{cplx:stokes-2D}
{0} \longrightarrow
H^{2}(\mathbb{R})
\xrightarrow{\curl}
H^{1}(\mathbb{R}^{2})
\xrightarrow{\div}
L^{2}(\mathbb{R})
\longrightarrow
{0},
\end{equation}
 and 
\begin{equation}\label{cplx:elasticity-2D}
{0} \longrightarrow
H^{2}(\mathbb{R})
\xrightarrow{\curl\curl}
H(\div;\mathbb{S})
\xrightarrow{\div}
L^{2}(\mathbb{R}^{2})
\longrightarrow
0,
\end{equation}
 essentially involve $H^{2}$ spaces on triangulations, which require scalar splines or finite elements with continuity higher than $C^{0}$. Simplicial splines still see many open problems \cite{hu2024condition,schenck2016multivariate,billera1988homology}. A general result claims that simplicial splines may exhibit \emph{supersmoothness}: $C^{r}$ piecewise smooth functions may have higher continuity at corners (vertices, edges...) of the mesh \cite{sorokina2010intrinsic,floater2020characterization}. Correspondingly, in the construction of conforming finite elements, such supersmoothness should be incorporated, either as part of the degrees of freedom, or one should use macroelements (further splits of each cell) to resolve the supersmoothness. Such supersmoothness will propagate in complexes such as \eqref{cplx:stokes-2D} and \eqref{cplx:elasticity-2D}. For example, a conforming $C^{1}$ finite element discretization of $H^{2}$ involves $C^{2}$ degrees of freedom at vertices. Consequently, its first order derivatives involve $C^{1}$ at vertices. This is a higher continuity requirement than  $ H^{1}$-conformity ($C^{0}$). This indicates that to obtain a finite element complex, we need to impose ($C^{1}$) continuity at vertices higher than the ($C^{0}$) global smoothness that $ H^{1}$ naturally requires. For more general cases, such as in higher dimensions with more delicate Sobolev spaces, characterizing such conditions remains open. 

 For the Stokes problem, the first work investigating the supersmoothness is Falk and Neilan \cite{falk2013stokes}. They constructed a Stokes pair with $C^{1}$ vertex continuity for the velocity and $C^{0}$ vertex continuity for the pressure, fitting within a discrete version of the complex \eqref{cplx:stokes-2D}. For the elasticity, it is already realized at an early stage \cite{arnold2002mixed} that the stress $\boldsymbol{\sigma}$ should have $C^{0}$ continuity at vertices. Examples include the Arnold-Winther element \cite{arnold2002mixed} and the Hu-Zhang element \cite{hu2014family}, constructed from different perspectives. 
 
Generalizing this idea to three dimensions sees essential challenges, as the complexes become longer: there are more slots to fill in and the supersmoothness is much less clear. For the Stokes problem, a generalization was given by Neilan \cite{neilan2015discrete}, where the velocity has $C^{2}$ continuity at vertices with piecewise polynomials of degree higher than or equal to 7 (this is consistent with the fact that $C^{1}$ scalar simplicial spline in 3D starts with $C^{4}$ vertex continuity with piecewise polynomials of degree 9). For linear elasticity, a 3D generalization of the Arnold-Winther element can be found in \cite{arnold2008finite}, and a 3D elasticity complex containing the Hu-Zhang pair \cite{hu2015family} was given by \cite{chen2022finite}. 

\subsubsection{Bubbles, trace and bubble complexes}\label{subsec:bubbles}

{To motivate the discussions on bubble functions, we recall the fact from Stokes problems that $\div: \boldsymbol H_{0}^{1}\to L^{2}\cap\mathbb{R}^{\perp}$ is onto, where $L^{2}\cap\mathbb{R}^{\perp}:=\{u\in L^{2}: \int_{\Omega} u\, dx=0\}$. This indicates that the $L^{2}$ pressure is almost controlled by the interior modes of the $\boldsymbol H_{0}^{1}$ velocity up to a constant. Similarly, conclusions hold for $\div: H_{0}(\div; \mathbb{X})\to L^{2}(\mathbb{R}^{d})\cap\mathcal{X}^{\perp}$, where $\mathcal{X}:=\ker(\div^{\ast})$ is a finite dimensional space.  On the finite-dimensional level, this indicates a general pattern that on each cell, the pressure is controlled by the interior modes, i.e., {\it bubbles}, of the velocity, up to a constant. The remaining constant is further controlled by one mode per face in the velocity space, mimicking the structure in the Raviart--Thomas element and piecewise constants (see Figure \ref{fig:bubble-decomposition} for an illustration). }

\begin{figure}
\begin{minipage}{0.68\textwidth}
$$
\begin{tikzcd}[row sep = small]
\cdots\arrow{r} & \boldsymbol \Sigma^{\div}_{h}\arrow{r}{\div} & \boldsymbol V_{h} \arrow{r} & 0\quad \quad \mbox{full}\\
&\parallel&\parallel&\\
\cdots\arrow{r} & 0 {\boldsymbol\Sigma}^{\div}_{h}\arrow{r}{\div} & \boldsymbol V_{h}/\ker(\div^{*}) \arrow{r} & 0\quad \quad \mbox{bubble}\\
&+&+&\\
\cdots\arrow{r} &  \tilde{\boldsymbol\Sigma}^{\div}_{h}\arrow{r}{\div} & \ker(\div^{*}) \arrow{r} & 0\quad \quad \mbox{skeleton}\\
\end{tikzcd}
$$
\end{minipage}
\begin{minipage}{0.3\textwidth}
  \parbox{0.59\textwidth}{\vspace{+0.1cm}
\centering
\includegraphics[height=0.3\textheight]{./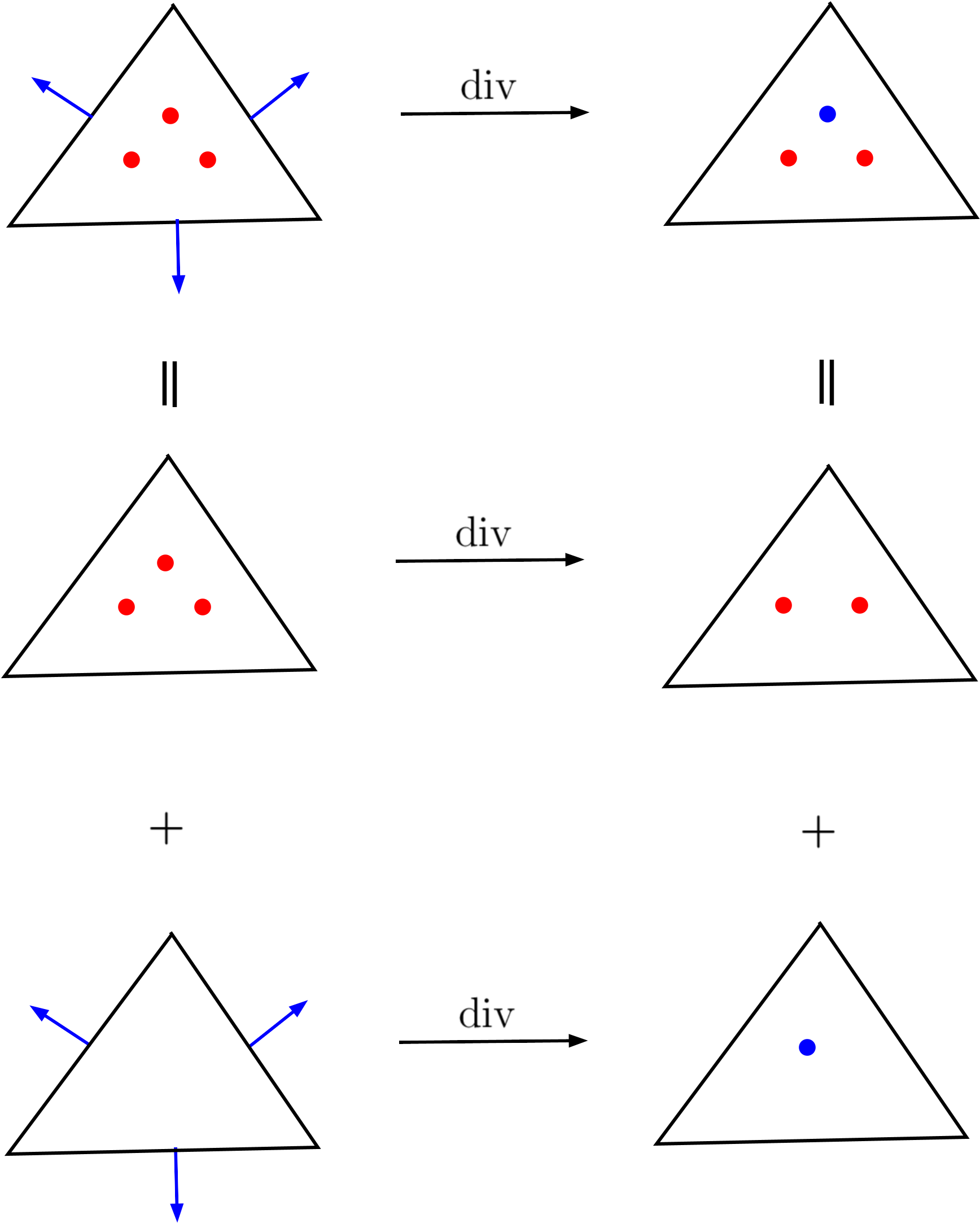}
}
\end{minipage}
\caption{{Illustration of the decomposition of a finite element space into bubbles and the rest (skeleton). At the end of the complex, $\div$ maps bubbles onto piecewise polynomials module a finite dimensional space, which is controlled by face degrees of freedom. In general, the bubbles living on each entity (cells, faces, edges, etc.) form a complex; a finite element space can be decomposed into bubbles in different dimensions plus the rest (skeleton).}}
\label{fig:bubble-decomposition}
 \end{figure}
The main question that we answer in this work (particularly for the last part of the complex) is as follows:
\begin{quote}
    \emph{What smoothness conditions should be imposed to construct a balanced \eqref{sec1:eq:balance} and inf-sup stable \eqref{sec1:eq:inf-sup} finite element pair $\boldsymbol{\Sigma}_{k,h}^{\div}\times\boldsymbol{V}_{k-1,h}\subset{H}(\div,\Omega; \bST)\times {L}^2(\Omega;\mathbb R^3)$, and the full conforming finite element sub-complex of \eqref{eq:sec1:conformal-complex}?}
\end{quote}
Let $T$ be a simplex and $P_{k}(T;\mathbb R)$ be the space of polynomials on $T$ with a degree less than or equal to $k$. A \emph{bubble space} is the subspace of a polynomial space whose \emph{trace} (the terms appearing in the integration by parts) on $\partial T$ vanishes. For instance, the canonical $H^1$ bubble space on a tetrahedron $K$ is given as 
$$
\mathbb{B}^1_k(K;\mathbb R^d)\;:=\;
\bigl\{\,\boldsymbol \sigma\in P_k(K;\mathbb R^d)\;\big|\; \boldsymbol \sigma|_{\partial K}=0\bigr\} = b_KP_{k-4}(K;\mathbb R^d),
$$
where $b_K:=\lambda_{0}\lambda_{1}\lambda_{2}\lambda_{3}$ denotes the bubble function of $K$, which is the product of the four barycentric coordinates.
The
$H(\div)$--type bubble spaces are defined as
\begin{align}
     \mathbb{B}^{\div}_k(K;\mathbb{X})
  :=\bigl\{
        \boldsymbol{\sigma}\in P_k(K;\mathbb{X})
        \;\big|\;
        \boldsymbol{\sigma}\boldsymbol n|_F = \mathbf 0\ \forall
        \text{ face }F
     \bigr\},
  \quad
  \mathbb{X}\in\{\mathbb{R}^d,\mathbb{S},\mathbb{T},\bST\},\label{def:div_bubble}
\end{align}
where $P_k(K;\mathbb{X}):=P_k(K;{\mathbb R})\otimes \mathbb X$ and $\boldsymbol{n}$ represents the face-normal vectors.

{
 The decomposition of a finite element space into bubbles and skeleton not only works for the last part of the complex involving divergence. To introduce the decomposition for the entire complex, we review the notions of {\it trace complexes} and {\it bubble complexes}.}  
 
 For the de~Rham complexes,   the trace spaces also form a \emph{trace complex}, and the trace operators commute with the differential operators. {For example,  for the de~Rham complex in $\mathbb{R}^{3}$, } the following diagram commutes:
\[
\begin{tikzcd}[row sep = large, column sep = large,
               ampersand replacement = \&]   
  H^{1}(\mathbb R)
      \arrow[r,"\grad"]
\& H(\operatorname{curl};\mathbb R^{3})
      \arrow[r,"\operatorname{curl}"]
\& H(\operatorname{div};\mathbb R^{3})
\\[-2pt]
  u
    \arrow[u,phantom,"\mathrel{\rotatebox{90}{$\in$}}"]
    \arrow[r,"\grad"]
    \arrow[d,"\operatorname{tr}"']
\& \boldsymbol{\sigma}
    \arrow[u,phantom,"\mathrel{\rotatebox{90}{$\in$}}"]
    \arrow[r,"\operatorname{curl}"]
    \arrow[d,"\operatorname{tr}"']
\& \boldsymbol{\tau}
    \arrow[u,phantom,"\mathrel{\rotatebox{90}{$\in$}}"]
    \arrow[d,"\operatorname{tr}"']
\\
  u\!\mid_{F}
      \arrow[r,"\grad_F"]
\& (\Pi_F\boldsymbol{\sigma})\!\mid_{F}
      \arrow[r,"\operatorname{rot}_F"]
\& (\boldsymbol{\tau}\!\cdot\!\boldsymbol{n})\!\mid_{F}
\\
  H^{1}(\mathbb R)
      \arrow[r,"\grad_F"]
      \arrow[u,phantom,"\mathrel{\rotatebox{270}{$\in$}}"]
\& H (\operatorname{rot}_{F};\mathbb R^{2})
      \arrow[r,"\operatorname{rot}_F"]
      \arrow[u,phantom,"\mathrel{\rotatebox{270}{$\in$}}"]
\&L^{2}(\mathbb R)
      \arrow[u,phantom,"\mathrel{\rotatebox{270}{$\in$}}"]
\end{tikzcd}
\]
where $\Pi_F:=\mathbf{I}-\boldsymbol{n}\boldsymbol{n}^T$ denotes the surface orthogonal projection.

\begin{remark}
For complexes of Sobolev spaces, $\tr(H^{1})=H^{\tfrac{1}{2}}$, and $\tr H(\operatorname{div};\mathbb R^{3})\subset H^{-\tfrac{1}{2}}(F;\mathbb{R}^{2})$. However, for finite element spaces, the traces have higher regularity and fit in the more regular complex
 $$
 \begin{tikzcd}[row sep = large, column sep = large,
               ampersand replacement = \&]
  H^{1}(\mathbb R)
      \arrow[r,"\grad_F"]
\& H (\operatorname{rot}_{F};\mathbb R^{2})
      \arrow[r,"\operatorname{rot}_F"]
\&L^{2}(\mathbb R).
\end{tikzcd}
$$
\end{remark} 
Consequently, the differential operators map bubble spaces to bubble spaces. By replacing each Sobolev space in the Hilbert complex with its corresponding bubble space, we obtain a new complex, referred to as the \emph{bubble complex}. 
Despite being purely local, bubbles and bubble complexes are powerful tools in the construction of global finite element complexes. For de~Rham complexes, bubbles living on each topological entity (cells, faces, and edges etc.) form an exact sequence; while the cohomology of the global finite element spaces is carried in the lowest order Whitney forms. This leads to a {\it geometric decomposition} of global finite element spaces into cell, face, edge and vertex modes \cite{arnold2006finite,schoberl2005high,christiansen2010finite,   arnold2009geometric, chen2022divdivanddiv}. Therefore, the first step of obtaining global finite element spaces with correct cohomology is to obtain exact bubble sequences. The bubble spaces and complexes also indicate the structure of local degrees of freedom (DOFs).

\begin{remark}[The interface+bubble decomposition]
Instead of the decomposition of finite element functions into bubbles living in all dimensions plus a skeletal part, we consider a coarser decomposition into {\it cell} bubbles and complement. Correspondingly, the DOFs are decomposed into two complementary classes:
(i) \emph{interface DOFs}, attached to lower-dimensional sub-simplices (vertices, edges, faces) to enforce the relevant traces being single-valued, and vanishing of which for a local polynomial implies inclusion in a bubble space; and
(ii) \emph{bubble DOFs}, usually moments against the bubble space (or its $L^2$ dual) to ensure interior completeness.
With the dimension identity,
\[
  \dim(\text{interface}) + \dim(\text{bubble}) = \dim(\text{shape}),
\]
unisolvency can be established. The interface DOFs must be calibrated such that the assembled global space is embedded in the target Sobolev space, and the discrete finite element sequence constitutes a sub-complex. 
\end{remark}

\subsubsection{Divergence pair: inf–sup stability via bubble surjectivity}\label{subsec:infsup-bubbles}

The discrete divergence inf-sup condition for a conforming mixed finite element pair as mentioned earlier
is indicated, in essence, by the \emph{last arrow} of the corresponding \emph{bubble complex}.  
Recall that one needs an {\it elementwise surjection}
\begin{equation}\label{eq:local-surj}
  \div:\mathbb B^{\div}_k\!\bigl(K;\mathbb X\bigr)\;
       \twoheadrightarrow\;
       \mathcal Q_{k-1}(K),\footnote{Here ``\(\twoheadrightarrow\)'' indicates surjectivity.}
\end{equation}
or, in the $H^{1}$–case,
\(
  \div:\mathbb B^{1}_k(K;\mathbb R^{d})\twoheadrightarrow\mathcal Q_{k-1}(K),
\)
where the target space  
\(
  \mathcal Q_{k-1}(K)\subset P_{k-1}(K)
\)
is obtained from
\(\,P_{k-1}\)-polynomials by imposing vertex/edge vanishing conditions (supersmoothness) and $L^2$ orthogonality to a low-degree polynomial (due to Stokes' formula).  

Once a right–inverse of \eqref{eq:local-surj} is known, interface DOFs can be tuned to match with the bubble spaces. Canonical scaling and interpolation arguments already developed in \cite{falk2013stokes,arnold2008finite,neilan2015discrete} then lift the local surjection to a \emph{bounded global} right–inverse, and the discrete inf–sup constant follows.

For the Falk–Neilan Stokes element in 2D, one has  
\begin{equation}\label{eq:stokes-bubble-2D}
  \div:\mathbb B^{1}_{k}(F;\mathbb R^{2})
   \twoheadrightarrow
   \bigl\{q\in P_{k-1}(F)\;:\;q(\delta)=0\ \forall\text{ vertices }\delta,\;
        \int_{F}q=0\bigr\},
\end{equation}
where $\mathbb B^{1}_{k}(F;\mathbb R^{2})$ denotes the $H^1$ bubble space on the face $F$.
This is a result first proved by Vogelius via an explicit local right–inverse \cite{vogelius1983right}.  
The $C^1-C^0$ vertex continuity of the Falk–Neilan pair aligns exactly with the restrictions in \eqref{eq:stokes-bubble-2D}. In three dimensions, supersmoothness propagates to edges; Neilan \cite{neilan2015discrete} showed that the bubble divergence satisfies  
\begin{equation}\label{eq:stokes-bubble-3D}
  \div:\mathbb B^{1}_{k}(K;\mathbb R^{3})
  \twoheadrightarrow
  \bigl\{q\in P_{k-1}(K)\;:\; q|_{e}=0\ \forall\text{ edges }e,\;
        \int_{K}q=0\bigr\},
\end{equation}
without constructing an explicit inverse.  Instead, Neilan established the exactness of a \emph{bubble Stokes complex} which facilitates the dimension count of  
\(\mathbb B^{1}_{k}(K;\mathbb R^{3})\cap\ker(\div)\),
and derived \eqref{eq:stokes-bubble-3D} through dimension counting.  Matching interface degrees of freedom (DOFs) with bubble spaces results in globally stable 3D Stokes pairs, albeit at a high polynomial degree.

For the de~Rham sequence, FEEC provides a classical surjection:  
\[
  \div:\mathbb B_k^{\div}(K;\mathbb R^{d})
        \twoheadrightarrow
        P_{k-1}(K;\mathbb R)\cap\mathbb R^{\perp},
\]
where \(\perp\) denotes the $L^{2}$‑orthogonal complement. When the $H(\div)$ space carries additional algebraic structure, the bubble image changes. In the case of symmetric tensors, the elementwise surjection
\begin{equation}\label{eq:bubble-div-S}
  \div:\;\mathbb{B}_{k}^{\div}(K;\mathbb S)
        \;\twoheadrightarrow\;
        P_{k-1}(K;\mathbb R^{3})\cap\boldsymbol{RM}^{\perp}
\end{equation}
holds, with \(\boldsymbol{RM}\) the space of infinitesimal rigid motions:
\begin{align}
      \boldsymbol{RM}:=\ker(\Def)=\{\boldsymbol a\times\boldsymbol x+\boldsymbol b:\boldsymbol a, \;\boldsymbol b\in \mathbb R^3\}.\label{def:RM}
\end{align}

Since the dimension of $\mathbb{B}_k^{\div}(K;\bS)$ is relatively easy to compute using geometric decompositions, there are two distinct perspectives from which to interpret this result: \emph{working with the kernel $\ker(\div)$} (with bubble complexes) or \emph{working with the range $\ran(\div)$} (without bubble complexes).
In the construction of Arnold–Awanou–Winther \cite{arnold2008finite} elements, the authors demonstrated the exactness of the following \emph{bubble elasticity complexes}:
\begin{align}
0 \;\longrightarrow\;
 b_KP_{k-3}(K;\mathbb R^3)
  \xrightarrow{\Def} 
  \mathbb{B}_{k}^{\inc}(K;\mathbb S)
  \xrightarrow{\inc} 
  \mathbb{B}_{k-2}^{\div}(K;\mathbb S)\cap\ker(\div)
  \xrightarrow{\div} 
0&,\label{sec1:eq:elasticity-bubble}\\
0 \;\longrightarrow\;
 b_K^2P_{k-7}(K;\mathbb R^3)
  \xrightarrow{\Def} 
  b_K\mathbb{B}_{k-4}^{1\inc}(K;\mathbb S)
  \xrightarrow{\inc} 
  \mathbb{B}_{k-2}^{\div}(K;\mathbb S)\cap\ker(\div)
  \xrightarrow{\div} 
0&,\label{sec1:eq:elasticity-bubble-2}
\end{align}
where  
\(\inc\) denotes the second order incompatibility operator (see precise definition in \eqref{def:inc}, Section \ref{sec:notations}); $\mathbb{B}_{k}^{\inc}$ and $b_K\mathbb{B}_{k-4}^{1\inc}$ represent $H(\inc)$ and $H(\inc)\cap H^1$ bubble spaces respectively:
 \begin{align*}    \mathbb{B}^{\inc}_k\left(K;\bS\right)&:=\left\{\boldsymbol{\sigma}\in P_k\left(K;\bS\right):\tr_1^{\inc}\left(\boldsymbol{\sigma}\right)|_F=\tr_2^{\inc}\left(\boldsymbol{\sigma}\right)|_F=0,\;\forall {\text{ face } F\in\mathcal{F}(K)}\right\},\\
    {\mathbb{B}}^{1\inc}_{k-4}\left(K;\bS\right)&:=\left\{{\boldsymbol{\sigma}}\in P_{k-4}\left(K;\bS\right):\tr_1^{\inc}(\boldsymbol{\sigma})|_F=0,\;\forall{\text{ face } F\in\mathcal{F}(K)}\right\}.
\end{align*}

Here \(\tr_{1}^{\inc}\) and \(\tr_{2}^{\inc}\) represent the zeroth- and first-order traces of the incompatibility operator \(\inc\) (their precise definitions are provided in \eqref{inc_trace1} and \eqref{inc_trace2}, Section~\ref{trace section}).  Both traces arise from the integration-by-parts identity for symmetric tensors $\boldsymbol{\sigma}$ and $\boldsymbol{\tau}$
\cite{arnold2008finite,chen2022finite}:
\begin{align*}
    \int_K \inc \boldsymbol{\sigma} : \boldsymbol{\tau} -     \int_K \inc \boldsymbol{\tau} : \boldsymbol{\sigma} &=- \int_{\partial K}\tr_1^{\inc} (\boldsymbol{\sigma}) :\tr_2^{\inc} (\boldsymbol{\tau}) + \int_{\partial K} \tr_2^{\inc} (\boldsymbol{\sigma}) :\tr_1^{\inc} (\boldsymbol{\tau})\\
    & \quad + \text{ edge integrals.}
\end{align*}

The bubble space \(\mathbb{B}^{\inc}_{k}(K;\mathbb{S})\) is
particularly delicate, as two distinct traces must vanish on \(\partial K\). Consequently, a significant portion of ~\cite{arnold2008finite} is dedicated to determining \(\dim\mathbb{B}^{\inc}_{k}(K;\mathbb{S})\).  Once this dimension is established, the exactness of ~\eqref{sec1:eq:elasticity-bubble} immediately yields \(
  \dim\bigl(\mathbb{B}_{k-2}^{\div}(K;\mathbb{S})\cap\ker(\div)\bigr)
\), thereby allowing the surjectivity ~\eqref{eq:bubble-div-S} to follow through a straightforward dimension count.

In \cite{arnold2008finite}, it was shown that the exactness of~\eqref{sec1:eq:elasticity-bubble} further implies the
exactness of~\eqref{sec1:eq:elasticity-bubble-2}.  The bubble
space
\(b_{K}\mathbb{B}_{k-4}^{1\inc}(K;\mathbb{S})
  \subset\mathbb{B}_{k}^{\inc}(K;\mathbb{S})\)
is significantly simpler than \(\mathbb{B}_{k}^{\inc}(K;\mathbb{S})\)
itself and admits explicit geometric decompositions.  Consequently,
the bubble complex~\eqref{sec1:eq:elasticity-bubble-2} provides a computable basis for \(\mathbb{B}_{k-2}^{\div}(K;\mathbb{S})\cap\ker(\div)\), which is a cornerstone in the design of the Arnold–Awanou–Winther element. This is the way of \emph{working with $\ker(\div)$}.

Hu and Zhang \cite{hu2015family} provided a proof of \eqref{eq:bubble-div-S} without employing any exact sequence: they provided an explicit characterization of \(\mathbb{B}_{k}^{\div}(K;\mathbb S)\) and identified its $L^{2}$-dual as precisely \(P_{k-2}(K;\mathbb S)\).  Therefore, for any $\boldsymbol p\in P_{k-1}(K;\mathbb R^3)$ such that $\boldsymbol{p}\perp \ran(\div)$, it holds that $$\Def \boldsymbol{p}\perp \mathbb B_k^{\div}(K;\bS), \quad\text{hence } \boldsymbol p\in \ker(\Def)=\boldsymbol{RM}\text{ and $\div$ is a surjection.} $$This approach avoids the delicate dimension count required for the \(\inc\)-bubble space. Hu and Liang \cite{hu2021conforming} later extended this technique to demonstrate that
an analogous result holds for traceless tensors:
\begin{equation}
      \div:\;\mathbb{B}_{k}^{\div}(K;\mathbb T)
        \;\twoheadrightarrow\;
        P_{k-1}(K;\mathbb R^{3})\cap\boldsymbol{RT}^{\perp}\label{eq:bubble-div-T}
\end{equation}
is surjective, where $\boldsymbol{RT}$ denotes the local Raviart–Thomas space:
  \begin{align}
  \boldsymbol{RT}:=\ker(\dev\grad)=\{a\boldsymbol x+\boldsymbol b: a\in \mathbb{R}, \boldsymbol b\in \mathbb R^{3}\}.\label{def:RT}
  \end{align}
And this corresponds the approach of \emph{working with $\ran(\div)$} directly.
   For any $\boldsymbol{\sigma} \in \mathbb{B}_k^{\operatorname{div}}(K; \mathbb{S})$ or $\boldsymbol{\sigma} \in \mathbb{B}_k^{\operatorname{div}}(K; \mathbb{T})$, it can be demonstrated that
        \begin{itemize}
            \item $\boldsymbol{\sigma} = 0$ on all vertices of $K$.
            \item $\boldsymbol{\sigma} \boldsymbol{n}_{e\pm}= 0$ across all edges of $K$, with $\boldsymbol{n}_{e\pm}$, and the edge tangential vector forming an orthogonal basis.            
\item $\boldsymbol{\sigma} \boldsymbol{n} = 0$ on all faces of $K$.
\end{itemize}
For either the $H(\div,\bS)$-$L^2(\mathbb R^3)$ Hu-Zhang pair \cite{hu2015family} or the $H(\div,\bT)$-$L^2(\mathbb R^3)$ Hu-Liang pair \cite{hu2021conforming}, the design of the interface degrees of freedom (DOFs) is established as follows:
\begin{itemize}
        \item The $P_k(\mathbb S)$/$P_k(\mathbb T)$ $H(\div)$-conforming space is $C^0$ at vertices and has edge-normal continuity along internal edges ($\boldsymbol{\sigma} \boldsymbol{n}_{e\pm}$ being single-valued across each edge), and face-normal continuity across internal interfaces.
        \item The $P_{k-1}(\mathbb R^3)$ displacement space has no continuity requirements on sub-simplices.
    \end{itemize}
This aligns with the smoothness of the bubble spaces, and it is demonstrated that these pairs are inf-sup stable.

These cases share the same idea: verifying the surjection between local bubbles \eqref{eq:local-surj}, designing interface DOFs that enforce proper edge/vertex/face conditions, and lifting  the local right–inverse to obtain a global inf–sup bound.  
\subsection{Overview of the construction of finite element conformal complexes}

Discretizing the conformal complex is considerably more challenging than its Stokes or elasticity counterparts: one must enforce \emph{simultaneously} the algebraic constraints of symmetry and tracelessness, while also accommodating the higher-order operator $\cott$. Although the canonical BGG-based framework developed by Bonizzoni \emph{et al.}  \cite{bonizzoni2023discrete} constructs conforming elements for the $\mathbb S$- and $\mathbb T$-based complexes in arbitrary dimensions, it does not extend in a direct manner to their intersection $\bST$.  This observation already suggests a genuine structural obstruction: a naive intersection of the known $\mathbb S$ and $\mathbb T$ bubble spaces could lead to an over-constraining of the local shape functions. {In this subsection, we discuss some of the technical issues in the previous section for $\mathbb{S}\cap \mathbb{T}$ finite elements and conformal complexes, and provide an overview of the construction of this paper.}

\subsubsection{Inf-sup condition of the divergence pair}

Recall that $\boldsymbol{CK}=\ker(\dev\Def)$ is the ten-dimensional space of conformal Killing fields. It has the form 
\begin{align}\label{def:CK}    \quad\quad\quad\boldsymbol{CK}=\{(\boldsymbol{x}\cdot\boldsymbol{x})\boldsymbol{a}-2(\boldsymbol{a}\cdot\boldsymbol{x})\boldsymbol{x}+\boldsymbol{b}\times\boldsymbol{x}+c\boldsymbol{x}+\boldsymbol{d}:\boldsymbol{a},\boldsymbol{b},\boldsymbol{d}\in\mathbb{R}^3,c\in\mathbb{R}\},
\end{align}
which was shown in \cite{neff2009new}; for completeness, we include a self-contained proof in Appendix \ref{appendix:characterization_of_CK}.
By Stokes’ formula, for any $\boldsymbol{\sigma}\in\mathbb{B}_{k}^{\div}(K;\bST)$ and any $\boldsymbol{q}\in\boldsymbol{CK}$,
\[
  \int_K \div \boldsymbol{\sigma}\cdot \boldsymbol{q}
  \;=\;
    -\int_K \boldsymbol{\sigma} : \dev\Def \boldsymbol{q}
    + \int_{\partial K} \boldsymbol{q} \cdot \boldsymbol{\sigma}\boldsymbol{n}
  \;=\;0.
\]
Motivated by the surjectivity results \eqref{eq:bubble-div-S} and \eqref{eq:bubble-div-T} for the pure $\mathbb S$- and $\mathbb T$-settings, one might conjecture that the map
\begin{equation}\label{eq:fake-surjection}
  \div:\;\mathbb{B}_{k}^{\div}(K;\bST)
  \longrightarrow
  P_{k-1}(K;\mathbb R^{3})\cap\boldsymbol{CK}^{\perp}
\end{equation}
is surjective. However, {we will demonstrate that} 
this statement is incorrect.  While for $\mathbb{B}_k^{\div}(K;\mathbb S)$ and $\mathbb{B}_k^{\div}(K;\mathbb T)$ only the normal components on the edges vanish,    for any $\boldsymbol{\sigma} \in \mathbb{B}_k^{\operatorname{div}}(K; \bST)$, it can be demonstrated that
        \begin{itemize}
            \item $\boldsymbol{\sigma} = 0$ at all vertices of $K$.
            \item $\boldsymbol{\sigma} = 0$ across all edges of $K$.            
\item $\boldsymbol{\sigma} \boldsymbol{n} = 0$ on all faces of $K$.
\end{itemize} This forces $\div\boldsymbol{\sigma}$ to vanish at all vertices (since $D\boldsymbol{\sigma}$ vanish at all vertices). To encode such vanishing conditions, define for $s\ge0$
\begin{equation}\label{def:polynomials-enhanced-smoothness}
  P_{k}^{(s)}(K;\mathbb X)
  :=
  \Bigl\{
    \boldsymbol{\sigma}\in P_{k}(K;\mathbb X)
    :
    D^{\alpha}\boldsymbol{\sigma}(\delta)=0
    \quad\forall\text{ vertices }\delta\in\mathcal V(K),\;
    0\le|\alpha|\le s
  \Bigr\}.
\end{equation}
It is then natural to weaken \eqref{eq:fake-surjection} and investigate if the following map is a surjection:
\begin{equation}\label{eq:fake-surjection-2}
  \div:\;\mathbb{B}_{k}^{\div}(K;\bST)
  \longrightarrow
  P_{k-1}^{(0)}(K;\mathbb R^{3})\cap\boldsymbol{CK}^{\perp}.
\end{equation}
Nevertheless, we shall demonstrate that even \eqref{eq:fake-surjection-2} is \emph{not} a surjection. In fact, by introducing additional vertex smoothness, we define for \(s\ge1\)
\begin{align}
    \mathbb B_{k}^{\div,(s)}(K;\bST)
:=P_{k}^{(s)}(K;\bST)\cap\mathbb B_{k}^{\div}(K;\bST),\label{def:div_bubble_extra_smoothness}
\end{align}
where \(P_{k}^{(s)}(K;\bST)\) was defined in \eqref{def:polynomials-enhanced-smoothness}. Clearly, we have the result \(\mathbb B_{k}^{\div,(1)}(K;\bST)=\mathbb B_{k}^{\div}(K;\bST)\) from edge supersmoothness mentioned earlier. Our main results on $\bST$ surjectivity are as follows:
\begin{theorem}[Bubble stability for the \(H(\div,\bST)\text{–}L^{2}\) pair]%
\label{thm:div-surjective property}
Let \(k\ge 7\).  Then
\[
\div:\;
\mathbb B_{k}^{\div,(s)}(K;\bST)
\;\longrightarrow\;
P_{k-1}^{(s-1)}(K;\mathbb R^{3})\cap\boldsymbol{CK}^{\perp}
\]
is surjective when \(s=3\); however, for \(s<3\) surjectivity fails.
\end{theorem}
\begin{remark}
This theorem shows that bubble surjectivity does not hold for the canonical space 
$\mathbb B_k^{\div}(K;\bST)$ unless additional $C^3$ smoothness at the vertices is imposed. 
This is in sharp contrast with the $\bS$- and $\bT$-settings, where surjectivity holds already for the canonical bubble spaces.  

For $\bS$ and $\bT$, the arguments establishing 
\eqref{eq:bubble-div-S} and \eqref{eq:bubble-div-T} (in the Hu–Zhang and Hu–Liang elements, respectively) rely on an \emph{edge decomposition} of the bubble spaces. Such a decomposition is not available for $\mathbb B_{k}^{\div}(K;\bST)$, since its elements vanish identically on every edge.  

As a consequence, our proof of Theorem~\ref{thm:div-surjective property} necessarily takes a different route. Instead of the \emph{working with $\ran(\div)$} strategy, which analyzes the image of the divergence, we adopt a \emph{working with $\ker(\div)$} approach: we study $\ker(\div)$ and construct a family of exact \emph{bubble   
  conformal complexes}, in the spirit of the Stokes pair and the Arnold–Awanou–Winther element construction.
\end{remark}
Once the bubble complexes are in place, the usual interface–bubble matching techniques developed for the finite element Hessian, elasticity, and divdiv complexes can be extended to assemble a full conforming sub‑complex of the conformal sequence.  The essential challenge is to characterize the $H(\cott)$ bubble spaces; even in elasticity complex, the $H(\inc)$ bubbles associated with the second‑order operator $\inc$ are already notoriously intricate.

\medskip
{Our construction follows the following steps:}
\[
\boxed{\text{continuous complex}}
\;\xrightarrow{}\;
\boxed{\text{trace complex}}
\;\xrightarrow{}\;
\boxed{\text{bubble complex}}
\;\xrightarrow{}\;
\boxed{\text{finite element complex}}.
\]
The results obtained at each stage are summarized below.

\subsubsection{Conformal trace complexes and bubble complexes}
We begin by demonstrating a local integration-by-parts identity for the operator
\(\cott\) on a tetrahedron \(K\) for $\bST$ tensors:
\begin{equation}
    \begin{aligned}
          \int_{K}\cott \boldsymbol{\sigma}: \boldsymbol{\tau}
  - \int_{K}\cott \boldsymbol{\tau}: \boldsymbol{\sigma}
  ={}& -\int_{\partial K}\tr_1^{\cott}(\boldsymbol{\sigma}):\Pi_F\inc\boldsymbol{\tau}\Pi_F
        -\int_{\partial K}\tr_2^{\cott}(\boldsymbol{\sigma}):\tr_{2}^{\inc}(\boldsymbol{\tau}) \\
      &\quad + \int_{\partial K}\tr_3^{\cott}(\boldsymbol{\sigma}):\tr_{1}^{\inc}(\boldsymbol{\tau})
        \;+\; \text{edge integrals}.
    \end{aligned}
    \label{eq:integration-by-parts}
\end{equation}

This formula involves three boundary traces associated with the conformal complex, i.e.,%
\(\tr_{1}^{\cott}\), \(\tr_{2}^{\cott}\), and \(\tr_{3}^{\cott}\), with differential operators of orders~0,~1, and~2, respectively, these traces also form commutative trace complexes: 
\[\tr_1^{\cott}(\dev\Def \boldsymbol{u})=\tfrac{1}{2}\Def_F(\boldsymbol{u}\times\boldsymbol{n})-\tfrac{1}{2}\sym\curl_F(\boldsymbol{u}\Pi_F),\]
\[ \begin{tikzcd}[column sep = large]
\boldsymbol{u} \arrow{r}{\dev\Def} \arrow{d}{} & \boldsymbol{\sigma} \arrow{d}{} \arrow{r}{\cott}&\boldsymbol{\tau}\arrow{d}{}\\%
\boldsymbol{u}\cdot\boldsymbol{n}\arrow{r}{-\Def_F\curl_F}& \tr_2^{\cott}(\boldsymbol{\sigma})\arrow{r}{-\div_F\div_F}&\boldsymbol{n}\cdot\boldsymbol{\tau}\cdot\boldsymbol{n},
\end{tikzcd}
\]
\[ \begin{tikzcd}[column sep = large]
\boldsymbol{u} \arrow{r}{\dev\Def} \arrow{d}{} & \boldsymbol{\sigma} \arrow{d}{} \arrow{r}{\cott}&\boldsymbol{\tau}\arrow{d}{}\\%
\boldsymbol{u}\times\boldsymbol{n}\arrow{r}{\tfrac{1}{2}\operatorname{hess}_F\div_F}& \tr_3^{\cott}(\boldsymbol{\sigma})\arrow{r}{\operatorname{rot}_F}&\boldsymbol{n}\times\boldsymbol{\tau}\cdot\boldsymbol{n}.
\end{tikzcd}
\]  
Precise descriptions are deferred to Section~\ref{trace section}.

Building upon these trace complexes, we construct three nested \(H(\cott)\) bubble spaces and establish the exact sequences stated below. Due to trace complexes, it is clear that the following sequences are complexes:

\begin{theorem}
    [Bubble conformal complexes]\label{thm:bubble_conformal}
Let \(k\ge 10\) and let \(K\) denote a tetrahedron with a quartic bubble function \(b_{K}\).  
The \(H(\cott)\) bubble spaces
 \begin{align*}    \mathbb{B}^{\cott}_k\left(K;\bST\right)&:=\left\{\boldsymbol{\sigma}\in P_k\left(K;\bST\right):\tr_1^{\cott}\left(\boldsymbol{\sigma}\right)|_F=\tr_2^{\cott}\left(\boldsymbol{\sigma}\right)|_F=\tr_3^{\cott}\left(\boldsymbol{\sigma}\right)|_F=0 {\text{ for } F\in\mathcal{F}(K)}\right\},\\
    {\mathbb{B}}^{1\cott}_{k-4}\left(K;\bST\right)&:=\left\{{\boldsymbol{\sigma}}\in P_{k-4}\left(K;\bST\right):b_K\boldsymbol{\sigma}\in \mathbb{B}^{\cott}_k\left(K;\bST\right)\right\},\\
    {\mathbb{B}}^{2\cott}_{k-8}\left(K;\bST\right)&:=\left\{{\boldsymbol{\sigma}}\in P_{k-8}\left(K;\bST\right):b_K^2\boldsymbol{\sigma}\in \mathbb{B}^{\cott}_k\left(K;\bST\right)\right\},
\end{align*}  satisfy
\[
  b_{K}^{2}\mathbb B_{k-8}^{2\cott}(K;\bST)
  \;\subset\;
  b_{K}\,\mathbb B_{k-4}^{1\cott}(K;\bST)
  \;\subset\;
  \mathbb B_{k}^{\cott}(K;\bST)
\]
and the following sequences are exact:
\begin{align}
  0
  \xrightarrow{}
     b_{K}P_{k-3}(K;\mathbb R^{3})
   \xrightarrow{\dev\Def}
     \mathbb B_{k}^{\cott}(K;\bST)
   \xrightarrow{\cott}
     \mathbb B_{k-3}^{\div}(K;\bST)\cap\ker(\div)
   \xrightarrow{\div}
     0&, \label{eq:bubble-seq0}\\
  0
  \xrightarrow{}
     b_{K}^{2}P_{k-7}(K;\mathbb R^{3})
   \xrightarrow{\dev\Def}
     b_{K}\,\mathbb B_{k-4}^{1\cott}(K;\bST)
   \xrightarrow{\cott}
     \mathbb B_{k-3}^{\div}(K;\bST)\cap\ker(\div)
   \xrightarrow{\div}
     0&, \label{eq:bubble-seq1}\\
     \label{eq:bubble-seq2}
  0
  \xrightarrow{}
     b_{K}^{3}P_{k-11}(K;\mathbb R^{3})
   \xrightarrow{\dev\Def}
     b_{K}^{2}\mathbb B_{k-8}^{2\cott}(K;\bST)
   \xrightarrow{\cott}
     \mathbb B_{k-3}^{\div}(K;\bST)\cap\ker(\div)
   \xrightarrow{\div}
     0&. 
\end{align}
\end{theorem}
Proof of Theorem \ref{thm:bubble_conformal} is divided  into three parts: Theorem \ref{thm:conformal-bubble-seq3} (for \eqref{eq:bubble-seq2}), Theorem \ref{thm:bubble-seq-0} (for \eqref{eq:bubble-seq0}) and Corollary \ref{cor:bubble-conformal-seq-2} (for \eqref{eq:bubble-seq1}). The main tool used in the proof of Theorem~\ref{thm:bubble_conformal} is the bubble-complex analog of the BGG construction~\cite{vcap2001bernstein,arnold2021complexes}. Compared with the continuous setting, applying BGG to bubble complexes is more delicate, since special care must be taken to handle vanishing traces and supersmoothness. Our approach begins with the exact bubble de~Rham complex~\cite{arnold2006finite} and bubble Stokes complex~\cite{neilan2015discrete}, together with their smoother variants. In a systematic and structured way, we recover the previously established bubble elasticity complex~\cite{arnold2008finite,hu2015family} and bubble divdiv complex~\cite{chen2022finitedivdiv,hu2022conformingdivdiv} using BGG machinery, and then employ these complexes to construct a BGG diagram proving the exactness of~\eqref{eq:bubble-seq2}. We further establish that $\ker(\cott) = \ran(\dev\Def)$ in~\eqref{eq:bubble-seq0}, which in turn yields the exactness of all three bubble conformal complexes. We also note that, during the revision of this work, related results on BGG constructions for conformal bubble complexes with the Hessian and elasticity complexes have appeared~\cite{huang2025finite}, which differ from our approach but highlight the growing interest in constructing finite element BGG complexes with bubbles.

Among the three spaces, $\mathbb B_{k}^{\cott}(K;\bST)$ is the most intricate, as it consists of piecewise polynomials whose three traces vanish on $\partial K$. By contrast, the smallest space $b_{K}^{2}\mathbb B_{k-8}^{2\cott}(K;\bST)$ has a particularly simple structure: the factor $b_K^2$ automatically annihilates the first two traces ($\tr_1^{\cott}$ and $\tr_2^{\cott}$) on $\partial K$. This simplicity allows a clean geometric decomposition, which facilitates both the explicit construction of a basis and the computation of dimensions. As a direct consequence, we can prove Theorem~\ref{thm:div-surjective property} from the exactness of~\eqref{eq:bubble-seq2}.
\begin{proof}[Sketch of the proof of Theorem \ref{thm:div-surjective property}]
From the supersmoothness of the bubble function $b_K$ at the vertices, we deduce that 
\[
  b_{K}^{2}\mathbb B_{k-8}^{2\cott}(K;\bST)
  \;\subset\;
  P_{k}^{(6)}(K;\bST).
\]
By the exactness of the third bubble conformal complex, it follows that
\[
  \mathbb B_{k-3}^{\div}(K;\bST)\cap\ker(\div)
  \;=\;
  \cott\bigl(b_{K}^{2}\mathbb B_{k-8}^{2\cott}(K;\bST)\bigr)
  \;\subset\;
  P_{k-3}^{(3)}(K;\bST).
\]
In particular, this shows that 
\[
  \mathbb B_{k-3}^{\div,(s)}(K;\bST)\cap\ker(\div)
  \;=\;
  \mathbb B_{k-3}^{\div}(K;\bST)\cap\ker(\div), 
  \qquad 1\le s\le 3.
\]

Consequently, a dimension-counting argument gives,
\begin{align*}
    \dim \ran(\div)
    &= \dim \mathbb B_{k-3}^{\div,(s)}(K;\bST)
      - \dim \ker(\div) \\
    &= \dim \mathbb B_{k-3}^{\div,(s)}(K;\bST)
       - \dim \mathbb B_{k-8}^{2\cott}(K;\bST)
       + \dim P_{k-11}(K;\mathbb R^3),
\end{align*}
where the last equality follows from the exactness of the bubble conformal complex and can be computed explicitly via geometric decompositions.  
The claim then follows by comparing $\dim \ran(\div)$ with $\dim\!\left(P_{k-4}^{(s-1)}(K;\mathbb R^{3})\cap\boldsymbol{CK}^{\perp}\right)$.
\end{proof}

\begin{remark}
The full proof is given in Section \ref{div section}. This result is somewhat unexpected: although the space $\mathbb B_{k-3}^{\div}(K;\bST)$ only vanishes at the vertices up to first order, its divergence-free kernel
\[
  \mathbb B_{k-3}^{\div}(K;\bST)\cap\ker(\div)
\]
in fact vanishes up to third order at vertices. Notably, this supersmoothness cannot be inferred directly from \eqref{eq:bubble-seq0}, since elements of $\mathbb B_k^{\cott}(K;\bST)$ vanish at vertices only to first order (see Lemma~\ref{edge vanish 1}).  
Thus, Theorem~\ref{thm:bubble_conformal} reveals an intrinsic supersmoothness phenomenon arising from the smoothest bubble complex \eqref{eq:bubble-seq2}. In this sense, the inclusion of $\mathbb B_{k-3}^{\div,(3)}(K;\bST)$ into the (exact) bubble conformal complex is natural, as its vertex smoothness precisely matches the structural requirements of the sequence. 
\end{remark}

\subsubsection{Finite element complex and inf–sup stability}
Guided by Theorem~\ref{thm:div-surjective property}, we select
\(\boldsymbol{\Sigma}_{k,h}^{\div}\) and \(\boldsymbol V_{k-1,h}\) in such a manner  
that their interface DOFs correspond  
to the smoothness of \(\mathbb B_{k}^{\div,(3)}(K;\bST)\) and
\(P_{k-1}^{(2)}(K;\mathbb R^{3})\), respectively:
\begin{itemize}
        \item The $P_k(\bST)$ $H(\div)$-conforming space $\boldsymbol{\Sigma}_{k,h}^{\div}$ is $C^3$ at vertices, $C^0$ along internal edges, and has face-normal continuity  across internal interfaces.
        \item The $P_{k-1}(\mathbb R^3)$ displacement space \(\boldsymbol V_{k-1,h}\) has $C^2$ continuity at vertices.
    \end{itemize} Section~\ref{div section} establishes the inf–sup stability of this pair using a
standard Neilan-type argument. Subsequently, we extend the pair to a complete finite element conformal complex:
\begin{claim}
There exists a finite element sub-complex of \eqref{eq:sec1:conformal-complex}
  \begin{align}
            \boldsymbol{CK} \xrightarrow[]{\subset} \boldsymbol{U}_{k+1,h}\xrightarrow{\dev \Def} \boldsymbol{\Sigma}^{\cott}_{k,h} \xrightarrow[]{\cott} \boldsymbol{\Sigma}^{\div}_{k-3,h}\xrightarrow[]{\div} \boldsymbol{V}_{k-4,h}\xrightarrow[]{}0.
            \label{eq:sec1:finite element complex 1}
 \end{align}
The sequence is exact if the domain $\Omega$ is contractible. 
    \label{claim1}
\end{claim}
  The finite element complex \eqref{eq:sec1:finite element complex 1} is constructed via enriching the first bubble complex \eqref{eq:bubble-seq0} (specifically, the variant in 
Theorem~\ref{conformal_theorem_bubble_extra}, which adds extra vertex smoothness) with   
interface DOFs. We note that to ensure the exact sequence property and match the vertex smoothness of the last two spaces, vertex continuity in~\eqref{eq:sec1:finite element complex 1} is designed as
(from left to right):
\[
C^{7}\;\; \to\;\; C^{6}\;\; \to\;\; C^{3}\;\; \to\;\; C^{2},
\]
which matches the vertex smoothness of the bubble complex in Theorem~\ref{conformal_theorem_bubble_extra}. For the \(H^{1}\)-conforming space \(\boldsymbol{U}_{k+1,h}\), we adopt the
Neilan Stokes velocity element~\cite{neilan2015discrete}  with enhanced \(C^{7}\)
vertex smoothness.
Interface DOFs for $\boldsymbol{\Sigma}^{\cott}_{k,h}$ are guided by a sufficient $H(\cott)$-conforming condition following \eqref{eq:integration-by-parts}. Drawing inspiration from the work on finite element elasticity complexes \cite{chen2022finite} and divdiv complexes \cite{hu2022conformingdivdiv,chen2022finitedivdiv}, we design the interface DOFs to ensure $\tr_1^{\cott}$, $\tr_2^{\cott}$, and $\tr_3^{\cott}$ are single-valued across faces by employing the interface+bubble paradigm once more for two-dimensional bubbles and elements, which are guided by trace complexes we discussed earlier.

To prove exactness on contractible domain, we show $\boldsymbol{\Sigma}^{\cott}_{k,h}\cap \ker(\cott)=\dev\Def\boldsymbol{U}_{k+1,h}$ by dealing with extra smoothness on vertices and edges. Since the last two spaces are balanced by design, the exactness of the full finite element complex is proven via checking Euler's identity.

\section{Preliminaries and Notations}
\label{sec:notations}
\subsection{Triangulations}
We denote $\mathcal{T}_h$ as a shape-regular tetrahedral triangulation of a contractible domain $\Omega$ with Lipschitz boundary $\partial \Omega$ in $\mathbb{R}^3$. For a tetrahedron $K \in \mathcal{T}_h$, let $\mathcal{F}(K)$, $\mathcal{E}(K)$, and $\mathcal{V}(K)$ denote the sets of its faces, edges, and vertices, respectively. Analogously, for a face $F \in \mathcal{F}(K)$, we let $\mathcal{E}(F)$ and $\mathcal{V}(F)$ denote the sets of its edges and vertices.

On a face $F$, we denote $\boldsymbol{n}$ as the unit outward normal vector. $\boldsymbol{t}_{F,e}$ is the unit tangential vector on $e$, with its direction induced by the orientation of $\boldsymbol{n}$. We also use $\boldsymbol{t}_e$ when the direction is not an important factor. Denote by $\boldsymbol{n}_{e\pm}$ the two orthogonal unit vectors perpendicular to the tangential vector $\boldsymbol{t}_e$ of an edge $e$. Let $\boldsymbol{n}_{F,e}$ be the outward pointing face unit normal vector on $e$. It can also be written as $\boldsymbol{n}_{F,e}=\boldsymbol{t}_{F,e}\times\boldsymbol{n}$. See Figure \ref{tetrahedron} for an illustration.
\begin{figure}[H]
    \centering
    \begin{tikzpicture}[line join = round, line cap = round, scale=0.5]
\pgfmathsetmacro{\factor}{1/sqrt(2)}; 
\coordinate  (A) at (3,0,0);
\coordinate  (B) at (-3,0,0);
\coordinate (C) at (0,4,0);
\coordinate (D) at (2,0,4*\factor);
\coordinate (MCD) at (1,2,2*\factor);
\coordinate (NF) at (-0.8133,
   +0.4880,
   +1.2651);
\coordinate (N) at ( 1.2172  ,  0.9129    ,0.4304);
\coordinate (Normal) at (2.8838,    2.2463  ,  3.7302);
\coordinate (ABC) at (barycentric cs:A=1,B=1,C=1);
\coordinate (ABD) at (barycentric cs:A=1,B=1,D=1);
\coordinate (ACD) at (barycentric cs:A=1,C=1,D=1);

\coordinate [label=right:$\boldsymbol{n}$] (ACD_4) at (barycentric cs:A=1,C=0.9,D=-0.1);
\coordinate [label=below:$\boldsymbol{n}_{F,e}$] (BCD) at (barycentric cs:B=1,C=1.5,D=1);
\coordinate  (ACD_1) at (barycentric cs:A=0,C=4,D=4);
\coordinate [label=below:$\boldsymbol{t}_{F,e}$] (ACD_5) at (barycentric cs:A=-1,C=1.5,D=5);
\coordinate (DC_1) at (0.5976,   -1.1952  ,  0.8452);
\foreach \i in {A,B}
    \draw[dashed,line width=0.3pt] (0,0)--(\i);
\draw[-, fill=black!10, opacity=0.5,line width=0.3pt] (A) --(D)--(C)--cycle;
\draw[-, opacity=5,line width=0.3pt] (A) --(D)--(C)--cycle;
\draw[-, opacity=5,line width=0.3pt] (B)--(D)--(C)--cycle;

\draw[-latex, line width=1.5pt] (ACD) -- ++(N);
\draw[-latex,line width=1.5pt] (MCD) -- ++(NF);
\draw[-latex,line width=1.5pt] (ACD_1) -- ++(DC_1);
\coordinate [label=right:$F$] (ACD_2) at (barycentric cs:A=1,C=0.7,D=1);
\end{tikzpicture}

    \caption{Vector notations on a tetrahedron $K$}
    \label{tetrahedron}
\end{figure}
Denote $\lambda_F$ as the corresponding barycentric coordinate of a face $F$. The tetrahedron bubble is defined as $b_K = \prod_{F\in\mathcal{F}(K)}\lambda_F$. For face $F$, the face bubble is given by $b_F = b_K/\lambda_F$. In the case of an edge $e = F_+ \cap F_-$, the edge bubble is defined as $b_e = b_K/(\lambda_{F_+}\lambda_{F_-})$. Note that $\operatorname{grad}\left(\lambda_F\right)=-\boldsymbol{n}/h_F$, where $h_F$ is the distance between a face $F$ and its opposed vertex. 
\subsection{Tensor operations}
Let $\mathbb{M}$ denote the space of $3 \times 3$ real matrices. Define the subspaces:
\begin{itemize}
    \item $\mathbb{K} \subset \mathbb{M}$: skew-symmetric matrices,
    \item $\mathbb{S} \subset \mathbb{M}$: symmetric matrices,
    \item $\mathbb{T} \subset \mathbb{M}$: traceless matrices,
    \item $\mathbb{S} \cap \mathbb{T}$: symmetric, traceless matrices.
\end{itemize}

We summarize some operators in the following table:
\renewcommand{\arraystretch}{1.3}
\begin{longtable}{lll}
\caption{Algebraic operators on matrices and vectors} \\
\toprule
\textbf{Operator} & \textbf{Domain $\to$ Codomain} & \textbf{Definition} \\
\midrule
\endfirsthead

\multicolumn{3}{l}{\emph{(continued from previous page)}} \\
\toprule
\textbf{Operator} & \textbf{Domain $\to$ Codomain} & \textbf{Definition} \\
\midrule
\endhead

\midrule
\multicolumn{3}{r}{\emph{(continued on next page)}} \\
\endfoot

\bottomrule
\endlastfoot

$\Tr$         & $\mathbb{M} \to \mathbb{R}$     & $\Tr(\boldsymbol{U}) = \sum_i \boldsymbol{U}_{ii}$ \\
$\sym$        & $\mathbb{M} \to \mathbb{S}$     & $\sym(\boldsymbol{U}) = \tfrac{1}{2}(\boldsymbol{U} + \boldsymbol{U}^T)$ \\
$\skw$        & $\mathbb{M} \to \mathbb{K}$     & $\skw(\boldsymbol{U}) = \tfrac{1}{2}(\boldsymbol{U} - \boldsymbol{U}^T)$ \\
$\mskw$       & $\mathbb{R}^3 \to \mathbb{K}$   & $[\mskw(\boldsymbol{a})]_{ij} = -\varepsilon_{ij\ell} \boldsymbol{a}_\ell$ \\
$\vskw$       & $\mathbb{M} \to \mathbb{R}^3$   & $\vskw = \mskw^{-1} \circ \skw$ \\
$\dev$           & $\mathbb{M} \to \mathbb{T}$     & $\dev(\boldsymbol{U}) = \boldsymbol{U} - 1/3(\Tr \boldsymbol{U}) \mathbf I$ \\
$S$           & $\mathbb{M} \to \mathbb{M}$     & $S(\boldsymbol{U}) = \boldsymbol{U}^T - (\Tr \boldsymbol{U}) \mathbf I$ \\
$\iota$           & $\mathbb{R} \to \mathbb{M}$     & $\iota({a}) =  a \mathbf I$ \\
\end{longtable}
We note that throughout the paper, the boundary trace operators are always in lowercase notation $\tr$ with appropriate superscripts and subscripts, while the matrix trace operator remains capitalized as $\Tr$. 

We use the following tensor-vector operations:
\begin{itemize}
    \item Double contraction: $\boldsymbol{a} \cdot \boldsymbol{U} \cdot \boldsymbol{b} := \boldsymbol{a}^T \boldsymbol{U} \boldsymbol{b}$,
    \item Column-wise cross product: $\boldsymbol{a} \times \boldsymbol{U}:=\mskw(\boldsymbol{a})\, \boldsymbol{U}$,
    \item Row-wise cross product: $\boldsymbol{U} \times \boldsymbol{a}:=\boldsymbol{U}\, \mskw(\boldsymbol{a})=- (\boldsymbol{a} \times \boldsymbol{U}^T)^T$.
\end{itemize}

We adopt standard notation for differential operators. For vector fields $\boldsymbol{a}$, the row-wise gradient is defined as $\grad \boldsymbol{a} := \boldsymbol{a} \nabla^T$ and the symmetric gradient is defined $\Def \boldsymbol{a} := \sym\grad \boldsymbol{a}$. The operators $\div$ and $\curl$ act row-wise on matrix fields. In our notations, $\curl \mathbf {U}=-\boldsymbol{U}\times \nabla$ for matrices.

$\Pi_F := \mathbf I - \boldsymbol{n} \boldsymbol{n}^T=-\mskw(\boldsymbol{n})^2$ denotes the surface orthogonal projection. Surface differential operators such as $\grad_F$, $\div_F$, and $\Def_F$ follow the same form, with $\nabla$ replaced by the tangential gradient $\nabla_F := \Pi_F \nabla$.

The tangential curl is defined as $\nabla_F^\perp := \boldsymbol{n} \times \nabla$ and $\curl_F \boldsymbol{a} := \boldsymbol{a} \nabla_F^{\perp T}$. The surface rot is denoted by $\operatorname{rot}_F \boldsymbol{a} := \nabla_F^{\perp}\cdot\boldsymbol{a}$. Analogous to $\div_F$, $\operatorname{rot}_F$ acts row-wise on matrices. The surface Hessian is denoted by $\hess_F(a) := \nabla_F^2 a$.

A key identity relating to double cross products is:
\begin{align}
    \boldsymbol{n} \times (\nabla \times \boldsymbol{a}) 
    = \nabla(\boldsymbol{n} \cdot \boldsymbol{a}) - \partial_{\boldsymbol{n}} \boldsymbol{a}
    = \nabla_F(\boldsymbol{n} \cdot \boldsymbol{a}) - \partial_{\boldsymbol{n}}(\Pi_F \boldsymbol{a}).
    \label{cross_product_identity_1}
\end{align}
\subsection{BGG diagrams} 
The elasticity complex and divdiv complex can be obtained from the following {\emph{Bernstein--Gelfand--Gelfand}} (BGG) diagram \cite{arnold2021complexes}:
\begin{equation}\label{eq:bgg-construction-elasticity}
    \begin{tikzcd}[column sep=2.5em,row sep=2.5em]
0 \arrow[r] & H^q\!\otimes\!\mathbb{R}^3 \arrow[r,"\operatorname{grad}"] &
H^{q-1}\!\otimes\!\mathbb{M} \arrow[r,"\operatorname{curl}"] &
H^{q-2}\!\otimes\!\mathbb{M} \arrow[r,"\operatorname{div}"] &
H^{q-3} \otimes \mathbb R^3\arrow[r] & 0 \\[2pt]
0 \arrow[r] & H^{q-1}\!\otimes\!\mathbb{R}^3 \arrow[r,"\operatorname{grad}"]
\arrow[ru,"\mskw"] &
H^{q-2}\!\otimes\!\mathbb{M} \arrow[r,"\operatorname{curl}"] \arrow[ru,"S"] &
H^{q-3}\!\otimes\!\mathbb{M} \arrow[r,"\operatorname{div}"] \arrow[ru,"-2\operatorname{vskw}"] &
H^{q-4}\!\otimes\!\mathbb{R}^3 \arrow[r] & 0 .\\
0 \arrow[r] & H^{q-2}\!\otimes\!\mathbb{R}\arrow[r,"\operatorname{grad}"]
\arrow[ru,"\iota"] & H^{q-3}\!\otimes\!\mathbb{R} 
\arrow[r,"\operatorname{curl}"] \arrow[ru,"\mskw"] & H^{q-4}\!\otimes\!\mathbb{R}^3 
 \arrow[r,"\operatorname{div}"] \arrow[ru,"\operatorname{id}"] &
H^{q-5}\!\otimes\!\mathbb{R} \arrow[r]& 0.
\end{tikzcd}
\end{equation}
A crucial property is that each block of this diagram anti-commutes. For example, the elasticity follows from a ``zigzag'' path of the top two rows, where $\mskw$ is an injection, $S$ is a bijection and $-2\vskw$ is a surjection. If we restrict our third slot in the top row as $\bS$ ($\ran(\mskw)^{\perp}$) and, the fourth slot in the second row also as $\bS$ ($\ker(\vskw)$), we have (see details at \cite{arnold2021complexes}):
\begin{equation}
    \begin{tikzcd}[column sep=2.5em,row sep=2.5em]
0 \arrow[r] & H^q\!\otimes\!\mathbb{R}^3 \arrow[r,"\Def"] &
H^{q-1}\!\otimes\!\mathbb{S} \arrow[r,"\operatorname{curl}"] &
{} &
 &  \\[2pt]
& &{}
\arrow[r,"\operatorname{curl}"] \arrow[ru,"S"] &
H^{q-3}\!\otimes\!\mathbb{S} \arrow[r,"\operatorname{div}"]  &
H^{q-4}\!\otimes\!\mathbb{R}^3 \arrow[r] & 0 ,
\end{tikzcd}
\end{equation}
and from which we get the elasticity complex: 
\begin{equation}
    \begin{tikzcd}
        0 \arrow[r] & H^{q}\!\otimes\!\mathbb{R}^3 \arrow[r,"\operatorname{def}"]
&
H^{q-1}\!\otimes\!\mathbb{S} \arrow[r,"\operatorname{inc}"]  &
H^{q-3}\!\otimes\!\mathbb{S} \arrow[r,"\operatorname{div}"]  &
H^{q-4}\!\otimes\!\mathbb{R}^3 \arrow[r] & 0 .
    \end{tikzcd}
\end{equation}
Here,\begin{align}
         \inc\, \boldsymbol{U} := \curl\, S^{-1} \curl\, \boldsymbol{U}.\label{def:inc}
 \end{align}
    denotes the \emph{incompatibility operator}. If $\boldsymbol{U}$ is symmetric, then $\Tr(\curl \boldsymbol{U}) = -\Tr( \boldsymbol{U}\mskw(\nabla))= 0$. So $\inc\, \boldsymbol{U} = \curl\left((\curl \boldsymbol{U})^T\right)$ remains symmetric. If $\boldsymbol{U}$ is skew-symmetric, i.e., $\boldsymbol{U} = \mskw(\boldsymbol{u})$, then from anti-commutativity of \eqref{eq:bgg-construction-elasticity}: $\curl \mskw=-S\grad$, $\inc\, \boldsymbol{U} = -\curl\grad \boldsymbol{u} =0$. Thus, $\inc$ maps general matrix fields to symmetric tensors, with skew-symmetric tensors lying in its kernel.

If we iterate the BGG construction for  divdiv complex and elasticity complex, we get another diagram:
\begin{equation}\label{eq:bgg-diagrams-conformal}
    \begin{tikzcd}[column sep=2.5em,row sep=2.5em]
0 \arrow[r] & H^q\!\otimes\!\mathbb{R}^3 \arrow[r,"\operatorname{dev\,grad}"] &
H^{q-1}\!\otimes\!\mathbb{T} \arrow[r,"\operatorname{sym\,curl}"] &
H^{q-2}\!\otimes\!\mathbb{S} \arrow[r,"\operatorname{div\,div}"] &
H^{q-4} \otimes \mathbb R\arrow[r] & 0 \\[2pt]
0 \arrow[r] & H^{q-1}\!\otimes\!\mathbb{R}^3 \arrow[r,"\operatorname{def}"]
\arrow[ru,"\mskw"] &
H^{q-2}\!\otimes\!\mathbb{S} \arrow[r,"\operatorname{inc}"] \arrow[ru,"S"] &
H^{q-4}\!\otimes\!\mathbb{S} \arrow[r,"\operatorname{div}"] \arrow[ru,"\Tr"] &
H^{q-5}\!\otimes\!\mathbb{R}^3 \arrow[r] & 0 .
\end{tikzcd}
\end{equation}
Applying the BGG machinery, we get
\begin{equation}
    \begin{tikzcd}[column sep=2.5em,row sep=2.5em]
0 \arrow[r] & H^q\!\otimes\!\mathbb{R}^3 \arrow[r,"\dev\Def"] &
H^{q-1}\!\otimes\!\bST \arrow[r,"\sym\operatorname{curl}"] & {}
  &
 &  \\[2pt]
& &
{}\arrow[r,"\operatorname{inc}"] \arrow[ru,"S"] &
H^{q-4}\!\otimes\!\bST \arrow[r,"\operatorname{div}"]  &
H^{q-5}\!\otimes\!\mathbb{R}^3 \arrow[r] & 0 .
\end{tikzcd}
\end{equation}
This yields the conformal deformation complex:
\begin{equation}
    \begin{tikzcd}
        0 \arrow[r] & H^{q}\!\otimes\!\mathbb{R}^3 \arrow[r,"\dev\operatorname{def}"]
&
H^{q-1}\!\otimes\!\bST \arrow[r,"\operatorname{cott}"]  &
H^{q-4}\!\otimes\!\bST \arrow[r,"\operatorname{div}"]  &
H^{q-5}\!\otimes\!\mathbb{R}^3 \arrow[r] & 0 ,
    \end{tikzcd}
\end{equation}
where $\cott$ denotes the \emph{linearized Cotton--York tensor}:
    \begin{align}
            \cott\, \boldsymbol{U} :
    = \inc\, S^{-1} \sym \curl\, \boldsymbol{U} = \curl\, S^{-1} \curl\, S^{-1} \curl\, \boldsymbol{U}.\label{def:cott}
    \end{align}
 If $\boldsymbol{U}$ is skew-symmetric (in $\ker(\inc)$) or a scalar multiple of the identity (in $\ker(\sym\curl)$), then $\cott\, \boldsymbol{U} = 0$. If $\boldsymbol{U}$ is symmetric and traceless, then $\sym \curl \boldsymbol{U}$ is also symmetric and traceless. From anti-commutativity of \eqref{eq:bgg-diagrams-conformal}, we have $\Tr(\cott\, \boldsymbol{U}) = -\div\div \sym\curl \boldsymbol{U} = 0$. Hence, $\cott$ maps symmetric and traceless tensors to symmetric and traceless tensors, with kernel containing all skew-symmetric tensors and scalar multiples of the identity matrix. This operator also occurs as $\operatorname{cinc}$ \cite{arnold2021complexes} and $\operatorname{cot}$ \cite{vcap2023bgg, bonizzoni2023discrete} in the literature.
\subsection{Polynomials with vanishing derivatives at vertices} 

Since we shall work with finite elements with high-order vertex regularity, 
we introduce the geometric decomposition of polynomials with vanishing derivatives at vertices of $K$ to aid the construction of finite element spaces. Denote by $P_k(e;\mathbb{X})$ and $P_k(F;\mathbb{X})$ polynomial subspaces of $P_k(K;\mathbb{X})$ spanned by the canonical Lagrange basis of $P_k(K;\mathbb{X})$ on $e\in\mathcal{E}(K)$ and $F\in\mathcal{F}(K)$ respectively.
\begin{proposition}\label{prop:geometric_decompositions}
    Let $s$ and $k$ be non-negative integers such that $k\geq 2s+1$. Then $ P_k^{(s)}(K;\mathbb{X})$ can be decomposed geometrically as follows:
    \begin{equation}
    \begin{aligned}
         P_k^{(s)}(K;\mathbb{X})=&\oplus_{e\in\mathcal{E}(K)}b_e^{s+1}P_{k-2s-2}(e;\mathbb{X})\\
  &\oplus_{F\in\mathcal{F}(K)}b_F P_{k-3}^{(s-2)}(F;\mathbb{X})\\
  &\oplus b_K P_{k-4}^{(s-3)}(K;\mathbb X),
    \end{aligned}
    \label{geo decomp}
\end{equation}
where $P^{(s)}_k(F;\mathbb{X})$ is defined as:
    \begin{equation}
    \begin{aligned}        P^{(s)}_k(F;\mathbb{X}):=&\oplus_{e\in\mathcal{E}(F)}b_e^{s+1}P_{k-2s-2}(e;\mathbb{X})\\
&\oplus b_FP_{k-3}^{(s-2)}(F;\mathbb{X}).
    \end{aligned}
    \label{geo decomp F}
\end{equation}
Here we set $P_{k}^{(q)}(F;\mathbb X):=P_{k}(F;\mathbb X)$ and $P_{k}^{(q)}(K;\mathbb X):=P_{k}(K;\mathbb X)$ for $q< 0$. 
\end{proposition}
\begin{proof}
The inclusion ``$\supset$'' follows directly from the recursive construction. For the reverse inclusion, we perform a dimension count. By induction, it can be shown that:
\begin{align}
\label{eq:dim_face_polynomials_vanishing_at_vertices} \dim P_k^{(s)}(F;\mathbb{X}) &= \dim P_k(F;\mathbb{X}) - \dim\mathbb{X} \cdot 3\binom{s+2}{2}, \\
 \label{eq:dim_rhs}\dim\text{(RHS of \eqref{geo decomp})} &= \dim P_k(K;\mathbb{X}) - \dim\mathbb{X} \cdot 4\binom{s+3}{3}.
\end{align}
This matches the dimension of $P_k^{(s)}(K;\mathbb{X})$, since vertex derivatives $D^\alpha \boldsymbol{\sigma}(\delta)$ are linearly independent for $|\alpha|\le s$ when $k \ge 2s + 1$ \cite{hu2023construction}.
\end{proof}
The following proposition shows that when restricted to face $F$, the recursive definition of $P_k^{(s)}(F;\mathbb{X})$ also characterizes the space of polynomials in $P_k(F;\mathbb{X})$ whose facial derivatives vanish to order $s$ at each vertex of $F$.
\begin{proposition}
  For $k\geq 2s+1$, it holds that 
\begin{align}
        P_k^{(s)}(F;\mathbb{X})|_F=\{\boldsymbol{\sigma}\in P_k(F;\mathbb{X})|_F: D_{F}^{\alpha}\boldsymbol{\sigma}(\delta)=0,\,\forall\delta\in\mathcal{V}(F)\text{ for }\left|\alpha\right|\leq s\}.\label{geometric_decomp_F}
\end{align}
\end{proposition}
\begin{proof}
The inclusion ``$\subset$'' follows from the recursive definition. To show equality, we match dimensions using \eqref{eq:dim_face_polynomials_vanishing_at_vertices}. The vertex derivative DOFs $D_F^\alpha \boldsymbol{\sigma}(\delta)$ for $|\alpha| \le s$ are linearly independent for $k \ge 2s + 1$ \cite{hu2023construction}, completing the argument.
\end{proof}
\begin{remark}
     The spaces \( P_k(e;\mathbb{X}) \) and \( P_k(F;\mathbb{X}) \) denote the subspaces of \( P_k(K;\mathbb{X}) \) via the Lagrange basis from \( K \) with nodes lying on the relevant sub-simplex. Therefore, they are globally defined on the whole $K$. By recursive definition of $P_k^{(s)}(F;\mathbb{X})$, it is also defined on the whole $K$ and is a subspace of $P_k^{(s)}(K;\mathbb{X})$.
\end{remark}
\section{Traces and Trace Complexes}
\label{trace section}
In this section, we are going to analyze the linearized Cotton-York operator $\cott$. We provide trace complexes and establish a sufficient condition for the ${H}(\cott)$-conformity. 

The traces of the linearized Cotton-York operator for $\bST$ tensors are defined as follows: 
 \begin{align}
    & \tr_1^{\cott}(\boldsymbol\sigma):=\sym(\Pi_F\boldsymbol\sigma\times\boldsymbol n),\\
    &\tr_2^{\cott}(\boldsymbol\sigma):= \sym((2\operatorname{def}_F(\boldsymbol{n}\cdot\boldsymbol{\sigma}\Pi_F)-\Pi_F\partial_n\boldsymbol{\sigma}\Pi_F)\times\boldsymbol{n}),\\
    &\tr_3^{\cott}(\boldsymbol\sigma):= 2\operatorname{def}_F(\boldsymbol{n}\cdot\sym\curl\boldsymbol{\sigma}\Pi_F)-\Pi_F\partial_n(\sym\curl\boldsymbol{\sigma})\Pi_F.
\end{align}
\begin{remark}
Two symmetric traces are introduced for the incompatibility operator $\inc$ for symmetric tensors $\boldsymbol \tau$\cite{arnold2008finite, chen2022finite}:
\begin{align}
    &{\tr}^{\inc}_1(\boldsymbol{\tau}):=\boldsymbol{n}\times\boldsymbol{\tau}\times\boldsymbol{n},\label{inc_trace1}\\
&{\tr}^{\inc}_2(\boldsymbol{\tau}):=2\operatorname{def}_F(\boldsymbol{n}\cdot\boldsymbol{\tau}\Pi_F)-\Pi_F\partial_n\boldsymbol{\tau}\Pi_F.\label{inc_trace2}
\end{align}

The newly derived traces for the linearized Cotton-York tensor $\cott$ can be represented as
\begin{align}
    & \tr_1^{\cott}(\boldsymbol{\sigma})=\sym(\tr^{\inc}_1(\boldsymbol{\sigma})\times\boldsymbol{n}),\label{new_representation_1}\\
    & \tr_2^{\cott}(\boldsymbol{\sigma})=\sym(\tr^{\inc}_2(\boldsymbol{\sigma})\times\boldsymbol{n}),\label{new_representation_2}\\
    &
    \tr_3^{\cott}(\boldsymbol{\sigma})=\tr^{\inc}_2(\sym\curl\boldsymbol{\sigma}).\label{new_representation_3}
\end{align}
Since $\tr_1^{\inc}(\boldsymbol{\sigma})$ is symmetric, $\Tr(\tr_1^{\inc}(\boldsymbol{\sigma})\times \boldsymbol{n})= \Tr(\tr_1^{\inc}(\boldsymbol{\sigma})\operatorname{mskw}(\boldsymbol{n}))=\Tr(\operatorname{mskw}(\boldsymbol{n})\tr_1^{\inc}(\boldsymbol{\sigma}))$. But $(\tr_1^{\inc}(\boldsymbol{\sigma})\operatorname{mskw}(\boldsymbol{n}))^T=-\operatorname{mskw}(\boldsymbol{n})\tr_1^{\inc}(\boldsymbol{\sigma})$. Therefore, $\Tr(\tr_1^{\cott}(\boldsymbol{\sigma}))=0$ and similarly $\Tr(\tr_2^{\cott}(\boldsymbol{\sigma}))=0$. 
We conclude that $\tr_1^{\cott}(\boldsymbol{\sigma})$ and $\tr_2^{\cott}(\boldsymbol{\sigma})$ are both symmetric and traceless. Moreover, Lemma~\ref{trace_couple} shows that $$\Tr(\tr_3^{\cott}(\boldsymbol{\sigma}))=\div_F\div_F\tr_1^{\cott}(\boldsymbol{\sigma}).$$
Thus if $\tr_1^{\cott}(\boldsymbol{\sigma})|_F=0$, $\tr_3^{\cott}(\boldsymbol{\sigma})|_F$ is also both symmetric and traceless. 
\end{remark}
\begin{remark}
There is a useful duality to observe here.  
For symmetric tensors $\boldsymbol{\tau}$ one has
\[
\big(S^{-1}(\boldsymbol{\tau}\times\boldsymbol{n})\big)\times\boldsymbol{n}
= -\,\tr^{\inc}_1(\boldsymbol{\tau}),
\]
which may be viewed as the boundary analog of the identity $\inc=\curl S^{-1}\curl$, where the differential operator $\mskw(\nabla)$ is replaced by the algebraic operator $\mskw(\boldsymbol{n})$.  
In the same spirit, for $\bST$ tensor $\boldsymbol{\sigma}$ we find
\[
\boldsymbol{n}\times\big(S^{-1}\sym(\boldsymbol{\sigma}\times\boldsymbol{n})\big)\times\boldsymbol{n}
= \tr^{\cott}_1(\boldsymbol{\sigma}),
\]
again reflecting the same substitution for $\cott =\inc S^{-1}\sym\curl$. 
\end{remark}

The following trace complexes parallel the classical elasticity trace complex \cite{arnold2008finite, chen2022finite}, while we are dealing with symmetric and traceless tensors.
 \begin{theorem}{\rm (Trace Complexes and Commutative Diagrams)}
 \label{trace complexes}
 If $\boldsymbol{\sigma}=\dev\Def\boldsymbol{u}$, It holds that
        \begin{align*}    \tr_1^{\cott}(\boldsymbol{\sigma})=&\tfrac{1}{2}\Def_F(\boldsymbol{u}\times\boldsymbol{n})-\tfrac{1}{2}\sym\curl_F(\boldsymbol{u}\Pi_F).
        \end{align*} 
     Moreover, the following diagrams 
\[ \begin{tikzcd}[column sep = large]
\boldsymbol{u} \arrow{r}{\dev\Def} \arrow{d}{} & \boldsymbol{\sigma} \arrow{d}{} \arrow{r}{\cott}&\boldsymbol{\tau}\arrow{d}{}\\%
\boldsymbol{u}\cdot\boldsymbol{n}\arrow{r}{-\Def_F\curl_F}& \tr_2^{\cott}(\boldsymbol{\sigma})\arrow{r}{-\div_F\div_F}&\boldsymbol{n}\cdot\boldsymbol{\tau}\cdot\boldsymbol{n}
\end{tikzcd}
\]
and
\[ \begin{tikzcd}[column sep = large]
\boldsymbol{u} \arrow{r}{\dev\Def} \arrow{d}{} & \boldsymbol{\sigma} \arrow{d}{} \arrow{r}{\cott}&\boldsymbol{\tau}\arrow{d}{}\\%
\boldsymbol{u}\times\boldsymbol{n}\arrow{r}{\tfrac{1}{2}\operatorname{hess}_F\div_F}& \tr_3^{\cott}(\boldsymbol{\sigma})\arrow{r}{\operatorname{rot}_F}&\boldsymbol{n}\times\boldsymbol{\tau}\cdot\boldsymbol{n}
\end{tikzcd}
\]
commute.{\label{trace_complex_commutative}}
 \end{theorem}
 \begin{proof}
     By the tensor identities in Lemma~\ref{trace complex identities part 1} and Lemma~\ref{trace complex identity part 2}.
 \end{proof}
We also provide a sufficient condition for a piecewise polynomial symmetric traceless tensor field to be ${H}(\cott)$-conforming. This is formalized in the following theorem:
\begin{theorem}
    Suppose $\boldsymbol{\sigma}$ is a function defined on a mesh $\mathcal{T}_h$ such that $\boldsymbol{\sigma}|_K\in P_k(K;\bST)$. Then, $\boldsymbol{\sigma}$ is in ${H}(\cott)$ if the following conditions hold:
\begin{itemize}
    \item $\boldsymbol\sigma|_e$ is single-valued for each edge $e\in$ $\mathcal{E}(\mathcal{T}_h);$
    \item $\left.\tr_1^{\cott}(\boldsymbol{\sigma})\right|_F, \left.\tr_2^{\cott}(\boldsymbol{\sigma})\right|_F, \left.\tr_3^{\cott}(\boldsymbol{\sigma})\right|_F$ are single-valued for each face $F \in \mathcal{F}(\mathcal{T}_h)$. To be specific, suppose two tetrahedrons $K$ and $K'$ share a face F, then $\tr_1^{\cott}(\boldsymbol{\sigma})|_{F,K}=-\tr_1^{\cott}(\boldsymbol{\sigma})|_{F,K'}$, $\tr_2^{\cott}(\boldsymbol{\sigma})|_{F,K}=\tr_2^{\cott}(\boldsymbol{\sigma})|_{F,K'}$ and
    $\tr_3^{\cott}(\boldsymbol{\sigma})|_{F,K}=-\tr_3^{\cott}(\boldsymbol{\sigma})|_{F,K'}$.
\end{itemize}
\label{sufficient conforming condition}
\end{theorem}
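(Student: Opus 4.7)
The plan is to apply the integration by parts identity of Lemma~\ref{integration by parts} cell by cell and verify that every contribution coming from the interior mesh skeleton cancels under the stated hypotheses, so that the distributional $\cinc\boldsymbol{\sigma}$ agrees with the piecewise cell-wise $\cinc(\boldsymbol{\sigma}|_K)$, which is manifestly in $L^2(\Omega;\bST)$. Concretely, for an arbitrary test field $\boldsymbol{\tau}\in C_c^\infty(\Omega;\bST)$ the lemma gives, on each tetrahedron,
\[
\int_K \boldsymbol{\sigma}:\cinc\boldsymbol{\tau}
=\int_K \cinc\boldsymbol{\sigma}:\boldsymbol{\tau}+B_K(\boldsymbol{\sigma},\boldsymbol{\tau})+E_K(\boldsymbol{\sigma},\boldsymbol{\tau}),
\]
where $B_K$ gathers the three face integrals pairing $\tr_i(\boldsymbol{\sigma})$ with the corresponding expressions in $\boldsymbol{\tau}$ and $E_K$ collects the edge contributions. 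Summing over $K\in\mathcal{T}_h$ and using that $\boldsymbol{\tau}$ vanishes near $\partial\Omega$, it suffices to prove $\sum_K B_K=0$ and $\sum_K E_K=0$.

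For the face part I would pair the two tetrahedra $K,K'$ sharing an interior face $F$, for which $\boldsymbol{n}_{K'}=-\boldsymbol{n}_K$, and perform a parity check in $\boldsymbol{n}$: the expressions $\Pi_F\inc\boldsymbol{\tau}\Pi_F$ and $\boldsymbol{n}\times\boldsymbol{\tau}\times\boldsymbol{n}$ are even in $\boldsymbol{n}$, while $2\operatorname{def}_F(\boldsymbol{n}\cdot\boldsymbol{\tau}\Pi_F)-\Pi_F\partial_n\boldsymbol{\tau}\Pi_F$ is odd. Inspection of the definitions of $\tr_1,\tr_2,\tr_3$ shows that $\tr_1$ and $\tr_3$ are intrinsically odd in $\boldsymbol{n}$ while $\tr_2$ is even, and these parities are precisely matched by the sign conventions in the hypothesis ($\tr_1$ flips sign across $F$, $\tr_2$ does not, $\tr_3$ flips). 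Multiplying test quantity by trace quantity on $F$ yields an even product in each of the three cases, so the two contributions from $K$ and $K'$ differ by a minus sign and cancel exactly; hence $\sum_K B_K=0$.

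For the edge part I would expand $E_K$ into its explicit form, namely integrals along the edges of $K$ of products of $\boldsymbol{\sigma}|_e$ with expressions built from the edge tangent $\boldsymbol{t}_e$, the face co-normals $\boldsymbol{n}_{F,e}$ of the two faces of $K$ meeting at $e$, and certain derivatives of $\boldsymbol{\tau}$. Reorganising $\sum_K E_K$ into a sum over interior edges $e\in\mathcal{E}(\mathcal{T}_h)$ and the tetrahedra in the ring around $e$, one can factor out the single-valued $\boldsymbol{\sigma}|_e$ granted by the hypothesis; the remaining edge integrand is a linear combination of $\boldsymbol{\tau}$-data evaluated on the faces of each ring-tetrahedron. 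Because $\boldsymbol{\tau}$ is smooth, these data agree across every shared face, so the inner sum around $e$ telescopes to zero.

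The step I expect to be the main obstacle is the explicit bookkeeping of the edge terms: their precise form is suppressed in the statement of Lemma~\ref{integration by parts}, and keeping track of the correct signs attached to the pair $(\boldsymbol{t}_{F,e},\boldsymbol{n}_{F,e})$ induced by the orientation of $\boldsymbol{n}$ on each face is subtle. Once the telescoping is shown to close using only the single-valuedness of $\boldsymbol{\sigma}|_e$ (rather than a stronger edge-jump condition), combining the face and edge cancellations yields $\int_\Omega \boldsymbol{\sigma}:\cinc\boldsymbol{\tau}=\sum_K\int_K \cinc\boldsymbol{\sigma}:\boldsymbol{\tau}$ for every test field $\boldsymbol{\tau}$, identifying the distributional $\cinc\boldsymbol{\sigma}$ with the piecewise $L^2$ field and proving $\boldsymbol{\sigma}\in H(\cinc,\Omega;\bST)$.
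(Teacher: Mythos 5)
Your high-level plan matches the paper exactly: apply Lemma~\ref{integration by parts} on each cell, sum, and verify that the skeleton contributions cancel. The face part is essentially right, but note a sign slip: each of the three face products is \emph{odd} in $\boldsymbol{n}$ (e.g.\ $\tr_1$ is odd and $\Pi_F\inc\boldsymbol{\tau}\Pi_F$ is even, so the pairing is odd), and it is precisely this oddness together with the sign conventions in the hypothesis ($\tr_1,\tr_3$ flip, $\tr_2$ does not) that makes the two contributions from $K$ and $K'$ cancel. Calling the product ``even'' inverts that reasoning, although the conclusion still stands.

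The genuine gap is in the edge terms. Your plan --- factor out the single-valued $\boldsymbol{\sigma}|_e$ and let the smooth $\boldsymbol{\tau}$-data telescope around the edge ring --- is not the structure that Lemma~\ref{integration by parts} actually produces. Only two of the five edge integrands, \eqref{int edge 3} and \eqref{int edge 5}, have a $\boldsymbol{\sigma}$-factor determined by $\boldsymbol{\sigma}|_e$ (the value and its tangential derivative along $e$); these do cancel \emph{pairwise across each interior face} using only edge continuity. The remaining three integrands involve $\nabla\times\boldsymbol{\sigma}$ and $\boldsymbol{n}_{F,e}\cdot\boldsymbol{\sigma}\cdot\boldsymbol{t}_{F,e}$, which contain components not determined by $\boldsymbol{\sigma}|_e$ (in particular a normal derivative $\partial_n\boldsymbol{\sigma}$ hides inside $\nabla\times\boldsymbol{\sigma}$). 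The paper handles these via the rewriting \eqref{eq:identity1conforming} and the two identities following it, which express the relevant $\boldsymbol{\sigma}$-factors in terms of $\boldsymbol{\sigma}|_e$ \emph{together with} the in-face derivatives of $\tr_1(\boldsymbol{\sigma})|_F$ and $\tr_2(\boldsymbol{\sigma})|_F$. So the edge cancellation genuinely uses the single-valuedness of the face traces as well; edge continuity of $\boldsymbol{\sigma}$ alone is not enough, and neither is smoothness of $\boldsymbol{\tau}$. Also, the cancellation mechanism is pairwise across each face $F\ni e$ (matching $K$ and $K'$ on either side), not a telescoping sum around the ring of tetrahedra.

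To repair the plan: write out the five edge integrands from Lemma~\ref{integration by parts} explicitly, treat \eqref{int edge 3} and \eqref{int edge 5} by direct edge continuity, and for the other three invoke the identities relating $\boldsymbol{n}_{F,e}\cdot\boldsymbol{\sigma}\cdot\boldsymbol{t}_{F,e}$, $\boldsymbol{n}\cdot(\nabla\times\boldsymbol{\sigma})\cdot\boldsymbol{n}_{F,e}$, and $\boldsymbol{t}_{F,e}\cdot(\nabla\times\boldsymbol{\sigma})\cdot\boldsymbol{t}_{F,e}$ to $\boldsymbol{\sigma}|_e$ and to $\tr_1|_F$, $\tr_2|_F$ before re-summing.
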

This theorem provides a concrete criterion for constructing conforming finite element spaces within the complex involving the linearized Cotton–York operator. The condition is motivated by the integration by parts formula for $\cott$,
\begin{align*}
           \int_K\cott \boldsymbol{\sigma}:\boldsymbol{\tau}-\int_K\cott \boldsymbol{\tau}:\boldsymbol{\sigma}\nonumber
        = &-\int_{\partial K}\tr_1^{\cott}(\boldsymbol{\sigma}):\Pi_F\inc\boldsymbol{\tau}\Pi_F-\int_{\partial K}\tr_2^{\cott}(\boldsymbol{\sigma}):\tr_2^{\inc}(\boldsymbol{\tau})\\
    &\quad+\int_{\partial K}\tr_3^{\cott}(\boldsymbol{\sigma}):\tr_1^{\inc}(\boldsymbol{\tau})+\text{ edge integrals}\nonumber.
\end{align*} While the face continuity conditions follow directly from the structure of the surface integrals, the technical challenge lies in handling the edge terms, which involve subtle coupling between face and edge geometry. Dealing with these terms requires careful analysis of tensor trace identities and compatibility conditions across shared edges.

\begin{remark}
In fact, we can derive various forms of Green's identity for the $\cott$ operator and establish different conforming conditions. The trace complexes in Theorem~\ref{trace complexes} serve as essential guides in selecting the appropriate traces that could fit into a finite element complex. The discretization of trace complexes in two dimensions aids in outlining continuity conditions for finite elements on sub-simplices, as elaborated in Section \ref{cinc section}.
Similarly, trace complexes are also derived in the de~Rham, Hessian, divdiv, and elasticity complexes, and they play a pivotal role in shaping the design of degrees of freedom for complex conforming elements.
\end{remark}
     
\subsection{Proofs} 
\begin{lemma}\label{trace_couple}
 For any sufficiently smooth symmetric and traceless tensor $\boldsymbol{\sigma}$, it holds that 
    \begin{align*}
\Tr(\tr_3^{\cott}(\boldsymbol{\sigma}))=\div_F\div_F\tr_1^{\cott}(\boldsymbol{\sigma}).
    \end{align*}
\end{lemma}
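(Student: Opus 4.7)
The plan is to use the representation $\tr_3(\boldsymbol{\sigma}) = \tr^{\inc}_2(\sym\curl\boldsymbol{\sigma})$ from \eqref{new_representation_3}, which reduces the left-hand side to a trace of the $\inc$-trace applied to $\sym\curl\boldsymbol{\sigma}$. The first step is to derive a formula for $\tr(\tr^{\inc}_2(\boldsymbol{U}))$ with a generic symmetric tensor $\boldsymbol{U}$. Taking the trace of the defining expression $\tr^{\inc}_2(\boldsymbol{U}) = 2\operatorname{def}_F(\boldsymbol{n}\cdot\boldsymbol{U}\Pi_F) - \Pi_F\partial_n\boldsymbol{U}\Pi_F$ and combining $\tr(\operatorname{def}_F\boldsymbol{v}) = \div_F\boldsymbol{v}$ for tangential $\boldsymbol{v}$ with $\tr(\Pi_F\boldsymbol{U}\Pi_F) = \tr(\boldsymbol{U}) - \boldsymbol{n}\cdot\boldsymbol{U}\cdot\boldsymbol{n}$ yields
\begin{align*}
\tr(\tr^{\inc}_2(\boldsymbol{U})) = 2\div_F(\boldsymbol{n}\cdot\boldsymbol{U}\Pi_F) + \boldsymbol{n}\cdot\partial_n\boldsymbol{U}\cdot\boldsymbol{n} - \partial_n\tr(\boldsymbol{U}).
\end{align*}
Specializing to $\boldsymbol{U} = \sym\curl\boldsymbol{\sigma}$, one has $\tr(\sym\curl\boldsymbol{\sigma}) = 0$ by \eqref{trace_indentity_1} applied to the symmetric $\boldsymbol{\sigma}$, so the last term vanishes, giving an explicit expression for $\tr(\tr_3(\boldsymbol{\sigma}))$ purely in terms of first-order surface and normal derivatives of $\sym\curl\boldsymbol{\sigma}$.

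Next, to identify this with $\div_F\div_F\tr_1(\boldsymbol{\sigma})$, I would reduce to a local Cartesian frame in which $\boldsymbol{n} = \boldsymbol{e}_3$; this loses no generality, since both sides are scalar invariants built tensorially from $\boldsymbol{\sigma}$ and $\boldsymbol{n}$. In such a frame, $\tr_1(\boldsymbol{\sigma}) = \sym(\Pi_F\boldsymbol{\sigma}\times\boldsymbol{n})$ has the explicit tangent-plane entries
\begin{align*}
\tr_1(\boldsymbol{\sigma}) = \begin{pmatrix}\sigma_{12} & \tfrac{1}{2}(\sigma_{22}-\sigma_{11}) & 0\\ \tfrac{1}{2}(\sigma_{22}-\sigma_{11}) & -\sigma_{12} & 0\\ 0 & 0 & 0\end{pmatrix},
\end{align*}
so that $\div_F\div_F\tr_1(\boldsymbol{\sigma}) = (\partial_1^2 - \partial_2^2)\sigma_{12} + \partial_1\partial_2(\sigma_{22}-\sigma_{11})$. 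Writing $T_{ij} := (\sym\curl\boldsymbol{\sigma})_{ij}$ and substituting into the formula $\tr(\tr_3(\boldsymbol{\sigma})) = 2\partial_1 T_{31} + 2\partial_2 T_{32} + \partial_3 T_{33}$ derived above, one expands each $T_{ij}$ in terms of first derivatives of $\boldsymbol{\sigma}$; all contributions involving $\sigma_{33}, \sigma_{13}, \sigma_{23}$ cancel pairwise by commuting partial derivatives, leaving exactly the same scalar.

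The main obstacle is the bookkeeping of several second-order derivatives arising from $\partial\sym\curl\boldsymbol{\sigma}$; however, the cancellations are purely mechanical and rely only on the symmetry of $\boldsymbol{\sigma}$. This yields the desired identity $\tr(\tr_3(\boldsymbol{\sigma})) = \div_F\div_F\tr_1(\boldsymbol{\sigma})$.
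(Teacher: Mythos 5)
Your proposal is correct, and its opening step is exactly the paper's: taking the trace of $\tr_3(\boldsymbol{\sigma}) = \tr_2^{\inc}(\sym\curl\boldsymbol{\sigma})$, using $\tr(\Def_F\boldsymbol{v})=\div_F\boldsymbol{v}$, $\tr(\Pi_F\boldsymbol{U}\Pi_F)=\tr\boldsymbol{U}-\boldsymbol{n}\cdot\boldsymbol{U}\cdot\boldsymbol{n}$, and $\tr(\sym\curl\boldsymbol{\sigma})=0$ to obtain $\tr(\tr_3(\boldsymbol{\sigma}))=2\nabla_F\cdot(\sym\curl\boldsymbol{\sigma})\cdot\boldsymbol{n}+\boldsymbol{n}\cdot\partial_{\boldsymbol{n}}(\sym\curl\boldsymbol{\sigma})\cdot\boldsymbol{n}$. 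From there the routes differ. The paper stays coordinate-free: expanding $\sym\curl\boldsymbol{\sigma}=\tfrac12(\nabla\times\boldsymbol{\sigma}-\boldsymbol{\sigma}\times\nabla)$, the $\nabla\times\boldsymbol{\sigma}$ contributions to the two terms add up to $\nabla\cdot(\nabla\times\boldsymbol{\sigma})\cdot\boldsymbol{n}=0$, leaving $\nabla_F\cdot\boldsymbol{\sigma}\cdot\nabla_F^{\perp}=\nabla_F\cdot(\Pi_F\boldsymbol{\sigma}\times\boldsymbol{n})\cdot\nabla_F=\div_F\div_F\tr_1(\boldsymbol{\sigma})$, all in one stroke. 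You instead fix a frame with $\boldsymbol{n}=\boldsymbol{e}_3$ and expand in components; I checked that $2\partial_1T_{31}+2\partial_2T_{32}+\partial_3T_{33}$ with $T=\sym\curl\boldsymbol{\sigma}$ does reduce, after the $\sigma_{13},\sigma_{23},\sigma_{33}$ pairs cancel by equality of mixed partials and the symmetry $\sigma_{ij}=\sigma_{ji}$, to $(\partial_1^2-\partial_2^2)\sigma_{12}+\partial_1\partial_2(\sigma_{22}-\sigma_{11})=\div_F\div_F\tr_1(\boldsymbol{\sigma})$. Both arguments are valid and use only symmetry of $\boldsymbol{\sigma}$ (tracelessness is not needed here); the paper's version is shorter because $\nabla\cdot\nabla\times=0$ performs in one step the cancellations your expansion records term by term, while your coordinate approach is more elementary and self-contained. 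One caveat: you only sketched the final bookkeeping, so a written-up version should actually display the expansion of $T_{31},T_{32},T_{33}$ and the cancellations rather than assert them.
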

\begin{proof}
    Notice that \begin{align*}
        \Tr({\tr_3}(\boldsymbol{\sigma}))&=2\nabla_F\cdot({\sym\curl\boldsymbol{\sigma}})\cdot\boldsymbol{n}+\boldsymbol{n}\cdot\partial_{\boldsymbol{n}}(\sym\curl\boldsymbol{\sigma})\cdot\boldsymbol{n}\\    &=\nabla_F\cdot\left(\nabla\times\boldsymbol{\sigma}-\boldsymbol{\sigma}\times\nabla\right)\cdot\boldsymbol{n}+\boldsymbol{n}\cdot\partial_{\boldsymbol{n}}\left(\nabla\times\boldsymbol{\sigma}\right)\cdot\boldsymbol{n}.
    \end{align*}
    Since $\nabla_F\cdot\left(\nabla\times\boldsymbol{\sigma}\right)\cdot\boldsymbol{n}+\boldsymbol{n}\cdot\partial_{\boldsymbol{n}}\left(\nabla\times\boldsymbol{\sigma}\right)\cdot\boldsymbol{n}=\nabla\cdot\left(\nabla\times\boldsymbol{\sigma}\right)\cdot\boldsymbol{n}=0
    $, we have
    \begin{align*}       \Tr(\tr_3^{\cott}(\boldsymbol{\sigma}))=\nabla_F\cdot\boldsymbol{\sigma}\cdot\nabla_F^{\perp}=\nabla_F\cdot\left(\Pi_F\boldsymbol{\sigma}\times\boldsymbol{n}\right)\cdot\nabla_F=\div_F\div_F\tr_1^{\cott}(\boldsymbol{\sigma}).
    \end{align*}
    This proves the lemma.
\end{proof}
The following lemma provides useful forms for $\tr_2^{\cott}(\boldsymbol{\sigma})$ and $\tr_3^{\cott}(\boldsymbol{\sigma})$.
\begin{lemma}
\label{new_representation_traces_corollary}
For any sufficiently smooth, symmetric and traceless tensor $\boldsymbol{\sigma}$, it holds that
\begin{align}
{\tr_2^{\cott}}(\boldsymbol{\sigma})  &=\boldsymbol{n} \times(\sym\curl\boldsymbol{\sigma})\times\boldsymbol{n}-\sym\curl_F\left(\Pi_F \boldsymbol{\sigma} \cdot \boldsymbol{n}\right),\label{new_form_trace_2} \\
    {\tr_3^{\cott}}(\boldsymbol{\sigma}) & =\boldsymbol{n} \times\left(\nabla \times\left(\sym\curl \boldsymbol{\sigma}\right)\right )\Pi_F+\left(\Pi_F \left(\sym\curl\boldsymbol{\sigma} \right)\cdot \boldsymbol{n}\right) \nabla_F^T. \label{new_form_trace_3}
\end{align}
\end{lemma}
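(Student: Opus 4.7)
The plan is to derive both identities by systematic application of the pointwise identity \eqref{cross_product_identity_1}, namely $\boldsymbol{n}\times(\nabla\times\boldsymbol{v})=\nabla_F(\boldsymbol{n}\cdot\boldsymbol{v})-\partial_{\boldsymbol{n}}(\Pi_F\boldsymbol{v})$, to each column of the matrix-valued input, combined with the equivalent reformulations \eqref{new_representation_2} and \eqref{new_representation_3} of $\tr_2$ and $\tr_3$ in terms of $\tr_2^{\inc}$.

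For \eqref{new_form_trace_2}, I would begin with $\tr_2(\boldsymbol{\sigma})=\sym(\tr_2^{\inc}(\boldsymbol{\sigma})\times\boldsymbol{n})$ from \eqref{new_representation_2} and expand $\tr_2^{\inc}(\boldsymbol{\sigma})=2\operatorname{def}_F(\boldsymbol{n}\cdot\boldsymbol{\sigma}\Pi_F)-\Pi_F\partial_{\boldsymbol{n}}\boldsymbol{\sigma}\Pi_F$. Column-wise application of \eqref{cross_product_identity_1} to $\boldsymbol{\sigma}$ yields the tensorial identity
\[
\Pi_F\partial_{\boldsymbol{n}}\boldsymbol{\sigma}\Pi_F=\nabla_F(\boldsymbol{n}\cdot\boldsymbol{\sigma})\Pi_F-\boldsymbol{n}\times(\nabla\times\boldsymbol{\sigma})\Pi_F.
\]
Inserting this and taking $\sym(\,\cdot\,\times\boldsymbol{n})$, the $\boldsymbol{n}\times(\nabla\times\boldsymbol{\sigma})\Pi_F$ piece with trailing $\times\boldsymbol{n}$ absorbs the $\Pi_F$ (using $\boldsymbol{A}\Pi_F\times\boldsymbol{n}=\boldsymbol{A}\times\boldsymbol{n}$), and the symmetrization identity $\sym(\boldsymbol{n}\times\boldsymbol{A}\times\boldsymbol{n})=\boldsymbol{n}\times\sym\boldsymbol{A}\times\boldsymbol{n}$, together with $\sym(\nabla\times\boldsymbol{\sigma})=\sym\curl\boldsymbol{\sigma}$ for symmetric $\boldsymbol{\sigma}$, produces the first term $\boldsymbol{n}\times(\sym\curl\boldsymbol{\sigma})\times\boldsymbol{n}$. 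The remaining tangential-gradient pieces of the form $[2\operatorname{def}_F(\boldsymbol{n}\cdot\boldsymbol{\sigma}\Pi_F)-\nabla_F(\boldsymbol{n}\cdot\boldsymbol{\sigma})\Pi_F]\times\boldsymbol{n}$, once symmetrized, rewrite through $\nabla_F^{\perp}=\boldsymbol{n}\times\nabla_F$ and the defining identity $\curl_F\boldsymbol{a}=\boldsymbol{a}{\nabla_F^{\perp}}^T$ into $-\sym\curl_F(\Pi_F\boldsymbol{\sigma}\cdot\boldsymbol{n})$, giving the second term.

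For \eqref{new_form_trace_3}, I would invoke \eqref{new_representation_3}: $\tr_3(\boldsymbol{\sigma})=\tr_2^{\inc}(\sym\curl\boldsymbol{\sigma})$. Setting $\boldsymbol{U}:=\sym\curl\boldsymbol{\sigma}$ (which is symmetric), expand $\tr_3(\boldsymbol{\sigma})=2\operatorname{def}_F(\boldsymbol{n}\cdot\boldsymbol{U}\Pi_F)-\Pi_F\partial_{\boldsymbol{n}}\boldsymbol{U}\Pi_F$, and apply the same column-wise version of \eqref{cross_product_identity_1} to $\boldsymbol{U}$:
\[
\Pi_F\partial_{\boldsymbol{n}}\boldsymbol{U}\Pi_F=\nabla_F(\boldsymbol{n}\cdot\boldsymbol{U})\Pi_F-\boldsymbol{n}\times(\nabla\times\boldsymbol{U})\Pi_F.
\]
The $\boldsymbol{n}\times(\nabla\times\boldsymbol{U})\Pi_F$ term directly supplies the first term on the right of \eqref{new_form_trace_3}. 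Using the symmetry of $\boldsymbol{U}$ to identify $\boldsymbol{n}\cdot\boldsymbol{U}\Pi_F$ with the transpose of the tangential vector $\Pi_F\boldsymbol{U}\cdot\boldsymbol{n}$, and the product-rule identity $2\operatorname{def}_F\boldsymbol{w}=\boldsymbol{w}\nabla_F^T+\nabla_F\boldsymbol{w}^T$, the remaining combination $2\operatorname{def}_F(\boldsymbol{n}\cdot\boldsymbol{U}\Pi_F)-\nabla_F(\boldsymbol{n}\cdot\boldsymbol{U})\Pi_F$ cancels one of the two halves of $\operatorname{def}_F$ and collapses to the row-wise surface gradient $(\Pi_F\boldsymbol{U}\cdot\boldsymbol{n})\nabla_F^T$, the second term.

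The main obstacle will be the careful bookkeeping of the row-wise versus column-wise matrix conventions, and in particular tracking how $\Pi_F$, $\times\boldsymbol{n}$, and $\partial_{\boldsymbol{n}}$ interact with $\operatorname{def}_F$, $\curl_F$, and symmetrization; a single missed transpose invalidates the telescoping. Two convenient internal checks are available: the right-hand sides of \eqref{new_form_trace_2}--\eqref{new_form_trace_3} must be symmetric (manifestly so for \eqref{new_form_trace_2}, and as a sum for \eqref{new_form_trace_3}), and the trace of the right-hand side of \eqref{new_form_trace_3} must recover $\div_F\div_F\tr_1(\boldsymbol{\sigma})$ in agreement with Lemma \ref{trace_couple}.
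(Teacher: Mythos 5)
Your proposal is correct and follows essentially the same route as the paper: both reduce the claim to the identity $\tr_2^{\inc}(\boldsymbol{\sigma})=\boldsymbol{n}\times(\nabla\times\boldsymbol{\sigma})\Pi_F+(\Pi_F\boldsymbol{\sigma}\cdot\boldsymbol{n})\nabla_F^T$ obtained from \eqref{cross_product_identity_1}, and then apply it together with \eqref{new_representation_2} and \eqref{new_representation_3} (to $\boldsymbol{\sigma}$ with the extra $\sym(\,\cdot\times\boldsymbol{n})$, and to $\sym\curl\boldsymbol{\sigma}$ directly). The only difference is that you derive this intermediate identity by the column-wise computation, whereas the paper cites it as \eqref{eq:elasticity_identity} from the elasticity-complex literature.
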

\begin{proof}
    By definition of $\tr_2^{\inc}$ in \eqref{inc_trace2}, we have
    \begin{align}
        \tr_2^{\inc}(\boldsymbol{\sigma})&=\nabla_F(\boldsymbol{n}\cdot\boldsymbol{\sigma}\Pi_F) +\left(\Pi_F\boldsymbol{\sigma}\cdot\boldsymbol{n}\right)\nabla_F^T -\Pi_F\partial_{\boldsymbol{n}}\boldsymbol{\sigma}\Pi_F\nonumber\\&=\boldsymbol{n}\times\left(\nabla\times\boldsymbol{\sigma}\right)\Pi_F+\left(\Pi_F\boldsymbol{\sigma}\cdot\boldsymbol{n}\right)\nabla_F^T,\label{eq:elasticity_identity}
    \end{align}
    which follows from \eqref{cross_product_identity_1}.
    This identity is derived in the discretizations of the elasticity complex in \cite[Lemma 4.1]{chen2022finite}. Using \eqref{eq:elasticity_identity}, \eqref{new_form_trace_2} and \eqref{new_form_trace_3} are simple consequences of \eqref{new_representation_2} and \eqref{new_representation_3}.
\end{proof}
Next, we link the traces of ${H}(\cott)$ space with well-understood traces of ${H}^1$ and ${H}(\div)$. 
 \begin{lemma}
 \label{trace complex identities part 1}
     For any sufficiently smooth $\mathbb R^3$-valued function $\boldsymbol u$, set $\boldsymbol{\sigma}=\dev\Def\boldsymbol{u}$, it holds that
     \begin{align}\sym\curl\boldsymbol{\sigma}=&\tfrac{1}{2}\Def(\curl\boldsymbol{u})\label{trace identity symcurl},\\
         \tr_1^{\cott}(\boldsymbol{\sigma})=&\tfrac{1}{2}\Def_F(\boldsymbol{u}\times\boldsymbol{n})-\tfrac{1}{2}\sym\curl_F(\boldsymbol{u}\Pi_F),\label{trace_complex_trace_1}\\
\tr_2^{\cott}(\boldsymbol{\sigma})=&-\Def_F\curl_F(\boldsymbol{u}\cdot\boldsymbol{n}),\label{trace_complex_trace_2}\\
\tr_3^{\cott}(\boldsymbol{\sigma})=&\tfrac{1}{2}\operatorname{hess}_F\left(\div_F\left(\boldsymbol{u}\times\boldsymbol{n}\right)\right).\label{trace_complex_trace_3}
     \end{align} 
 \end{lemma}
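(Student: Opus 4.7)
The plan is to first reduce $\boldsymbol{\sigma}=\dev\Def\boldsymbol{u}$ to $\Def\boldsymbol{u}$ by showing that the scalar-trace contribution $-\tfrac{1}{3}(\div\boldsymbol{u})I$ makes no net contribution to any of the four identities. For any scalar $f$, the row-wise $\curl(fI)$ has $i$-th row $\nabla f\times e_i$, so $\curl(fI)=-\mskw(\nabla f)$ is skew-symmetric; this kills $\sym\curl$ in \eqref{trace identity symcurl} and, via representation \eqref{new_representation_3}, also in \eqref{trace_complex_trace_3}. For \eqref{trace_complex_trace_1}, $\Pi_F(fI)\times\boldsymbol{n}=f\mskw(\boldsymbol{n})$ is skew and annihilated by $\sym$. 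For \eqref{trace_complex_trace_2}, $\boldsymbol{n}\cdot(fI)\Pi_F=0$ and $\Pi_F\partial_n(fI)\Pi_F=(\partial_n f)\Pi_F$, so $\operatorname{tr}_2^{\inc}(fI)=-(\partial_n f)\Pi_F$, whose symmetrized cross product with $\boldsymbol{n}$ is $-(\partial_n f)\sym\mskw(\boldsymbol{n})=0$. So one may assume $\boldsymbol{\sigma}=\Def\boldsymbol{u}$.

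Next I would prove \eqref{trace identity symcurl} by decomposing $\Def\boldsymbol{u}=\grad\boldsymbol{u}-\mskw(\tfrac{1}{2}\curl\boldsymbol{u})$ using $\vskw(\grad\boldsymbol{u})=\tfrac{1}{2}\curl\boldsymbol{u}$. Since $\curl\grad\boldsymbol{u}$ vanishes row-wise, only the skew summand survives; a direct computation gives $\curl\mskw(\boldsymbol{a})=(\div\boldsymbol{a})I-(\grad\boldsymbol{a})^T$, so substituting $\boldsymbol{a}=\tfrac{1}{2}\curl\boldsymbol{u}$ together with $\div\curl\boldsymbol{u}=0$ yields $\curl\Def\boldsymbol{u}=\tfrac{1}{2}(\grad\curl\boldsymbol{u})^T$, and applying $\sym$ gives $\tfrac{1}{2}\Def(\curl\boldsymbol{u})$. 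For \eqref{trace_complex_trace_1}, split $\Pi_F\Def\boldsymbol{u}\times\boldsymbol{n}=\tfrac{1}{2}\Pi_F(\grad\boldsymbol{u}+\grad\boldsymbol{u}^T)\times\boldsymbol{n}$. The first piece: row $i$ of $\grad\boldsymbol{u}\cdot\mskw(\boldsymbol{n})$ equals $(\nabla u_i)^T\mskw(\boldsymbol{n})=-(\nabla_F^\perp u_i)^T$, hence $\grad\boldsymbol{u}\times\boldsymbol{n}=-\curl_F\boldsymbol{u}$, and the left $\Pi_F$-projection yields $-\curl_F(\boldsymbol{u}\Pi_F)$ since $\Pi_F\curl_F\boldsymbol{u}=\curl_F(\Pi_F\boldsymbol{u})$ for constant $\boldsymbol{n}$. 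The second piece: tangential rows of $\Pi_F(\grad\boldsymbol{u})^T\times\boldsymbol{n}$ are $(\partial_i\boldsymbol{u})^T\times\boldsymbol{n}=(\partial_i(\boldsymbol{u}\times\boldsymbol{n}))^T$ for constant $\boldsymbol{n}$, identifying the piece as $\nabla_F(\boldsymbol{u}\times\boldsymbol{n})^T$. Symmetrizing produces exactly \eqref{trace_complex_trace_1}.

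For \eqref{trace_complex_trace_2} and \eqref{trace_complex_trace_3}, the central ingredient is the elasticity trace identity $\operatorname{tr}_2^{\inc}(\Def\boldsymbol{v})=\operatorname{hess}_F(\boldsymbol{v}\cdot\boldsymbol{n})$, which I would derive from the representation \eqref{eq:elasticity_identity} already proved in the paper. For $\boldsymbol{\sigma}=\Def\boldsymbol{v}$, an index calculation (the double-curl term $\epsilon_{ikl}\partial_k\partial_l v_j$ vanishes by antisymmetry) gives $\nabla\times\Def\boldsymbol{v}=\tfrac{1}{2}\grad\curl\boldsymbol{v}$, so the first term of \eqref{eq:elasticity_identity} becomes $\tfrac{1}{2}\grad_F(\boldsymbol{n}\times\curl\boldsymbol{v})$; the second term equals $\tfrac{1}{2}\grad_F(\Pi_F\partial_n\boldsymbol{v}+\nabla_F(\boldsymbol{v}\cdot\boldsymbol{n}))$ after using $\Pi_F\Def\boldsymbol{v}\cdot\boldsymbol{n}=\tfrac{1}{2}(\Pi_F\partial_n\boldsymbol{v}+\nabla_F(\boldsymbol{v}\cdot\boldsymbol{n}))$; summing and invoking \eqref{cross_product_identity_1} in the form $\boldsymbol{n}\times\curl\boldsymbol{v}+\Pi_F\partial_n\boldsymbol{v}=\nabla_F(\boldsymbol{v}\cdot\boldsymbol{n})$ collapses everything to $\operatorname{hess}_F(\boldsymbol{v}\cdot\boldsymbol{n})$. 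With $\boldsymbol{v}=\boldsymbol{u}$, representation \eqref{new_representation_2} gives $\tr_2(\boldsymbol{\sigma})=\sym(\operatorname{hess}_F(\boldsymbol{u}\cdot\boldsymbol{n})\mskw(\boldsymbol{n}))=-\Def_F\curl_F(\boldsymbol{u}\cdot\boldsymbol{n})$, the last equality being the elementary surface identity $\sym(\operatorname{hess}_F f\cdot\mskw(\boldsymbol{n}))=-\Def_F\curl_F f$ from $\curl_F f=\boldsymbol{n}\times\nabla_F f$. For \eqref{trace_complex_trace_3}, combining \eqref{new_representation_3}, the already-established \eqref{trace identity symcurl}, and the elasticity identity with $\boldsymbol{v}=\tfrac{1}{2}\curl\boldsymbol{u}$ yields $\tr_3(\boldsymbol{\sigma})=\tfrac{1}{2}\operatorname{hess}_F(\boldsymbol{n}\cdot\curl\boldsymbol{u})$, and the final form follows from $\boldsymbol{n}\cdot\curl\boldsymbol{u}=\div_F(\boldsymbol{u}\times\boldsymbol{n})$ (since $\boldsymbol{u}\times\boldsymbol{n}$ is tangential and $\div(\boldsymbol{u}\times\boldsymbol{n})=\boldsymbol{n}\cdot\curl\boldsymbol{u}$ for constant $\boldsymbol{n}$). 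The main obstacle is the bookkeeping in the derivation of $\operatorname{tr}_2^{\inc}(\Def\boldsymbol{v})=\operatorname{hess}_F(\boldsymbol{v}\cdot\boldsymbol{n})$: the cancellation of normal-derivative cross terms is computationally delicate but not conceptually novel, and once it is in place, all four identities follow essentially by substitution together with \eqref{cross_product_identity_1} and the surface identity relating $\operatorname{hess}_F$ and $\Def_F\curl_F$.
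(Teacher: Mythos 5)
Your proof is correct, and its core computations essentially coincide with the paper's. For \eqref{trace identity symcurl} and \eqref{trace_complex_trace_1} the routes are the same. For \eqref{trace_complex_trace_2} and \eqref{trace_complex_trace_3} you organize the algebra slightly differently: you isolate the elasticity trace identity $\tr_2^{\inc}(\Def\boldsymbol{v})=\operatorname{hess}_F(\boldsymbol{v}\cdot\boldsymbol{n})$ as a single lemma, then obtain both identities as corollaries via \eqref{new_representation_2}, \eqref{new_representation_3} — once with $\boldsymbol{v}=\boldsymbol{u}$ and the tidy surface fact $\sym(\operatorname{hess}_F f\times\boldsymbol{n})=-\Def_F\curl_F f$, once with $\boldsymbol{v}=\tfrac{1}{2}\curl\boldsymbol{u}$. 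The paper instead manipulates the two "new forms" of Lemma \ref{new_representation_traces_corollary} directly; its chain for $\tr_3$ is in fact precisely the inline computation of your elasticity identity, while its chain for $\tr_2$ uses a different trick ($\boldsymbol{n}\times\Def(\curl\boldsymbol{u})\times\boldsymbol{n}=\sym\curl_F(\curl\boldsymbol{u}\times\boldsymbol{n})$). Your factorization is a genuine modest unification: one computation serves both traces and makes the reliance on the elasticity trace complex explicit. One detail worth keeping visible: $\tr_2^{\inc}$ itself is \emph{not} invariant under adding a multiple of the identity (indeed $\tr_2^{\inc}(f\boldsymbol{I})=-(\partial_nf)\Pi_F\neq\boldsymbol{0}$), so the $\dev$-reduction for $\tr_2$ legitimately relies on the additional $\sym(\,\cdot\times\boldsymbol{n})$ killing $\Pi_F$; you handle this correctly, but it is the one point in the reduction that is not a straight "the $\dev$ part maps to zero."
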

 \begin{proof}
 Let $\mathbf{I}$ be the identity matrix. It follows from $\sym\curl (a\mathbf{I})=0$ for any scalar $a\in \mathbb R$ that \begin{align*}
     \sym\curl\boldsymbol{\sigma}&=\sym\curl\Def\boldsymbol{u}\\
     &=\tfrac{1}{2}\sym\left(-\left(\boldsymbol{u}\nabla+\nabla\boldsymbol{u}\right)\times\nabla\right)\\
     &=\tfrac{1}{2}\Def(\curl\boldsymbol{u}).
 \end{align*}
 Similarly, we have$$\sym(\Pi_F\mathbf{I}\times\boldsymbol{n})=-\boldsymbol{n}\times\sym(\boldsymbol{n}\times \mathbf{I})\times\boldsymbol{n}=0.$$ Thus we obtain
     \begin{align*}       \tr_1^{\cott}(\boldsymbol{\sigma})&=\sym\left(\Pi_F\left(\dev\Def\boldsymbol{u}\right)\times \boldsymbol{n}\right)\\
        &= \sym\left(\Pi_F\left(\Def\boldsymbol{u}\right)\times \boldsymbol{n}\right)\\
        &=
        \tfrac{1}{2}\sym(\nabla_F(\boldsymbol{u}\times\boldsymbol{n})-\nabla_F^{\perp}(\boldsymbol{u}\Pi_F))\\
        &=\tfrac{1}{2}\Def_F(\boldsymbol{u}\times\boldsymbol{n})-\tfrac{1}{2}\sym\curl_F(\boldsymbol{u}\Pi_F).
     \end{align*}
      Following Lemma~\ref{new_representation_traces_corollary} and \eqref{cross_product_identity_1}, we have
      \begin{align*}
        \tr_2^{\cott}(\boldsymbol{\sigma})&=\boldsymbol{n} \times\left(\sym\curl\left(\dev\Def\boldsymbol{u}\right)\right)\times\boldsymbol{n}-\sym\curl_F\left(\Pi_F 
    \left(\dev\Def\boldsymbol{u}\right)\cdot \boldsymbol{n}\right)\\
        &=\tfrac{1}{2}\boldsymbol{n} \times\left(\Def\left(\curl\boldsymbol{u}\right)\right)\times\boldsymbol{n}-\tfrac{1}{2}\sym\curl_F\left(\nabla_F\left(\boldsymbol{u}\cdot\boldsymbol{n}\right)+\partial_{\boldsymbol{n}}\left(\Pi_F\boldsymbol{u}\right)\right)\\
        &=\tfrac{1}{2}\sym\curl_F\left(\curl\boldsymbol{u}\times\boldsymbol{n}-\nabla_F\left(\boldsymbol{u}\cdot\boldsymbol{n}\right)-\partial_{\boldsymbol{n}}\left(\Pi_F\boldsymbol{u}\right)\right)\\
        &=-\Def_F\curl_F(\boldsymbol{u}\cdot\boldsymbol{n}),
      \end{align*}
      \begin{align*}
          \tr_3^{\cott}(\boldsymbol{\sigma})&= \boldsymbol{n} \times\left(\nabla \times\left(\tfrac{1}{2}\Def\left(\curl\boldsymbol{u}\right)\right)\right )\Pi_F+\left(\Pi_F \left(\tfrac{1}{2}\Def\left(\curl\boldsymbol{u}\right)\right)\cdot \boldsymbol{n}\right) \nabla_F^T \\
          &=\tfrac{1}{4}\boldsymbol{n}\times\left(\nabla\times\left(\curl\boldsymbol{u}\right)\nabla_F^T\right) +\tfrac{1}{4}\left(\nabla_F^2\left(\boldsymbol{n}\cdot\curl\boldsymbol{u}\right)+\partial_{\boldsymbol{n}}\left(\Pi_F\curl\boldsymbol{u}\right)\nabla_F^T\right)\\
          &=\tfrac{1}{4}\left(\nabla_F^2\left(\boldsymbol{n}\cdot\curl\boldsymbol{u}\right)-\partial_{\boldsymbol{n}}\left(\Pi_F\curl\boldsymbol{u}\right)\nabla_F^T\right)\\
          &\quad+\tfrac{1}{4}\left(\nabla_F^2\left(\boldsymbol{n}\cdot\curl\boldsymbol{u}\right)+\partial_{\boldsymbol{n}}\left(\Pi_F\curl\boldsymbol{u}\right)\nabla_F^T\right)\\
          &=\tfrac{1}{2}\nabla_F^2\left(\boldsymbol{n}\cdot\curl\boldsymbol{u}\right)\\
          &=\tfrac{1}{2}\operatorname{hess}_F\left(\operatorname{div}_F\left(\boldsymbol{u}\times\boldsymbol{n}\right)\right).
      \end{align*}
      Hence we complete the proof.
 \end{proof}
\begin{lemma}
For any sufficiently smooth $\bST$-valued function $\boldsymbol{\sigma}$, set $\boldsymbol{\tau}=\cott\boldsymbol{\sigma}$, it holds that
\label{trace complex identity part 2}
\begin{align}
    \boldsymbol{n}\cdot\boldsymbol{\tau} \cdot\boldsymbol{n}&=-\div_F\div_F\tr_2^{\cott}\left(\boldsymbol{\sigma}\right),\label{divdiv trace 2}\\       \boldsymbol{n}\times\boldsymbol{\tau} \cdot\boldsymbol{n}&=\operatorname{rot}_F\tr_3^{\cott}\left(\boldsymbol{\sigma}\right).\label{rot trace 3}
\end{align}
\end{lemma}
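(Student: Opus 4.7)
The plan is to reduce both identities to direct surface-differential-operator computations after exploiting the factorization $\cinc=\inc\circ\sym\curl$ on symmetric traceless tensors. Since $\boldsymbol{\sigma}$ is symmetric and traceless, \eqref{trace_indentity_1} gives $\tr(\sym\curl\boldsymbol{\sigma})=0$, so $\boldsymbol{\mu}:=\sym\curl\boldsymbol{\sigma}$ is again symmetric and traceless and $\boldsymbol{\tau}=\cinc\boldsymbol{\sigma}=\inc\boldsymbol{\mu}$. For symmetric $\boldsymbol{\mu}$ the Saint-Venant form $(\inc\boldsymbol{\mu})_{ij}=\epsilon_{ikl}\epsilon_{jmn}\partial_{k}\partial_{m}\mu_{ln}$ will be the workhorse throughout.

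For \eqref{divdiv trace 2}, I first apply Lemma \ref{new_representation_traces_corollary} to write $\tr_{2}(\boldsymbol{\sigma})=\boldsymbol{n}\times\boldsymbol{\mu}\times\boldsymbol{n}-\sym\curl_{F}(\Pi_{F}\boldsymbol{\sigma}\cdot\boldsymbol{n})$. The second term is annihilated by $\div_{F}\div_{F}$: because $\nabla_{F}\cdot\nabla_{F}^{\perp}=0$ as scalar surface operators and $(\curl_{F}\boldsymbol{a})_{ij}=a_{i}(\nabla_{F}^{\perp})_{j}$, using the symmetry $\div_{F}\div_{F}\boldsymbol{A}=\div_{F}\div_{F}\boldsymbol{A}^{T}$ and commuting through the $(\nabla_{F}^{\perp})_{j}$ factor kills it. It then remains to verify $\boldsymbol{n}\cdot\inc\boldsymbol{\mu}\cdot\boldsymbol{n}=-\div_{F}\div_{F}(\boldsymbol{n}\times\boldsymbol{\mu}\times\boldsymbol{n})$. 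Contracting $n_{i}n_{j}$ into the Saint-Venant formula and using the pointwise identity $n_{i}\epsilon_{ikl}\partial_{k}=(\nabla_{F}^{\perp})_{l}$, valid on the planar face $F$ where $\boldsymbol{n}$ is constant, gives $\boldsymbol{n}\cdot\inc\boldsymbol{\mu}\cdot\boldsymbol{n}=(\nabla_{F}^{\perp})_{l}(\nabla_{F}^{\perp})_{n}\mu_{ln}$. Writing $\boldsymbol{n}\times\boldsymbol{\mu}\times\boldsymbol{n}=\mskw(\boldsymbol{n})\boldsymbol{\mu}\mskw(\boldsymbol{n})$ and expanding in a $2$D orthonormal frame of $F$ reduces $\div_{F}\div_{F}$ of this quantity to the negative of the same scalar expression, closing the first identity.

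For \eqref{rot trace 3}, Lemma \ref{new_representation_traces_corollary} already gives $\tr_{3}(\boldsymbol{\sigma})=\tr^{\inc}_{2}(\boldsymbol{\mu})$ with $\tr^{\inc}_{2}$ as in \eqref{inc_trace2}, so the claim reduces to the elasticity-complex identity $\boldsymbol{n}\times\inc\boldsymbol{\mu}\cdot\boldsymbol{n}=\operatorname{rot}_{F}\tr^{\inc}_{2}(\boldsymbol{\mu})$ for symmetric $\boldsymbol{\mu}$. I would contract the Saint-Venant formula with $\epsilon_{ikl}n_{k}$ on the left and $n_{j}$ on the right, then apply the standard algebraic identity $\epsilon_{ikl}\epsilon_{lab}=\delta_{ia}\delta_{kb}-\delta_{ib}\delta_{ka}$ to split the result into $\partial_{i}[\nabla_{F}^{\perp}\cdot(\boldsymbol{n}\cdot\boldsymbol{\mu})]-n_{i}[\nabla_{F}^{\perp}\cdot(\div\boldsymbol{\mu})]$. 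A parallel expansion of $\operatorname{rot}_{F}\tr^{\inc}_{2}(\boldsymbol{\mu})$, using $\operatorname{rot}_{F}\boldsymbol{A}=\boldsymbol{A}\cdot\nabla_{F}^{\perp}$, the identity $\Pi_{F}\nabla_{F}^{\perp}=\nabla_{F}^{\perp}$, and again $\nabla_{F}\cdot\nabla_{F}^{\perp}=0$ to kill the $(\Pi_{F}\boldsymbol{\mu}\cdot\boldsymbol{n})\nabla_{F}^{T}$ contribution, recovers the same two pieces and closes the second identity.

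The hard part is careful sign-bookkeeping in the local frame, since $\boldsymbol{n}\times\boldsymbol{\mu}\times\boldsymbol{n}=\mskw(\boldsymbol{n})\boldsymbol{\mu}\mskw(\boldsymbol{n})$ and $\Pi_{F}\boldsymbol{\mu}\Pi_{F}=\mskw(\boldsymbol{n})^{2}\boldsymbol{\mu}\mskw(\boldsymbol{n})^{2}$ differ by a $90$-degree in-plane rotation, which flips $\div_{F}\div_{F}$ of the former into the negative of $\operatorname{rot}_{F}\operatorname{rot}_{F}$ of the latter. Matching this sign against the positively-signed expression for $\boldsymbol{n}\cdot\inc\boldsymbol{\mu}\cdot\boldsymbol{n}$ in terms of $\operatorname{rot}_{F}\operatorname{rot}_{F}(\Pi_{F}\boldsymbol{\mu}\Pi_{F})$ produces the minus sign in \eqref{divdiv trace 2}, and an analogous check is needed in \eqref{rot trace 3}. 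Beyond this sign tracking, the remainder is routine tensor calculus once the reduction to $\inc\boldsymbol{\mu}$ is in place.
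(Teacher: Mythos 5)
Your proposal is correct and follows essentially the same route as the paper: both reduce $\cinc$ to $\inc\circ\sym\curl$, substitute the alternative forms of $\tr_2$ and $\tr_3$ from Lemma~\ref{new_representation_traces_corollary}, annihilate the secondary term via $\nabla_F\cdot\nabla_F^\perp=0$, and then identify what remains with $\boldsymbol{n}\cdot\inc\boldsymbol{\mu}\cdot\boldsymbol{n}$ and $\boldsymbol{n}\times\inc\boldsymbol{\mu}\cdot\boldsymbol{n}$ respectively, with $\boldsymbol{\mu}=\sym\curl\boldsymbol{\sigma}$. The only difference is cosmetic: you carry out the final comparison in Levi--Civita index notation in a local frame on $F$, whereas the paper manipulates the operators $\nabla_F$, $\nabla_F^\perp$, $\Pi_F$, $\times\boldsymbol{n}$ directly, arriving at the same cancellations and signs.
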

\begin{proof}
    It follows from \eqref{new_form_trace_2} that \begin{align*}
    \div_F\div_F\tr_2^{\cott}(\boldsymbol{\sigma})&=\div_F\div_F\left(\boldsymbol{n} \times(\sym\curl\boldsymbol{\sigma})\times\boldsymbol{n}\right)\\
    &=\nabla_F\cdot \left(\boldsymbol{n} \times(\sym\curl\boldsymbol{\sigma})\times\boldsymbol{n}\right)\cdot\nabla_F\\
    &=\boldsymbol{n}\cdot
\left(\nabla\times\left(\sym\curl\boldsymbol{\sigma}\right)\times\nabla\right)\cdot\boldsymbol{n}\\
&=-\boldsymbol{n}\cdot\cott\boldsymbol{\sigma}\cdot\boldsymbol{n}.
    \end{align*}
    And from \eqref{new_form_trace_3}, we get
    \begin{align*}
\boldsymbol{n}\times\cott\boldsymbol{\sigma}\cdot\boldsymbol{n}&=-\boldsymbol{n}\times\left(\nabla\times\left(\sym\curl\boldsymbol{\sigma}\right)\times\nabla\right)\cdot\boldsymbol{n}\\
&=\boldsymbol{n}\times\left(\nabla\times\left(\sym\curl\boldsymbol{\sigma}\right)\Pi_F\right)\cdot\nabla_F^{\perp}\\
&=\operatorname{rot}_F\tr_3^{\cott}(\boldsymbol{\sigma}),
    \end{align*}
where the last identity follows from $\nabla_F^{\perp}\cdot\nabla_F=0$.
\end{proof}

Next we proceed to prove Theorem~\ref{sufficient conforming condition}. We shall analyze the linearized Cotton-York operator through integration by parts. For this purpose, we introduce the following lemma. The proof can be found in Appendix \ref{integration by parts appendix}.
\begin{lemma}
    \label{integration by parts}
    Suppose $\boldsymbol{\sigma}$ and $\boldsymbol{\tau}$ are sufficiently smooth symmetric and traceless tensors in three dimensions. Then, it holds that:
        \begin{align}
        \quad&\int_K\cott \boldsymbol{\sigma}:\boldsymbol{\tau}-\int_K\cott \boldsymbol{\tau}:\boldsymbol{\sigma}\nonumber\\
        = &-\int_{\partial K}\tr_1^{\cott}(\boldsymbol{\sigma}):\Pi_F\inc\boldsymbol{\tau}\Pi_F+\int_{\partial K}\tr_3^{\cott}(\boldsymbol{\sigma}):\boldsymbol{n}\times\boldsymbol{\tau}\times\boldsymbol{n}\nonumber\\
    &-\int_{\partial K}\tr_2^{\cott}(\boldsymbol{\sigma}):2\operatorname{def}_F(\boldsymbol{n}\cdot\boldsymbol{\tau}\Pi_F)-\Pi_F\partial_n\boldsymbol{\tau}\Pi_F
    \nonumber\\
        &+
    \sum_{F\in\mathcal{F}(K)}\sum_{e\in \mathcal{E}(F)}\int_e\left(\boldsymbol{n}\cdot\partial_{\boldsymbol{t}_{F,e}}\boldsymbol{\sigma}\Pi_F\right) \cdot\left(\boldsymbol{n}\cdot\boldsymbol{\tau}\Pi_F\right)\label{int edge 1} \\
    &+
    \sum_{F\in\mathcal{F}(K)}\sum_{e\in \mathcal{E}(F)}\int_e\left(\boldsymbol{n}\cdot(\nabla\times\boldsymbol{\sigma})\cdot\boldsymbol{n}_{F,e}\right)\left(\boldsymbol{t}_{F,e}\cdot\boldsymbol{\tau}\cdot\boldsymbol{t}_{F,e}\right) \label{int edge 2}\\
    &-\sum_{F\in\mathcal{F}(K)}\sum_{e\in \mathcal{E}(F)}\int_e\left(\boldsymbol{t}_{F,e}\cdot(\nabla\times\boldsymbol{\sigma})\cdot\boldsymbol{t}_{F,e}\right)\left(\boldsymbol{n}_{F,e}\cdot\boldsymbol{\tau}\cdot\boldsymbol{n}\right)\label{int edge 3}\\
    &- \sum_{F\in\mathcal{F}(K)}\sum_{e\in \mathcal{E}(F)}\int_e\left(\boldsymbol{n}_{F,e}\cdot\boldsymbol{\sigma}\cdot\boldsymbol{t}_{F,e}\right)\left(\boldsymbol{n}\cdot(\nabla\times\boldsymbol{\tau})\cdot\boldsymbol{t}_{F,e}\right)\label{int edge 4}
    \\
    &+ \sum_{F\in\mathcal{F}(K)}\sum_{e\in \mathcal{E}(F)}\int_e\left(\boldsymbol{n}_{F,e}\cdot\boldsymbol{\sigma}\cdot\boldsymbol{n}\right)\left(\boldsymbol{t}_{F,e}\cdot(\nabla\times\boldsymbol{\tau})\cdot\boldsymbol{t}_{F,e}\right). \label{int edge 5}
    \end{align}
\end{lemma}
We are in the position of proving Theorem~\ref{sufficient conforming condition}.
\begin{proof}[Proof of Theorem~\ref{sufficient conforming condition}]
    Choose $\boldsymbol{\tau}\in C_0^{\infty}(\Omega;\bST)$. 
    We consider \begin{align}
    \int_{\Omega}\cott\boldsymbol{\tau}:\boldsymbol{\sigma}-\sum_{K\in \mathcal{T}_h}\int_K\cott\boldsymbol{\sigma}:\boldsymbol{\tau}.\label{eq:integration_by_parts}
\end{align}
From Lemma~\ref{integration by parts}, we have a set of integrals on internal edges and faces of $\mathcal{T}_h$, and we aim to show they cancel out.

    We note that if two elements $K$ and $K'$ share a face $F$ and edge $e$, it holds that
    $$\boldsymbol{n}=-\boldsymbol{n}',\quad\boldsymbol{n}_{F,e}=\boldsymbol{n}_{F,e}',\quad\boldsymbol{t}_{F,e}=-\boldsymbol{t}_{F,e}',$$
    where $'$ is used to distinguish vectors in $K$ and $K'$.
    Using these properties and the fact that $\tr_1^{\cott}(\boldsymbol{\sigma})|_F$, $\tr_2^{\cott}(\boldsymbol{\sigma})|_F$ and $\tr_3^{\cott}(\boldsymbol{\sigma})|_F$ are single-valued, we can deduce that 
     all the terms on the faces cancel out. It remains to show the edge terms concerning \eqref{int edge 1}-\eqref{int edge 5} vanish.
    
     For terms concerning \eqref{int edge 1}, by changing the order of summation, it follows from the fact that $\boldsymbol{\sigma}|_e$ is single-valued that
     \begin{align}
        &\quad -\sum_{K\in\mathcal{T}_h}
    \sum_{F\in\mathcal{F}(K)}\sum_{e\in \mathcal{E}(F)}\int_e\left(\boldsymbol{n}\cdot\partial_{\boldsymbol{t}_{F,e}}\boldsymbol{\sigma}\Pi_F\right) \cdot\left(\boldsymbol{n}\cdot\boldsymbol{\tau}\Pi_F\right)\nonumber\\
    &=-\sum_{e\in\mathcal{E}(\mathcal{T}_h)}\sum_{F\in \mathcal{F}(e)}\int_e \left[(\boldsymbol{n}\cdot\partial_{\boldsymbol{t}_{F,e}}\boldsymbol{\sigma}|_K\Pi_F)-(\boldsymbol{n}'\cdot\partial_{\boldsymbol{t}'_{F,e}}\boldsymbol{\sigma}|_{K'}\Pi_F)\right]\cdot(\boldsymbol{n}\cdot\boldsymbol{\tau}\Pi_F)\label{change order}\\
    &=0.\nonumber
     \end{align}
Here, we use $\mathcal{T}_h(e)$ and $\mathcal{F}(e)$ denotes internal edges of $\mathcal T_h$ and faces that share an edge $e$. A similar argument shows that terms concerning \eqref{int edge 4} and \eqref{int edge 5} vanish.

For any vector $\boldsymbol{a}$, $\boldsymbol{b}\in \mathbb R^3$, it holds that $\boldsymbol{a}\cdot (\boldsymbol{b}\times \boldsymbol{\sigma}) = (\boldsymbol{a}\times\boldsymbol{b})\cdot \boldsymbol{\sigma}$ and $(\boldsymbol{\sigma}\times\boldsymbol{a})\cdot \boldsymbol{b} = \boldsymbol{\sigma}\cdot(\boldsymbol{a}\times\boldsymbol{b} )$.    Using this property, we have
        \begin{align} \boldsymbol{t}_{F,e}\cdot\tr_1^{\cott}(\boldsymbol{\sigma})|_F\cdot\boldsymbol{t}_{F,e}&=\boldsymbol{t}_{F,e}\cdot(\boldsymbol{\sigma}\times \boldsymbol{n})\cdot \boldsymbol{t}_{F,e}\nonumber\\
        &=-\boldsymbol{n}_{F,e}\cdot\boldsymbol{\sigma}\cdot\boldsymbol{t}_{F,e},\label{eq:t_tr_1_t}\\
\boldsymbol{n}_{F,e}\cdot\tr_2^{\cott}(\boldsymbol{\sigma})|_F\cdot\boldsymbol{n}_{F,e} & =\boldsymbol{n}_{F,e}\cdot 2\Def_F(\boldsymbol{n}\cdot\boldsymbol{\sigma}\Pi_F)\cdot\boldsymbol{t}_{F,e} - \boldsymbol{n}_{F,e}\cdot(\partial_{\boldsymbol{n}}\boldsymbol{\sigma}\times \boldsymbol{n})\cdot\boldsymbol{n}_{F,e}\nonumber\\
& = \partial_{\boldsymbol{n}_{F,e}}(\boldsymbol{n}\cdot\boldsymbol{\sigma}\cdot\boldsymbol{t}_{F,e}) + \partial_{\boldsymbol{t}_{F,e}}(\boldsymbol{n}\cdot\boldsymbol{\sigma}\cdot\boldsymbol{n}_{F,e})-\partial_{\boldsymbol{n}}(\boldsymbol{n}_{F,e}\cdot\boldsymbol{\sigma}\cdot\boldsymbol{t}_{F,e}),\\
\boldsymbol{n}\cdot(\nabla\times\boldsymbol{\sigma})\cdot\boldsymbol{n}_{F,e}&=-\nabla_F\cdot(\boldsymbol{n}\times\boldsymbol{\sigma}\cdot\boldsymbol{n}_{F,e})\nonumber\\&=
-\partial_{\boldsymbol{t}_{F,e}}(\boldsymbol{t}_{F,e}\cdot (\boldsymbol{n}\times\boldsymbol{\sigma})\cdot\boldsymbol{n}_{F,e})-\partial_{\boldsymbol{n}_{F,e}}(\boldsymbol{n}_{F,e}\cdot (\boldsymbol{n}\times\boldsymbol{\sigma})\cdot\boldsymbol{n}_{F,e})\nonumber\\
&=-\partial_{\boldsymbol{t}_{F,e}}\left(\boldsymbol{n}_{F,e}\cdot\boldsymbol{\sigma}\cdot\boldsymbol{n}_{F,e}\right)-\partial_{\boldsymbol{n}_{F,e}}\left(\boldsymbol{t}_{F,e}\cdot\tr_1^{\cott}(\boldsymbol{\sigma})|_F\cdot\boldsymbol{t}_{F,e}\right),\label{eq:identity_for_edge_2}\\
\boldsymbol{t}_{F,e}\cdot(\nabla\times\boldsymbol{\sigma})\cdot\boldsymbol{t}_{F,e}&=-\nabla\cdot(\boldsymbol{t}_{F,e}\times\boldsymbol{\sigma}\cdot\boldsymbol{t}_{F,e})\nonumber\\
&= -\partial_{\boldsymbol{n}}(\boldsymbol{n}\cdot(\boldsymbol{t}_{F,e}\times\boldsymbol{\sigma}\cdot\boldsymbol{t}_{F,e}))-\partial_{\boldsymbol{n}_{F,e}}(\boldsymbol{n}_{F,e}\cdot(\boldsymbol{t}_{F,e}\times\boldsymbol{\sigma}\cdot\boldsymbol{t}_{F,e}))\nonumber\\&=\partial_{\boldsymbol{n}}\left(\boldsymbol{n}_{F,e}\cdot\boldsymbol{\sigma}\cdot\boldsymbol{t}_{F,e}\right)-\partial_{\boldsymbol{n}_{F,e}}\left(\boldsymbol{n}\cdot\boldsymbol{\sigma}\cdot\boldsymbol{t}_{F,e}\right)\nonumber\\&
    =-\boldsymbol{n}_{F,e}\cdot\tr_2^{\cott}(\boldsymbol{\sigma})|_F\cdot\boldsymbol{n}_{F,e}+\partial_{\boldsymbol{t}_{F,e}}(\boldsymbol{n}\cdot\boldsymbol{\sigma}\cdot\boldsymbol{n}_{F,e}).\label{eq:identity_for_edge_3}
    \end{align}
By \eqref{eq:identity_for_edge_2} and \eqref{eq:identity_for_edge_3}, terms concerning \eqref{int edge 2} and \eqref{int edge 3} vanish by changing the order of summation, analogous to \eqref{change order}. Therefore \eqref{eq:integration_by_parts} vanishes, which establishes that $\boldsymbol{\sigma}$ is ${H}(\cott)$-conforming. This completes the proof.
\end{proof}
\begin{remark}
\label{rmk:distributional}
    The edge continuity is necessary for the ${H}(\cott)$-conformity. We illustrate this argument by constructing a piecewise constant tensor $\boldsymbol{\sigma}$ with single-valued $\tr_i(\boldsymbol{\sigma})|_F$, which fails to be $H(\cott)$-conforming. The counterexample can be constructed using the traceless part of the linearized Regge element function \cite{christiansen2011linearization}.
    
    Let $\boldsymbol{\sigma'}$ be a piecewise constant symmetric tensor (Regge metric), determined by the tangential-tangential components on each edge: $$ \int_e\boldsymbol{t}_e\cdot\boldsymbol{\sigma'}\cdot\boldsymbol{t}_e,\,e\in \mathcal{E}(K).$$
    Take $\boldsymbol{\sigma}=\dev\boldsymbol{\sigma'}$.
    Since $\boldsymbol{\sigma'}$ satisfies the tangential-tangential continuity, i.e. $\tr^{\inc}_1(\boldsymbol{\sigma'})|_F$ is single-valued for all the faces $F\in\mathcal{F}(\mathcal{T}_h)$,  by \eqref{new_representation_1}, $\tr_1^{\cott}(\boldsymbol{\sigma})|_F=\sym\left(\tr^{\inc}_1(\boldsymbol{\sigma'})|_F\times\boldsymbol{n}\right)$, $\tr_2^{\cott}(\boldsymbol{\sigma})|_F=0$ and $\tr_3^{\cott}(\boldsymbol{\sigma})|_F=0$ are single-valued, but there is no full edge continuity.
    
    For such $\boldsymbol{\sigma}$ constructed above, by \eqref{eq:t_tr_1_t} we obtain that terms concerning \eqref{int edge 4} vanish since $\tr_1^{\cott}(\boldsymbol{\sigma})|_F$ is single-valued on any face $F$ of $\mathcal{T}_h$. Terms concerning \eqref{int edge 1}-\eqref{int edge 3} also vanish, since $\boldsymbol{\sigma}$ is constant on each tetrahedron. Therefore, we have
\begin{align*}  \int_\Omega\cott\boldsymbol{\tau}:\boldsymbol{\sigma}&=\sum_{K\in\mathcal{T}_h}\int_K\cott\boldsymbol{\sigma}:\boldsymbol{\tau}-\sum_{K\in\mathcal{T}_h} \sum_{F\in\mathcal{F}(K)}\sum_{e\in \mathcal{E}(F)}\int_e\left(\boldsymbol{n}_{F,e}\cdot\boldsymbol{\sigma}\cdot\boldsymbol{n}\right)\left(\boldsymbol{t}_{F,e}\cdot(\nabla\times\boldsymbol{\tau})\cdot\boldsymbol{t}_{F,e}\right)\\
    &=\int_{\Omega} \cott\boldsymbol{\sigma}:\boldsymbol{\tau}-\sum_{e\in\mathcal{E}(\mathcal{T}_h)}\int_e\left(\sum_{F\in\mathcal{F}(e)}\llbracket\boldsymbol{n}_{F,e}\cdot\boldsymbol{\sigma}\cdot\boldsymbol{n}\rrbracket\right)\left(\boldsymbol{t}_{e}\cdot\sym\curl\boldsymbol{\tau}\cdot\boldsymbol{t}_{e}\right),
\end{align*}
where $\llbracket\boldsymbol{n}_{F,e}\cdot\boldsymbol{\sigma}\cdot\boldsymbol{n}\rrbracket$ denotes the jump $\boldsymbol{n}_{F,e}\cdot\boldsymbol{\sigma}|_K\cdot\boldsymbol{n}+\boldsymbol{n}_{F,e}'\cdot\boldsymbol{\sigma}|_{K'}\cdot\boldsymbol{n}',$  $ e\in K\cap K'=F$. Since there is no continuity condition on edges, this establishes that $\boldsymbol{\sigma}$ is not $H(\cott)$-conforming in general.
\end{remark}
 \label{traces and trace complexes}
\section{Bubbles and Bubble Complexes}\label{bubble section}
The goal of this section is to prove the exactness of the smoothest bubble conformal complex \eqref{eq:bubble-seq2}.

We begin by recalling the $H(\cott)$ bubble spaces (see also Section~\ref{sec:overview} for context):
\begin{align*}
\mathbb{B}^{\cott}_k\!\left(K;\bST\right)
&=\{\boldsymbol{\sigma}\in P_k\!\left(K;\bST\right):
\tr_1^{\cott}\!\left(\boldsymbol{\sigma}\right)|_F=
\tr_2^{\cott}\!\left(\boldsymbol{\sigma}\right)|_F=
\tr_3^{\cott}\!\left(\boldsymbol{\sigma}\right)|_F=0
\ \ \forall\,F\in\mathcal{F}(K)\},\\[0.25em]
{\mathbb{B}}^{1\cott}_{k-4}\!\left(K;\bST\right)
&=\{{\boldsymbol{\sigma}}\in P_{k-4}\!\left(K;\bST\right):\ 
b_K\,\boldsymbol{\sigma}\in \mathbb{B}^{\cott}_k\!\left(K;\bST\right)\},\\[0.25em]
{\mathbb{B}}^{2\cott}_{k-8}\!\left(K;\bST\right)
&=\{{\boldsymbol{\sigma}}\in P_{k-8}\!\left(K;\bST\right):\ 
b_K^2\,\boldsymbol{\sigma}\in \mathbb{B}^{\cott}_k\!\left(K;\bST\right)\}.
\end{align*}
By construction, these spaces satisfy the inclusion hierarchy
\begin{equation}\label{eq:inclusion-hierarchy}
b_K^2\,{\mathbb{B}}^{2\cott}_{k-8}\!\left(K;\bST\right)\ \subset\
b_K\,{\mathbb{B}}^{1\cott}_{k-4}\!\left(K;\bST\right)\ \subset\
\mathbb{B}^{\cott}_k\!\left(K;\bST\right).
\end{equation}
We also recall the $H(\div)$ bubble spaces $\mathbb{B}^{\div}_k$ in \eqref{def:div_bubble} and their smoother variants $\mathbb{B}^{\div,(s)}_k$ in \eqref{def:div_bubble_extra_smoothness} introduced in Section~\ref{sec:introduction}.

The conformal bubble complexes built on $\mathbb{B}^{\cott}_k$ and $\mathbb{B}^{1\cott}_{k-4}$ are delicate: their definitions involve higher-order trace constraints and their dimensions are not readily accessible (cf.\ the dimension analysis for $\inc$-bubble spaces in~\cite{arnold2008finite}). Consequently, these sequences are not well suited for a clean characterization of the divergence-free subspace
\[
\mathbb{B}^{\div}_{k-3}\!\left(K;\bST\right)\cap\ker(\div).
\]
In contrast, the space $b_K^2\,\mathbb{B}^{2\cott}_{k-8}\!\left(K;\bST\right)$ admits a simpler boundary structure. Indeed, for any $\boldsymbol{\sigma}\in\mathbb{B}^{2\cott}_{k-8}\!\left(K;\bST\right)$,
\[
\tr_1^{\cott}\!\bigl(b_K^2\boldsymbol{\sigma}\bigr)|_F
=\tr_2^{\cott}\!\bigl(b_K^2\boldsymbol{\sigma}\bigr)|_F=0
\qquad\forall\,F\in\mathcal{F}(K),
\]
because the operators $\tr_1^{\cott}$ and $\tr_2^{\cott}$ involve only zeroth- and first-order boundary differential terms, which are annihilated by the factor $b_K^2$. Thus, among the three $H(\cott)$ bubble levels appearing in our complexes, the space $b_K^2\,\mathbb{B}^{2\cott}_{k-8}\!\left(K;\bST\right)$ is the most tractable: only the third-order conformal trace $\tr_3^{\cott}$ requires explicit analysis. For this reason, our proof of exactness will focus on the bubble conformal complex whose middle spaces are built from $b_K^2\,\mathbb{B}^{2\cott}_{k-8}\!\left(K;\bST\right)$.

Our main tool is a \emph{bubble} variant of the BGG construction. Starting from the canonical bubble de~Rham complex~\cite{arnold2006finite} and the bubble Stokes complex~\cite{neilan2015discrete}, together with minor supersmoothness variants, we derive bubble elasticity and bubble divdiv complexes via the BGG machinery. The bubble setting is subtler than the continuous one: the higher interior smoothness forces additional compatibility conditions on traces. Nevertheless, the BGG framework yields streamlined exactness proofs. In particular, it recovers and simplifies results from the smooth bubble elasticity complex~\cite{arnold2008finite} and from the smoother bubble divdiv complexes developed in~\cite{chen2022finitedivdiv,hu2022conformingdivdiv}.

\textbf{Organization.}
We first collect preliminary results on the characterization and geometric decompositions of the bubble spaces needed later. We then construct bubble BGG diagrams for elasticity and divdiv complexes, and finally use these to establish the exactness of \eqref{eq:bubble-seq2}.

\subsection{Characterization and geometric decompositions of bubble spaces}
To begin with, define the polynomial space on $K$ with the first trace vanishing as
\begin{align}
    \mathbb{B}_k^{\tr_1}(K;\bST):=\{\boldsymbol{\sigma}\in P_k\left(K;\bST\right):\tr_1^{\cott}(\boldsymbol{\sigma})|_F=0 \text{ for each } F\in\mathcal{F}(K)\}.
\end{align}
We have the following proposition:
\begin{proposition}
\label{prop:bubbles_in_tr1_bubble}
    It holds that
    \begin{align}
        \mathbb{B}^{\cott}_{k}(K;\bST)&\subset\mathbb{B}_{k}^{\tr_1}(K;\bST),\label{cinc bubble in tr_1}\\
        \mathbb{B}^{1\cott}_{k-4}(K;\bST)&\subset\mathbb{B}_{k-4}^{\tr_1}(K;\bST),\label{cinc1 bubble in tr_1}\\
        \mathbb{B}^{2\cott}_{k-8}(K;\bST)&=\mathbb{B}_{k-8}^{\tr_1}(K;\bST).\label{cinc2 bubble in tr_1}
    \end{align}
\end{proposition}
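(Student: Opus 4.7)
The plan is to prove the three claims in parallel, since each reduces to computing one or more of $\tr_1$, $\tr_2$, $\tr_3$ applied to $b_K^m\boldsymbol{\sigma}$ on a face $F$ of $K$ and exploiting the boundary behavior of $b_K$. Near such a face, the tetrahedron bubble factors as $b_K=\lambda_F b_F$ with $\lambda_F|_F=0$ and $\nabla\lambda_F=-\boldsymbol{n}/h_F$, yielding the vanishings $b_K|_F=0$, $\nabla b_K|_F=-(b_F/h_F)\boldsymbol{n}$, and $b_K^2|_F=\nabla(b_K^2)|_F=0$. The key algebraic identity I will use is
\begin{equation*}
\Pi_F\,\sym(\boldsymbol{\sigma}\times\boldsymbol{n})\,\Pi_F \;=\; \sym(\Pi_F\boldsymbol{\sigma}\times\boldsymbol{n}) \;=\; \tr_1(\boldsymbol{\sigma}),
\end{equation*}
which follows from $\Pi_F\mskw(\boldsymbol{n})=\mskw(\boldsymbol{n})=\mskw(\boldsymbol{n})\Pi_F$.

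Claim \eqref{cinc bubble in tr_1} is immediate, since $\tr_1|_F=0$ is one of the three defining conditions of $\mathbb{B}^{\cinc}_k(K;\bST)$. For \eqref{cinc1 bubble in tr_1}, I would assume $b_K\boldsymbol{\sigma}\in\mathbb{B}^{\cinc}_k(K;\bST)$ and examine $\tr_2(b_K\boldsymbol{\sigma})|_F$. Since $(b_K\boldsymbol{\sigma})|_F=0$, the $\operatorname{def}_F$-part of $\tr_2$ drops out, while $\partial_n(b_K\boldsymbol{\sigma})|_F=(\partial_n b_K)\boldsymbol{\sigma}|_F=-(b_F/h_F)\boldsymbol{\sigma}|_F$. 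A short computation using the algebraic identity above then yields
\begin{equation*}
\tr_2(b_K\boldsymbol{\sigma})|_F \;=\; (b_F/h_F)\,\sym\bigl(\Pi_F\boldsymbol{\sigma}\Pi_F\times\boldsymbol{n}\bigr) \;=\; (b_F/h_F)\,\tr_1(\boldsymbol{\sigma})|_F.
\end{equation*}
Since $b_F$ is positive in the interior of $F$ and $\tr_1(\boldsymbol{\sigma})|_F$ is a polynomial, the vanishing of the left-hand side forces $\tr_1(\boldsymbol{\sigma})|_F=0$.

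For \eqref{cinc2 bubble in tr_1}, the vanishings $b_K^2|_F=0$ and $\nabla(b_K^2)|_F=0$ immediately give $\tr_1(b_K^2\boldsymbol{\sigma})|_F=\tr_2(b_K^2\boldsymbol{\sigma})|_F=0$ for every $\boldsymbol{\sigma}\in P_{k-8}(K;\bST)$, so the only nontrivial constraint defining $\mathbb{B}^{2\cinc}_{k-8}(K;\bST)$ is $\tr_3(b_K^2\boldsymbol{\sigma})|_F=0$. Using the product rule $\curl(b_K^2\boldsymbol{\sigma})=b_K^2\curl\boldsymbol{\sigma}-2b_K\,\boldsymbol{\sigma}\times\nabla b_K$ and discarding every term containing an undifferentiated factor of $b_K$, I obtain $\sym\curl(b_K^2\boldsymbol{\sigma})|_F=0$ and
\begin{equation*}
\partial_n\,\sym\curl(b_K^2\boldsymbol{\sigma})|_F \;=\; -2(b_F/h_F)^2\,\sym(\boldsymbol{\sigma}\times\boldsymbol{n})|_F.
\end{equation*}
Invoking the alternative form \eqref{new_representation_3}, namely $\tr_3(\cdot)=\tr^{\inc}_2(\sym\curl(\cdot))$, together with the vanishing of $\sym\curl(b_K^2\boldsymbol{\sigma})|_F$, gives
\begin{equation*}
\tr_3(b_K^2\boldsymbol{\sigma})|_F \;=\; -\Pi_F\,\partial_n\sym\curl(b_K^2\boldsymbol{\sigma})|_F\,\Pi_F \;=\; 2(b_F/h_F)^2\,\tr_1(\boldsymbol{\sigma})|_F.
\end{equation*}
This establishes the equivalence $\tr_3(b_K^2\boldsymbol{\sigma})|_F=0 \iff \tr_1(\boldsymbol{\sigma})|_F=0$, yielding both inclusions in \eqref{cinc2 bubble in tr_1}.

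The main obstacle is the product-rule bookkeeping for $\partial_n\sym\curl(b_K^2\boldsymbol{\sigma})|_F$: one must carefully discard every contribution containing an undifferentiated factor of $b_K$, retain only the single surviving term in which the normal derivative hits both copies of $b_K$, and then identify the resulting expression as a multiple of $\tr_1(\boldsymbol{\sigma})|_F$ via the algebraic identity above. Once this step is carried out, the conclusion is immediate because $b_F/h_F$ does not vanish identically on the interior of any face.
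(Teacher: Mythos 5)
Your argument is correct and takes essentially the same route as the paper's own proof: claim \eqref{cinc bubble in tr_1} is definitional; claims \eqref{cinc1 bubble in tr_1} and \eqref{cinc2 bubble in tr_1} are obtained by computing $\tr_2(b_K\boldsymbol{\sigma})|_F = (b_F/h_F)\tr_1(\boldsymbol{\sigma})|_F$ and $\tr_3(b_K^2\boldsymbol{\sigma})|_F = 2(b_F/h_F)^2\,\tr_1(\boldsymbol{\sigma})|_F$, exactly the identities \eqref{trace3_H2} and its $\tr_2$ analogue used in the paper, and then noting $b_F$ is not identically zero on the interior of $F$. The only cosmetic difference is that you organize the $\tr_3$ computation by a product rule for $\curl(b_K^2\boldsymbol{\sigma})$ and discard terms with an undifferentiated $b_K$, whereas the paper first expands $\sym\curl(b_K^2\boldsymbol{\sigma})$ as $b_K$ times a bracket; these are the same bookkeeping.
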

\begin{proof}
    Note that \eqref{cinc bubble in tr_1} holds by the definition of $\mathbb{B}_k^{\cott}(K;\bST)$.
    
    For each $\boldsymbol{\sigma}_1\in\mathbb{B}_{k-4}^{1\cott}(K;\bST)$, since $b_K\boldsymbol{\sigma}_1|_F=0$ on $F\in\mathcal{F}(K)$, we have
    \begin{align}      0=\tr_2^{\cott}(b_K\boldsymbol{\sigma}_1)|_F&=\sym\left(\left(2\Def_F\left(\boldsymbol{n}\cdot b_K\boldsymbol{\sigma}_1\Pi_F\right)-\Pi_F\partial_{\boldsymbol{n}}\left(b_K\boldsymbol{\sigma}_1\right)\Pi_F\right)\times\boldsymbol{n}\right)|_F\nonumber\\
    &=-\partial_{\boldsymbol{n}}\left(\lambda_F\right)b_F\sym\left(\Pi_F\boldsymbol{\sigma}_1\times\boldsymbol{n}\right)|_F=\tfrac{b_F}{h_F}\tr_1^{\cott}(\boldsymbol{\sigma}_1)|_F.
    \end{align}
    Note that the first term of the right-hand side in the first line vanishes. Thus, \eqref{cinc1 bubble in tr_1} is confirmed to be valid.
    
    For each ${\boldsymbol{\sigma}_2}\in P_{k-8}(K;\bST)$, a direct expansion shows
\begin{align}
\sym\curl\left(b_K^2{\boldsymbol{\sigma}_2}\right)&=\sum_{F\in\mathcal{F}(K)}b_F\sym\left(\operatorname{grad}\lambda_F\times b_K{\boldsymbol{\sigma}_2}\right)+b_K\sym\curl\left(b_K{\boldsymbol{\sigma}_2}\right)\nonumber\\
&=b_K\left(2\sum_{F\in\mathcal{F}(K)}b_F\sym\left(\operatorname{grad}\lambda_F\times {\boldsymbol{\sigma}_2}\right)+b_K\sym\curl{\boldsymbol{\sigma}_2}\right).
\end{align}
It follows that on $F\in\mathcal{F}(K)$,
\begin{align}
\tr_3\left(b_K^2{\boldsymbol{\sigma}_2}\right)|_F&=-\Pi_{F}\partial_{\boldsymbol{n}}\left(\sym\curl\left(b_K^2{\boldsymbol{\sigma}_2}\right)\right)|_F\Pi_{F}\nonumber\\
&=-2\partial_{\boldsymbol{n}}\left(\lambda_F\right)b_F^2\left(\Pi_{F}\sym\left(\operatorname{grad}\lambda_F\times {\boldsymbol{\sigma}_2}\right)\Pi_{F}\right)|_F\nonumber\\
&=-\tfrac{2b_F^2}{h_F^2}\sym\left(\boldsymbol{n}\times{\boldsymbol{\sigma}_2}\Pi_{F}\right)|_F=\tfrac{2b_F^2}{h_F^2}\tr_1\left({\boldsymbol{\sigma}_2}\right)|_{F}.\label{trace3_H2}
\end{align}
Since $\tr_1^{\cott}(b_K^2\boldsymbol{\sigma}_2)|_F=\tr_2^{\cott}(b_K^2\boldsymbol{\sigma}_2)|_F=0$ for each $F\in\mathcal{F}(K)$, \eqref{trace3_H2} shows that ${\boldsymbol{\sigma}_2}\in\mathbb{B}^{2\cott}_{k-8}(K;\bST)$ if and only if $\tr_1^{\cott}({\boldsymbol{\sigma}_2})|_{F}=0$ for each face $F\in\mathcal{F}(K)$. This completes the proof.
\end{proof}

Next, we investigate the characterization of $\mathbb{B}_{k}^{2\cott}(K;\bST)$.
\begin{proposition}
\label{representation of tr_1 bubble}
  $\mathbb{B}_k^{2\cott}(K;\bST)$ can be decomposed geometrically as follows:
    \begin{equation}
        \begin{aligned}
            \mathbb{B}_k^{2\cott}(K;\bST)&=\oplus_{\substack{e\in \mathcal{E}(K)\\e=F_+\cap F_-}}b_eP_{k-2}(e;\mathbb{R})\dev\sym(\boldsymbol{n}_+\boldsymbol{n}_-^T)\\&\quad \oplus_{F\in \mathcal{F}(K)}b_F\dev\sym\left(P_{k-3}(F;\mathbb{R}^3)\boldsymbol{n}^T\right)\\
    &\quad \oplus b_KP_{k-4}(K;\bST).
        \end{aligned}
        \label{representation of tr_1 bubble sum}
    \end{equation}
\end{proposition}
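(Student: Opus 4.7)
My plan is a three-step decomposition argument: (i) show each summand lies in $\mathbb{B}_k^{\tr_1}(K;\bST)$; (ii) given any $\boldsymbol{\sigma}\in\mathbb{B}_k^{\tr_1}(K;\bST)$, constructively extract its edge, face, and cell components; (iii) note that (ii) determines the components uniquely. For (i), the cell summand is trivial since $b_K|_{\partial K}=0$; for the face summand at $F$, $b_F$ vanishes on every $F'\neq F$, and on $F$ itself I compute
\begin{align*}
\Pi_F\dev\sym(\boldsymbol{v}\boldsymbol{n}^T)\times\boldsymbol{n} = -\tfrac13(\boldsymbol{v}\cdot\boldsymbol{n})\mskw(\boldsymbol{n}),
\end{align*}
using $\Pi_F\boldsymbol{n}=\boldsymbol{0}$ and $(I-\boldsymbol{n}\boldsymbol{n}^T)\times\boldsymbol{n}=\mskw(\boldsymbol{n})$; the right-hand side is skew-symmetric, so its symmetrization (which is $\tr_1$) vanishes. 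The same identity with $\boldsymbol{v}=\boldsymbol{n}_\mp$ and $\boldsymbol{n}=\boldsymbol{n}_\pm$ handles the edge summand on $F_\pm$, while $b_e$ vanishes on faces other than $F_\pm$.

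For (ii), on any edge $e=F_+\cap F_-$, the conditions $\tr_1(\boldsymbol{\sigma})|_{F_\pm}=\boldsymbol{0}$ contribute four scalar constraints that cut the five-dimensional space $\bST$ down to the one-dimensional kernel spanned by $\dev\sym(\boldsymbol{n}_+\boldsymbol{n}_-^T)$. At each vertex $v$, three incident faces yield six such conditions which, by linear independence of the three face normals meeting at $v$ (non-degenerate tetrahedron), force $\boldsymbol{\sigma}(v)=\boldsymbol{0}$. Thus $\boldsymbol{\sigma}|_e = b_e|_e\, q_e\,\dev\sym(\boldsymbol{n}_+\boldsymbol{n}_-^T)$ with $q_e\in P_{k-2}(e;\mathbb{R})$. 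I would pick polynomial extensions $\tilde q_e$ of each $q_e$ to $K$ of degree $\leq k-2$; since $b_e|_{e'}=0$ for $e'\neq e$ (a direct barycentric-coordinate check), the residue $\boldsymbol{\sigma}^{(1)} := \boldsymbol{\sigma} - \sum_e b_e\tilde q_e\dev\sym(\boldsymbol{n}_+\boldsymbol{n}_-^T)$ lies in $\mathbb{B}_k^{\tr_1}(K;\bST)$ and vanishes on every edge. On each face $F$, injectivity of $\boldsymbol{v}\mapsto\dev\sym(\boldsymbol{v}\boldsymbol{n}^T)$ (a short direct check) combined with $\tr_1(\boldsymbol{\sigma}^{(1)})|_F=\boldsymbol{0}$ gives $\boldsymbol{\sigma}^{(1)}|_F = \dev\sym(\boldsymbol{v}_F\boldsymbol{n}^T)$ for a unique face polynomial $\boldsymbol{v}_F$; the condition $\boldsymbol{\sigma}^{(1)}|_{\partial F}=\boldsymbol{0}$ then forces $\boldsymbol{v}_F|_{\partial F}=\boldsymbol{0}$, i.e.\ $\boldsymbol{v}_F\in b_F|_F P_{k-3}(F;\mathbb{R}^3)$. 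Subtracting the face contributions leaves a residue that vanishes on $\partial K$ and therefore lies in $b_K P_{k-4}(K;\bST)$. Uniqueness in (iii) is immediate from the same localization: $\boldsymbol{\sigma}|_e$ determines each $q_e$, $\boldsymbol{\sigma}^{(1)}|_F$ determines each $\boldsymbol{v}_F$, and the cell bubble is the remainder.

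The two main obstacles I expect are (a) the vertex argument — checking that the six trace conditions at $v$ have full rank $5$ on the five-dimensional space $\bST$, which I would verify by fixing a basis of $\bST$ adapted to the three (linearly independent) face normals meeting at $v$ and computing the constraint matrix directly — and (b) fixing a canonical lift $P_{k-3}(F;\mathbb{R}^3)\hookrightarrow P_{k-3}(K;\mathbb{R}^3)$ so that the face summand is a well-defined subspace of $P_k(K;\bST)$. For (b), two different extensions of the same face polynomial differ by $\lambda_F$ times a polynomial on $K$, producing differences of the form $b_K\dev\sym(\boldsymbol{u}(\grad\lambda_F)^T)$ that are absorbed into $b_KP_{k-4}(K;\bST)$, so any fixed convention (for instance, identifying $P_{k-3}(F;\mathbb{R}^3)$ with polynomials in the three barycentric coordinates of $F$ regarded as degree-one polynomials on $K$) makes the right-hand side a well-defined subspace, and the direct-sum property is preserved because ambiguous pieces live in the cell bubble summand.
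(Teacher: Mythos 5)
Your proposal is correct and takes essentially the same route as the paper: the pointwise rank arguments pinning $\boldsymbol{\sigma}|_e$ to the direction $\dev\sym(\boldsymbol{n}_+\boldsymbol{n}_-^T)$, the vertex-vanishing step, and the face-level kernel characterization by $\dev\sym(\boldsymbol{v}\boldsymbol{n}^T)$ are precisely the content of Lemma \ref{edge_basis} and Lemma \ref{face_basis} as used in the paper's proof. The only difference is bookkeeping: the paper splits $\boldsymbol{\sigma}$ into edge/face/cell components at once via the geometric decomposition of $P_k^{(0)}(K;\bST)$ (Proposition \ref{prop:geometric_decompositions}), while you peel off edge and face lifts successively and absorb lift ambiguities, which works as long as the lifts are taken in the canonical extension spaces $P_{k-2}(e;\mathbb{R})$ and $P_{k-3}(F;\mathbb{R}^3)$, as you indicate.
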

This decomposition is intuitive, for the vanishing of $\tr_1^{\cott}$ removes 2 out of 5 ``directions'' in $\bST$. Therefore, on edges it only carries a single direction. It is important to note that the dimension of $\mathbb{B}^{2\cott}_k(K;\bST)$ can be explicitly determined, as a direct corollary of Proposition~\ref{representation of tr_1 bubble}.
To aid our proof of Proposition~\ref{representation of tr_1 bubble}, we also present two sets of linearly independent basis for $\bST$ in the following two lemmas. The proofs of these lemmas can be found in Appendix \ref{edge basis appendix} and Appendix \ref{face basis appendix}. 
\begin{lemma}{{\rm (Edge basis of $\bST$)}}{\label{edge_basis}}
    Let $e = F_+ \cap F_-$.   $\boldsymbol{n}_+$ and $\boldsymbol{n}_-$ represent the corresponding outward unit normal vectors of $F_+$ and $F_-$, respectively. Then  $\sym(\boldsymbol{t_e}\boldsymbol{n}_+^T)$, $\sym(\boldsymbol{t_e}\boldsymbol{n}_-^T)$, $\dev(\boldsymbol{n}_+\boldsymbol{n}_+^T)$,
    $\dev(\boldsymbol{n}_-\boldsymbol{n}_-^T)$, and
    $\dev\sym(\boldsymbol{n}_+\boldsymbol{n}_-^T)$ are linearly independent and form a basis of $\bST$.
\end{lemma}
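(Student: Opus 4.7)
The plan is to observe that $\dim(\bST)=5$, so linear independence of the five listed matrices is equivalent to them forming a basis. The key geometric inputs I would use are: $\boldsymbol{t}_e\cdot\boldsymbol{n}_\pm=0$ (since $\boldsymbol{t}_e$ is tangent to the edge $e\subset F_+\cap F_-$, which lies in both $F_+$ and $F_-$) and $\boldsymbol{n}_+,\boldsymbol{n}_-$ are not parallel (as outward unit normals of two distinct faces of a tetrahedron). Set $c:=\boldsymbol{n}_+\cdot\boldsymbol{n}_-$, so that $|c|<1$ and $s^2:=1-c^2>0$.

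Suppose
$$a_1\sym(\boldsymbol{t}_e\boldsymbol{n}_+^T)+a_2\sym(\boldsymbol{t}_e\boldsymbol{n}_-^T)+a_3\dev(\boldsymbol{n}_+\boldsymbol{n}_+^T)+a_4\dev(\boldsymbol{n}_-\boldsymbol{n}_-^T)+a_5\dev\sym(\boldsymbol{n}_+\boldsymbol{n}_-^T)=\boldsymbol{0}.$$
I would contract this identity with the five rank-one test tensors $\boldsymbol{t}_e\boldsymbol{n}_+^T$, $\boldsymbol{t}_e\boldsymbol{n}_-^T$, $\boldsymbol{n}_+\boldsymbol{n}_+^T$, $\boldsymbol{n}_-\boldsymbol{n}_-^T$, and $\boldsymbol{n}_+\boldsymbol{n}_-^T$ under the Frobenius pairing, using the elementary identity $\boldsymbol{u}\boldsymbol{v}^T:\boldsymbol{p}\boldsymbol{q}^T=(\boldsymbol{u}\cdot\boldsymbol{p})(\boldsymbol{v}\cdot\boldsymbol{q})$. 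Because $\boldsymbol{t}_e\cdot\boldsymbol{n}_\pm=0$, every pairing between the first two (mixed) matrices and the last three (pure-normal) test tensors vanishes, and vice versa. Hence the $5\times 5$ linear system block-decouples into a $2\times 2$ system for $(a_1,a_2)$ with coefficient matrix $\tfrac{1}{2}\bigl(\begin{smallmatrix}1 & c\\ c & 1\end{smallmatrix}\bigr)$, whose determinant $s^2/4$ is strictly positive, and a $3\times 3$ system for $(a_3,a_4,a_5)$.

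The $2\times 2$ block immediately forces $a_1=a_2=0$. For the $3\times 3$ block the entries are explicit rational functions of $c$ arising from pairings such as $\dev\sym(\boldsymbol{n}_+\boldsymbol{n}_-^T):\boldsymbol{n}_+\boldsymbol{n}_-^T=\tfrac{1+c^2}{2}-\tfrac{c^2}{3}$; a direct (if mildly tedious) expansion should yield a determinant equal to a positive multiple of $s^6$, hence nonzero. I expect the main obstacle to be simply the bookkeeping in this last $3\times 3$ determinant, balancing the $\tfrac{1}{3}$ trace corrections against the cross terms. As a sanity check, the orthogonal-face limit $c\to 0$ makes $\{\boldsymbol{t}_e,\boldsymbol{n}_+,\boldsymbol{n}_-\}$ orthonormal and places each of the five tensors in a distinct coordinate direction of $\bST$, making independence obvious and matching the anticipated limit of the determinant. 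An equivalent route, should the intrinsic computation become awkward, is to fix the orthonormal frame in which $\boldsymbol{t}_e=(1,0,0)^T$, $\boldsymbol{n}_+=(0,1,0)^T$, $\boldsymbol{n}_-=(0,c,s)^T$ and read off the five coordinate vectors in $\bST\cong\mathbb{R}^5$; the same two determinants then appear as the block factors of a $5\times 5$ Gram-like matrix.
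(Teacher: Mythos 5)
Your proposal is correct, and the one step you left as ``bookkeeping'' does come out as predicted: with $a=\tfrac{2}{3}$, $b=c^2-\tfrac13$, $d=\tfrac{2c}{3}$, $e=\tfrac12+\tfrac{c^2}{6}$ the $3\times 3$ block has determinant $(a-b)\bigl[e(a+b)-2d^2\bigr]=s^2\cdot\tfrac{s^4}{6}=\tfrac{s^6}{6}>0$, so together with the $2\times2$ factor $\tfrac{s^2}{4}$ the whole system is nonsingular whenever $|c|<1$. Note also that since each of the five tensors is symmetric and traceless, pairing with the rank-one test tensors is the same as pairing with their $\dev\sym$ parts, so your $5\times5$ matrix is exactly the Frobenius Gram matrix of the five candidate basis tensors; its nonsingularity is then literally equivalent to independence, which makes the logic particularly clean. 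This is a genuinely different route from the paper's: there the vanishing combination is contracted as $\boldsymbol{u}\cdot\boldsymbol{\sigma}\cdot\boldsymbol{v}$ against the edge frame $\{\boldsymbol{t}_e,\boldsymbol{n}_{F_+,e},\boldsymbol{n}_{F_-,e}\}$, yielding six scalar equations that are solved somewhat ad hoc, including a case distinction between dihedral angle $\theta=\pi/2$ and $\theta\neq\pi/2$ to kill the coefficient of $\dev\sym(\boldsymbol{n}_+\boldsymbol{n}_-^T)$. Your Gram-matrix computation avoids the case split and exposes the block structure ($\boldsymbol{t}_e\perp\boldsymbol{n}_\pm$ decoupling the mixed from the pure-normal part), at the price of one explicit $3\times3$ determinant; the paper's contractions keep every step to a one-line scalar identity but require choosing the right six test pairs and the angle dichotomy. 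Either argument fully proves the lemma.
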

\begin{lemma}{\rm (Face basis of $\bST$)}{\label{face_basis}}
    Let $\boldsymbol{t}_{F,1}$ and $\boldsymbol{t}_{F,2}$ be two unit tangent vectors of face $F$ satisfying $\boldsymbol{t}_{F,1}\cdot\boldsymbol{t}_{F,2}=0$. Then $\sym(\boldsymbol{n}\boldsymbol{t}_{F,1}^T)$, $\sym(\boldsymbol{n}\boldsymbol{t}_{F,2}^T)$,
    $\sym(\boldsymbol{t}_{F,1}\boldsymbol{t}_{F,2}^T)$,
    $\boldsymbol{t}_{F,1}\boldsymbol{t}_{F,1}^T-\boldsymbol{t}_{F,2}\boldsymbol{t}_{F,2}^T$, and $\dev\left(\boldsymbol{n}\boldsymbol{n}^T\right)$ are linearly independent and form a basis of $\bST$.
\end{lemma}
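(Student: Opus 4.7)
The plan is to verify the five matrices lie in $\bST$, and then prove linear independence via a five-dimensional dimension count. Since $\dim\bST=5$ (symmetric matrices have dimension $6$, cut down by one from the traceless constraint), proving linear independence of the five candidates automatically gives that they form a basis; this reduces the problem to a finite check.

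First I would verify that each of the five matrices is symmetric and traceless. Symmetry is immediate from the $\sym$, the construction $\boldsymbol{t}_{F,1}\boldsymbol{t}_{F,1}^{T}-\boldsymbol{t}_{F,2}\boldsymbol{t}_{F,2}^{T}$, and the $\dev$. For the trace, recall $\operatorname{tr}(\boldsymbol{a}\boldsymbol{b}^{T})=\boldsymbol{a}\cdot\boldsymbol{b}$, so the first three matrices have trace $\boldsymbol{n}\cdot\boldsymbol{t}_{F,1}$, $\boldsymbol{n}\cdot\boldsymbol{t}_{F,2}$, $\boldsymbol{t}_{F,1}\cdot\boldsymbol{t}_{F,2}$ respectively, all zero by the orthogonality assumptions on $\{\boldsymbol{n},\boldsymbol{t}_{F,1},\boldsymbol{t}_{F,2}\}$; the fourth has trace $\|\boldsymbol{t}_{F,1}\|^{2}-\|\boldsymbol{t}_{F,2}\|^{2}=0$; and $\dev(\boldsymbol{n}\boldsymbol{n}^{T})$ is traceless by definition of $\dev$.

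For linear independence, I would take the orthonormal triple $\{\boldsymbol{n},\boldsymbol{t}_{F,1},\boldsymbol{t}_{F,2}\}$ and test a hypothetical vanishing combination
\[
a\sym(\boldsymbol{n}\boldsymbol{t}_{F,1}^{T})+b\sym(\boldsymbol{n}\boldsymbol{t}_{F,2}^{T})+c\sym(\boldsymbol{t}_{F,1}\boldsymbol{t}_{F,2}^{T})+d(\boldsymbol{t}_{F,1}\boldsymbol{t}_{F,1}^{T}-\boldsymbol{t}_{F,2}\boldsymbol{t}_{F,2}^{T})+e\dev(\boldsymbol{n}\boldsymbol{n}^{T})=\boldsymbol{0}
\]
against pairs of basis vectors. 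Sandwiching with $\boldsymbol{n}\cdot(\,\cdot\,)\cdot\boldsymbol{t}_{F,1}$ and $\boldsymbol{n}\cdot(\,\cdot\,)\cdot\boldsymbol{t}_{F,2}$ forces $a=b=0$ since only the first two matrices have a nonzero $\boldsymbol{n}$–$\boldsymbol{t}_{F,i}$ block; testing with $\boldsymbol{t}_{F,1}\cdot(\,\cdot\,)\cdot\boldsymbol{t}_{F,2}$ extracts $c$; the quadratic form $\boldsymbol{t}_{F,1}\cdot(\,\cdot\,)\cdot\boldsymbol{t}_{F,1}-\boldsymbol{t}_{F,2}\cdot(\,\cdot\,)\cdot\boldsymbol{t}_{F,2}$ extracts $d$; and finally $\boldsymbol{n}\cdot(\,\cdot\,)\cdot\boldsymbol{n}$ applied to $\dev(\boldsymbol{n}\boldsymbol{n}^{T})=\boldsymbol{n}\boldsymbol{n}^{T}-\tfrac{1}{3}I$ gives $\tfrac{2}{3}e=0$, so $e=0$.

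There is no real obstacle: the argument is an elementary orthogonality computation once the orthonormal frame is fixed. The only mild subtlety is the order of the tests, which must be chosen so that each step isolates a single coefficient. Combining linear independence with the dimension $5$ of $\bST$ completes the proof.
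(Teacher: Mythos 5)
Your proof is correct and follows essentially the same strategy as the paper's: express a hypothetical vanishing combination and test it against bilinear forms $\boldsymbol{a}\cdot(\cdot)\cdot\boldsymbol{b}$ built from the orthonormal frame $\{\boldsymbol{n},\boldsymbol{t}_{F,1},\boldsymbol{t}_{F,2}\}$. The only cosmetic difference is that you use the difference $\boldsymbol{t}_{F,1}\cdot(\cdot)\cdot\boldsymbol{t}_{F,1}-\boldsymbol{t}_{F,2}\cdot(\cdot)\cdot\boldsymbol{t}_{F,2}$ to isolate $d$ in one step, whereas the paper evaluates the two quadratic forms separately and solves a small linear system; both are fine, and your extra explicit check that each matrix lies in $\bST$ is a harmless addition.
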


Now, we are ready to prove Proposition~\ref{representation of tr_1 bubble}.
\begin{proof}[Proof of Proposition~\ref{representation of tr_1 bubble}]
    It is straightforward to verify that the right-hand side of \eqref{representation of tr_1 bubble sum} is indeed a direct sum, and it forms a subset of $\mathbb{B}_k^{2\cott}(K;\bST)$. It remains to prove the opposite inclusion relation.
    
    For each $\boldsymbol{\sigma}\in \mathbb{B}_k^{2\cott}(K;\bST)$, by \eqref{cinc2 bubble in tr_1} on edge $e\in F_+\cap F_-$ we have $$\sym\left(\Pi_{F_+}\boldsymbol{\sigma}|_e\times\boldsymbol{n}_+\right)=\sym\left(\Pi_{F_-}\boldsymbol{\sigma}|_e\times\boldsymbol{n}_-\right)=0.$$
Hence, the following identities hold on edge $e$:
\begin{equation}
\left\{
\begin{array}{rcl}
\boldsymbol{t}_{F_\pm,e}\cdot\boldsymbol{\sigma}|_e\cdot\boldsymbol{n}_{F_\pm,e} &=& -\boldsymbol{t}_{F_\pm,e}\cdot\tr_1^{\cott}(\boldsymbol{\sigma}|_e)|_{F_\pm}\cdot\boldsymbol{t}_{F_\pm,e} = 0, \\
\boldsymbol{t}_{F_\pm ,e}\cdot\boldsymbol{\sigma}|_e\cdot\boldsymbol{t}_{F_\pm,e} 
- \boldsymbol{n}_{F_\pm,e}\cdot\boldsymbol{\sigma}|_e\cdot\boldsymbol{n}_{F_\pm,e} 
&=& 2\boldsymbol{t}_{F_\pm,e}\cdot\tr_1^{\cott}(\boldsymbol{\sigma}|_e)|_{F_\pm}\cdot\boldsymbol{n}_{F_\pm,e} = 0.
\end{array}
\right.
\label{eq:edge_trace_identity}
\end{equation}

By Lemma~\ref{edge_basis},  $\boldsymbol{\sigma}|_e\in P_{k}(e;\mathbb R)|_e\dev\sym(\boldsymbol{n}_+\boldsymbol{n}_-^T)$. Since each vertex has 3 adjacent edges, $\boldsymbol{\sigma}(\delta)=0$, $\forall\delta\in\mathcal{V}(K)$ and subsequently $\boldsymbol{\sigma}\in P_k^{(0)}(K;\bST)$. Thus, from \eqref{geo decomp}, there exist $p_e\in P_{k-2}(e;\mathbb R)$, $\boldsymbol{U}_{F}\in P_{k-3}(F;\bST)$ and $\boldsymbol{S}\in P_{k-4}(K;\bST)$ such that
$$\boldsymbol{\sigma}=\sum_{e\in\mathcal{E}(K)}b_e p_e\dev\sym(\boldsymbol{n}_+\boldsymbol{n}_-^T)+\sum_{F\in\mathcal{F}(K)}b_F \boldsymbol{U}_{F}+b_K\boldsymbol{S}.$$

On each face $F$, $\tr_1^{\cott}(\boldsymbol{\sigma})|_F=\sym(\Pi_{F}\boldsymbol{\sigma}|_F\times\boldsymbol{n})=0$. This implies on face $F$,
$$\left\{\begin{array}{rcl}
\boldsymbol{t}_{F,1}\cdot\boldsymbol{U}_F\cdot\boldsymbol{t}_{F,2}=0 \\
\boldsymbol{t}_{F,1}\cdot\boldsymbol{U}_F\cdot\boldsymbol{t}_{F,1}-\boldsymbol{t}_{F,2}\cdot\boldsymbol{U}_F\cdot\boldsymbol{t}_{F,2}=0
\end{array} \right..$$

From Lemma~\ref{face_basis}, we see that 
\begin{align*}
    \boldsymbol{U}_{F}&\in P_{k-3}(F;\mathbb R)\otimes\operatorname{span}\{\sym\left(\boldsymbol{n}\boldsymbol{t}_{F,1}^T\right),\,\sym\left(\boldsymbol{n}\boldsymbol{t}_{F,2}^T\right),\,\dev(\boldsymbol{n}\boldsymbol{n}^T)\}.
\end{align*}
Hence there exists $\boldsymbol{p}_F\in P_{k-3}(K;\mathbb R^3)$ such that $\boldsymbol{U}_F=\dev\sym\left(\boldsymbol{n}\boldsymbol{p}_F^T\right)$.
This proves the proposition.
\end{proof}
\begin{remark}
Recall the $H^1\cap H(\inc)$ bubble space with vanishing tangential–tangential trace: 
\begin{align}
{\mathbb{B}}^{1\inc}_{k}\left(K;\bS\right)&=\left\{{\boldsymbol{\sigma}}\in P_{k}\left(K;\bS\right):\tr_1^{\inc}(\boldsymbol{\sigma})|_F= \boldsymbol{n}\times\boldsymbol{\sigma}\times\boldsymbol{n}|_F=0,\;\forall{\text{ face } F\in\mathcal{F}(K)}\right\}.\label{def:1inc_bubble}
\end{align}
Applying the same methodology as Proposition~\ref{representation of tr_1 bubble}, we can show that 
    \begin{equation}
        \begin{aligned}
            \mathbb{B}_k^{1\inc}(K;\bS)&=\oplus_{\substack{e\in \mathcal{E}(K)\\e=F_+\cap F_-}}b_eP_{k-2}(e;\mathbb{R})\sym(\boldsymbol{n}_+\boldsymbol{n}_-^T)\\&\quad \oplus_{F\in \mathcal{F}(K)}b_F\sym\left(P_{k-3}(F;\mathbb{R}^3)\boldsymbol{n}^T\right)\\
    &\quad \oplus b_KP_{k-4}(K;\bS).
        \end{aligned}
\label{eq:1inc_geometric_decompositions}
    \end{equation}
    By Proposition~\ref{representation of tr_1 bubble}, it then holds that
    \begin{align*}
        \mathbb{B}_k^{2\cott}(K;\bST)=\dev \mathbb{B}_k^{1\inc}(K;\bS).
    \end{align*}
\end{remark}
\begin{lemma}
If $\boldsymbol{\sigma}\in\mathbb{B}^{\cott}_k\left(K;\bST\right)$, then $\boldsymbol{\sigma}|_e=0$ for each edge $e\in\mathcal{E}(K)$.\label{edge vanish 1}
\end{lemma}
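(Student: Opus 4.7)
Plan: Combine Propositions \ref{prop:bubbles_in_tr1_bubble} and \ref{representation of tr_1 bubble} with the remaining trace conditions $\tr_2(\boldsymbol\sigma)|_F=\boldsymbol 0$ and $\tr_3(\boldsymbol\sigma)|_F=\boldsymbol 0$ to reduce the edge restriction to a single scalar and then kill it.

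First, by Proposition \ref{prop:bubbles_in_tr1_bubble}, $\boldsymbol\sigma\in\mathbb B^{\tr_1}_k(K;\bST)$, so Proposition \ref{representation of tr_1 bubble} gives
\[
\boldsymbol\sigma = \sum_{e'\in\mathcal E(K)} b_{e'}\,p_{e'}\,\dev\sym(\boldsymbol n_+\boldsymbol n_-^T) + \sum_{F\in\mathcal F(K)}b_F\,\dev\sym(\boldsymbol q_F(\grad\lambda_F)^T) + b_K\,\boldsymbol R,
\]
where $\boldsymbol n_\pm$ depends on the edge $e'$. On a fixed edge $e$, every face bubble $b_F$ and the cell bubble $b_K$ carry a barycentric factor that vanishes on $e$, and for $e'=v_av_b$ with $e'\neq e$ the bubble $b_{e'}=\lambda_a\lambda_b$ also vanishes on $e$ because $\{a,b\}$ shares at least one index with the pair whose barycentrics are zero on $e$. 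Only the edge-$e$ summand survives, giving $\boldsymbol\sigma|_e = b_e|_e\,p_e|_e\,\dev\sym(\boldsymbol n_+\boldsymbol n_-^T)$. Hence the lemma reduces to proving $p_e\equiv 0$ on $e$.

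Next, I would exploit $\tr_2(\boldsymbol\sigma)|_{F_\pm}=\boldsymbol 0$ and $\tr_3(\boldsymbol\sigma)|_{F_\pm}=\boldsymbol 0$ on the two faces $F_\pm$ sharing $e$. Using the alternative form from Lemma \ref{new_representation_traces_corollary},
\[
\tr_2(\boldsymbol\sigma)|_F = \boldsymbol n\times(\sym\curl\boldsymbol\sigma)\times\boldsymbol n - \sym\curl_F(\Pi_F\boldsymbol\sigma\cdot\boldsymbol n),
\]
and computing $\Pi_{F_+}\boldsymbol\sigma|_{F_+}\cdot\boldsymbol n_+$ from the decomposition as a tangential vector with a $p_e$-proportional term along $\boldsymbol n_{F_+,e}$, I restrict $\tr_2(\boldsymbol\sigma)|_{F_+}=\boldsymbol 0$ to $e$ and contract with scalar tensors such as $\boldsymbol t_e\otimes\boldsymbol t_e$ and $\boldsymbol t_e\otimes\boldsymbol n_{F_+,e}$. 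Contributions from other edges $e'\subset F_+$ restrict to $e$ only through normal-to-$e$ derivatives at $e$ (since $b_{e'}|_e=0$), and likewise for the face and cell coefficients. Pairing these identities with their counterparts on $F_-$, invoking $\tr_3=\boldsymbol 0$ and Lemma \ref{trace_couple} to control the couplings, and using the orthogonality of $\boldsymbol n_{F_\pm,e}$ against the edge matrix $\dev\sym(\boldsymbol n_+\boldsymbol n_-^T)$, I arrive at a homogeneous linear ODE along $e$ for $b_ep_e$ with zero Cauchy data at its two endpoints (vertices, where $b_e$ vanishes), forcing $p_e\equiv 0$.

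The main obstacle is organizing the cross-terms cleanly: the normal derivative $\partial_n\boldsymbol\sigma$ at $F_\pm$ picks up nontrivial contributions from $\boldsymbol q_{F_\pm}$ and $\boldsymbol R$, and the tangential vector $\Pi_{F_\pm}\boldsymbol\sigma\cdot\boldsymbol n_\pm$ couples $p_e$ to $p_{e'}$ for the neighbouring edges $e'\subset F_\pm$. Disentangling $p_e$ alone in the resulting scalar identity on $e$ relies on the careful use of all three trace conditions on both faces $F_\pm$ simultaneously, together with the orthogonality structure of the edge basis from Lemma \ref{edge_basis}.
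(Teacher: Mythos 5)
Your overall framing is right: you invoke Propositions \ref{prop:bubbles_in_tr1_bubble} and \ref{representation of tr_1 bubble} to reduce $\boldsymbol\sigma|_e$ to $b_e\,p_e\,\dev\sym(\boldsymbol n_+\boldsymbol n_-^T)$, and you correctly note that the job is to show $p_e\equiv 0$. But what you then offer is a plan, not a proof, and the plan goes down a much harder road than necessary while the one clean step is missing.

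The paper does not compute $\tr_2(\boldsymbol\sigma)|_{F_\pm}$ from the decomposition at all (which, as you correctly worry, would drag in normal derivatives of the face and cell coefficients). Instead it takes the single contraction $\boldsymbol n_{F_\pm,e}\cdot\tr_2(\boldsymbol\sigma)|_{F_\pm}\cdot\boldsymbol n_{F_\pm,e}=0$ on each of the two faces sharing $e$, using the identity
\begin{align*}
\boldsymbol n_{F,e}\cdot\tr_2(\boldsymbol\sigma)|_F\cdot\boldsymbol n_{F,e}
=-\boldsymbol t_{F,e}\cdot(\sym\curl\boldsymbol\sigma)\cdot\boldsymbol t_{F,e}+\partial_{\boldsymbol t_{F,e}}\bigl(\boldsymbol n_{F,e}\cdot\boldsymbol\sigma\cdot\boldsymbol n\bigr),
\end{align*}
and then \emph{subtracts} the two. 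Because $\boldsymbol t_{F_+,e}=-\boldsymbol t_{F_-,e}$ and $\sym\curl\boldsymbol\sigma$ is symmetric, the $\sym\curl$ terms (which carry all the normal-derivative information) cancel identically, and one is left with a pure tangential derivative along $e$,
\begin{align*}
\partial_{\boldsymbol t_{F_+,e}}\bigl(\boldsymbol n_{F_+,e}\cdot\boldsymbol\sigma\cdot\boldsymbol n_+ + \boldsymbol n_{F_-,e}\cdot\boldsymbol\sigma\cdot\boldsymbol n_-\bigr)=0,
\end{align*}
an expression involving only $\boldsymbol\sigma|_e$. Since $\boldsymbol\sigma(\delta)=\boldsymbol 0$ at the vertices (from Proposition \ref{representation of tr_1 bubble}), the quantity inside the derivative vanishes at the endpoints and hence identically on $e$; plugging in $\boldsymbol\sigma|_e=b_e p_e\,\dev\sym(\boldsymbol n_+\boldsymbol n_-^T)$ leaves a nonvanishing geometric factor times $b_e p_e$, so $p_e\equiv 0$.

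You contract with $\boldsymbol t_e\otimes\boldsymbol t_e$ and $\boldsymbol t_e\otimes\boldsymbol n_{F,e}$ but not with $\boldsymbol n_{F,e}\otimes\boldsymbol n_{F,e}$, and you never consider subtracting the two face conditions to kill the $\sym\curl$ term. That subtraction is the whole proof; without it your plan really does face the cross-term obstacle you flag, and it is not clear it closes. You also call on $\tr_3$ and Lemma \ref{trace_couple}, which are not needed for this lemma at all (they are used later for the $\mathbb B^{1\cinc}$ analogue). Finally, the phrase "zero Cauchy data at its two endpoints (vertices, where $b_e$ vanishes)" is a red herring: $b_e p_e$ vanishes at the endpoints for \emph{any} $p_e$ precisely because $b_e$ does, so endpoint vanishing alone constrains nothing; the endpoint information that matters is $\boldsymbol\sigma(\delta)=\boldsymbol 0$ applied to a quantity that is \emph{not} a priori of the form $b_e\cdot(\cdot)$, feeding into the integrated $\partial_{\boldsymbol t}$ equation.
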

\begin{proof}  Let $\boldsymbol{\sigma}\in\mathbb{B}^{\cott}_k\left(K;\bST\right)$. Recall \eqref{new_form_trace_2}. On each edge $e=F_+\cap F_-$ it holds that
    \begin{align*}
0&=\boldsymbol{n}_{F_+,e}\cdot\tr_2^{\cott}(\boldsymbol{\sigma})|_{F_+,e}\cdot\boldsymbol{n}_{F_+,e}-\boldsymbol{n}_{F_-,e}\cdot\tr_2^{\cott}(\boldsymbol{\sigma})|_{F_-,e}\cdot\boldsymbol{n}_{F_-,e}\\
&=\left(-\boldsymbol{t}_{F_+,e}\cdot\left(\sym\curl\boldsymbol{\sigma}\right)\cdot\boldsymbol{t}_{F_+,e}+\partial_{\boldsymbol{t}_{F_+,e}}\left(\boldsymbol{n}_{F_+,e}\cdot\boldsymbol{\sigma}\cdot\boldsymbol{n}_{+}\right)\right)\\
&\quad-\left(-\boldsymbol{t}_{F_-,e}\cdot\left(\sym\curl\boldsymbol{\sigma}\right)\cdot\boldsymbol{t}_{F_-,e}+\partial_{\boldsymbol{t}_{F_-,e}}\left(\boldsymbol{n}_{F_-,e}\cdot\boldsymbol{\sigma}\cdot\boldsymbol{n}_{-}\right)\right)\\
&=\partial_{\boldsymbol{t}_{F_+,e}}\left(\boldsymbol{n}_{F_+,e}\cdot\boldsymbol{\sigma}\cdot\boldsymbol{n}_{+}+\boldsymbol{n}_{F_-,e}\cdot\boldsymbol{\sigma}\cdot\boldsymbol{n}_{-}\right).
    \end{align*}
From Proposition~\ref{representation of tr_1 bubble}, $\boldsymbol{\sigma}(\delta)=0$,  $\forall\delta\in\mathcal{V}(K)$ and there exists $p_e\in P_{k-2}(e;\mathbb R)$ such that $\boldsymbol{\sigma}|_e=b_ep_e\dev\sym\left(\boldsymbol{n}_+\boldsymbol{n}_-^T\right)$ for each edge $e$. Hence, we obtain $\boldsymbol{n}_{F_+,e}\cdot\boldsymbol{\sigma}\cdot\boldsymbol{n}_{+}+\boldsymbol{n}_{F_-,e}\cdot\boldsymbol{\sigma}\cdot\boldsymbol{n}_{-}=0$ on $e$. 
This implies, $$\left[\left(\boldsymbol{n}_{F_+,e}\cdot\boldsymbol{n}_{-}\right)+\left(\boldsymbol{n}_{F_-,e}\cdot\boldsymbol{n}_{+}\right)\right]b_ep_e=0\text{ on } e.$$
Thus, $\boldsymbol{\sigma}|_e=0$ for each edge $e\in \mathcal{E}(K)$.
\end{proof}
Recall our definition of $\mathbb{B}_{k}^{\div, (s)}(K;\bST)$ in \eqref{def:div_bubble_extra_smoothness}.
\begin{proposition}
\label{proposition: geometric decompositions for div bubble}
    Assume $s\geq 1$ and $k\geq2s+1$. Then $\mathbb{B}_{k}^{\div, (s)}(K;\bST)$ can be decomposed geometrically as follows:
    \begin{equation}
        \begin{aligned}
            \mathbb{B}_{k}^{\div, (s)}(K;\bST)&= \oplus_{F\in\mathcal{F}(K)}b_FP_{k-3}^{(s-2)}(F;\mathbb{R})\sym(\boldsymbol{t}_{F,1}\boldsymbol{t}_{F,2}^T)\\
    &\quad\oplus_{F\in\mathcal{F}(K)}b_FP_{k-3}^{(s-2)}(F;\mathbb{R})\left(\boldsymbol{t}_{F,1}\boldsymbol{t}_{F,1}^T-\boldsymbol{t}_{F,2}\boldsymbol{t}_{F,2}^T\right)\\
    &\quad\oplus b_K P_{k-4}^{(s-3)}(K;\bST).
        \end{aligned}
        \label{representation of div bubbles}
    \end{equation}
\end{proposition}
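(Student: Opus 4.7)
The plan is to establish both inclusions of (\ref{representation of div bubbles}) and the directness of the sum separately, combining the general geometric decomposition of $P_k^{(s)}(K;\bST)$ from Proposition~\ref{prop:geometric_decompositions} with the face and edge bases of $\bST$ furnished by Lemmas~\ref{face_basis} and~\ref{edge_basis}.

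For the $\supset$ inclusion, I will verify each summand on the right-hand side satisfies both the div-bubble face condition $\boldsymbol{\sigma}\boldsymbol{n}|_{F'}=\boldsymbol{0}$ and the vertex supersmoothness of order $s$. Take a typical face term $b_F\, p\, M$ with $p\in P_{k-3}^{(s-2)}(F;\mathbb{R})$ and $M\in\{\sym(\boldsymbol{t}_{F,1}\boldsymbol{t}_{F,2}^T),\ \boldsymbol{t}_{F,1}\boldsymbol{t}_{F,1}^T-\boldsymbol{t}_{F,2}\boldsymbol{t}_{F,2}^T\}$. On $F'\neq F$ the factor $b_F$ vanishes, while on $F$ itself $M\boldsymbol{n}=\boldsymbol{0}$ since $M$ lies in the tangent-tangent subspace, so the div-bubble condition holds. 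Supersmoothness is automatic because $b_F P_{k-3}^{(s-2)}(F;\mathbb{R})\subset P_k^{(s)}(K;\mathbb{R})$ by Proposition~\ref{prop:geometric_decompositions}, and this is unaffected by multiplication by a constant matrix $M$. The interior summand $b_K P_{k-4}^{(s-3)}(K;\bST)$ is likewise contained in $P_k^{(s)}(K;\bST)$ by Proposition~\ref{prop:geometric_decompositions} and trivially satisfies the face condition. Directness then follows because each face-$F$ summand sits in the distinct piece $b_F P_{k-3}^{(s-2)}(F;\bST)$ of the direct-sum decomposition in Proposition~\ref{prop:geometric_decompositions}, and within one such block the two tangent-tangent matrices are linearly independent by Lemma~\ref{face_basis}.

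For the reverse inclusion, take $\boldsymbol{\sigma}\in\mathbb{B}_k^{\div,(s)}(K;\bST)$ and expand it via Proposition~\ref{prop:geometric_decompositions} (applied componentwise to $\bST$) as
\[
\boldsymbol{\sigma}=\sum_{e\in\mathcal{E}(K)}b_e^{s+1}\boldsymbol{\tau}_e+\sum_{F\in\mathcal{F}(K)}b_F\boldsymbol{\tau}_F+b_K\boldsymbol{\tau}_K,
\]
with $\bST$-valued coefficients $\boldsymbol{\tau}_e\in P_{k-2s-2}(e;\bST)$, $\boldsymbol{\tau}_F\in P_{k-3}^{(s-2)}(F;\bST)$, $\boldsymbol{\tau}_K\in P_{k-4}^{(s-3)}(K;\bST)$. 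The goal is to show $\boldsymbol{\tau}_e\equiv\boldsymbol{0}$ for every edge and that each $\boldsymbol{\tau}_F$ takes values in the two-dimensional tangent-tangent subspace. Fixing a face $F$, only the $e\subset F$ edge terms and the $F$-term survive on restriction to $F$ (since $b_{e'}|_F=0$ for $e'\not\subset F$, $b_{F'}|_F=0$ for $F'\neq F$, and $b_K|_F=0$), and the surviving expression fits precisely into the 2D analogue of Proposition~\ref{prop:geometric_decompositions} on $F$. Contracting with $\boldsymbol{n}$ and invoking the 2D directness then separates $\boldsymbol{\sigma}\boldsymbol{n}|_F=\boldsymbol{0}$ into the individual conditions $\boldsymbol{\tau}_e\boldsymbol{n}_{\pm}=\boldsymbol{0}$ along each $e=F_+\cap F_-$ and $\boldsymbol{\tau}_F\boldsymbol{n}=\boldsymbol{0}$ on $F$. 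The first forces $\boldsymbol{\tau}_e=\boldsymbol{0}$, since a symmetric matrix annihilating the linearly independent vectors $\boldsymbol{n}_+$ and $\boldsymbol{n}_-$ must be proportional to $\boldsymbol{t}_e\boldsymbol{t}_e^T$, and tracelessness removes the remaining scalar degree of freedom. The second, combined with Lemma~\ref{face_basis}, pins $\boldsymbol{\tau}_F$ into the claimed two-dimensional tangent-tangent subspace.

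The main technical obstacle is keeping track of how the 3D geometric decomposition of $\boldsymbol{\sigma}$ interacts with the 2D decomposition induced on each face: one has to justify that $b_e^{s+1}|_F\boldsymbol{\tau}_e|_F$ lands in the edge piece and $b_F|_F\boldsymbol{\tau}_F|_F$ in the face piece of the 2D version of Proposition~\ref{prop:geometric_decompositions} applied to $P_k^{(s)}(F;\mathbb{R}^3)|_F$, so that the per-component vanishing from the face condition can be extracted. Once this compatibility is cleanly set up, the elimination of edge coefficients via the two-normal argument on each edge and the pinning of face coefficients via Lemma~\ref{face_basis} reduce to straightforward bookkeeping.
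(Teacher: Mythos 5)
Your proof is correct and follows essentially the same route as the paper: both rest on the geometric decomposition in Proposition~\ref{prop:geometric_decompositions}, elimination of the edge contributions via the two-normal condition $\boldsymbol{\sigma}\cdot\boldsymbol{n}_{\pm}|_e=\boldsymbol{0}$ along each edge $e=F_+\cap F_-$, and identification of the face part via Lemma~\ref{face_basis}. The only stylistic difference is in the edge step: you argue directly that a symmetric traceless matrix annihilating the two linearly independent normals must vanish, whereas the paper expands $\boldsymbol{\tau}|_e$ in the edge basis of Lemma~\ref{edge_basis} and solves the resulting small linear system; both lead to the same conclusion $\boldsymbol{\tau}|_e=\boldsymbol{0}$ before or after invoking the decomposition.
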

\begin{proof}
    It suffices to show the one side inclusion ``$\subset$'', since the converse is straightforward. From Lemma~\ref{edge_basis}, on edge $e=F_+\cap F_-$, we can decompose $\boldsymbol{\tau}\in \mathbb{B}_{k}^{\div}(K;\bST)$ as $\boldsymbol{\tau}|_e=u_1\sym(\boldsymbol{t_e}\boldsymbol{n}_+^T)+u_2\sym(\boldsymbol{t_e}\boldsymbol{n}_-^T)+u_3\dev(\boldsymbol{n}_+\boldsymbol{n}_+^T)+u_4\dev(\boldsymbol{n}_-\boldsymbol{n}_-^T)+u_5\dev\sym(\boldsymbol{n}_+\boldsymbol{n}_-^T)$ with $u_i\in P_{k}(e;\mathbb R)$. By definition, $\boldsymbol{\tau}\cdot\boldsymbol{n}_+|_e=\boldsymbol{\tau}\cdot\boldsymbol{n}_-|_e=0$. Therefore, we have 
$$\left\{\begin{array}{rcl}
   \tfrac{1}{2}\left(u_1+(\boldsymbol{n}_+\cdot\boldsymbol{n}_-)u_2\right)\boldsymbol{t}_e +\left(\tfrac{2}{3}u_3-\tfrac{1}{3}u_4+\tfrac{1}{6}(\boldsymbol{n}_+\cdot\boldsymbol{n}_-)u_5\right)\boldsymbol{n}_+ +\left((\boldsymbol{n}_+\cdot\boldsymbol{n}_-)u_4+\tfrac{1}{2}u_5\right)\boldsymbol{n}_-=0,\\
\tfrac{1}{2}\left(u_2+(\boldsymbol{n}_+\cdot\boldsymbol{n}_-)u_1\right)\boldsymbol{t}_e +\left(\tfrac{2}{3}u_4-\tfrac{1}{3}u_3+\tfrac{1}{6}(\boldsymbol{n}_+\cdot\boldsymbol{n}_-)u_5\right)\boldsymbol{n}_- +\left((\boldsymbol{n}_+\cdot\boldsymbol{n}_-)u_3+\tfrac{1}{2}u_5\right)\boldsymbol{n}_+=0\\
\end{array} \right.$$
on $e$.
Since $\boldsymbol{t}_e$, $\boldsymbol{n}_+$ and $\boldsymbol{n}_-$ are linearly independent, and $-1< \boldsymbol{n}_+\cdot\boldsymbol{n}_-< 1$, we obtain that $u_i=0, 1\leq i\leq 5$ on $e$. As a consequence, for each $\boldsymbol{\tau}\in \mathbb{B}_{k}^{\div}(K;\bST)$, $\boldsymbol{\tau}|_e=0$ for each edge $e\in\mathcal{E}(K)$ and $D^{\alpha}\boldsymbol{\tau}(\delta)=0$ for any multi-indices $|\alpha|\leq 1$, $\forall \delta\in\mathcal{V}(K)$. Since $\boldsymbol{\tau}\cdot\boldsymbol{n}|_F=0$ for each face $F\in\mathcal{F}(K)$, from the geometric decomposition of $P_k^{(s)}(K;\bST)$ \eqref{geo decomp} and face basis of $\bST$ (Lemma~\ref{face_basis}), we obtain the proof.
\end{proof}
\begin{remark}
If $\boldsymbol{\sigma}\in\mathbb{B}^{\div}_k\left(K;\bST\right)$, then $\boldsymbol{\sigma}|_e=0$ for each edge $e\in\mathcal{E}(K)$. Therefore,
     \begin{align}
       \mathbb{B}_{k}^{\div}(K;\bST)\subset P^{(1)}_k(K;\bST), \quad \mathbb{B}_{k}^{\div}(K;\bST)=\mathbb{B}_{k}^{\div, (1)}(K;\bST)\label{div bubble equivalence}
    \end{align}
    due to vertex supersmoothness.
\end{remark}
\subsection{Unified BGG construction of bubble complexes}
We construct the bubble complexes in stages, guided by the BGG framework. Bubble elasticity complexes have been derived in \cite{arnold2008finite}, bubble divdiv complexes in \cite{chen2022finitedivdiv}, and related BGG constructions (including variants with enhanced smoothness) in \cite{chen2025complexes}. Since our BGG construction of the bubble conformal complexes requires bubble elasticity and bubble divdiv complexes with specific smoothness properties, we present a self-contained derivation tailored to this setting. In particular, to clarify the structure of the (often intricate) bubble spaces and to provide intuition for their role in exact sequences, we develop the BGG constructions from scratch, starting with the canonical \emph{bubble de~Rham} complex and the \emph{bubble Stokes} complex, together with their smoother variants.

 Denote by the $H(\curl)$ and the modified $H(\div)$ bubble spaces as    \begin{align*}
            \mathbb{B}_{k}^{\curl}(K;\mathbb{X}) &:=\{\boldsymbol{p}\in P_k(K;\mathbb  X):\; \boldsymbol{p} \times\boldsymbol{n}|_F=0, \;\forall \text{ face }F\},\quad \mathbb X \in\{ \mathbb R^3, \mathbb M\},\\
    \mathbb B_{k}^{\div *}(K;\mathbb{X})&:=\{\boldsymbol{p} \in \mathbb B_{k}^{\div }(K;\mathbb{X}):\boldsymbol{p}|_e =0,\;\forall\text{ edge }e\}, \quad \mathbb X \in\{ \mathbb R^3, \mathbb M\}. 
\end{align*}
We also define auxiliary polynomial spaces that vanish on edges:       \begin{align*}
            P^*_{k-2}(K;\mathbb R) &:= \{p\in P_{k-2}(K;\mathbb R):p|_e=0 \; \forall \text{ edge }e\}.\\
             P^{(2)*}_{k-2}(K;\mathbb R) &:= \{p\in P_{k-2}^{(2)}(K;\mathbb R):p|_e=0 \; \forall \text{ edge }e\}.
        \end{align*}
The following geometric decompositions are straightforward: 
\begin{equation}
        \begin{aligned}
            \mathbb{B}_k^{\curl}(K;\mathbb R^3)=&\oplus_{F\in \mathcal{F}(K)}b_FP_{k-3}(F;\mathbb{R}) \boldsymbol{n}\\
    &\oplus b_KP_{k-4}(K;\mathbb R^3).
        \end{aligned}
        \label{eq:representation of curl bubble sum}
    \end{equation}
    \begin{equation}
        \begin{aligned}
            \mathbb{B}_k^{\div*}(K;\mathbb R^3)=&\oplus_{F\in \mathcal{F}(K)}b_FP_{k-3}(F;\mathbb{R}) \boldsymbol{t}_{F,1}\\
            &\oplus_{F\in \mathcal{F}(K)}b_FP_{k-3}(F;\mathbb{R}) \boldsymbol{t}_{F,2}\\
    &\oplus b_KP_{k-4}(K;\mathbb R^3).
        \end{aligned}
        \label{eq:representation of div* bubble sum}
    \end{equation}
The corresponding decompositions for $\mathbb B_k^{\curl}(\boldsymbol)$ and $\mathbb B_k^{\div*}(\boldsymbol)$ can be obtained by replicating the vector versions for each row.
\begin{lemma}[Bubble de~Rham complex]\label{lemma:bubble-de-rham}
    The following sequence is exact:
\begin{align*}
    {0} \xrightarrow{} b_KP_{k-3}(K;\mathbb{R})\xrightarrow{\grad} \mathbb{B}_{k}^{\curl}(K;\mathbb{R}^3) \xrightarrow[]{\curl} \mathbb{B}^{\div}_{k-1}(K;\mathbb{R}^3)  \xrightarrow[]{\div} P_{k-2}(K;\mathbb R) \cap \mathbb R^\perp \xrightarrow{} 0. 
\end{align*}
\end{lemma}

\begin{corollary}[Smoother bubble de~Rham complex]\label{cor:smoother-de-rham}
    The following sequence is exact:
    \begin{align*}
        {0} \xrightarrow{} b_K^2P_{k-7}(K;\mathbb{R})\xrightarrow{\grad} b_KP_{k-4}(K;\mathbb{R}^3) \xrightarrow[]{\curl} \mathbb B_{k-1}^{\div *}(K;\mathbb{R}^3)\cap\ker(\div)  \xrightarrow[]{\div} 0 .
    \end{align*}
\end{corollary}
\begin{proof}The sequence is clearly a complex. We therefore only focus on the reverse inclusion.

    {\bf Step 1: $\ker(\div)\subset\ran(\curl)$.} For any $\boldsymbol{p}\in \ker(\div)$, from Lemma \ref{lemma:bubble-de-rham}, there exists $\boldsymbol{q} \in \mathbb{B}_{k}^{\curl}$ such that $\boldsymbol{p} =\curl\boldsymbol{q}$. Using geometric decompositions \eqref{eq:representation of curl bubble sum}, there exist $q_F\in P_{k-3}(F;\mathbb R)$ for each face $F$ and $\boldsymbol{r}\in P_{k-4}(K;\mathbb R^3)$ such that $\boldsymbol{q} = \sum_Fb_Fq_F\grad\lambda_F + b_K\boldsymbol{r}$. For each edge $e = F_+\cap F_-$, It holds that
    \begin{align*}
 \curl \boldsymbol{q}|_e = b_e (q_{F_+} -q_{F_-})|_e (\grad \lambda_{F_-}\times \grad \lambda_{F_+})=0.
    \end{align*}
    Therefore, $q_{F_+} = q_{F_-}$ on edge $e$. Hence, there exists ${q}'\in P_{k-3}(K;\mathbb R)$ such that $q'|_F = q_F|_F$ on each face $F$. Then, we obtain that $\boldsymbol q - \grad(b_K q')$ vanishes on all faces, hence in $b_KP_{k-4}(K;\mathbb R^3)$. Since $ \curl \left(\boldsymbol q - \grad(b_K q')\right) = \curl \boldsymbol{q} = \boldsymbol{p}$, we obtain the result.

    {\bf Step 2: $\ker(\curl)\subset\ran(\grad)$.} For any $\boldsymbol{p} \in \ker(\curl)$, from the exactness in Lemma \ref{lemma:bubble-de-rham}, there exists $q\in P_{k-3}(K;\mathbb R)$ such that $\boldsymbol{p} = \grad (b_K q) =\sum_Fb_Fq\grad\lambda_F + b_K\grad q$. Since $\boldsymbol{p}|_F=0$ for each face $F$, $q|_F=0$ for each face $F$, therefore $q\in b_KP_{k-7}(K;\mathbb R)$. This completes the proof.
\end{proof}
\begin{lemma}[Bubble Stokes complex \cite{neilan2015discrete}]
\label{lemma:bubble-stokes-complex}
    The following sequence is exact:
    \begin{align*}
        {0} \xrightarrow{} b_K^2P_{k-7}(K;\mathbb{R})\xrightarrow{\grad} b_K\mathbb{B}_{k-4}^{\curl}(K;\mathbb{R}^3) \xrightarrow[]{\curl} b_KP_{k-5}(K;\mathbb{R}^3)  \xrightarrow[]{\div} P_{k-2}^*(K;\mathbb R) \cap \mathbb R^\perp \xrightarrow{} 0 .   
    \end{align*}
\end{lemma}
\begin{corollary}
    \label{cor:smoother-bubble-stokes-complex}
    The following sequence is exact for $k\geq 7$:
    \begin{align*}
            {0} \xrightarrow{} b_K^2P_{k-7}(K;\mathbb{R})\xrightarrow{\grad} b_K\mathbb{B}_{k-4}^{\curl}(K;\mathbb{R}^3) \xrightarrow[]{\curl} b_KP_{k-5}^{(0)}(K;\mathbb{R}^3)  \xrightarrow[]{\div} P_{k-2}^{(2)*}(K;\mathbb R) \cap \mathbb R^\perp \xrightarrow{} 0 .
    \end{align*}
\end{corollary}
\begin{proof}
By the geometric decomposition \eqref{eq:representation of curl bubble sum}, for every $\boldsymbol{p}\in\mathbb B^{\curl}_k(K;\mathbb R^3)$ we have $\boldsymbol{p}|_e=0$ on each edge $e$. Consequently,
\[
\mathbb B^{\curl}_{k-4}(K;\mathbb R^3)\subset P^{(1)}_{k-4}(K;\mathbb R^3),
\qquad
b_K\mathbb B^{\curl}_{k-4}(K;\mathbb R^3)\subset P^{(4)}_k(K;\mathbb R^3).
\]
Moreover, since $b_KP^{(0)}_{k-5} \subset P^{(3)}_{k-1}$, Lemma~\ref{lemma:bubble-stokes-complex} implies that the displayed sequence is a complex. It remains to show exactness of the right part of the sequence.

Again by Lemma~\ref{lemma:bubble-stokes-complex},
\[
\left(b_KP_{k-5}(K;\mathbb R^3)\right)\cap \ker(\div)
= \curl\left(b_K\mathbb B^{\curl}_{k-4}(K;\mathbb R^3)\right)
\subset b_KP_{k-5}^{(0)}(K;\mathbb R^3),
\]
and hence
\[
\left(b_KP_{k-5}(K;\mathbb R^3)\right)\cap \ker(\div)
= \left(b_KP_{k-5}^{(0)}(K;\mathbb R^3)\right)\cap \ker(\div).
\]
Therefore,
\begin{align*}
\dim \ran(\div)
&= 3\bigl(\dim P_{k-5}(K;\mathbb R)-4\bigr) - \dim \ker(\div)\\
&= 3\bigl(\dim P_{k-5}(K;\mathbb R)-4\bigr) - \dim \mathbb B_{k-4}^{\curl}(K;\mathbb R^3) + \dim P_{k-7}(K;\mathbb R)\\
&= \dim P_{k-2}^{(2)*}(K;\mathbb R) - 1,
\end{align*}
where $\dim P_{k-2}^{(2)*}(K;\mathbb R) = 4\,\dim P_{k-5}^{(0)}(F;\mathbb R) + \dim P_{k-6}(K;\mathbb R)$ for $k\ge 7$ by geometric decompositions (Proposition~\ref{prop:geometric_decompositions}). Thus $\div$ is surjective, and the proof is complete.
\end{proof}
\begin{corollary}
\label{cor:smoothest-bubble-stokes-complex}
        The following sequence is exact for $k\geq 8$:
        \begin{align*}
            0\xrightarrow{} b_K^3P_{k-11}(K;\mathbb{R})\xrightarrow{\grad} b_K^2P_{k-8}(K;\mathbb{R}^3) \xrightarrow[]{\curl} b_K \mathbb B_{k-5}^{\div*}(K;\mathbb{R}^3) \xrightarrow[]{\div} \left(b_K P_{k-6}^{(0)}(K;\mathbb{R})\right) \cap\mathbb R^\perp\xrightarrow{}0.
        \end{align*}
\end{corollary}
\begin{proof}
    It is straightforward to check that the sequence is a complex. $\ker(\curl)=\ran(\grad)$ and $\ker(\div)=\ran(\curl)$ follow from Lemma~\ref{lemma:bubble-stokes-complex} via the same supersmoothness arguments used in Corollary~\ref{cor:smoother-de-rham}. To see $\div$ is a surjection, we use geometric decompositions \eqref{eq:representation of div* bubble sum} to compute
    \begin{align*}
        \dim\ran(\div) &=\dim \mathbb B_{k-5}^{\div*}(K;\mathbb{R}^3) - \dim \ker(\div)\\
        &=8 \dim P_{k-8}(F;\mathbb R)+\dim P_{k-9}(K;\mathbb R^3) - \dim \ran(\curl)\\
        &= 8 \dim P_{k-8}(F;\mathbb R)+\dim P_{k-9}(K;\mathbb R^3) - \dim P_{k-8}(K;\mathbb R^3) + \dim P_{k-11}(K;\mathbb R)\\
        & = \dim P_{k-6}(K;\mathbb R) -5.
    \end{align*}
    This completes the proof.
\end{proof}
\begin{remark}
    We adopt the convention that $P_k(K;\mathbb{X}) = {0}$ for $k < 0$, so bubble spaces of the form $b_K^s P_{k-m}(K;\mathbb{X})$ 
  are trivial whenever $k < m$. For $P_k^{(s)}(K;\mathbb{X})$, we always assume $k \geq 2s+1$. 
\end{remark}
\subsubsection{BGG construction for bubble elasticity complex} 
Recall the symmetric $\inc$ bubble space $\mathbb{B}_{k-4}^{1\inc}(K;\mathbb{S})$ with vanishing tangential-tangential trace \eqref{def:1inc_bubble}. We start by introducing its enrichment to $\boldsymbol$:
\begin{lemma}\label{lemma:decomposition_inc_bubble_bgg}Let $$\mathbb B^{1\inc}_{k}(K;\mathbb M):=\{\boldsymbol{\sigma}\in P_k(K;\mathbb M): \left(S^{-1}(\boldsymbol{\sigma}\times\boldsymbol{n})\right)\times\boldsymbol{n}|_F =0\}.$$
    Then, it holds that \begin{align*}
    \mathbb B^{1\inc}_{k}(K;\mathbb M)=\mathbb B^{1\inc}_k(K;\mathbb S) \,\oplus^{\perp}\,\mskw \, P_k(K;\mathbb R^3).
\end{align*}
\end{lemma}
\begin{proof}
Let $\boldsymbol{\sigma}\in P_k(K;\mathbb M)$ and write $\boldsymbol{\sigma}=\sym\boldsymbol{\sigma}+\skw\boldsymbol{\sigma}$.  
Since $\skw\boldsymbol{\sigma}=\mskw(\boldsymbol{a})$ for some $\boldsymbol{a}\in P_k(K;\mathbb R^3)$, the anti-commutativity of the BGG diagram \eqref{eq:bgg-construction-elasticity} gives (similar to $\curl S^{-1}\curl \mskw = -\curl \grad=0$):
\begin{equation}
\label{eq:anti-commutativity-inc}
    \begin{aligned}
        \big(S^{-1}(\mskw(\boldsymbol{a})\times\boldsymbol{n})\big)\times\boldsymbol{n}= (\boldsymbol{a}\boldsymbol{n}^T)\times\boldsymbol{n} =0.
    \end{aligned}
\end{equation}

Hence,
\[
\big(S^{-1}(\boldsymbol{\sigma}\times\boldsymbol{n})\big)\times\boldsymbol{n}
=-\,\boldsymbol{n}\times\sym\boldsymbol{\sigma}\times\boldsymbol{n},
\]
so that
\[
\boldsymbol{\sigma}\in\mathbb B^{1\inc}_k(K;\mathbb M)
\iff \sym\boldsymbol{\sigma}\in\mathbb B^{1\inc}_k(K;\mathbb S).
\]
In particular, $\mskw P_k(K;\mathbb R^3)\subset\mathbb B^{1\inc}_k(K;\mathbb M)$, giving the inclusion ``$\supset$''.  
Conversely, if $\boldsymbol{\sigma}\in\mathbb B^{1\inc}_k(K;\mathbb M)$, then $\sym\boldsymbol{\sigma}\in\mathbb B^{1\inc}_k(K;\mathbb S)$ and $\skw\boldsymbol{\sigma}\in \mskw P_k(K;\mathbb R^3)$, yielding ``$\subset$''.  
This completes the proof.
\end{proof}
Using bubble de~Rham complex (Lemma~\ref{lemma:bubble-de-rham}) and its smoother version (Corollary~\ref{cor:smoother-de-rham}), we construct BGG diagram for bubble elasticity complex as follows:
\begin{lemma}[BGG construction for bubble elasticity complex]\label{lemma:bgg-elasticity}
    The following two sequences are exact, and the diagram is anti-commutative:
    \[
\begin{tikzcd}[column sep=2.5em,row sep=3.2em]
0 \arrow[r] & b_K^2P_{k-7}(\mathbb R^3)\arrow[r,"\operatorname{grad}"] &
b_K\mathbb B_{k-4}^{1\inc}(\mathbb M)\arrow[r,"\operatorname{curl}"] &
\left(S\,\mathbb B_{k-1}^{\curl}(\mathbb{M})\right)\cap\ker(\div)\arrow[r,"\operatorname{div}"] & 0
\\[2pt]
0 \arrow[r] & b_KP_{k-4}(\mathbb R^3) \arrow[r,"\operatorname{grad}"]
\arrow[ru,"\mskw"] &
\mathbb B_{k-1}^{\curl}(\mathbb M)\cap\ker(\div S)\arrow[r,"\operatorname{curl}"] \arrow[ru,"S"] &
\mathbb{B}^{\div}_{k-2}(\mathbb{S})  \cap\ker(\div)\arrow[r,"\operatorname{div}"] \arrow[ru,"-2\operatorname{vskw}"] &
0.
\end{tikzcd}
\]
Here $\mskw$ is an injection, $S$ is a bijection, and $-2\vskw$ is a surjection.
\end{lemma}
\begin{proof}
$\mskw$ is an injection by Lemma~\ref{lemma:decomposition_inc_bubble_bgg} and $-2\vskw$ is a surjection by definition. $S$ is a surjection by construction. Anti-commutativity follows from the BGG construction \eqref{eq:bgg-construction-elasticity}.

 We first show the second sequence is exact. Since $2\vskw \curl = \div S$, from Lemma~\ref{lemma:bubble-de-rham}, we have
    \begin{align*}
    \mathbb B_{k-1}^{\curl}(K;\mathbb M)\cap\ker(\div S)\cap\ker(\curl) &= \mathbb B_{k-1}^{\curl}(K;\mathbb M)\cap\ker(\curl)\\ &
    = \grad \left(b_K^2 P_{k-8}(K;\mathbb R^3)\right),\\
        \mathbb{B}^{\div}_{k-2}(K;\mathbb{S})  \cap\ker(\div) &= \mathbb{B}^{\div}_{k-2}(K;\mathbb{M}) \cap\ker(\div) \cap\ker(\vskw)  \\
        &=\left(\curl \mathbb B_{k-1}^{\curl}(K;\mathbb M)\right)\cap\ker(\vskw)\\
        & = \curl\left(\mathbb B_{k-1}^{\curl}(K;\mathbb M)\cap\ker(\div S)\right).
    \end{align*}
    Hence the exactness of the second complex is established. Next, we show the first sequence is exact. First, we show 
    $$S\,\mathbb B_{k-1}^{\curl}(K;\mathbb{M}) \subset \mathbb B_{k-1}^{\div *}(K;\mathbb M).$$
    For any $\boldsymbol{\sigma}\in \mathbb B_{k-1}^{\curl}(K;\mathbb M)$, $\boldsymbol{\sigma}|_e=0$ for each edge $e$. Therefore, $S\boldsymbol{\sigma}|_e$ for each edge $e$. Furthermore, the BGG identity $\div S = 2\vskw\curl$ also holds if we replace $\nabla$ with $\boldsymbol{n}$: $$(S\boldsymbol{\sigma})\boldsymbol{n}|_F = -2\vskw(\boldsymbol{\sigma}\times\boldsymbol{n})|_F =0$$ for each face $F$, which implies $S$ is an injection between $\mathbb B_{k-1}^{\curl}(K;\mathbb{M}) $ and $\mathbb B_{k-1}^{\div *}(K;\mathbb M)$. Hence, for any $\boldsymbol{\sigma}\in\mathbb B_{k-1}^{\curl}(K;\mathbb{M})$ and $\div S\boldsymbol{\sigma}=0$, there exists $\boldsymbol{\tau}\in P_{k-4}(\mathbb M)$ such that $S\boldsymbol{\sigma} = \curl(b_K \boldsymbol{\tau})$ from Corollary~\ref{cor:smoother-de-rham}, and therefore, $S^{-1}\curl(b_K \boldsymbol{\tau}) \in \mathbb B^{\curl}_{k-1}(K;\mathbb M)$.
    By definition, for each face $F$ it holds that \begin{align*}
        0 & =\left(S^{-1}\curl(b_K\boldsymbol{\tau})\right)\times \boldsymbol{n}|_F = \tfrac{b_F}{h_F}\left(S^{-1}(\boldsymbol{\tau}\times\boldsymbol{n})\right)\times \boldsymbol{n}|_F.
    \end{align*}
    Hence, $\boldsymbol{\tau}\in \mathbb B_{k-4}^{1\inc}(K;\mathbb M)$, which implies $\ker(\div) \subset\ran(\curl)$. The reverse inclusion $\ran(\curl)\subset\ker(\div)$ follows from a similar expansion.

    Finally, for any $\boldsymbol{p}\in P_{k-7}(K;\mathbb R^3)$, we have$$\grad(b_K^2\boldsymbol{p})=b_K\left(2\sum_{F\in\mathcal{F}(K)}b_F\boldsymbol{p}\grad\lambda_F^T+b_K\grad\boldsymbol{p}\right),$$
and therefore $\grad$ maps to $b_K\mathbb B_{k-4}^{1\inc}(K;\mathbb M)$. Since $\mathbb B_{k-4}^{1\inc}(K;\mathbb M)\subset P_{k-4}(K;\boldsymbol)$, $\ran(\grad)=\ker(\curl)$ from Corollary~\ref{cor:smoother-de-rham}. The first sequence is also exact. This completes the proof.
\end{proof}
\begin{lemma}[Bubble elasticity complex  in \cite{arnold2008finite}]\label{lemma:bubble_elasticity}
    For $k\geq 6$, the following sequence is exact:
    \begin{align*}
        0 \xrightarrow{} b_K^2P_{k-7}(K;\mathbb{R}^3)\xrightarrow{\Def} b_K\mathbb{B}_{k-4}^{1\inc}(K;\mathbb{S}) \xrightarrow[]{\inc} \mathbb{B}^{\div}_{k-2}(K;\mathbb{S})  \cap\ker(\div)\xrightarrow[]{\div} 0.
    \end{align*}
\end{lemma}
\begin{proof}
We use the bubble BGG construction in Lemma~\ref{lemma:bgg-elasticity}. By Lemma~\ref{lemma:decomposition_inc_bubble_bgg}, $\ran(\mskw)^{\perp}$ is exactly $b_K\mathbb{B}_{k-4}^{1\inc}(K;\mathbb{S})$ and $\ker(\vskw)=\mathbb{B}^{\div}_{k-2}(K;\mathbb{S})  \cap\ker(\div)$. Since both sequences in the diagram of Lemma \ref{lemma:bgg-elasticity} are exact, the resulting sequence is also exact by the nature of BGG construction. This completes the proof.
\end{proof}

\subsubsection{BGG construction for the bubble divdiv complex}
\begin{lemma}[BGG construction for the bubble divdiv complex]\label{lemma:bgg-bubble-divdiv}
    For $k\geq 8$, the following two sequences are exact, and the diagram is anti-commutative:
    \[
\begin{tikzcd}[column sep=1.5em,row sep=3.2em]
0 \arrow[r] & b_K^3P_{k-11}(\mathbb R^3)\arrow[r,"\operatorname{grad}"] &
b_K^2P_{k-8}(\mathbb M)\arrow[r,"\operatorname{curl}"] &
b_K\mathbb B_{k-5}^{\div*}(\mathbb{M})\arrow[r,"\operatorname{div}"] & \left(b_K P_{k-6}^{(0)}(\mathbb{R}^3)\right) \cap(\mathbb R^3)^\perp \arrow[r] & 0
\\[2pt]
0 \arrow[r] & b_K^2 P_{k-8}(\mathbb R) \arrow[r,"\operatorname{grad}"]
\arrow[ru,"\iota"] &
b_K\mathbb B_{k-5}^{\curl}(\mathbb R^3)\arrow[r,"\operatorname{curl}"] \arrow[ru,"\mskw"] &
\left(b_KP_{k-6}^{(0)}(\mathbb R^3)\right) \cap (\mathbb R^3)^{\perp} \arrow[r,"\operatorname{div}"] \arrow[ru,"\operatorname{id}"] &
P_{k-2}^{(2)*}(\mathbb R)\cap P_1(\mathbb R) ^{\perp}\arrow[r]& 0.
\end{tikzcd}
\]
Here $\iota$ and $\mskw$ are injections, and $\operatorname{id}$ is a bijection.
\end{lemma}
\begin{proof}
$\iota$ is clearly an injection, and $\operatorname{id}$ is a bijection by construction. For any $\boldsymbol{p}\in \mathbb B_{k-5}^{\curl}(K;\mathbb R^3)$, it holds that $$\mskw (\boldsymbol{a}) \boldsymbol{n}|_F=\boldsymbol{a}\times \boldsymbol{n}|_F=0$$for each face $F$
and $\boldsymbol{a}|_e=0$ for each edge $e$. Therefore, $\mskw$ is also an injection. The anti-commutativity follows from the BGG identities \eqref{eq:bgg-construction-elasticity}.

The top sequence is exact by Corollary~\ref{cor:smoothest-bubble-stokes-complex}.  
For the bottom sequence, only the right part requires verification.  
Given $\boldsymbol{p}\in b_K\mathbb B_{k-5}^{\curl}(K;\mathbb R^3)$ and any constant $\boldsymbol{q}\in\mathbb R^3$, Stokes’ formula yields
\[
\int_K \operatorname{curl}\boldsymbol{p}\cdot \boldsymbol{q}
= \int_K \operatorname{curl}\boldsymbol{q}\cdot \boldsymbol{p}=0,
\]
hence
\[
\operatorname{curl}\big(b_K\mathbb B_{k-5}^{\curl}(K;\mathbb R^3)\big)
   \subset \left(\big(b_K P_{k-6}^{(0)}(K;\mathbb R^3)\big)\cap(\mathbb R^3)^\perp\right)\cap \ker(\div).
\]
By Corollary~\ref{cor:smoother-bubble-stokes-complex}, this image coincides with
\[
\big(b_K P_{k-6}^{(0)}(K;\mathbb R^3)\big)\cap\ker(\operatorname{div}),
\]
which implies $\ker(\operatorname{div})=\ran(\operatorname{curl})$ in the bottom row.  
Finally, surjectivity of $\operatorname{div}$ onto $P_{k-2}^{(2)*}(K;\mathbb R)\cap P_1(K;\mathbb R)^\perp$ again follows from Corollary~\ref{cor:smoother-bubble-stokes-complex} together with Stokes’ formula.
\end{proof}
Now, we introduce a new bubble space for the $\div\div$ operator:\[
\mathbb B_{k}^{\div\div*}(K;\mathbb S)
:= \left\{ \boldsymbol{\sigma}\in P_k(K;\mathbb S) \;:\;
\boldsymbol{n}^T \boldsymbol{\sigma}\,\boldsymbol{n}|_F = 0 \;\;\forall F\in\mathcal F(K),
\;\;\boldsymbol{\sigma}|_e = 0 \;\;\forall e\in\mathcal E(K) \right\}.
\]
\begin{lemma}\label{lemma:decomposition-of-divdiv-bubble}
    It holds that:
    $$
\sym\,\mathbb B_{k}^{\div*}(K;\mathbb M)=\mathbb B_{k}^{\div\div*}(K;\mathbb S), \quad  \mathbb B_{k}^{\div*}(K;\mathbb M)\cap \ker(\sym) = \mskw \mathbb B^{\curl}_k(K;\mathbb R^3).
$$
\end{lemma}
\begin{proof}
{\bf Step 1: $\ker(\sym)=\ran(\mskw)$.} 
Indeed, if $\boldsymbol{\tau}\in\mathbb B_{k}^{\div*}(K;\mathbb M)$ with $\sym\boldsymbol{\tau}=0$, then $\boldsymbol{\tau}=\mskw(\boldsymbol{v})$ for some $\boldsymbol{v}\in P_{k}(K;\mathbb R^3)$, and the boundary condition $\boldsymbol{\tau}\boldsymbol{n}|_F=0$ for each face $F$ implies $\boldsymbol{v}\times \boldsymbol{n}=0$, i.e.\ $\boldsymbol{v}\in\mathbb B_{k}^{\curl}(K;\mathbb R^3)$. The converse is also immediate.

{\bf Step 2: surjectivity of $\sym$.} For any $\boldsymbol{\tau}\in\mathbb B_{k}^{\div*}(K;\mathbb M)$, the condition $\boldsymbol{\tau}\boldsymbol{n}=0$ implies $\boldsymbol{n}^T(\sym\boldsymbol{\tau})\boldsymbol{n}=0$, hence $\sym\boldsymbol{\tau}\in\mathbb B_{k}^{\div\div*}(K;\mathbb S)$. Thus 
\[
\sym\,\mathbb B_{k}^{\div*}(K;\mathbb M)\subset\mathbb B_{k}^{\div\div*}(K;\mathbb S).
\]

For the reverse inclusion, we compare dimensions. By geometric decompositions \eqref{eq:representation of curl bubble sum} and \eqref{eq:representation of div* bubble sum}:
\[
\dim \mathbb B_{k}^{\div*}(K;\mathbb M)=12\,\dim P_{k-3}(F;\mathbb R^2)+\dim P_{k-4}(K;\mathbb M),
\]
\[
\dim (\mskw\,\mathbb B_{k}^{\curl}(\mathbb R^3))=4\,\dim P_{k-3}(F;\mathbb R)+\dim P_{k-4}(K;\mathbb R^3).
\]
Hence
\begin{align*}
    \dim (\sym\,\mathbb B_{k}^{\div*}(K;\mathbb M))
&=\dim \mathbb B_{k}^{\div*}(K;\mathbb M)-\dim (\mskw\,\mathbb B_{k}^{\curl}(K;\mathbb R^3))
\\&=20\,\dim P_{k-3}(F;\mathbb R)+6\dim P_{k-4}(K;\mathbb R).
\end{align*}
On the other hand, it is easy to show using geometric decompositions:
\[
\dim \mathbb B_{k}^{\div\div*}(K;\mathbb S)=20\,\dim P_{k-3}(F;\mathbb R)+\dim P_{k-4}(K;\bS).
\]
Thus $\sym\,\mathbb B_{k}^{\div*}(K;\mathbb M)=\mathbb B_{k}^{\div\div*}(K;\mathbb S)$.
Combining both statements completes the proof.
\end{proof}

\begin{lemma}[Bubble divdiv complex]\label{lemma:bubble-divdiv}
    The following sequence is exact for $k\geq8$:
    \begin{align*}
        0 \xrightarrow{} b_K^3P_{k-11}(K;\mathbb{R}^3)\xrightarrow{\dev\grad} b_K^2P_{k-8}(K;\mathbb{T}) \xrightarrow[]{\sym\curl}b_K \mathbb{B}^{\div\div*}_{k-5}(K;\mathbb{S})  \cap\ker(\div\div)\xrightarrow[]{\div\div} {0}.
    \end{align*}
\end{lemma}
\begin{proof}
It is straightforward to check that the sequence is a complex. Therefore, we focus on the reverse inclusion.

{\bf Step 1: $\ker(\div\div)\subset\ran(\sym\curl)$.}
By Lemma~\ref{lemma:decomposition-of-divdiv-bubble}, given $\boldsymbol{\sigma}\in \mathbb B_{k-5}^{\div\div*}(K;\mathbb S)$ with $\div\div(b_K\boldsymbol{\sigma})=0$, there exists 
$\boldsymbol{\tau}\in \mathbb B_{k-5}^{\div*}(K;\mathbb M)$ such that 
$\boldsymbol{\sigma}=\sym\boldsymbol{\tau}$ and $\div\div(b_K\boldsymbol{\tau})=0$.
By exactness of the bottom complex and the anti-commutativity of the diagram in Lemma~\ref{lemma:bgg-bubble-divdiv}, there is 
$\boldsymbol{p}\in\mathbb B_{k-5}^{\curl}(K;\mathbb R^3)$ with
\[
\div(b_K\boldsymbol{\tau})=\curl(b_K\boldsymbol{p})
= -\,\div\mskw(b_K\boldsymbol{p}).
\]
Then, exactness of the top complex in Lemma~\ref{lemma:bgg-bubble-divdiv} then yields some 
$\boldsymbol{q}\in P_{k-8}(K;\mathbb M)$ such that
\[
b_K\boldsymbol{\tau}=-\,b_K\mskw(\boldsymbol{p})+\curl(b_K^2\boldsymbol{q}).
\]
Applying $\sym$ and using $\sym\mskw=0$ gives
\[
b_K\boldsymbol{\sigma}
=\sym(b_K\boldsymbol{\tau})
=\sym\curl(b_K^2\boldsymbol{q}),
\]
hence $\boldsymbol{\sigma}\in\ran(\sym\curl)$ on the bubble space. This proves 
$\ker(\div\div)\subset\ran(\sym\curl)$.

{\bf Step 2: $\ker(\sym\curl)\subset\ran(\dev\grad)$.}
For any $\boldsymbol{\sigma}\in P_{k-8}(K;\mathbb T)$ and $\sym\curl(b_K^2\boldsymbol{\sigma})=0$, it holds that $$\curl(b_K^2\boldsymbol{\sigma})\in \mathbb B_{k}^{\div*}(K;\mathbb M)\cap \ker(\sym)$$ since the top sequence in Lemma~\ref{lemma:bgg-bubble-divdiv} is a complex. By Lemma~\ref{lemma:decomposition-of-divdiv-bubble},
there exists $\boldsymbol{p}\in \mathbb B^{\curl}_{k-5}(\mathbb R^3)$ such that $\curl(b_K^2\boldsymbol{\sigma})=\mskw(b_K\boldsymbol{p})$. Therefore, by anti-commutativity it holds that $$0=\div\mskw(b_K\boldsymbol{p})=-\curl(b_K\boldsymbol{p}).$$ By exactness of the bottom sequence in Lemma~\ref{lemma:bgg-bubble-divdiv}, there exists $q\in P_{k-8}(K;\mathbb R)$ such that $b_K\boldsymbol{p}=\grad(b_K^2q)$. It then follows from anti-commutativity that $$\curl(b_K^2\boldsymbol{\sigma})=\mskw\grad(b_K^2q) = -\curl(b_K^2q\mathbf{I}).$$ Hence, by exactness of the top sequence in Lemma \ref{lemma:bgg-bubble-divdiv}, there exists $\boldsymbol{u}\in P_{k-11}(\mathbb R^3)$ such that $b_K^2\boldsymbol{\sigma}=-b_K^2q\mathbf{I} +\grad(b_K^3\boldsymbol{u})$. Since $\boldsymbol{\sigma}$ is traceless, $b_K^2\boldsymbol{\sigma}=\dev\grad(b_K^3\boldsymbol{u})$. This completes the proof.
\end{proof}
\begin{remark}
In fact, one cannot apply BGG machinery to the diagram in Lemma~\ref{lemma:bgg-bubble-divdiv} directly to get a bubble divdiv complex, as $\ran(\mskw)^{\perp}$ is \emph{not} the divdiv bubble space $b_K\mathbb B_{k-5}^{\div\div*}(K;\mathbb S)$.  $b_K\mathbb B_{k-5}^{\div\div*}(K;\mathbb S)$ is \emph{not} even a subspace of $b_K\mathbb B_{k-5}^{\div*}(K;\mathbb M)$.
\end{remark}
\subsubsection{BGG construction for bubble conformal complex}
Finally, we construct the smoothest bubble conformal complex using BGG diagram that connects bubble divdiv complex (Lemma~\ref{lemma:bubble-divdiv}) and bubble elasticity complex (Lemma~\ref{lemma:bubble_elasticity}). Recall our $H(\cott)\cap H^2$ bubble space $b_K^2\mathbb B_{k-8}^{2\cott}(K;\bST)$. We parallel Lemma~\ref{lemma:decomposition_inc_bubble_bgg} and enrich $\mathbb B_{k}^{2\cott}(\bST)$ to $\mathbb T$.
\begin{lemma}\label{lemma:decomposition-for-T-cott-bubbles}
Let $$\mathbb B^{2\cott}_{k}(K;\bT):=\{\boldsymbol{\sigma}\in P_k(K;\mathbb T): \boldsymbol{n}\times S^{-1}\sym(\boldsymbol{\sigma}\times\boldsymbol{n})\times\boldsymbol{n}|_F =0\}.$$
   Then, it holds that \begin{align*}
    \mathbb B^{2\cott}_{k}(K;\bT)=\mathbb B^{2\cott}_k(K;\bST) \,\oplus^{\perp}\,\mskw \, P_k(K;\mathbb R^3).
\end{align*}
\end{lemma}
\begin{proof}The proof is similar to the proof of Lemma~\ref{lemma:decomposition_inc_bubble_bgg}.
Let $\boldsymbol{\sigma}\in P_k(K;\bT)$.  
From \eqref{eq:anti-commutativity-inc}, it holds that
\begin{align}
    \boldsymbol{n}\times \big(S^{-1}\sym(\skw(\boldsymbol{\sigma})\times\boldsymbol{n})\big)\times\boldsymbol{n}=    \sym\left(\boldsymbol{n}\times \big(S^{-1}(\skw(\boldsymbol{\sigma})\times\boldsymbol{n})\big)\times\boldsymbol{n}\right)=0.\label{eq:anti-commutativity-cott}
\end{align}
Hence, 
\begin{align}
    \boldsymbol{n}\times \big(S^{-1}\sym(\boldsymbol{\sigma}\times\boldsymbol{n})\big)\times\boldsymbol{n}
= \sym\big(\Pi_F\sym(\boldsymbol{\sigma})\times\boldsymbol{n}\big)=\tr_1^{\cott}\left(\sym(\boldsymbol{\sigma})\right).\label{eq:cott-trace-relations}
\end{align}
Therefore, by \eqref{cinc bubble in tr_1} in Proposition~\ref{prop:bubbles_in_tr1_bubble}, we have \[
\boldsymbol{\sigma}\in\mathbb B^{2\cott}_k(K;\mathbb T)
\iff \sym\boldsymbol{\sigma}\in\mathbb B^{2\cott}_k(K;\bST).
\]
Then, the ``$\supset$'' relation follows \eqref{eq:anti-commutativity-cott} and \eqref{eq:cott-trace-relations}, and ``$\subset$'' follows by decomposing $\boldsymbol{\sigma}\in\mathbb B^{2\cott}_k(K;\mathbb T)$ into $\boldsymbol{\sigma}=\sym \boldsymbol{\sigma}+\skw \boldsymbol{\sigma}$.
\end{proof}
\begin{lemma}[Bubble BGG construction for conformal complex]\label{lemma:bgg-bubble-conformal}
    For $ k\geq 10$, the following two discrete sequences are exact, and the diagram is anti-commutative:
\[
\begin{tikzcd}[column sep=2.5em, row sep=3.2em]
   0 \arrow[r] & b_K^3P_{k-11}(\mathbb R^3)  \arrow[r, "\operatorname{dev\,grad}"]  & b_K^2\mathbb B_{k-8}^{2\cott}(\mathbb T)  \arrow[r, "\operatorname{sym\,curl}"] &  \left( Sb_K\,\mathbb B_{k-5}^{1\inc }(\bS)\right)\cap\ker(\div\div)  \arrow[r, "\operatorname{div\,div}"]  & 0 \\
  0 \arrow[r] & b_K^2P_{k-8}(\mathbb R^3) \arrow[r, "\operatorname{def}"] \arrow[ru, "\mskw"] & \left(b_K \mathbb B_{k-5}^{1\inc }(\bS)\right)\cap\ker(\div\div S) \arrow[r, "\operatorname{inc}"] \arrow[ru, "S"] & \mathbb B_{k-3}^{\div}(\bS\cap\bT)\cap\ker(\div) \arrow[r, "\operatorname{div}"] \arrow[ru, "\Tr"] & 0.
\end{tikzcd}
\]
Here, $\mskw$ is an injection,  $S$ is a bijection, and $\Tr$ is a surjection. 
\end{lemma}
\begin{proof}We first note that $\mskw$ is an injection follows from Lemma~\ref{lemma:decomposition-for-T-cott-bubbles}. $\Tr$ is a surjection and $S$ is a bijection by definition of the diagram. It is also straightforward to show the exactness of the bottom sequence by bubble elasticity complex in Lemma~\ref{lemma:bubble_elasticity} and the anti-commutativity of the diagram, using similar arguments in the proof of exactness in Lemma~\ref{lemma:bgg-elasticity}. Therefore, we focus on showing the exactness of the first sequence in the diagram.

Recall the bubble divdiv complex derived in Lemma~\ref{lemma:bubble-divdiv}. We first show 
$$\left(Sb_K\mathbb B_{k-4}^{1\inc}(K;\bS)\right)\cap\ker(\div\div) \subset \left(b_K\mathbb B_{k-4}^{\div\div*}(K;\bS)\right)\cap\ker(\div\div). $$

For any $\boldsymbol{\sigma} \in \mathbb B_{k-4}^{1\inc}(K;\bS)$ and $\div\div S(b_K\boldsymbol{\sigma}) = 0$, from geometric decompositions of $\mathbb B_{k-4}^{1\inc}(K;\bS)$ \eqref{eq:1inc_geometric_decompositions}, there exists $p\in P_{k-6}(e;\mathbb R)$ such that $\boldsymbol{\sigma}|_e = b_ep|_e\sym(\boldsymbol{n}_+\boldsymbol{n}_-^T)$ for each edge $e=F_+\cap F_-$. By Proposition~\ref{proposition: geometric decompositions for div bubble}, elements in $\mathbb B_{k-3}^{\div}(K;\bST)$ vanish on edges. Therefore, it holds that 
\begin{align*}
0=\inc(b_K\boldsymbol{\sigma})|_e&=-\frac{b_e}{h_+h_-} \left(\boldsymbol{n}_+\times\boldsymbol{\sigma}|_e\times\boldsymbol{n}_- + \boldsymbol{n}_-\times\boldsymbol{\sigma}|_e\times\boldsymbol{n}_+\right) \\
&=-\frac{b_e^2p_e}{h_+h_-}|_e (\boldsymbol{n}_+\times\boldsymbol{n}_-)(\boldsymbol{n}_+\times\boldsymbol{n}_-)^T.
\end{align*}
This leads to $p|_e=0$, hence $\boldsymbol{\sigma}|_e=0$ for each edge $e$. Note that:
\begin{align*}
        \Tr\left(\tr_1^{\inc}({\boldsymbol{\sigma}})\right)|_F&=\Tr\left(\boldsymbol{n}\times \boldsymbol{\sigma}\times\boldsymbol{n}\right)|_F\\
        &=\boldsymbol{n}\cdot \boldsymbol{\sigma}\cdot\boldsymbol{n}-\Tr(\boldsymbol{\sigma})\\&=\boldsymbol{n}\cdot S\boldsymbol{\sigma}\cdot\boldsymbol{n}|_F.
    \end{align*}
Hence, $S (b_K\boldsymbol{\sigma})\in \left(b_K\mathbb B_{k-5}^{\div\div*}(\bS)\right) \cap \ker(\div\div)$. By Lemma~\ref{lemma:bubble-divdiv}, there exists $\boldsymbol{\tau}\in P_{k-8}(\bT)$, such that $$S(b_K\boldsymbol{\sigma})=\sym\curl(b_K^2\boldsymbol{\tau})=b_K\sym\left(\sum_{F\in\mathcal{F}(K)}\frac{2b_F}{h_F}\boldsymbol{\tau}\times\boldsymbol{n}+b_K\curl\boldsymbol{\tau}\right).$$
Since for each face $F$, by definition it holds that $\boldsymbol{n}\times\boldsymbol{\sigma}\times\boldsymbol{n}|_F=0$. Hence, we have $\boldsymbol{n}\times \left(S^{-1}\sym(\boldsymbol{\tau}\times\boldsymbol{n})\right)\times\boldsymbol{n}|_F=0$, which yields $\boldsymbol{\tau}\in \mathbb B_{k-8}^{2\cott}(K;\mathbb T)$ and hence $\ker(\div\div)\subset\ran(\sym\curl)$. The reverse inclusion $\ran(\sym\curl)\subset\ker(\div\div)$ also follows from a similar expansion.

Finally, for any $\boldsymbol{p}\in P_{k-11}(\mathbb R^3)$, we have$$\dev\grad(b_K^3\boldsymbol{p})=b_K^2\left(3\sum_{F\in\mathcal{F}(K)}b_F\dev(\boldsymbol{p}\grad\lambda_F^T)+b_K\dev\grad\boldsymbol{p}\right),$$
and therefore $\dev\grad$ maps to $b_K^2\mathbb B_{k-8}^{2\cott}(\bT)$. Since $\mathbb B_{k-8}^{2\cott}(\bT)\subset P_{k-8}(\bST)$, $\ran(\dev\grad)=\ker(\sym\curl)$ from Lemma~\ref{lemma:bubble-divdiv}. The first sequence is also exact. This completes the proof.
\end{proof}
\begin{theorem}\label{thm:conformal-bubble-seq3}
    The following sequence is exact for $k\geq 10$:
    \begin{align*}
            0 \xrightarrow{} b_K^3P_{k-11}(K;\mathbb{R}^3)\xrightarrow{\dev \Def} b_K^2\mathbb{B}_{k-8}^{2\cott}(K;\bST) \xrightarrow[]{\cott} &\mathbb{B}^{\div}_{k-3}(K;\bST)\cap \ker(\div) \xrightarrow[]{\div} 0.
    \end{align*}
\end{theorem}
\begin{proof}
    We use the BGG diagram in Lemma~\ref{lemma:bgg-bubble-conformal}. By Lemma~\ref{lemma:decomposition-for-T-cott-bubbles}, 
    $$\ran(\mskw)^{\perp}=b_K^2\mathbb{B}_{k-8}^{2\cott}(K;\bST), \quad \ker(\Tr) =\mathbb{B}^{\div}_{k-3}(K;\bST)\cap \ker(\div).  $$
Therefore, the complex follows from the BGG machinery. The resulting complex is also exact, since both complexes in the Lemma~\ref{lemma:bgg-bubble-conformal} are exact. This finishes the proof.
\end{proof}
\section{\texorpdfstring{$H(\operatorname{div})$}{H(div)}-conforming Finite Element Space and Divergence Stability}
\label{div section}
In this section, we first prove the bubble stability result in Theorem~\ref{thm:div-surjective property} by using Theorem~\ref{thm:conformal-bubble-seq3}. We then proceed to construct a conforming finite element pair for $H(\div;\bST)\times L^2(\mathbb R^3)$.  Adapting the stabilization ideas introduced for the Stokes pair in \cite{falk2013stokes,neilan2015discrete}, we establish an inf–sup (divergence) stability estimate for the new space.

\subsection{Supersmoothness in bubble surjectivity}
We note that for any $\boldsymbol{\sigma}\in\mathbb B^{\div}_{k-3}$, $\boldsymbol{\sigma}|_e=0$ for each edge $e\in \mathcal E(K)$ from supersmoothness (Proposition~\ref{proposition: geometric decompositions for div bubble}). Therefore $\mathbb B^{\div}_{k-3}\subset P^{(1)}_{k-3}$.
However, we show that $\mathbb B^{\div}_{k-3}\cap \ker(\div)$ exhibits even higher intrinsic supersmoothness at the vertices, which directly inherits from the image of $\cott$, as a result of Theorem~\ref{thm:conformal-bubble-seq3}.
\begin{lemma}
\label{prop:super-smoothness-div-free-bubble}
        For $k\geq 10$, it holds that
    \begin{align}
        \mathbb{B}_{k-3}^{\div}(K;\bST)\cap\ker(\div)\subset P^{(3)}_{k-3}(K;\bST).
    \end{align}
    As a result, we have
    \begin{align}
    \mathbb{B}_{k-3}^{\div, (s)}(K;\bST)\cap\ker(\div)=\mathbb{B}_{k-3}^{\div}(K;\bST)\cap\ker(\div)
\end{align}
for $1\leq s\leq3$.
\end{lemma}
\begin{proof}
Let $\boldsymbol{\tau} \in \mathbb{B}_{k-3}^{\div}(K;\bST) \cap \ker(\div)$. By Theorem~\ref{thm:conformal-bubble-seq3}, there exists $\boldsymbol{\sigma} \in \mathbb{B}_{k-8}^{2\cott}(K;\bST)$ such that
\[
\boldsymbol{\tau} = \cott(b_K^2 \boldsymbol{\sigma}).
\]
From geometric decompositions of $\mathbb{B}_{k-8}^{2\cott}(K;\bST)$, we have $\boldsymbol{\sigma}(\delta) = 0$ for all vertices $\delta \in \mathcal{V}(K)$, so $\boldsymbol{\sigma} \in P_{k-8}^{(0)}$.
Using the supersmoothness of the bubble function $b_K$, we obtain:
\[
b_K \boldsymbol{\sigma} \in P_{k-4}^{(3)}, \quad b_K^2 \boldsymbol{\sigma} \in P_k^{(6)}.
\]
Since $\cott$ is a third-order differential operator, it follows that
\[
\boldsymbol{\tau} = \cott(b_K^2 \boldsymbol{\sigma}) \in P_{k-3}^{(3)},
\]
 which finishes the proof.
\end{proof}
We are able to give a proof for Theorem~\ref{thm:div-surjective property}.
\begin{proof}[Proof of Theorem~\ref{thm:div-surjective property}]
We work with $\mathbb{B}_{k-3}^{\div, (s)}$ and $P^{(s-1)}_{k-4}$ assuming $k\geq 10$ for convenience. We first note that $\div$ maps $\mathbb{B}_{k-3}^{\div, (s)}(K;\bST)$ to $P^{(s-1)}_{k-4}(K;\mathbb{R}^3)\cap \boldsymbol{CK}^{\perp}$ by Stokes' formula. Therefore, it suffices to check the dimension of the image of $\div$ and $P^{(s-1)}_{k-4}(K;\mathbb{R}^3)\cap \boldsymbol{CK}^{\perp}$.

From the unisolvency of Neilan Stokes velocity element \cite[Section 3]{neilan2015discrete}, sets of DOFs
$$D^{\alpha}\boldsymbol{v},\quad |\alpha|\leq 2 \text{ on } \mathcal{V}(K), \quad\int_K\boldsymbol{v}\cdot P_{2}(K;\mathbb{R}^3)$$
are linearly independent for $\boldsymbol{v}\in P_{k-4}(K;\mathbb{R}^3)$, provided $k\geq 10$.
Since $\boldsymbol{CK}\subset P_2(K;\mathbb R^3)$, it follows that $$\operatorname{dim}P^{(s-1)}_{k-4}(K;\mathbb{R}^3)\cap\boldsymbol{CK}^{\perp}=\operatorname{dim}P_{k-4}(K;\mathbb{R}^3)-10-12\binom{s+2}{3}$$
 for $1\leq s\leq 3$.
From Lemma~\ref{prop:super-smoothness-div-free-bubble} and Theorem~\ref{thm:conformal-bubble-seq3}, we have
\begin{align*}
    \dim \mathbb{B}_{k-3}^{\div, (s)}(K;\bST)\cap\ker(\div)&=\dim \mathbb{B}_{k-3}^{\div}(K;\bST)\cap\ker(\div)\\
    & =\dim \mathbb B_{k-8}^{2\cott}(K;\bST) - \dim P_{k-11}(K;\mathbb R^3)
\end{align*}
for $1\leq s\leq3$. Since $k-3\geq 7\geq 2s+1$ for $1\leq s\leq3$, we also note that $\dim \mathbb B_{k-3}^{\div,(s)}(K;\bST)$ can be obtained via geometric decomposition in Proposition~\ref{proposition: geometric decompositions for div bubble}.
Therefore, through a simple dimension count, we have
\begin{align*}
&\quad\operatorname{dim}\div\mathbb{B}_{k-3}^{\div,(s)}(K;\bST)\\
&= \dim\mathbb{B}_{k-3}^{\div,(s)}(K;\bST)-\dim\mathbb{B}_{k-3}^{\div,(s)}(K;\bST)\cap \ker(\div)\\
& = 8\dim P_{k-6}^{(s-2)}(F;\mathbb R) + 5\dim P^{(s-3)}_{k-7}(K;\mathbb R) -6\dim P_{k-10}(e;\mathbb R) \\
&\quad-12\dim P_{k-11}(F;\mathbb R)-5\dim P_{k-12}(K;\mathbb R) + 3\dim P_{k-11}(K;\mathbb R) \\
&= \operatorname{dim}P_{k-4}^{(s-1)}(K;\mathbb{R}^3)\cap \boldsymbol{CK}^{\perp}-\tfrac{4}{3}s^3 + 4s^2 + \tfrac{28}{3}s - 28
\end{align*}
 for $1\leq s\leq3$. Thus,
\begin{align*}
    &\dim\div\mathbb{B}_{k-3}^{\div,(1)}(K;\bST)=\operatorname{dim}P_{k-4}^{(0)}(K;\mathbb{R}^3)\cap\boldsymbol{CK}^{\perp}-16,\\
    &\dim\div\mathbb{B}_{k-3}^{\div,(2)}(K;\bST)=\operatorname{dim}P_{k-4}^{(1)}(K;\mathbb{R}^3)\cap\boldsymbol{CK}^{\perp}-4,\\
     &\dim\div\mathbb{B}_{k-3}^{\div,(3)}(K;\bST)=\operatorname{dim}P_{k-4}^{(2)}(K;\mathbb{R}^3)\cap\boldsymbol{CK}^{\perp}.
\end{align*}
This completes the proof. 
\end{proof}
\begin{remark}
        Using a standard scaling argument, we can show that for any $\boldsymbol{v}\in P_{k-4}^{(2)}(K;\mathbb{R}^3)\cap\boldsymbol{CK}^{\perp}$, there exists $\boldsymbol{\tau}\in \mathbb{B}_{k-3}^{\div,(3)}(K;\bST)$ such that $\div\boldsymbol{\tau}=\boldsymbol{v}$ and $\lVert\boldsymbol{\tau}\rVert_{H(\div,K)}\leq C \lVert\boldsymbol{v}\rVert_{L^2(K)}$, where $C$ is independent of the mesh size $h$.
    \label{affine constant bubble}
\end{remark}
\subsection{Degrees of freedom}
For $k\geq 10,$ set the shape function as $P_{k-3}(K;\bST)$. A unisolvent set of degrees of freedom for $H(\div)$-conforming symmetric and traceless finite element space $\boldsymbol{\Sigma}^{\div}_{k-3,h}$ is locally given by
\begin{subequations}
\begin{align}
    D^{\alpha}\boldsymbol{\tau}(\delta),\quad &\forall|\alpha|\leq 3 ,\quad \forall\delta \in\mathcal{V}(K).\label{div_1}\\
    \int_{e} \boldsymbol{\tau}:\boldsymbol{q},\quad&\forall\boldsymbol{q}\in P_{k-11}(e;\bST),\quad\forall e\in \mathcal{E}(K).\label{div_2}\\
     \int_{F} \boldsymbol{q}\cdot\boldsymbol{\tau}\cdot\boldsymbol{n},\quad&\forall\boldsymbol{q}\in P_{k-6}^{(1)}(F;\mathbb{R}^3),\quad\forall F\in \mathcal{F}(K).\label{div_3}\\
      \int_{K} \boldsymbol{\tau}:\boldsymbol{q},\quad&\forall\boldsymbol{q}\in \mathbb{B}_{k-3}^{\div,(3)}(K;\bST).\label{div_4}
\end{align}
\end{subequations}
\begin{theorem}
    The above degrees of freedom are unisolvent. Furthermore, DOFs \eqref{div_1}-\eqref{div_3} determine the trace $\boldsymbol{\tau}\cdot\boldsymbol{n}$ uniquely on $\partial K$.
\end{theorem}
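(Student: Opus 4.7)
The plan is to prove injectivity of the degree of freedom map (vanishing of all DOFs forces $\boldsymbol{\tau}=0$) and then combine it with a dimension count showing that the total number of DOFs matches $\dim P_{k-3}(K;\bST) = 5\binom{k}{3}$. The second claim of the theorem will emerge as a by-product of the intermediate steps, because only the first three families of DOFs will be used to conclude that $\boldsymbol{\tau}\cdot\boldsymbol{n}|_{\partial K}=0$.

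First, I would show that $\boldsymbol{\tau}$ vanishes on every edge $e\in\mathcal{E}(K)$. The vertex DOFs \eqref{div_1} make all derivatives $D^{\alpha}\boldsymbol{\tau}$ with $|\alpha|\le 3$ vanish at each vertex, so in particular the tangential derivatives of $\boldsymbol{\tau}|_{e}$ up to order $3$ vanish at both endpoints of $e$. Since $b_{e}|_{e}$ vanishes to order one at each endpoint, this forces a factorization
\[
\boldsymbol{\tau}|_{e}= (b_{e}|_{e})^{4}\boldsymbol{s}, \qquad \boldsymbol{s}\in P_{k-11}(e;\bST).
\]
Because the restriction map $P_{k-11}(K;\bST)\to P_{k-11}(e;\bST)$ is surjective, I can pick a test polynomial in \eqref{div_2} whose restriction to $e$ equals $\boldsymbol{s}$; this yields $\int_{e}(b_{e}|_{e})^{4}|\boldsymbol{s}|^{2}=0$, whence $\boldsymbol{s}=\boldsymbol{0}$ and $\boldsymbol{\tau}|_{e}=\boldsymbol{0}$.

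Next I would argue, for each face $F$, that $\boldsymbol{\tau}\cdot\boldsymbol{n}|_{F}=\boldsymbol{0}$. The previous step gives $\boldsymbol{\tau}\cdot\boldsymbol{n}|_{\partial F}=\boldsymbol{0}$, and the third-order vertex vanishing from \eqref{div_1} together with the fact that $b_{F}|_{F}$ vanishes to order two at each vertex of $F$ gives the factorization $\boldsymbol{\tau}\cdot\boldsymbol{n}|_{F}=b_{F}\boldsymbol{r}$ with $\boldsymbol{r}\in P_{k-6}^{(1)}(F;\mathbb{R}^{3})$ (the residual supersmoothness is precisely what is needed to recover the total fourth-order vanishing at vertices). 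Choosing $\boldsymbol{q}=\boldsymbol{r}$ in \eqref{div_3} produces $\int_{F}b_{F}|\boldsymbol{r}|^{2}=0$, so $\boldsymbol{r}=\boldsymbol{0}$ and hence $\boldsymbol{\tau}\cdot\boldsymbol{n}|_{F}=\boldsymbol{0}$. Since only DOFs \eqref{div_1}--\eqref{div_3} were used, this simultaneously proves the second assertion of the theorem.

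Combining these two steps with the vertex supersmoothness from \eqref{div_1} places $\boldsymbol{\tau}$ in $\mathbb{B}^{\div,(3)}_{k-3}(K;\bST)$, so testing \eqref{div_4} against $\boldsymbol{q}=\boldsymbol{\tau}$ gives $\int_{K}|\boldsymbol{\tau}|^{2}=0$ and therefore $\boldsymbol{\tau}=\boldsymbol{0}$. To close the unisolvency argument it remains to verify that the total number of DOFs equals $5\binom{k}{3}$: this reduces to summing $4\cdot 5\binom{6}{3}$ from \eqref{div_1}, $6\cdot 5(k-10)$ from \eqref{div_2}, $12[\binom{k-4}{2}-9]$ from \eqref{div_3}, and $\dim\mathbb{B}^{\div,(3)}_{k-3}(K;\bST)$ from \eqref{div_4}, the last being explicit through Proposition \ref{proposition: geometric decompositions for div bubble}, and checking the resulting polynomial identity. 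The main (mild) obstacle is the bookkeeping of how the $C^{3}$ vertex supersmoothness absorbs exactly the boundary vanishing orders of $b_{e}^{4}$ at edge endpoints and of $b_{F}$ at face vertices; it is this bookkeeping that dictates the choice of $P_{k-11}$ and $P_{k-6}^{(1)}$ as test spaces in \eqref{div_2} and \eqref{div_3}.
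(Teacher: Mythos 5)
Your proposal is correct and follows essentially the same route as the paper: inject\-ivity is established by first using the vertex and edge DOFs to force $\boldsymbol{\tau}|_e=\boldsymbol{0}$ and hence $\boldsymbol{\tau}\cdot\boldsymbol{n}|_F\in b_FP_{k-6}^{(1)}(F;\mathbb{R}^3)|_F$, then the face DOFs annihilate the normal trace so that $\boldsymbol{\tau}\in\mathbb{B}_{k-3}^{\div,(3)}(K;\bST)$, and the interior DOFs and a dimension count finish. You merely spell out in more detail the bookkeeping of the $(b_e)^4$ and $b_F$ factorizations and the supersmoothness orders that the paper leaves implicit.
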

\begin{proof}
    The number of DOFs is
    \begin{align*}
        &\quad 4\cdot5\cdot20+30(k-10)+12(\frac{(k-4)(k-5)}{2}-9)+ \frac{5\,k^3 }{6}-\frac{17\,k^2 }{2}+\frac{77\,k}{3}-112\\
        &=\dim P_{k-3}(K;\bST).
    \end{align*}
    We note that $\dim \mathbb B_{k-3}^{\div,(3)}(K;\bST)$ could be determined from Proposition~\ref{proposition: geometric decompositions for div bubble}. Therefore, it suffices to show that $\boldsymbol{\tau}=0$ if all DOFs vanish. Notice the vanishing of \eqref{div_1} and \eqref{div_2} establishes that $\boldsymbol{\tau}\cdot\boldsymbol{n}|_F\in b_F P_{k-6}^{(1)}(F;\mathbb{R}^3)|_F$, where $b_F$ is the barycentric bubble of face $F$. Hence, if \eqref{div_1}-\eqref{div_3} vanish, $\boldsymbol{\tau}\cdot\boldsymbol{n}$ vanishes on $\partial K$ and as a consequence $\boldsymbol{\tau}\in\mathbb{B}_{k-3}^{\div,(3)}(K;\bST).$ Therefore, $\boldsymbol{\tau}\cdot\boldsymbol{n}$ is uniquely determined on faces and by \eqref{div_4} $\boldsymbol{\tau}=0$, which completes the proof. 
\end{proof}
Next, we define the discontinuous vector finite element space $\boldsymbol{V}_{k-4,h}$ with extra smoothness at vertices for $k\geq 10$. Choose the shape function as $P_{k-4}(K;\mathbb{R}^3)$ and a set of local DOFs is given by
\begin{subequations}
\begin{align}
    D^{\alpha}\boldsymbol{v}(\delta),\quad &\forall|\alpha|\leq 2 ,\quad \forall\delta \in\mathcal{V}(K).\label{Discontinous1}\\
      \int_{K} \boldsymbol{v}:\boldsymbol{q},\quad&\forall\boldsymbol{q}\in P^{(2)}_{k-4}(K;\mathbb{R}^3).\label{Discontinous2}
\end{align}
\end{subequations}
The unisolvency is valid by definition.
\subsection{Divergence stability}
At the end of the section, we present the divergence stability of the above finite element spaces, i.e. the operator $\div: \boldsymbol{\Sigma}^{\div}_{k-3,h}\longrightarrow \boldsymbol{V}_{k-4,h}$ is surjective. 
\begin{theorem}
\label{div_stability_FE}
    For any $\boldsymbol{v}\in \boldsymbol{V}_{k-4,h}$ provided $k\geq 11$, there exists $\boldsymbol{\tau}\in \boldsymbol{\Sigma}^{\div}_{k-3,h}$ such that $\div \boldsymbol{\tau}=\boldsymbol{v}$ and $\lVert\boldsymbol{\tau}\rVert_{H(\div,\Omega)}\leq C\lVert \boldsymbol{v}\rVert_{L^2(\Omega)}$, where $C$ is a constant independent of the mesh size $h$.
\end{theorem}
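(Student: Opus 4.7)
The plan is a standard Fortin-style argument reducing the global divergence stability to the cell-wise bubble stability already proved in Theorem~\ref{conformal bubble stabbility}. Given $\boldsymbol{v}\in \boldsymbol V_{k-4,h}\subset L^2(\Omega;\mathbb R^3)$, I would first obtain a continuous right inverse: solve the strongly elliptic Lamé-type system $\tfrac12\Delta \boldsymbol w + \tfrac16 \nabla\div\boldsymbol w=\boldsymbol v$ on the contractible Lipschitz domain $\Omega$ with Dirichlet boundary data, invoke elliptic regularity to get $\boldsymbol w\in H^2(\Omega;\mathbb R^3)$, and set $\tilde{\boldsymbol\tau}:=\dev\Def\boldsymbol w\in H^1(\Omega;\bST)$. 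A direct computation gives $\div\tilde{\boldsymbol\tau} = \tfrac12\Delta\boldsymbol w+\tfrac16\nabla\div\boldsymbol w=\boldsymbol v$ with $\|\tilde{\boldsymbol\tau}\|_{H^1(\Omega)}\le C\|\boldsymbol v\|_{L^2(\Omega)}$.

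The second step is to construct a stable commuting quasi-interpolation $\Pi_h:H^1(\Omega;\bST)\to \boldsymbol\Sigma^{\div}_{k-3,h}$ with $\div\Pi_h = Q_h\div$, where $Q_h$ is the $L^2$-projection onto $\boldsymbol V_{k-4,h}$. I would assemble $\Pi_h=\Pi_h^0+\Pi_h^b$. The base operator $\Pi_h^0$ is a Scott--Zhang-style operator that fixes the face, edge and vertex DOFs \eqref{div_1}--\eqref{div_3}; to cope with the high-order vertex DOFs \eqref{div_1} from merely $H^1$ input, the vertex values are read off as integrals of $\tilde{\boldsymbol\tau}$ against a scaled dual basis supported on a single neighbouring cell, exactly as in the Scott--Zhang construction, which preserves polynomials and is $H^1$-stable by scaling. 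The bubble part $\Pi_h^b\tilde{\boldsymbol\tau}|_K\in \mathbb B^{\div,(3)}_{k-3}(K;\bST)$ is then chosen, cell by cell, so that the moment condition $\int_K\div(\Pi_h^0+\Pi_h^b)\tilde{\boldsymbol\tau}\cdot\boldsymbol q=\int_K\div\tilde{\boldsymbol\tau}\cdot\boldsymbol q$ holds for every $\boldsymbol q\in \boldsymbol V_{k-4,h}|_K$; the solvability and $h$-uniform stability of this step is precisely Theorem~\ref{conformal bubble stabbility} together with Remark~\ref{affine constant bubble}. Setting $\boldsymbol\tau:=\Pi_h\tilde{\boldsymbol\tau}$ yields $\div\boldsymbol\tau=Q_h\boldsymbol v=\boldsymbol v$ and $\|\boldsymbol\tau\|_{H(\div,\Omega)}\le C\|\tilde{\boldsymbol\tau}\|_{H^1(\Omega)}\le C\|\boldsymbol v\|_{L^2(\Omega)}$, which is the claim.

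The main obstacle is the construction of $\Pi_h^0$ so that the cell-wise correction $\Pi_h^b$ is actually solvable. For each $K$ the residual $\boldsymbol r_K:=\div\tilde{\boldsymbol\tau}-\div\Pi_h^0\tilde{\boldsymbol\tau}$ must land in $P^{(2)}_{k-4}(K;\mathbb R^3)\setminus \boldsymbol{CK}$, i.e.\ it must have (i) vanishing derivatives up to order~$2$ at every vertex of $K$ and (ii) vanish against every element of $\boldsymbol{CK}|_K$ under the $L^2(K)$ pairing. Condition (i) is secured by having $\Pi_h^0$ match all vertex DOFs \eqref{div_1} of $\tilde{\boldsymbol\tau}$ (so that the divergence matches vertex derivatives up to order~$2$), which is the whole point of the supersmoothness requirement built into $\boldsymbol\Sigma^{\div}_{k-3,h}$. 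Condition (ii) reduces, via integration by parts and the identity $\dev\Def\boldsymbol q=\boldsymbol 0$ for $\boldsymbol q\in \boldsymbol{CK}$, to the finite-dimensional face-trace requirement $\int_F \boldsymbol q\cdot(\Pi_h^0\tilde{\boldsymbol\tau}-\tilde{\boldsymbol\tau})\cdot\boldsymbol n=0$ for every $\boldsymbol q\in \boldsymbol{CK}$ and $F\in\mathcal F(K)$; since $\boldsymbol{CK}|_F\cdot\boldsymbol n$ is a fixed $10$-dimensional subspace of $P_2(F;\mathbb R)$, this is enforced by enlarging the face moment set in the definition of $\Pi_h^0$ to include pairings against $\boldsymbol{CK}|_F\cdot\boldsymbol n$, at the price of reducing the bubble DOFs accordingly. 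A routine scaling argument (as in \cite{neilan2015discrete,hu2015family}) then produces the $h$-uniform $H^1$-stability estimate, completing the Fortin construction.
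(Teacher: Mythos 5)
Your overall two--step strategy (a stable $H^1$ right inverse $\tilde{\boldsymbol\tau}$ of $\div$, a base interpolation, then a cell-wise bubble correction whose solvability is exactly Theorem~\ref{conformal bubble stabbility} plus Remark~\ref{affine constant bubble}) is the same as the paper's. However, there is a genuine gap in how you try to secure condition~(i), i.e.\ that the cell residual $\boldsymbol r_K=\boldsymbol v-\div\Pi_h^0\tilde{\boldsymbol\tau}$ has vanishing derivatives up to order~$2$ at every vertex.

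You propose to read the vertex DOFs \eqref{div_1} off $\tilde{\boldsymbol\tau}$ by Scott--Zhang averaging (correctly noting $\tilde{\boldsymbol\tau}$ is only $H^1$), and then claim this makes ``the divergence match vertex derivatives up to order~$2$.'' That does not follow: Scott--Zhang averaging does not reproduce pointwise derivative values of an $H^1$ function (those values are not even defined), so after averaging there is no reason at all for $D^\alpha(\div\Pi_h^0\tilde{\boldsymbol\tau})(\delta)$ to agree with $D^\alpha\boldsymbol v(\delta)$, which is the identity you actually need since $\boldsymbol v$ is the cell-wise polynomial target. Consequently $\boldsymbol r_K\notin P^{(2)}_{k-4}(K;\mathbb R^3)$ in general and the bubble correction $\Pi_h^b$ fails to exist. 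The missing ingredient --- and the technical heart of the paper's proof --- is a direct prescription of the vertex derivative DOFs of the base interpolant \emph{in terms of $\boldsymbol v$'s vertex jet}, not in terms of $\tilde{\boldsymbol\tau}$: one must solve, at each vertex $\delta$, the linear problem of choosing the jet of an $\bST$-valued polynomial $\boldsymbol\tau'$ up to order $3$ so that $D^\alpha(\div\boldsymbol\tau')(\delta)=D^\alpha\boldsymbol v(\delta)$ for $|\alpha|\le 2$ while respecting symmetry and tracelessness. The paper exhibits an explicit solution (set $D^\alpha\boldsymbol\tau'_{ii}(\delta)=0$ for $1\le|\alpha|\le 3$, $D^\alpha(\partial_i\boldsymbol\tau'_{ij})(\delta)=\tfrac12 D^\alpha\boldsymbol v_j(\delta)$ for $i\ne j$, and $D^\alpha(\partial_k\boldsymbol\tau'_{ij})(\delta)=0$ for $i,j,k$ distinct); verifying that such a choice is consistent with $\bST$ and gives the right divergence jet is a small but nontrivial step that your proposal skips entirely by offloading it to Scott--Zhang.

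Two further, lesser remarks. First, the paper constructs its base interpolant $\boldsymbol\tau'$ as a Neilan Stokes $H^1$-conforming element (which is then automatically in $\boldsymbol\Sigma^{\div}_{k-3,h}$), using full face moments $\int_F\boldsymbol\tau':\boldsymbol q$ for $\boldsymbol q\in P_{k-9}(F;\bST)$; this automatically includes the pairings $\int_F\boldsymbol q\cdot(\boldsymbol\tau'-\boldsymbol\tau_1)\cdot\boldsymbol n$ for $\boldsymbol q\in\boldsymbol{CK}$ since $\boldsymbol q\cdot\boldsymbol M\cdot\boldsymbol n=\boldsymbol M:\dev\sym(\boldsymbol q\boldsymbol n^T)$ and $\dev\sym(\boldsymbol q\boldsymbol n^T)\in P_2(F;\bST)$. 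Your alternative of ``enlarging the face moment set and reducing the bubble DOFs'' would require a fresh unisolvency and stability argument, which you have not supplied. Second, your choice of $\tilde{\boldsymbol\tau}=\dev\Def\boldsymbol w$ via a Lamé-type Dirichlet problem needs $H^2$ elliptic regularity, which is not automatic on a general contractible Lipschitz domain; the paper avoids this by citing the (non-regularity-based) $H^1$ right-inverse property of $\div$ on $H(\div;\bST)$ from the BGG framework \cite{arnold2021complexes}.
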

\begin{proof}
    To start with, from the BGG construction on the continuous level \cite{arnold2021complexes}, there exists $\boldsymbol{\tau}_1\in {H}^1(\Omega;\bST)$ such that
    \begin{align}
        \div\boldsymbol{\tau}_1=\boldsymbol{v} \text{ and } \lVert\boldsymbol{\tau}_1\rVert_{H^1(\Omega)}\leq C\lVert \boldsymbol{v}\rVert_{L^2(\Omega)}.
    \end{align}
    Denote by $\boldsymbol{I}_h\boldsymbol{\tau}_1\in {P}_{k-3}(\bST)$ the canonical Scott-Zhang interpolant \cite{scott1990finite}. We then determine $\boldsymbol{\tau'}$ by Neilan Stokes ${H}^1$-conforming elements with extra smoothness at vertices as follows:
    \begin{subequations}
    \begin{align}
        \boldsymbol{\tau'}(\delta)=&\boldsymbol{I}_h\boldsymbol{\tau}_1(\delta),\label{div_construction_vertices}\\
        D^{\alpha}\boldsymbol{\tau'}_{ii}(\delta)=&0,\quad \forall1\leq|\alpha|\leq 3,\nonumber\\
        D^{\alpha}(\partial_i\boldsymbol{\tau'}_{ij})(\delta)=&\tfrac{1}{2}D^{\alpha}\boldsymbol{v}_j(\delta),\quad i\neq j,\quad\forall0\leq|\alpha|\leq2,\nonumber\\
        D^{\alpha}(\partial_k\boldsymbol{\tau'}_{ij})(\delta)=&0,\quad i,j,k\text{ are different from each other},\nonumber\\&\quad\quad\forall0\leq|\alpha|\leq2,\quad \forall \delta\in \mathcal{V}(K).\nonumber\\
        \int_e \boldsymbol{\tau'}:\boldsymbol{q}=&\int_e \boldsymbol{I}_h\boldsymbol{\tau}_1:\boldsymbol{q},\quad \forall \boldsymbol{q}\in P_{k-11}(e;\bST),\quad \forall e\in\mathcal{E}(K).\\
        \int_e \partial_{\boldsymbol{n}_{e{\pm}}}\left(\boldsymbol{\tau'}\right):\boldsymbol{q}=&\int_e \partial_{\boldsymbol{n}_{e{\pm}}}\left(\boldsymbol{I}_h\boldsymbol{\tau}_1\right):\boldsymbol{q},\quad \forall \boldsymbol{q}\in P_{k-10}(e;\bST),\quad \forall e\in\mathcal{E}(K).\\
        \int_F \boldsymbol{\tau'}:\boldsymbol{q}=&\int_F \boldsymbol{\tau}_1:\boldsymbol{q},\quad \forall \boldsymbol{q}\in P_{k-9}(F;\bST),\quad \forall F\in\mathcal{F}(K).\label{div_construction_face}\\
\int_K\boldsymbol{\tau'}:\boldsymbol{q}=&\int_K \boldsymbol{I}_h\boldsymbol{\tau}_1:\boldsymbol{q},\quad \forall \boldsymbol{q}\in P_{k-7}^{(0)}(K;\bST).
    \end{align}
    \end{subequations}
    Note that this is exactly Neilan Stokes element adding extra orders of derivatives at vertices. Since $\boldsymbol{\tau'}$ has $C^3$ smoothness at vertices and satisfies global continuity, it follows that $\boldsymbol{\tau'}\in\boldsymbol{\Sigma}_{k-3,h}^{\div}$. Moreover, from a standard scaling argument similar to \cite[Lemma 4.5]{neilan2015discrete}, we can derive that \begin{align}        \lVert\boldsymbol{\tau'}\rVert_{H^1(\Omega)}\leq C\lVert\boldsymbol{v}\rVert_{L^2(\Omega)}.\label{scaling estimate}
    \end{align} By \eqref{div_construction_vertices}, we have
    \begin{align*}     D^{\alpha}\left(\div\boldsymbol{\tau'}\right)(\delta)=D^{\alpha}\boldsymbol{v}(\delta),\quad\forall0\leq|\alpha|\leq2,\quad \forall \delta\in \mathcal{V}(K).
    \end{align*}
    Note that $\boldsymbol{CK}$ consists of polynomials with degree no more than two, and we assume $k
    \geq 11$. By \eqref{div_construction_face} we obtain
    \begin{align*}
        \int_K\left(\div \boldsymbol{\tau'}- \boldsymbol{v}\right)\cdot\boldsymbol{q}=&-\int_K\left(\boldsymbol{\tau'}-\boldsymbol{\tau}_1\right): \dev\Def\boldsymbol{q}+\int_{\partial K}\boldsymbol{q}\cdot\left(\boldsymbol{\tau'}-\boldsymbol{\tau}_1\right)\cdot\boldsymbol{n}\\=&0,\quad \forall\boldsymbol{q}\in \boldsymbol{CK}.
    \end{align*}
    Thus, from Theorem~\ref{thm:div-surjective property} and Remark \ref{affine constant bubble}, there exists $\boldsymbol{\tau}_2\in \boldsymbol{\Sigma}_{k-3,h}^{\div}$, such that $\boldsymbol{\tau}_2|_K\in\mathbb{B}^{\div,(3)}_{k-3}(K;\bST)$ and 
    \begin{align*}
      \div\boldsymbol{\tau}_2=\div\boldsymbol{\tau'}-\boldsymbol{v} ,\quad\lVert\boldsymbol{\tau}_2\rVert_{H(\div,\Omega)}\leq C\lVert\div\boldsymbol{\tau'}-\boldsymbol{v}\rVert_{L^2(\Omega)}.
    \end{align*}
    Finally, we define $\boldsymbol{\tau}=\boldsymbol{\tau'}-\boldsymbol{\tau}_2$.
    Hence $\div\boldsymbol{\tau}=\boldsymbol{v}$ and by \eqref{scaling estimate} we have
    \begin{align*}  \lVert\boldsymbol{\tau}\rVert_{H(\div,\Omega)}&\leq C\left(\lVert\div\boldsymbol{\tau'}-\boldsymbol{v}\rVert_{L^2(\Omega)}+\lVert\boldsymbol{\tau'}\rVert_{H(\div,\Omega)}\right)\\
      &\leq C\lVert\boldsymbol{v}\rVert_{L^2(\Omega)}. 
    \end{align*}
    This completes the proof.
\end{proof}
\begin{corollary}
    The inf-sup condition \eqref{sec1:eq:inf-sup} holds for $k\geq 11$.
\end{corollary}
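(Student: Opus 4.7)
The plan is to deduce the inf-sup bound \eqref{eq:inf-sup} as an immediate consequence of the stable right-inverse for the divergence constructed in Theorem \ref{div_stability_FE}. The corollary is of Fortin-style: once one produces, for every $\boldsymbol{v}$ in the pressure space, a stress field $\boldsymbol{\sigma}^*$ whose divergence equals $\boldsymbol{v}$ with a uniform norm bound, the inf-sup follows by plugging $\boldsymbol{\sigma}^*$ into the supremum.

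Concretely, given any nonzero $\boldsymbol{v}\in \boldsymbol{V}_{k-1,h}$, I would invoke Theorem \ref{div_stability_FE} (with the shift of index needed to reconcile the complex-level labelling $\boldsymbol{\Sigma}^{\div}_{k-3,h}$, $\boldsymbol{V}_{k-4,h}$ with the shape-function labelling $\boldsymbol{\Sigma}_{k,h}^{\div}$, $\boldsymbol{V}_{k-1,h}$ appearing in \eqref{eq:inf-sup}) to produce $\boldsymbol{\sigma}^*\in \boldsymbol{\Sigma}_{k,h}^{\div}$ with $\div\boldsymbol{\sigma}^*=\boldsymbol{v}$ and $\|\boldsymbol{\sigma}^*\|_{H(\div,\Omega)}\le C_1\|\boldsymbol{v}\|_{L^2(\Omega)}$, where $C_1$ does not depend on $h$. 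Substituting $\boldsymbol{\sigma}^*$ into the supremum,
$$\sup_{\boldsymbol{\sigma}\in \boldsymbol{\Sigma}_{k,h}^{\div}\setminus\{0\}}\frac{\int_{\Omega}\div\boldsymbol{\sigma}\cdot\boldsymbol{v}}{\|\boldsymbol{\sigma}\|_{H(\div,\Omega)}\|\boldsymbol{v}\|_{L^2(\Omega)}}\;\ge\;\frac{\int_{\Omega}\div\boldsymbol{\sigma}^*\cdot\boldsymbol{v}}{\|\boldsymbol{\sigma}^*\|_{H(\div,\Omega)}\|\boldsymbol{v}\|_{L^2(\Omega)}}\;=\;\frac{\|\boldsymbol{v}\|_{L^2(\Omega)}^{2}}{\|\boldsymbol{\sigma}^*\|_{H(\div,\Omega)}\|\boldsymbol{v}\|_{L^2(\Omega)}}\;\ge\;\frac{1}{C_1},$$
and taking the infimum over $\boldsymbol{v}$ yields \eqref{eq:inf-sup} with $C=1/C_1$.

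There is essentially no new obstacle here: the genuine work has already been carried out upstream, namely the bubble-level surjectivity with $\boldsymbol{C}^3$ vertex supersmoothness (Theorem \ref{conformal bubble stabbility}), which forced the $\boldsymbol{CK}$-orthogonality issue, and the global correction via Neilan-type vertex degrees of freedom used to build the right-inverse in Theorem \ref{div_stability_FE}. The only item to double-check is that the threshold $k\ge 11$ in \eqref{eq:inf-sup} is inherited correctly from the hypothesis $k\ge 11$ of Theorem \ref{div_stability_FE} after the relabelling. The main obstacle, if one wishes to call it that, is purely bookkeeping on indices; the analytical content is already encoded in the stable bubble construction and in the $h$-uniform scaling estimate \eqref{scaling estimate}.
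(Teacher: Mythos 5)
Your proposal is correct and is exactly how the paper intends the corollary to be read: it follows immediately from the stable right inverse of $\div$ constructed in Theorem \ref{div_stability_FE}, by inserting that particular $\boldsymbol{\sigma}^*$ into the supremum, which is why the paper states the corollary without further proof. The only care needed is the index relabelling between $\boldsymbol{\Sigma}^{\div}_{k-3,h},\boldsymbol{V}_{k-4,h}$ and the pair in \eqref{eq:inf-sup}, which you have handled.
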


\section{\texorpdfstring{$H({\operatorname{cott}})$}{H(cott)}-conforming Finite Element Space}
\label{cinc section}

In this section, we present several bubble conformal complexes with smoothness less than the bubble complex in Theorem~\ref{thm:conformal-bubble-seq3}. We construct a family of ${H}(\cott)$-conforming symmetric and traceless finite element spaces suitable for use in a finite element complex.

Building on the analysis from the previous section, \(\boldsymbol{\Sigma}_{k-3,h}^{\div}\) requires at least \(C^3\) vertex smoothness for \(\div\)-stability. Accordingly, we impose \(C^6\) vertex smoothness for \(\boldsymbol{\Sigma}_{k,h}^{\cott}\) to ensure compatibility in the complex.

To construct \(\boldsymbol{\sigma} \in \boldsymbol{\Sigma}_{k,h}^{\cott}\), we first define degrees of freedom ensuring that \(\tr_1^{\cott}(\boldsymbol{\sigma})|_F\) and \(\boldsymbol{\sigma}|_e\) are single-valued. We then analyze \(\tr_2^{\cott}(\boldsymbol{\sigma})\) and \(\tr_3^{\cott}(\boldsymbol{\sigma})\) in 2D under the simplifying assumption that \(\tr_1^{\cott}(\boldsymbol{\sigma})|_{\partial K} = 0\) and \(\boldsymbol{\sigma}|_e = 0\), which isolates the local behavior on a single tetrahedron.

Inspired by the finite element systems framework \cite{christiansen2010finite,christiansen2018generalized}, which emphasizes that restrictions to lower-dimensional sub-cells should themselves form finite element complexes, we derive unisolvent degrees of freedom for $H(\div_F\div_F)$- and $H(\operatorname{rot}_F)$-conforming symmetric and traceless tensors on faces. These results are then used to design face and edge DOFs to ensure $\tr_2^{\cott}(\boldsymbol{\sigma})$ and $\tr_3^{\cott}(\boldsymbol{\sigma})$ are single-valued for \(\boldsymbol{\Sigma}_{k,h}^{\cott}\) in \eqref{cinc-DOFs}.
\subsection{Bubble conformal complex with less smoothness}

\begin{theorem}
\label{thm:bubble-seq-0}
For $k\geq10$, the following sequence is exact:
$$0 \xrightarrow{} b_KP_{k-3}(K;\mathbb{R}^3)\xrightarrow{\dev \Def} \mathbb{B}_{k}^{\cott}(K;\bST) \xrightarrow[]{\cott} \mathbb{B}^{\div}_{k-3}(K;\bST)\cap \ker(\div) \xrightarrow[]{\div} 0.$$
\end{theorem}
By the trace complexes in Section~\ref{trace section}, the above is a complex. 
Moreover, since $b_K^2\mathbb B_{k-8}^{2\cott}(\bST)\subset\mathbb B_k^{\cott}(\bST)$, 
the surjectivity of $\cott$ follows from Theorem~\ref{thm:conformal-bubble-seq3}.  
Hence, the only nontrivial part is the exactness of the first two terms: 
we must show $\ker(\cott)=\ran(\dev\Def)$ inside $\mathbb B_k^{\cott}(K;\bST)$.
\begin{lemma}{{\rm (Polynomial conformal complex \cite{vcap2023bounded})}}
    \label{polynomial_conformal}The following polynomial sequence is exact:
    \begin{align}\label{polynomial-complex}
       \boldsymbol{CK} \xrightarrow{\subset} P_{k+1}(K;\mathbb{R}^3)\xrightarrow{\dev \Def} P_k(K;\bST) \xrightarrow[]{\cott} P_{k-3}(K;\bST) \xrightarrow[]{\div}P_{k-4}(K;\mathbb{R}^3) \xrightarrow[]{}0.
    \end{align}
\end{lemma}
\begin{lemma}
\label{lemma:characterizations of CK}
$\boldsymbol{v}\in\boldsymbol{CK} $ can be uniquely determined by the following degrees of freedom:
\begin{align}
\int_e\boldsymbol{t}_e\cdot\curl\boldsymbol{v},&\quad e\in \mathcal{E}(K),\label{CK_1}\\
    \int_F\boldsymbol{v}\cdot\boldsymbol{n},&\quad F\in \mathcal{F}(K).\label{CK_2}
\end{align}
\end{lemma}
\begin{proof}
    Recall our definition of infinitesimal rigid motions $\boldsymbol{RM}$ in \eqref{def:RM}, the lowest order Raviart-Thomas space $\boldsymbol{RT}$ in \eqref{def:RT}, and the conformal Killing fields \eqref{def:CK}:
    \[
\boldsymbol{CK}=\ker(\dev\Def)
=\bigl\{(\boldsymbol{x}\!\cdot\!\boldsymbol{x})\boldsymbol{a}
-2(\boldsymbol{a}\!\cdot\!\boldsymbol{x})\boldsymbol{x}
+\boldsymbol{b}\times\boldsymbol{x}
+c\,\boldsymbol{x}
+\boldsymbol{d}:\;
\boldsymbol{a},\boldsymbol{b},\boldsymbol{d}\in\mathbb{R}^3,\ c\in\mathbb{R}\bigr\}.
\]
It holds that
\[
\nabla\times(\boldsymbol{b}\times\boldsymbol{x})=2\boldsymbol{b},
\qquad
\nabla\times\Big((\boldsymbol{x}\!\cdot\!\boldsymbol{x})\boldsymbol{a}
-2(\boldsymbol{a}\!\cdot\!\boldsymbol{x})\boldsymbol{x}\Big)
=-4\,\boldsymbol{a}\times\boldsymbol{x}.
\]
Hence \(\curl\boldsymbol{CK}=\boldsymbol{RM}\) and
\(\boldsymbol{CK}\cap\ker(\curl)=\boldsymbol{RT}\).
Therefore, from the unisolvency of the lowest order Nédélec and Raviart-Thomas element, $\boldsymbol{CK}$ can be uniquely determined by \eqref{CK_1} and \eqref{CK_2}.
\end{proof}

\begin{proof}[Proof of Theorem~\ref{thm:bubble-seq-0}]
We show $\ker(\cott) = \ran(\dev\Def)$.

\medskip\noindent\textbf{Step 1: Using the polynomial conformal complex.}
Let $\boldsymbol\sigma\in\mathbb B_{k}^{\cott}(K;\bST)$ with $\cott\boldsymbol\sigma=0$.
Since $\boldsymbol\sigma$ is polynomial, the polynomial conformal complex
(Lemma~\ref{polynomial_conformal}) yields $\boldsymbol v\in P_{k+1}(K;\mathbb R^3)$ such that
\[
\dev\Def\boldsymbol v=\boldsymbol\sigma,
\qquad \boldsymbol v \ \text{unique up to}\ \boldsymbol{CK}.
\]
We fix this ambiguity by imposing the following moments
\begin{align}
\int_e \boldsymbol t_e\cdot\curl\boldsymbol v = 0 \quad &\forall e\in\mathcal E(K), \label{eq:gauge-1}\\
\int_F \boldsymbol v\cdot \boldsymbol n = 0 \quad &\forall F\in\mathcal F(K). \label{eq:gauge-2}
\end{align}
By Lemma~\ref{lemma:characterizations of CK}, 
\eqref{eq:gauge-1} and \eqref{eq:gauge-2} uniquely determine $\boldsymbol v$.

\medskip\noindent\textbf{Step 2: Enforcing boundary vanishing conditions.}
Since $\boldsymbol\sigma\in\mathbb B_k^{\cott}$, all face traces $\tr_i(\boldsymbol\sigma)$ vanish on $\partial K$.
Substituting $\boldsymbol\sigma=\dev\Def\boldsymbol v$ into the trace identities of Lemma~\ref{trace complex identities part 1},
for every $F\in\mathcal F(K)$,
\begin{align}
\tr_1^{\cott}(\boldsymbol\sigma)|_F &= \tfrac12\Def_F(\boldsymbol v\times\boldsymbol n)-\tfrac12\sym\curl_F(\boldsymbol v\Pi_F)=0, \label{eq:tr1}\\
\tr_2^{\cott}(\boldsymbol\sigma)|_F &= -\Def_F\curl_F(\boldsymbol v\cdot\boldsymbol n)=0, \label{eq:tr2}\\
\tr_3^{\cott}(\boldsymbol\sigma)|_F &= \tfrac12\hess_F\big(\boldsymbol n\cdot\curl\boldsymbol v\big)=0. \label{eq:tr3}
\end{align}
From the edge identities \eqref{trace identity symcurl} and \eqref{new_form_trace_2}, for any edge $e\subset F$, we have
\begin{align}
\tfrac{1}{2}\,\partial_{\boldsymbol{t}_e}\!\left(\boldsymbol{t}_e\cdot\curl\boldsymbol{v}\right)
&=\boldsymbol{t}_e\cdot(\sym\curl\boldsymbol{\sigma})\cdot\boldsymbol{t}_e \nonumber\\
&=-\,\boldsymbol{n}_{F,e}\cdot\tr_2^{\cott}(\boldsymbol{\sigma})|_{F}\cdot\boldsymbol{n}_{F,e}
+\partial_{\boldsymbol{t}_{F,e}}\!\big(\boldsymbol{n}_{F,e}\cdot\boldsymbol{\sigma}\cdot\boldsymbol{n}\big).
\label{eq:edge-condition}
\end{align}
By Lemma~\ref{edge vanish 1}, $\boldsymbol\sigma|_e=0$ on every edge $e\in\mathcal E(K)$, so both terms on the right of \eqref{eq:edge-condition} vanish. Hence,
\[
\partial_{\boldsymbol{t}_e}\!\left(\boldsymbol{t}_e\cdot\curl\boldsymbol{v}\right)=0
\qquad\forall e\in\mathcal E(K).
\]
Combined with the condition \eqref{eq:gauge-1}, we obtain
\begin{equation}\label{eq:curl-vertex-vanish}
\boldsymbol t_e\cdot\curl\boldsymbol v\equiv 0\ \text{on }e,\qquad
(\curl\boldsymbol v)(\delta)=0\ \text{for every vertex }\delta\in\mathcal V(K).
\end{equation}

\medskip\noindent\textbf{Step 3: Facewise de~Rham reduction and affine boundary.}
From \eqref{eq:tr2} we have $\boldsymbol v\cdot\boldsymbol n\in\ker(\Def_F\curl_F)$ on each face $F$; thus $\boldsymbol v\cdot\boldsymbol n$ is a polynomial of degree $\le 2$.
From \eqref{eq:tr3} and \eqref{eq:curl-vertex-vanish} we get $\boldsymbol n\cdot\curl\boldsymbol v\in\ker(\hess_F)$ and vanishing at all vertices of $F$, hence
$\boldsymbol n\cdot\curl\boldsymbol v= \operatorname{rot}_F(\boldsymbol v\Pi_F)=0$ on $F$.
By the exactness of the 2D polynomial de~Rham complex, there is $u_F\in P_{k+2}(F)$ such that
\[
\boldsymbol v\Pi_F=\grad_F u_F \quad\text{on }F.
\]
Inserting into \eqref{eq:tr1} gives $\sym\curl_F(\boldsymbol v\Pi_F)=0$, i.e.
$\boldsymbol v\Pi_F\in\ker(\sym\curl_F)=\Pi_F(\boldsymbol{RT})$ \cite{chen2022finite}. 
Therefore, $\boldsymbol v$ is linear along every edge and at most quadratic facewise, hence facewise affine.

Let $F_+,F_-$ be adjacent faces with common edge $e$. Writing
$\boldsymbol v\Pi_{F_\pm}=a_\pm\,\boldsymbol x\Pi_{F_\pm}+\boldsymbol b_{F_\pm}\Pi_{F_\pm}$ on $F_\pm$,
$\boldsymbol v\cdot\boldsymbol t_e$ being single-valued along $e$ yields $a_+=a_-$. Hence, there exists a constant $a$ such that
\[
(\boldsymbol v-a\boldsymbol x)\Pi_F\ \text{is constant on every }F\in\mathcal F(K).
\]
Thus $\boldsymbol v-a\boldsymbol x$ is constant along every edge; combined with facewise affinity, 
$\boldsymbol v-a\boldsymbol x$ is constant on each face. 
Equivalently, there exists $\boldsymbol b\in\mathbb R^3$ with
\begin{equation}\label{eq:ax+b}
\boldsymbol v - a\,\boldsymbol x - \boldsymbol b \equiv 0 \qquad\text{on }\partial K.
\end{equation}
Combined with \eqref{eq:gauge-2}, we conclude that $a=0$ and $\boldsymbol b=0$. Therefore, $\boldsymbol{v}|_{\partial K}=0$, which completes the proof.
\end{proof}

\begin{corollary}\label{cor:bubble-conformal-seq-2}For $k\geq 10$, the following sequence is exact:
$$0 \xrightarrow{} b_K^2P_{k-7}(K;\mathbb{R}^3)\xrightarrow{\dev \Def} b_K\mathbb{B}_{k-4}^{1\cott}(K;\bST) \xrightarrow[]{\cott} \mathbb{B}^{\div}_{k-3}(K;\bST)\cap \ker(\div) \xrightarrow[]{\div} 0.$$
\end{corollary}
\begin{proof}
    The surjectivity of $\cott$ follows from Theorem \ref{thm:conformal-bubble-seq3}. Suppose $\boldsymbol{\sigma} \in b_K \mathbb{B}_{k-4}^{1\cott}(K; \bST)$ and $\cott \boldsymbol{\sigma} = 0$. From Theorem~\ref{thm:bubble-seq-0}, there exists $\boldsymbol{p} \in P_{k-3}(K; \mathbb{R}^3)$ such that
\[
\boldsymbol{\sigma} = \dev \Def (b_K \boldsymbol{p}).
\]
We expand:
\[
\dev \Def (b_K \boldsymbol{p}) = b_K \dev \Def \boldsymbol{p} + \sum_{F \in \mathcal{F}(K)} b_F \dev \sym(\grad \lambda_F \boldsymbol{p}^T).
\]
Since $\boldsymbol{\sigma}$ vanishes on $\partial K$, the second term must vanish on each face, which implies $\boldsymbol{p}|_{\partial K} = 0$. Therefore, $\boldsymbol{p} \in b_{K}P_{k-7}(K; \mathbb R^3)$, and $\boldsymbol{\sigma} \in \dev \Def (b_K^2 P_{k-7}(K; \mathbb{R}^3))$. This completes the proof.
\end{proof}
We note that Lemma~\ref{edge vanish 1} implies that for any \(\boldsymbol{\sigma} \in \mathbb{B}_{k}^{\cott}(K;\bST)\), we have \(D^\alpha \boldsymbol{\sigma}(\delta) = 0\) for all \(|\alpha| \leq 1\), \(\delta \in \mathcal{V}(K)\) due to edge vanishing. To align with the imposed \(C^6\) vertex smoothness of \(\boldsymbol{\Sigma}_{k,h}^{\cott}\), we define the enhanced bubble space as
\[
\mathbb{B}_{k}^{\cott, (6)}(K;\bST) := P_{k}^{(6)}(K;\bST) \cap \mathbb{B}_{k}^{\cott}(K;\bST),
\]
for \(k \geq 13\). We present a version of bubble complex that forms the foundation of our conforming element design.
\begin{theorem}\label{conformal_theorem_bubble_extra}
    The following bubble sequence with extra smoothness at vertices is exact for $k\geq 13$:
        \begin{align*}
         0 \xrightarrow{} b_KP_{k-3}^{(4)}(K;\mathbb{R}^3)\xrightarrow{\dev \Def} \mathbb{B}_{k}^{\cott,(6)}(K;\bST) \xrightarrow[]{\cott} \mathbb{B}^{\div, (3)}_{k-3}(K;\bST)\xrightarrow[]{\div} P_{k-4}^{(2)}(K;\mathbb{R}^3)\cap\boldsymbol{CK}^{\perp}\xrightarrow[]{}0.   
     \end{align*}
\end{theorem}

We present a lemma to aid our proof of Theorem~\ref{conformal_theorem_bubble_extra}. A constructive proof is given in Appendix \ref{appendix: devdef lemma}.

\begin{lemma}
\label{devdef lemma}Let $\alpha$, $\beta$ be multi-indices such that $|\alpha|\geq 3$ and $|\beta|=|\alpha|-1$. Assume $\boldsymbol{u}$ is a sufficiently smooth three-dimensional vector. Then elements of $D^{\alpha}\boldsymbol{u}$ can be expressed as linear combinations of elements in $D^{\beta}\left(\dev\Def\boldsymbol{u}\right)$.\end{lemma}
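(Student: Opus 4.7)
The plan is to derive an explicit pointwise identity expressing third-order derivatives of $\boldsymbol{u}$ in terms of second-order derivatives of $S:=\dev\Def\boldsymbol{u}$, and then reduce the general case $|\alpha|\geq 3$ to this one by further differentiation.

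Starting from the relation $S_{ij}=\epsilon_{ij}-\tfrac{1}{3}\delta_{ij}\div\boldsymbol{u}$ with $\epsilon_{ij}:=\tfrac{1}{2}(\partial_i u_j+\partial_j u_i)$, substitution into the classical Killing identity $\partial_k\partial_j u_i=\partial_k\epsilon_{ij}+\partial_j\epsilon_{ik}-\partial_i\epsilon_{jk}$ yields
\begin{equation*}
\partial_k\partial_j u_i = \partial_k S_{ij}+\partial_j S_{ik}-\partial_i S_{jk}+\tfrac{1}{3}\left(\delta_{ij}\partial_k+\delta_{ik}\partial_j-\delta_{jk}\partial_i\right)\div\boldsymbol{u}.
\end{equation*}
The remaining task is therefore to express second derivatives of $\div\boldsymbol{u}$ in terms of second derivatives of $S$.

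A direct computation from the definition of $S$ gives $(\div S)_j=\tfrac{1}{2}\Delta u_j+\tfrac{1}{6}\partial_j\div\boldsymbol{u}$; symmetrizing in $(j,k)$ produces
\begin{equation*}
\partial_k(\div S)_j+\partial_j(\div S)_k=\Delta S_{jk}+\tfrac{1}{3}\delta_{jk}\Delta\div\boldsymbol{u}+\tfrac{1}{3}\partial_j\partial_k\div\boldsymbol{u}.
\end{equation*}
Taking the trace in $(j,k)$ and using $\operatorname{tr}(S)=0$ yields $\Delta\div\boldsymbol{u}=\tfrac{3}{2}\div\div S$; substituting this back isolates
\begin{equation*}
\partial_j\partial_k\div\boldsymbol{u}=3\partial_k(\div S)_j+3\partial_j(\div S)_k-3\Delta S_{jk}-\tfrac{3}{2}\delta_{jk}\div\div S,
\end{equation*}
so every second derivative of $\div\boldsymbol{u}$ is already a linear combination of second derivatives of $S$.

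Applying $\partial_l$ to the Killing-type identity and substituting the displayed formula for the resulting $\partial_l\partial_m\div\boldsymbol{u}$ terms gives $\partial_l\partial_k\partial_j u_i$ as an explicit linear combination of second derivatives of $S$, settling the case $|\alpha|=3$. For $|\alpha|\geq 4$ I would fix any decomposition $\alpha=\alpha_0+\alpha'$ with $|\alpha_0|=3$ and apply $\partial^{\alpha'}$ to this identity, realizing $\partial^\alpha u_i$ as a linear combination of $\partial^\beta S_{jk}$ with $|\beta|=|\alpha|-1$, as required. The main obstacle is the tracelessness of $S$: because $\operatorname{tr}(S)=0$, the scalar $\div\boldsymbol{u}$ cannot be recovered from $S$ pointwise, so the Killing identity alone leaves the $\div\boldsymbol{u}$ contribution unresolved. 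Locating the correct symmetric combination of $\partial_k(\div S)_j$ whose trace recovers $\Delta\div\boldsymbol{u}$ is the crux of the argument, and it is the algebraic counterpart of the fact that $\boldsymbol{CK}$ contains no homogeneous polynomials of degree $\geq 3$, which makes $\dev\Def$ injective on jets of order $\geq 3$.
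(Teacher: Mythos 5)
Your proof is correct, and it takes a genuinely different route from the paper's. Both arguments start from the Saint-Venant/Killing identity and reduce the problem to controlling second derivatives of $\div\boldsymbol{u}$ by second derivatives of $S:=\dev\Def\boldsymbol{u}$, but they resolve this step in different ways. The paper works with the \emph{kernel} formulation (``if $D^{\beta}S=0$ for all $|\beta|=2$ then $D^{\alpha}\boldsymbol{u}=0$ for all $|\alpha|=3$''), fixes a permutation $(i,j,m)$ of $(1,2,3)$, and proceeds by a case analysis on index patterns, first getting $\partial_{jm}\div\boldsymbol{u}=0$ from the all-distinct case of the Killing identity and then recovering $\partial_{ii}\div\boldsymbol{u}$ from three hand-chosen Laplacian-type combinations such as $(\partial_{ii}+\partial_{jj})\partial_iu_i$. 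You instead derive a fully explicit and invariant identity: symmetrizing $\partial_k(\div S)_j$ and taking the trace (using $\operatorname{tr} S=0$) gives $\Delta\div\boldsymbol{u}=\tfrac{3}{2}\div\div S$, which you substitute back to solve for $\partial_j\partial_k\div\boldsymbol{u}$ as a closed-form linear combination of $\partial\partial S$, $\Delta S$, and $\div\div S$. The paper's route is shorter to state and avoids keeping track of coefficients, but it is tied to coordinates and to the specific index choices; your route is constructive, gives the explicit coefficients one would need for numerical implementation or a scaling argument, and organizes the bookkeeping around the natural invariants $\div S$ and $\div\div S$, which also makes the reduction from $|\alpha|\geq 4$ to $|\alpha|=3$ transparent. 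Your closing remark correctly identifies the algebraic mechanism: $\boldsymbol{CK}$ consists only of polynomials of degree at most two, so $\dev\Def$ is injective on homogeneous jets of degree $\geq 3$; this is exactly why the statement holds for $|\alpha|\geq 3$ and fails for $|\alpha|\leq 2$, as the paper notes after the lemma.
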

\begin{remark}
Lemma~\ref{devdef lemma} relates to conformal Korn inequalities \cite{feireisl2009singular,fuchs2009application,fuchs2010generalizations}. It does not hold for \(|\alpha| \leq 2\), but one has the identity:
\begin{align}
    \partial_{ij} u_m = \partial_i (\Def \boldsymbol{u})_{mj} + \partial_j (\Def \boldsymbol{u})_{mi} - \partial_m (\Def \boldsymbol{u})_{ij}.\label{def identity}
\end{align}
\end{remark}
\begin{proof}[Proof of Theorem~\ref{conformal_theorem_bubble_extra}]
It is straightforward to check that the sequence is a complex. We have shown $\div$ is a surjection in Theorem~\ref{thm:div-surjective property}. We prove the exactness in two steps.

\textbf{Step 1: $\ran(\cott)=\ker(\div)$.}  
Since \(b_K^2 \mathbb{B}_{k-8}^{2\cott}(K;\bST) \subset P_k^{(6)}(K;\bST)\), we obtain the hierarchy:
\[
b_K^2 \mathbb{B}_{k-8}^{2\cott}(K;\bST) \subset \mathbb{B}_{k}^{\cott,(6)}(K;\bST) \subset \mathbb{B}_{k}^{\cott}(K;\bST).
\]
Hence,
\[
\cott\mathbb{B}_{k}^{\cott,(6)}(K;\bST) = \mathbb{B}_{k-3}^{\div}(K;\bST) \cap \ker(\div) = \mathbb{B}_{k-3}^{\div,(3)}(K;\bST) \cap \ker(\div),
\]
by Theorem~\ref{thm:conformal-bubble-seq3} and Lemma~\ref{prop:super-smoothness-div-free-bubble}, since $\cott$ is already surjective under the smaller subspace $b_K^2 \mathbb{B}_{k-8}^{2\cott}(K;\bST) $.

\textbf{Step 2: $\ker(\cott)=\ran(\dev\Def)$.}  
Let \(\boldsymbol{\sigma} \in \mathbb{B}_k^{\cott,(6)} \cap \ker(\cott)\). By Theorem~\ref{thm:bubble-seq-0}, there exists \(\boldsymbol{p} \in P_{k-3}(K;\mathbb{R}^3)\) such that \(\boldsymbol{\sigma} = \dev\Def(b_K \boldsymbol{p})\). Since \(b_K \boldsymbol{p} \in P_{k+1}^{(2)}\) from supersmoothness of $b_K$ and \(D^\alpha(b_K \boldsymbol{p})\) can be expressed in terms of \(D^\beta(\boldsymbol{\sigma})\) with $|\beta|=|\alpha|-1$ for \(|\alpha| \geq 3\) (by Lemma~\ref{devdef lemma}), we conclude \(b_K \boldsymbol{p} \in P_{k+1}^{(7)}\), i.e., \(\boldsymbol{p} \in P_{k-3}^{(4)}\) from geometric decompositions (Proposition~\ref{prop:geometric_decompositions}). Thus, the sequence is exact.
\end{proof}
\subsection{Characterizations of traces in two dimensions}
In this subsection, we will give sets of DOFs for two-dimensional elements, which can guide our construction to determine $\tr_2^{\cott}(\boldsymbol{\sigma})$ and $\tr_3^{\cott}(\boldsymbol{\sigma})$ on $\partial K$. The key lies in the design of ${H}(\div_F\div_F)$- and ${H}(\operatorname{rot}_F)$-conforming symmetric and traceless tensors in two dimensions. Since we have assumed $\boldsymbol{\sigma}$ has $C^6$ continuity at vertices, we are going to give vertex derivative DOFs for $\tr_2^{\cott}(\boldsymbol{\sigma})$ and $\tr_3^{\cott}(\boldsymbol{\sigma})$ up to 5th and 4th order respectively. Denote $\mathbb{S}_F$ and $\mathbb{T}_F$ as symmetric and traceless matrices in $\Pi_F\mathbb{M}\Pi_F$ respectively. In this subsection, $\perp$ denotes $L^2$ orthogonality on face $F$.
\subsubsection{The $H(\operatorname{div}_F\div_F)$-conforming element}
Traces of ${H}(\div_F\div_F)$-conforming symmetric tensors in two dimensions \cite{chen2022divdivanddiv,chen2022finitedivdiv2dim,hu2022conformingdivdiv} are given as: 
\begin{align*} \operatorname{tr}_{e, 1}^{\div_F\div_F}(\boldsymbol{\sigma}) & :=\boldsymbol{n}_{F, e} \cdot \boldsymbol{\sigma} \cdot \boldsymbol{n}_{F, e}, \\ \operatorname{tr}_{e, 2}^{\div_F\div_F}(\boldsymbol{\sigma}) & :=\partial_{\boldsymbol{t}_{F,e}}\left(\boldsymbol{t}_{F, e} \cdot \boldsymbol{\sigma} \cdot \boldsymbol{n}_{F, e}\right)+\boldsymbol{n}_{F, e} \cdot \operatorname{div}_F \boldsymbol{\sigma} \end{align*}
for $e\in \mathcal{E}(F)$. The canonical symmetric $H(\div_F\div_F)$ bubble space with domain restricted on $F$, is defined as
$$\mathbb{B}^{\div_F\div_F}_k(F;\mathbb{S}_F)|_F:=\left\{\boldsymbol{\sigma} \in {P}_k(F ; \mathbb{S}_F)|_F: \operatorname{tr}_{e, 1}^{\div_F\div_F}(\boldsymbol{\sigma})|_e=\operatorname{tr}_{e, 2}^{\div_F\div_F}(\boldsymbol{\sigma})|_e=0, \forall e \in \mathcal{E}(F), \boldsymbol{\sigma}(\delta)={0} ,\forall \delta \in \mathcal{V}(F)\right\}.$$ 

\begin{lemma}[Two dimensional bubble divdiv complex \cite{chen2022finitedivdiv2dim, chen2022finite}]\label{two dim divdiv S bubble complex}
    The following sequence is exact: 
    $${0} \xrightarrow[]{\subset} b_F {P}_{k-2}\left(F ; \Pi_F\mathbb{R}^3\right) |_F\xrightarrow[]{\sym\curl_F} \mathbb{B}^{\div_F\div_F}_k(F;\mathbb{S}_F)|_F \xrightarrow[]{\div_F\div_F} {P}_{k-2}(F;\mathbb{R})|_F \cap {P}_1(F;
\mathbb{R})|_F^{\perp} \rightarrow 0.$$
\end{lemma}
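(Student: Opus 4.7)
The plan is to establish the exactness of this two-dimensional bubble divdiv complex by the standard strategy used repeatedly in this paper for bubble complexes: verify the complex property together with the range inclusions, establish injectivity of the leftmost map and surjectivity of the rightmost map, and close the middle by a dimension count.

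First I would verify that the sequence is a complex. The identity $\div_F\div_F \circ \sym\curl_F = 0$ is the two-dimensional BGG compatibility between $\sym\curl_F$ and $\div_F\div_F$; it can be checked by direct expansion in a local orthonormal frame $(\boldsymbol t_{F,1}, \boldsymbol t_{F,2})$ on $F$. I would also verify the range inclusion $\sym\curl_F(b_F P_{k-2}(F;\Pi_F\mathbb{R}^3))|_F \subset \mathbb{B}_k^{\div_F\div_F}(F;\mathbb{S}_F)|_F$: because $b_F$ vanishes on $\partial F$ together with its tangential derivative along each edge, both face traces $\tr_{e,1}(\sym\curl_F(b_F\boldsymbol v))$ and $\tr_{e,2}(\sym\curl_F(b_F\boldsymbol v))$ vanish on every edge $e \in \mathcal{E}(F)$, and the tensor vanishes at every vertex. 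Integration by parts twice shows that for any $q \in P_1(F;\mathbb{R})|_F$ and any $\boldsymbol\sigma \in \mathbb{B}_k^{\div_F\div_F}(F;\mathbb{S}_F)|_F$ one has $\int_F q\,\div_F\div_F\boldsymbol\sigma = 0$, because $\nabla_F^2 q = 0$ and every edge/vertex contribution is annihilated by the bubble conditions. Hence the image of $\div_F\div_F$ really does lie in $P_{k-2}(F;\mathbb{R})|_F \backslash P_1(F;\mathbb{R})|_F$.

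Second, I would establish injectivity of $\sym\curl_F$ on $b_F P_{k-2}(F;\Pi_F\mathbb{R}^3)|_F$. The kernel of $\sym\curl_F$ on planar vector fields is the two-dimensional analogue of rigid motions, a finite-dimensional space of low-degree polynomials; intersecting it with the subspace of polynomials vanishing on all of $\partial F$ yields only zero. This gives exactness at the leftmost node. Third, for the surjectivity of $\div_F\div_F$ I would invoke the exactness of the polynomial planar divdiv sequence
\begin{equation*}
P_{k+1}(F;\Pi_F\mathbb{R}^3)|_F \xrightarrow{\sym\curl_F} P_k(F;\mathbb{S}_F)|_F \xrightarrow{\div_F\div_F} P_{k-2}(F;\mathbb{R})|_F \backslash P_1(F;\mathbb{R})|_F \longrightarrow 0.
\end{equation*}
Given $q$ in the quotient on the right, pick a preimage $\boldsymbol\sigma_0 \in P_k(F;\mathbb{S}_F)|_F$; then subtract a symmetric tensor realising the nonzero $\tr_{e,1}$, $\tr_{e,2}$ and vertex values of $\boldsymbol\sigma_0$. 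Since such a correction lies in $\ker(\div_F\div_F)$ (up to bubbles), the residual lands in $\mathbb{B}_k^{\div_F\div_F}(F;\mathbb{S}_F)|_F$ and realises $q$. The correction can be built from edge and vertex shape functions of the two-dimensional $H(\div_F\div_F)$-conforming element of Chen--Huang.

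Finally I would close exactness at the middle node by dimension count. The dimension of $b_F P_{k-2}(F;\Pi_F\mathbb{R}^3)|_F$ and of $P_{k-2}(F;\mathbb{R})|_F \backslash P_1(F;\mathbb{R})|_F$ are elementary, while that of $\mathbb{B}_k^{\div_F\div_F}(F;\mathbb{S}_F)|_F$ follows from Proposition \ref{prop:geometric_decompositions} combined with explicit counting of the $\tr_{e,1}$, $\tr_{e,2}$, and pointwise-value constraints. Combined with the injectivity and surjectivity already shown, the Euler characteristic identity forces $\ker(\div_F\div_F) \cap \mathbb{B}_k^{\div_F\div_F}(F;\mathbb{S}_F)|_F = \sym\curl_F(b_F P_{k-2}(F;\Pi_F\mathbb{R}^3))|_F$. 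The step I expect to be the main obstacle is the explicit trace-killing construction within the surjectivity step: one must design edge- and vertex-based symmetric tensors whose $\tr_{e,1}$ and $\tr_{e,2}$ reproduce prescribed data while remaining in the kernel of $\div_F\div_F$, in the spirit of the geometric decompositions and face basis lemmas already developed in Section \ref{bubble section}.
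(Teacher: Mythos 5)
The paper itself supplies no proof of this statement; it is cited directly from Chen--Huang \cite{chen2022finitedivdiv2dim}. So there is no ``paper's own proof'' to compare against here, but the paper's proof strategy for the closely analogous Lemma \ref{lem:face-complex} (and for the three-dimensional bubble complexes in Section \ref{bubble section}) does furnish a natural benchmark, and against that benchmark your proposal has a genuine gap. You have chosen the order: left injectivity, then right surjectivity, then middle exactness by dimension count. This is logically valid, but it puts all the weight on proving right surjectivity, and that is exactly the step you do not carry out. Your argument there --- take an unconstrained polynomial preimage $\boldsymbol{\sigma}_0$ of $q$, then subtract a $\div_F\div_F$-kernel correction matching the $\tr_{e,1}$, $\tr_{e,2}$ and vertex data --- requires building an explicit trace-killing family of symmetric tensors lying in $\ker(\div_F\div_F)$, and you say only that it ``can be built from edge and vertex shape functions of the two-dimensional $H(\div_F\div_F)$-conforming element of Chen--Huang.'' That element's unisolvency is usually established \emph{via} the bubble complex you are trying to prove, so as written the step risks circularity; and the parenthetical ``up to bubbles'' in the phrase ``lies in $\ker(\div_F\div_F)$ (up to bubbles)'' is vague enough that the residual is not actually shown to land in $\mathbb{B}_k^{\div_F\div_F}(F;\mathbb{S}_F)|_F$ with the right $\div_F\div_F$. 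You correctly flag this as ``the main obstacle,'' but flagging is not proving; without it, the dimension count for the middle node has no independent surjectivity to lean on.

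The cleaner path, consistent with how this paper handles Lemma \ref{lem:face-complex} and the three-dimensional bubble complexes, reverses the last two steps: prove middle exactness first and deduce surjectivity from the dimension count. Concretely, given $\boldsymbol{\sigma}\in\mathbb{B}_k^{\div_F\div_F}(F;\mathbb{S}_F)|_F\cap\ker(\div_F\div_F)$, the exactness of the \emph{full} polynomial planar divdiv complex (whose codomain is $P_{k-2}(F;\mathbb{R})|_F$, not the quotient by $P_1$ --- your display is slightly off there; the $P_1$-orthogonality only enters for bubbles, through the vanishing of the boundary terms in Green's identity) produces $\boldsymbol{v}\in P_{k+1}(F;\Pi_F\mathbb{R}^3)|_F$ with $\sym\curl_F\boldsymbol{v}=\boldsymbol{\sigma}$, unique modulo $\boldsymbol{RT}_F=\ker(\sym\curl_F)$. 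One then translates the vanishing traces of $\boldsymbol{\sigma}$ on $\partial F$ into boundary conditions forcing $\boldsymbol{v}$ (after fixing the $\boldsymbol{RT}_F$ gauge with edge-integral and vertex data, exactly as in the proof of Lemma \ref{bubble_conformal-part2}) to vanish on $\partial F$, i.e.\ $\boldsymbol{v}\in b_F P_{k-2}(F;\Pi_F\mathbb{R}^3)|_F$. Your injectivity and complex-property verifications are correct and transfer unchanged; just note that $\ker(\sym\curl_F)$ is $\boldsymbol{RT}_F=\{c\boldsymbol{x}_F+\boldsymbol{d}\}$, the ninety-degree rotation of the planar rigid motions rather than the rigid motions themselves (the counting conclusion is the same: a nonzero linear field cannot vanish on all of $\partial F$).
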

\begin{lemma}{\em (Bubble Stokes complex \cite{vogelius1983right})}
\label{two dim stokes bubble complex}
The following sequence is exact: 
    $${0} \xrightarrow[]{\subset} b_F^2 {P}_{k-5}\left(F ; \mathbb{R}\right)|_F \xrightarrow[]{\grad_F} b_F P_{k-3}(F;\Pi_F\mathbb{R}^3) |_F\xrightarrow[]{\operatorname{rot}_F} {P}_{k-1}^{(0)}(F;\mathbb{R})|_F\cap\mathbb R^{\perp} \rightarrow 0.$$
\end{lemma}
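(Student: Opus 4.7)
The plan is to verify exactness at each position separately, following the template of a Poincaré-lemma argument promoted to the bubble setting. The four items to check are: injectivity of $\grad_F$, the complex property together with landing in $P_{k-1}^{(0)}(F;\mathbb{R})|_F \setminus P_0$, exactness at the middle term, and surjectivity of $\operatorname{rot}_F$ (which will fall out of a dimension count once the other items are in place).

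Injectivity of $\grad_F$ is immediate since $b_F^2 P_{k-5}$ contains no nonzero constants, and the complex property $\operatorname{rot}_F \grad_F = \nabla_F^{\perp}\cdot\nabla_F = 0$ is trivial. To see that $\operatorname{rot}_F(b_F \boldsymbol{p})$ lies in the stated codomain: at each vertex $\delta \in \mathcal{V}(F)$, both $b_F(\delta)=0$ and $\nabla_F b_F(\delta)=0$, since at each vertex two of the three barycentric coordinates vanish, so
$$\operatorname{rot}_F(b_F\boldsymbol{p})(\delta) = (\nabla_F^{\perp} b_F)(\delta)\cdot \boldsymbol{p}(\delta) + b_F(\delta)\,\operatorname{rot}_F \boldsymbol{p}(\delta) = 0;$$
moreover $\int_F \operatorname{rot}_F(b_F\boldsymbol{p}) = \int_{\partial F}(b_F \boldsymbol{p})\cdot \boldsymbol{t} = 0$, so the image is zero-mean.

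For exactness at the middle, suppose $\boldsymbol{v} = b_F \boldsymbol{p}$ satisfies $\operatorname{rot}_F \boldsymbol{v} = 0$. Since $F$ is contractible, the polynomial $2$D de~Rham complex supplies $\phi \in P_{k+1}(F;\mathbb{R})$ with $\grad_F \phi = \boldsymbol{v}$. Because $\boldsymbol{v}|_{\partial F}=0$ and $\partial F$ is connected, $\phi$ is constant on $\partial F$; subtracting this constant, we may take $\phi|_{\partial F} = 0$, so $\phi = b_F \psi$ with $\psi \in P_{k-2}(F;\mathbb{R})$. The identity $\psi\,\grad_F b_F + b_F\,\grad_F \psi = b_F \boldsymbol{p}$ rearranges to $\psi\,\grad_F b_F = b_F(\boldsymbol{p}-\grad_F \psi)$. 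Restricting to edge $e_i$ (opposite vertex $\delta_i$, on which $\lambda_i = 0$ and so $b_F = 0$), the right side vanishes, while $\grad_F b_F|_{e_i} = \lambda_j\lambda_k\,\grad_F \lambda_i$ with $\grad_F \lambda_i$ a nonzero constant vector and $\lambda_j\lambda_k$ a nontrivial polynomial on $e_i$. Hence $\psi|_{e_i} = 0$. Repeating on all three edges gives $\psi|_{\partial F}=0$, whence $\psi = b_F q$ with $q \in P_{k-5}$ and $\phi = b_F^2 q$, as required.

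Finally, a dimension count closes the proof. One has $\dim\bigl(b_F P_{k-3}(F;\Pi_F\mathbb{R}^3)|_F\bigr) = 2\binom{k-1}{2} = (k-1)(k-2)$, while $\dim\bigl(b_F^2 P_{k-5}|_F\bigr) + \dim\bigl(P_{k-1}^{(0)}|_F \setminus P_0\bigr) = \binom{k-3}{2} + \binom{k+1}{2} - 4 = (k-1)(k-2)$ as well. Together with injectivity of $\grad_F$ and the middle-exactness just established, this forces $\operatorname{rot}_F$ to be surjective onto $P_{k-1}^{(0)}(F;\mathbb{R})|_F \setminus P_0$. The main obstacle is the edge-restriction step: $\grad_F b_F$ is not pointwise nonvanishing on $e_i$ (it vanishes at the two endpoints), so one must interpret the resulting edge identity $\psi\cdot \lambda_j\lambda_k\,\grad_F \lambda_i = 0$ as a polynomial equation on $e_i$ and invoke the fact that $\lambda_j\lambda_k \not\equiv 0$ on $e_i$ to conclude $\psi|_{e_i}=0$.
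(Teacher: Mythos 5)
Your proof is correct. Note that the paper does not supply its own argument here: the lemma is quoted from Vogelius \cite{vogelius1983right}, so there is nothing internal to compare against. Your approach is the standard bubble-complex argument: verify the image of $\operatorname{rot}_F$ vanishes at vertices (using $b_F(\delta)=\nabla_F b_F(\delta)=0$) and has zero mean; for middle exactness use the polynomial Poincar\'e lemma and divisibility by $b_F$ of a polynomial vanishing on $\partial F$ to write $\phi=b_F\psi$, then restrict $\psi\,\grad_F b_F=b_F(\boldsymbol{p}-\grad_F\psi)$ to each edge $e_i$ to force $\psi|_{e_i}=0$; and close with a dimension count. The edge-restriction step you flagged as the main obstacle is resolved correctly: on $e_i$ the identity reduces to $\psi\,\lambda_j\lambda_k\,\grad_F\lambda_i=0$ with $\grad_F\lambda_i$ a nonzero constant vector, and since $\lambda_j\lambda_k$ is not identically zero on $e_i$ and polynomials on a segment form an integral domain, $\psi|_{e_i}=0$ follows. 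The arithmetic $\binom{k-3}{2}+\binom{k+1}{2}-4=(k-1)(k-2)$ also checks out, so rank--nullity together with the established kernel identification gives surjectivity onto the stated codomain.
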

Next, denote the symmetric and traceless ${H}(\div_F\div_F)$ bubble space with extra smoothness at vertices as
$$\mathbb{B}^{\div_F\div_F,(5)}_k(F;\mathbb{S}_F\cap\mathbb{T}_F)|_F:=P_k^{(5)}(F;\mathbb{S}_F\cap\mathbb{T}_F)|_{F}\cap\mathbb{B}^{\div_F\div_F}_k(F;\mathbb{S}_F)|_F.$$
The kernel and image of $\div_F\div_F$ of this space can be characterized via the following Lemma:
\begin{lemma}\label{lem:face-complex} 
    The following sequence is exact:
    $${0} \xrightarrow[]{\subset} b_F^2 {P}_{k-4}^{(3)}\left(F ; \mathbb{R}\right)|_F \xrightarrow[]{\Def_F\curl_F} \mathbb{B}^{\div_F\div_F,(5)}_k(F;\mathbb{S}_F\cap\mathbb{T}_F)|_F\xrightarrow[]{\div_F\div_F} {P}_{k-2}^{(3)}(F;\mathbb{R})|_F \cap {P}_1^{+}(F;
\mathbb{R})|_F^{\perp} \rightarrow 0,$$
\label{two dim divdiv ST bubble complex extra smoothness}
where$${P}_1^{+}(F;
\mathbb{R})|_F:={P}_1(F;
\mathbb{R})|_F\oplus\{(\Pi_F\boldsymbol{x})\cdot(\Pi_F\boldsymbol{x})\}.$$
\end{lemma}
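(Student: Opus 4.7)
\emph{Strategy.} The plan has four parts: verify the sequence is a complex, prove injectivity of $\Def_F\curl_F$ at the left, prove surjectivity of $\div_F\div_F$ at the right, and conclude exactness at the middle by dimension count.

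\emph{Complex and injectivity.} That the sequence is a complex follows from $\tr(\Def_F\curl_F u)=\div_F\curl_F u=0$, which places the image of $\Def_F\curl_F$ in $\mathbb S_F\cap\mathbb T_F$, and $\div_F\div_F\Def_F\curl_F u=\Delta_F\div_F\curl_F u=0$. For $u\in b_F^2 P^{(3)}_{k-4}(F;\mathbb R)|_F$ the factor $b_F^2$ provides second-order vanishing on $\partial F$, so the edge traces $\tr_{e,1}$ and $\tr_{e,2}$ of $\Def_F\curl_F u$ vanish on each edge; the combined order-8 vanishing at each vertex (order 4 from $b_F^2$ together with order 4 from $P^{(3)}$) descends, after the second-order operator $\Def_F\curl_F$, to order-6 vanishing, giving membership in $\mathbb B^{\div_F\div_F,(5)}_k(F;\mathbb S_F\cap\mathbb T_F)|_F$. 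For injectivity: if $\Def_F\curl_F u=0$ then $\curl_F u$ lies in $\ker(\Def_F)$, the three-dimensional space of 2D rigid motions; but $\nabla_F u=0$ on $\partial F$ forces $\curl_F u=0$ on $\partial F$, and no nonzero 2D rigid motion vanishes on a triangle boundary, so $\curl_F u\equiv 0$, hence $u$ is constant and, being a bubble, $u=0$.

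\emph{Surjectivity.} Given $q\in P^{(3)}_{k-2}(F;\mathbb R)|_F\setminus P_1^+(F;\mathbb R)|_F$, first a supersmoothness-enriched variant of Lemma \ref{two dim divdiv S bubble complex} (proved by combining that lemma with vertex-smoothness propagation along the lines of Theorem \ref{conformal_theorem_bubble_extra}) produces a symmetric bubble $\boldsymbol\tau\in \mathbb B^{\div_F\div_F,(5)}_k(F;\mathbb S_F)|_F$ with $\div_F\div_F\boldsymbol\tau=q$; orthogonality of $q$ to $|\Pi_F\boldsymbol x|^2$ is exactly what compensates the extra 2D conformal Killing mode. The trace $t:=\tr\boldsymbol\tau$ need not vanish on $\partial F$ (only $\boldsymbol n\cdot\boldsymbol\tau\cdot\boldsymbol n$ does), so $\dev_F\boldsymbol\tau$ is not yet a $\div_F\div_F$-bubble. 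Applying the bubble Stokes complex (Lemma \ref{two dim stokes bubble complex}) we choose $\boldsymbol w\in b_F P_{k-3}(F;\Pi_F\mathbb R^3)|_F$ with $\operatorname{rot}_F\boldsymbol w=t/2$ modulo constants, and replace $\boldsymbol\tau$ by $\boldsymbol\tau-\sym\curl_F\boldsymbol w$; this makes $\tr\boldsymbol\tau$ vanish on $\partial F$, so $\dev_F\boldsymbol\tau$ now satisfies $\tr_{e,1}=0$. A residual $\tr_{e,2}$ error caused by the deviatoric projection is then eliminated by adding a $\Def_F\curl_F$-lift from Step~1 (which is in $\ker(\div_F\div_F)$). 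The result is $\boldsymbol\sigma\in\mathbb B^{\div_F\div_F,(5)}_k(F;\mathbb S_F\cap\mathbb T_F)|_F$ with $\div_F\div_F\boldsymbol\sigma=q$.

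\emph{Dimension count and main obstacle.} With $\dim b_F^2 P^{(3)}_{k-4}(F;\mathbb R)|_F=\binom{k-2}{2}-30$ and $\dim P^{(3)}_{k-2}(F;\mathbb R)|_F\setminus P_1^+(F;\mathbb R)|_F=\binom{k}{2}-34$ (the $4=\dim P_1^+$), a geometric decomposition of $\mathbb B^{\div_F\div_F,(5)}_k(F;\mathbb S_F\cap\mathbb T_F)|_F$ in the spirit of Proposition \ref{representation of tr_1 bubble} identifies its dimension as the sum, forcing exactness at the middle. The hardest step is surjectivity: the naive $\dev_F$ of a symmetric bubble fails to be a bubble since the tangential-tangential boundary component of $\boldsymbol\tau$ (which equals $\tr\boldsymbol\tau$ on edges once $\boldsymbol n\cdot\boldsymbol\tau\cdot\boldsymbol n=0$) generally does not vanish on $\partial F$, and one must simultaneously enforce the bubble conditions, pointwise tracelessness, and vertex supersmoothness by chaining the symmetric divdiv bubble complex, the bubble Stokes complex, and the $\Def_F\curl_F$-lift. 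The necessity of the enlarged exclusion $P_1^+$ (versus $P_1$ in the symmetric case) is structural: $\ker(\dev_F\operatorname{hess}_F)=P_1^+$ because of the 2D polynomial conformal Killing mode $|\Pi_F\boldsymbol x|^2/2$.
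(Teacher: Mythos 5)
Your proposal inverts the paper's strategy. The paper characterizes the kernel directly: given $\boldsymbol\sigma$ in the traceless bubble with $\div_F\div_F\boldsymbol\sigma=0$, it applies Lemma \ref{two dim divdiv S bubble complex} to obtain a Stokes potential $\boldsymbol v\in b_FP_{k-2}(F;\Pi_F\mathbb R^3)|_F$ with $\boldsymbol\sigma=\sym\curl_F\boldsymbol v$, observes $\operatorname{rot}_F\boldsymbol v=\tr\boldsymbol\sigma=0$, applies Lemma \ref{two dim stokes bubble complex} to write $\boldsymbol v=\grad_F(b_F^2u)$, and then propagates the $\boldsymbol C^5$ vertex vanishing of $\boldsymbol\sigma=\Def_F\curl_F(b_F^2u)$ back onto $b_F^2u$ via the 2D analogue of identity \eqref{def identity} to conclude $u\in P_{k-4}^{(3)}$. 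Surjectivity of $\div_F\div_F$ then follows from a one-line dimension count, since the kernel dimension is now known and a lower bound on $\dim\mathbb B^{\div_F\div_F,(5)}_k(F;\mathbb S_F\cap\mathbb T_F)|_F$ from the edge/vertex DOF characterization suffices. You instead prove surjectivity by an explicit lift and middle exactness by dimension count. Your structural observation that $\ker(\dev_F\nabla_F^2)=P_1^+$ correctly explains the enlarged exclusion and matches the paper's use of $\nabla_F^2(|\Pi_F\boldsymbol x|^2)=2\Pi_F\boldsymbol I\Pi_F$, and your injectivity argument via 2D rigid motions is fine.

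However, your chosen direction for the hard step is the one that requires more machinery, and it is left incomplete in several places. First, the ``supersmoothness-enriched variant of Lemma \ref{two dim divdiv S bubble complex}'' (a $\boldsymbol C^5$-at-vertices symmetric $\mathbb S_F$ divdiv bubble lemma) is not established anywhere and its proof is not a routine repetition of the cited result. Second, to kill $\tr\boldsymbol\tau$ you should solve $\operatorname{rot}_F\boldsymbol w=\tr\boldsymbol\tau$, not $\tr\boldsymbol\tau/2$; the solvability condition is precisely $\int_F(\tr\boldsymbol\tau)=\frac12\int_F q\,|\Pi_F\boldsymbol x|^2=0$, which your $P_1^+$-orthogonality guarantees, but you also need $\tr\boldsymbol\tau$ to vanish at vertices (it does, by the assumed supersmoothness), and you should say so. Third, after subtracting $\sym\curl_F\boldsymbol w$ it is not automatic that the corrected tensor retains the $\boldsymbol C^5$ vertex vanishing or the $\tr_{e,2}$ bubble condition; you gesture at a further ``$\Def_F\curl_F$-lift'' to repair $\tr_{e,2}$, but its existence and compatibility with the remaining constraints are not argued, so the chain of corrections is not actually closed. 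Fourth, your dimension count for middle exactness presupposes an exact geometric decomposition of $\mathbb B^{\div_F\div_F,(5)}_k(F;\mathbb S_F\cap\mathbb T_F)|_F$, which is itself nontrivial and not developed. The paper avoids all of this by establishing the kernel first, after which only an inequality on the bubble dimension is needed; I would recommend that route.
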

We give a sketch of proof. A full proof is given in Appendix \ref{appendix: two dim divdiv ST bubble complex extra smoothness}.
\begin{proof}[Sketch of proof]
\textbf{Step 1: It is a complex.} The zero derivatives at vertices follow a pattern of 7-5-3, which match with the differential operator.
Note that for $u\in b_F^2 {P}_{k-4}^{(3)}\left(F ; \mathbb{R}\right)|_F $, $\grad_F u\in b_FP_{k-2}^{(4)}(F;\Pi_F\mathbb R^3)|_F$. From Lemma~\ref{two dim divdiv S bubble complex}, it holds that $$\Def_F\curl_F u=\sym\curl_F(\grad_Fu)\in \mathbb B_k^{\div_F\div_F,(5)}(F;\mathbb S_F\cap\mathbb T_F)|_F.$$
 Since $\Pi_F\boldsymbol{\sigma}\Pi_F\in \bS_F\cap\bT_F$ is traceless, the $L^2$ orthogonality with respect to ${P}_1^{+}(F;
\mathbb{R})$ comes from integration by parts:
$$\int_F\div_F\div_F\boldsymbol\sigma{q}=\int_F\Pi_F\boldsymbol{\sigma}\Pi_F:\nabla_F^2{q},\quad \forall q\in P_1^{+}(F;
\mathbb{R})|_F. $$

{\textbf {Step 2: Exactness in the middle.}}    If $\div_F\div_F\boldsymbol{\sigma}=0$, by Lemma~\ref{two dim divdiv S bubble complex}, there exists $\boldsymbol{v}\in b_FP_{k-2}(F;\Pi_F\mathbb{R}^3)|_F$ such that $\boldsymbol{\sigma}=\sym\curl_F\boldsymbol{v}$. Since $\boldsymbol{\sigma}$ is traceless, $\operatorname{rot}_F\boldsymbol{v}=0$. Therefore, from Lemma~\ref{two dim stokes bubble complex}, there exists $u\in P_{k-4}(F;\mathbb{R})|_F$ such that $\boldsymbol{v}=\grad_F(b_F^2u)$ hence $ \boldsymbol{\sigma}=\Def_F\curl_F (b_F^2u).$ $u$ inherits the extra smoothness of $\boldsymbol{\sigma}$ at vertices.  

{\textbf{Step 3: Surjectivity of $\div_F\div_F$.}}
By definition of $\mathbb B_k^{\div_F\div_F,(5)}(F;\mathbb S_F\cap\mathbb T_F)|_F$ we can give a lower bound of its dimension via vanishing of a set of degrees of freedoms. Since the kernel of $\div_F\div_F$ is established in {\textbf{Step 2}}, we therefore have a lower bound on the dimension of image space. The result follows from a dimension count.
\end{proof}

With the help of the bubble complex, we are ready to present a set of DOFs for $H(\div_F\div_F$-conforming symmetric and traceless finite element space in two dimensions. 
\begin{proposition}
\label{prop:divdiv_conforming_elements}
   For $k\geq 11$, choose the shape function as $P_{k}(F;\mathbb{S}_F\cap\mathbb{T}_F)|_F$. The following set of DOFs for the $H(\div_F\div_F;\bS_F\cap\bT_F)$-conforming finite element space is unisolvent:
\begin{subequations}
\begin{align}
D_F^{\alpha}\boldsymbol{\sigma}(\delta)&,\quad \forall 0\leq|\alpha|\leq5,\quad\forall \delta\in \mathcal{V}(F).\label{divdiv face vertex}\\
        \int_e\tr_{e,1}^{\div_F\div_F}(\boldsymbol{\sigma})q&,\quad\forall q\in P_{k-12}(e;\mathbb{R}),\quad\forall e\in\mathcal{E}(F).\label{divdiv face edge 1}\\
\int_e\tr_{e,2}^{\div_F\div_F}(\boldsymbol{\sigma})q&,\quad\forall q\in P_{k-11}(e;\mathbb{R}),\quad\forall e\in\mathcal{E}(F).\label{divdiv face edge 2}\\
\int_e\div_F\div_F\boldsymbol{\sigma}q&,\quad\forall q\in {P}_{k-10}(e;\mathbb{R}),\quad\forall e\in\mathcal{E}(F).\label{divdiv face edge 3}\\
\int_F\div_F\div_F\boldsymbol{\sigma}q&,\quad\forall q\in {P}_{k-5}^{(1)}(F;\mathbb{R})|_F \cap {P}_1^{+}(F;
\mathbb{R})|_F^{\perp}.\label{divdiv face 1}\\
\int_F\boldsymbol{\sigma}:\Def_F\curl_F(b_F^2{q})&,\quad\forall {q}\in {P}_{k-4}^{(3)}(F;\mathbb{R})|_F.\label{divdiv face 2}
\end{align}
\end{subequations}  
\end{proposition}
\begin{proof}
The total number of degrees of freedom is,
\begin{align*}
    &\quad 3\cdot 2\cdot \binom{7}{5} + 3\cdot\left(3k-30\right) +\binom{k-3}{2}-4 -3\cdot\binom{3}{1} +\binom{k-2}{2}-3\cdot\binom{5}{3}\\
    & = \dim P_{k}(F;\mathbb{S}_F\cap\mathbb{T}_F).
\end{align*}
Note that the vanishing of \eqref{divdiv face vertex}-\eqref{divdiv face edge 2} implies $\boldsymbol{\sigma}\in\mathbb B_k^{\div_F\div_F,(5)}(F;\mathbb S_F\cap\mathbb T_F)$. Using Lemma~\ref{two dim divdiv ST bubble complex extra smoothness} and the vanishing of \eqref{divdiv face edge 3}, we see that $\div_F\div_F\boldsymbol{\sigma}\in \left[b_F{P}_{k-5}^{(1)}(F;\mathbb{R})\right]|_F \cap {P}_1^{+}(F;
\mathbb{R})|_F^{\perp}$. Then, the vanishing of \eqref{divdiv face 1} further implies that $\div_F\div_F\boldsymbol{\sigma}=0$. Finally, from the exactness of bubble complexes in Lemma~\ref{two dim divdiv ST bubble complex extra smoothness}, $\boldsymbol{\sigma} \in \mathbb{B}^{\div_F\div_F,(5)}_k(F;\mathbb{S}_F \cap \mathbb{T}_F)\cap \ker(\div_F\div_F)$ must be of the form $\boldsymbol{\sigma} = \Def_F\curl_F(b_F^2 q)$ for some $q \in P_{k-4}^{(3)}(F; \mathbb{R})$.
$\boldsymbol{\sigma}=0$ follows from the vanishing of \eqref{divdiv face 2}. This completes the proof.
\end{proof}

\subsubsection{The $H(\operatorname{rot}_F)$-conforming element}
\begin{proposition}
\label{prop:rot_conforming_elements}
    For $k\geq9$, choose the shape function as $P_{k}(F;\mathbb{S}_F\cap\mathbb{T}_F)|_F$. The following set of DOFs for the $H(\operatorname{rot}_F;\bS_F\cap\bT_F)$-conforming finite element space in two dimensions is unisolvent:
\begin{subequations}
 \begin{align}
D_F^{\alpha}\boldsymbol{\sigma}(\delta)&,\quad \forall 0\leq|\alpha|\leq4,\quad\forall \delta\in \mathcal{V}(F).\label{vertex rot 1}\\
\int_e\boldsymbol{\sigma}:\boldsymbol{q}&,\quad\forall \boldsymbol{q}\in P_{k-10}(e;\mathbb{S}_F\cap\mathbb{T}_F),\quad\forall e\in\mathcal{E}(F).\label{edge rot 1}\\
\int_e\operatorname{rot}_F\boldsymbol{\sigma}\cdot\boldsymbol{q}&,\quad\forall \boldsymbol{q}\in P_{k-9}(e;\Pi_F\mathbb{R}^3),\quad\forall e\in\mathcal{E}(F).\label{edge rot 2}\\        \int_F\boldsymbol{\sigma}:\boldsymbol{q}&,\quad\forall q\in {P}_{k-6}^{(0)}(F;\mathbb{S}_F\cap\mathbb{T}_F)|_F.\label{face rot 1}
\end{align}
 \end{subequations}
\end{proposition}
\begin{proof}
    The total number of DOFs is
    \begin{align*}
        &\quad 3\cdot 2\cdot\binom{6}{2} + 3\cdot\left(2k-18+2k-16\right) + 2[\binom{k-4}{2}-3]=\dim P_k(F;\bS_F\cap\bT_F)|_F.
    \end{align*}
    Note that the vanishing of \eqref{vertex rot 1}-\eqref{edge rot 2} implies $\boldsymbol{\sigma}\in P^{(4)}_{k}(F;\bS_F\cap\bT_F)$, $\boldsymbol{\sigma}|_e=0$, and $\operatorname{rot}_F\boldsymbol{\sigma}|_e=0$ for each edge $e\in \mathcal{E}(F)$. On each edge $e\in \mathcal{E}(F)$, it holds that
    \begin{align*}
        0=\operatorname{rot}_F\boldsymbol{\sigma}&=-\div_F(\boldsymbol{n}\times\boldsymbol{\sigma})\\
        &=-\partial_{\boldsymbol{t}_{F,e}}\left(\boldsymbol{t}_{F,e}\cdot\left(\boldsymbol{n}\times\boldsymbol{\sigma}\right)\right)-\partial_{\boldsymbol{n}_{F,e}}(\boldsymbol{n}_{F,e}\cdot(\boldsymbol{n}\times\boldsymbol{\sigma}))\\
        &=-\partial_{\boldsymbol{t}_{F,e}}(\boldsymbol{n}_{F,e}\cdot\boldsymbol{\sigma})+\partial_{\boldsymbol{n}_{F,e}}(\boldsymbol{t}_{F,e}\cdot\boldsymbol{\sigma})\\
        &=\partial_{\boldsymbol{n}_{F,e}}(\boldsymbol{t}_{F,e}\cdot\boldsymbol{\sigma}),
    \end{align*}
    where the last equality follows from $\boldsymbol{\sigma}|_e=0$.
    Therefore, $\nabla_F(\boldsymbol{t}_{F,e}\cdot\boldsymbol{\sigma})=0$. Since $\boldsymbol{\sigma}$ is both symmetric and traceless, it holds that
    $$\boldsymbol{t}_{F,e}\cdot \boldsymbol{\sigma}\cdot \boldsymbol{t}_{F,e} = -\boldsymbol{n}_{F,e}\cdot \boldsymbol{\sigma}\cdot \boldsymbol{n}_{F,e}, \quad \boldsymbol{n}_{F,e}\cdot \boldsymbol{\sigma}\cdot \boldsymbol{t}_{F,e} = \boldsymbol{t}_{F,e}\cdot \boldsymbol{\sigma}\cdot \boldsymbol{n}_{F,e}, \quad e\in \mathcal{E}(F).$$
    Therefore, the edge trace $\boldsymbol{\sigma}\cdot\boldsymbol{t}_{F,e}|_e$ determines $\boldsymbol{\sigma}|_e$. It follows that $\nabla_F\boldsymbol{\sigma}|_e=0$ for each edge $e$. Hence, $\boldsymbol{\sigma}\in b_F^2P^{(0)}_{k-6}(F;\bS_F\cap\bT_F)|_F$ from Proposition~\ref{prop:geometric_decompositions}. Finally, the vanishing of \eqref{face rot 1} implies $\boldsymbol{\sigma}=0$. This completes the proof.
\end{proof}
\subsubsection{Traces of traces}
\begin{lemma}\label{lem:traces-of-traces}
    Assume $\boldsymbol{\sigma}\in \mathbb B_k^{\tr_1}(K;\bST)$ and $\boldsymbol{\sigma}|_e=0$, $\forall e\in\mathcal{E}(K)$. Then on edge $e\subset F$, it holds that
    \begin{align}      \tr_{e,1}^{\div_F\div_F}\left(\tr_2^{\cott}(\boldsymbol{\sigma})\right)&=-\boldsymbol{t}_e\cdot(\sym\curl\boldsymbol{\sigma})\cdot\boldsymbol{t}_e,\label{trace 2 edge trace 1}\\  \tr_{e,2}^{\div_F\div_F}\left(\tr_2^{\cott}(\boldsymbol{\sigma})\right)&=\boldsymbol{n}_{F,e}\cdot\left(2\partial_{\boldsymbol{t}_e}(\sym\curl\boldsymbol{\sigma})\cdot\boldsymbol{t}_e-\nabla\left(\boldsymbol{t}_e\cdot(\sym\curl\boldsymbol{\sigma})\cdot\boldsymbol{t}_e\right)\right),\label{trace 2 edge trace 2}\\
\boldsymbol{t}_{F,e}\cdot\tr_3^{\cott}(\boldsymbol{\sigma})\cdot\boldsymbol{t}_{F,e}&=\boldsymbol{n}\cdot\left(2\partial_{\boldsymbol{t}_e}(\sym\curl\boldsymbol{\sigma})\cdot\boldsymbol{t}_e-\nabla\left(\boldsymbol{t}_e\cdot(\sym\curl\boldsymbol{\sigma})\cdot\boldsymbol{t}_e\right)\right),\label{trace 3 edge trace 1}\\
\boldsymbol{n}_{F,e}\cdot\tr_3^{\cott}(\boldsymbol{\sigma})\cdot\boldsymbol{t}_{F,e}&=-\boldsymbol{t}_e\cdot\nabla\times(\sym\curl\boldsymbol{\sigma})\cdot\boldsymbol{t}_e-\tfrac{1}{2}\partial_{\boldsymbol{t}_e}(\boldsymbol{t}_e\cdot\div\boldsymbol{\sigma}).\label{trace 3 edge trace 2}
    \end{align}
\end{lemma}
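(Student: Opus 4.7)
The plan is to derive all four identities from three ingredients: the alternative representations of $\tr_2$ and $\tr_3$ in Lemma \ref{new_representation_traces_corollary} (in particular the reduction $\tr_3(\boldsymbol{\sigma})=\tr_2^{\inc}(\sym\curl\boldsymbol{\sigma})$ from \eqref{new_representation_3}); the pointwise edge identities between components of $\sym\curl\boldsymbol{\sigma}$, $\tr_1(\boldsymbol{\sigma})$, $\tr_2(\boldsymbol{\sigma})$, and the raw entries of $\boldsymbol{\sigma}$ derived in the proof of Theorem \ref{sufficient conforming condition}; and the two hypotheses, which play complementary roles. In the local orthonormal frame $\{\boldsymbol{t}_e,\boldsymbol{n}_{F,e},\boldsymbol{n}\}$, the hypothesis $\boldsymbol{\sigma}|_e=\boldsymbol{0}$ kills all tangential derivatives $\partial_{\boldsymbol{t}_e}\sigma_{ij}|_e$, while $\tr_1(\boldsymbol{\sigma})|_F=\boldsymbol{0}$ forces $\sigma_{12}|_F=0$ and $(\sigma_{11}-\sigma_{22})|_F=0$, and hence also $\partial_{\boldsymbol{n}_{F,e}}\sigma_{12}|_e=0$ and $\partial_{\boldsymbol{n}_{F,e}}(\sigma_{11}-\sigma_{22})|_e=0$.

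Identity \eqref{trace 2 edge trace 1} is immediate from the pointwise identity
\[
\boldsymbol{t}_{F,e}\cdot(\sym\curl\boldsymbol{\sigma})\cdot\boldsymbol{t}_{F,e} + \boldsymbol{n}_{F,e}\cdot\tr_2(\boldsymbol{\sigma})\cdot\boldsymbol{n}_{F,e} = \partial_{\boldsymbol{t}_{F,e}}(\boldsymbol{n}\cdot\boldsymbol{\sigma}\cdot\boldsymbol{n}_{F,e})
\]
(established in the proof of Theorem \ref{sufficient conforming condition}), whose right-hand side vanishes on $e$ by $\boldsymbol{\sigma}|_e=\boldsymbol{0}$. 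Identity \eqref{trace 3 edge trace 1} follows from \eqref{new_representation_3} and a direct expansion of $\tr_2^{\inc}(\boldsymbol{\tau}) = 2\Def_F(\boldsymbol{n}\cdot\boldsymbol{\tau}\Pi_F) - \Pi_F\partial_n\boldsymbol{\tau}\Pi_F$, which for any tangent vector $\boldsymbol{t}$ to $F$ yields
\[
\boldsymbol{t}\cdot\tr_2^{\inc}(\boldsymbol{\tau})\cdot\boldsymbol{t} = 2\partial_{\boldsymbol{t}}(\boldsymbol{n}\cdot\boldsymbol{\tau}\cdot\boldsymbol{t}) - \partial_n(\boldsymbol{t}\cdot\boldsymbol{\tau}\cdot\boldsymbol{t}),
\]
matching the claim after substituting $\boldsymbol{\tau}=\sym\curl\boldsymbol{\sigma}$ and noting bilinearity in $\boldsymbol{t}_e$.

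For \eqref{trace 3 edge trace 2}, I use \eqref{new_form_trace_3} together with $\boldsymbol{n}_{F,e}\times\boldsymbol{n}=-\boldsymbol{t}_{F,e}$ to obtain
\[
\boldsymbol{n}_{F,e}\cdot\tr_3(\boldsymbol{\sigma})\cdot\boldsymbol{t}_{F,e} = -\boldsymbol{t}_{F,e}\cdot(\nabla\times\sym\curl\boldsymbol{\sigma})\cdot\boldsymbol{t}_{F,e} + \partial_{\boldsymbol{t}_{F,e}}\bigl[\boldsymbol{n}_{F,e}\cdot\sym\curl\boldsymbol{\sigma}\cdot\boldsymbol{n}\bigr].
\]
The first term matches the target by bilinearity in $\boldsymbol{t}_e$. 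For the second, a coordinate expansion of $(\sym\curl\boldsymbol{\sigma})_{\boldsymbol{n}_{F,e}\boldsymbol{n}}$, combined with the cancellations above, collapses to $\tfrac{1}{2}\partial_{\boldsymbol{n}}\sigma_{13}|_e=\tfrac{1}{2}\boldsymbol{t}_e\cdot\div\boldsymbol{\sigma}|_e$; applying $\partial_{\boldsymbol{t}_{F,e}}=-\partial_{\boldsymbol{t}_e}$ then produces the second term of the target. Identity \eqref{trace 2 edge trace 2} is treated similarly: one first shows $\tr_2(\boldsymbol{\sigma})$ is tangent to $F$ (from \eqref{new_form_trace_2}) and traceless (from $\tr(\sym(S\times\boldsymbol{n}))=0$ for symmetric $S$, via \eqref{trace_indentity_1} and $\operatorname{vskw}$ vanishing on symmetric matrices), so that $\tau_{nn}=-\tau_{tt}$ on $F$ rewrites $\tr_{e,2}(\tr_2(\boldsymbol{\sigma}))$ as $2\partial_{\boldsymbol{t}_e}\tau_{tn}-\partial_{\boldsymbol{n}_{F,e}}\tau_{tt}$; coordinate expressions for $\tau_{tt}|_F$ and $\tau_{tn}|_F$ obtained from $\tr_2(\boldsymbol{\sigma})=\sym(\tr_2^{\inc}(\boldsymbol{\sigma})\times\boldsymbol{n})$ then collapse under the same cancellations to match the target.

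The main obstacle is the indicial bookkeeping in \eqref{trace 3 edge trace 2} and \eqref{trace 2 edge trace 2}: using only $\boldsymbol{\sigma}|_e=\boldsymbol{0}$ leaves surplus terms such as $\partial_{\boldsymbol{n}_{F,e}}\sigma_{12}|_e$ and $\partial_{\boldsymbol{n}_{F,e}}(\sigma_{11}-\sigma_{22})|_e$, and the face condition $\tr_1(\boldsymbol{\sigma})|_F=\boldsymbol{0}$ must be invoked in exactly the right places to eliminate them; the interplay between the edge and face hypotheses is thus essential. Careful sign bookkeeping between $\boldsymbol{t}_{F,e}$ and $\boldsymbol{t}_e$, forced by the convention $\boldsymbol{n}_{F,e}=\boldsymbol{t}_{F,e}\times\boldsymbol{n}$ (which gives $\boldsymbol{t}_{F,e}=-\boldsymbol{t}_e$ in the frame above), must also be done throughout.
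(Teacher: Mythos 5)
Your proposal is correct and follows essentially the same route as the paper: it uses the alternative representations of $\tr_2$ and $\tr_3$ (Lemma \ref{new_representation_traces_corollary}), expands the two-dimensional edge traces in the frame $\{\boldsymbol{t}_e,\boldsymbol{n}_{F,e},\boldsymbol{n}\}$, and invokes $\boldsymbol{\sigma}|_e=\boldsymbol{0}$ and $\tr_1(\boldsymbol{\sigma})|_F=\boldsymbol{0}$ in exactly the places the paper does (e.g.\ $\boldsymbol{n}_{F,e}\cdot\sym\curl\boldsymbol{\sigma}\cdot\boldsymbol{n}=-\tfrac12\boldsymbol{t}_{F,e}\cdot\div\boldsymbol{\sigma}$ on $e$). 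The only cosmetic deviations are your use of the tracelessness of $\tr_2(\boldsymbol{\sigma})$ to trade the $\boldsymbol{n}_{F,e}\boldsymbol{n}_{F,e}$ component for $-\tau_{tt}$, and the fixed sign convention $\boldsymbol{t}_{F,e}=-\boldsymbol{t}_e$ (which is not forced, but harmless since every final expression is quadratic in the tangent direction).
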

\begin{proof}
    The domain of our arguments in the following discussion is restricted to the edge $e$ contained within the face $F$.
   Recalling the new representations of $\tr_2^{\cott}(\cdot)$ as given in \eqref{new_form_trace_2}, we can derive the following expression for term \eqref{trace 2 edge trace 1}:
    \begin{align*}
        \tr_{e,1}^{\div_F\div_F}(\tr_2^{\cott}(\boldsymbol\sigma))=-\boldsymbol{t}_{F,e}\cdot(\sym\curl\boldsymbol{\sigma})\cdot\boldsymbol{t}_{F,e}+\partial_{\boldsymbol{t}_{F,e}}(\boldsymbol{n}\cdot\boldsymbol{\sigma}\cdot\boldsymbol{n}_{F,e}).
    \end{align*}
    Hence, \eqref{trace 2 edge trace 1} holds since $\boldsymbol{\sigma}|_e=0$. Note that we also have
    \begin{align*}
        \tr_{e,2}^{\div_F\div_F}(\tr_2^{\cott}(\boldsymbol{\sigma}))&=2\partial_{\boldsymbol{t}_{F,e}}\left(\boldsymbol{t}_{F,e}\cdot\tr_2^{\cott}(\boldsymbol{\sigma})\cdot\boldsymbol{n}_{F,e}\right)+\partial_{\boldsymbol{n}_{F,e}}\left(\boldsymbol{n}_{F,e}\cdot\tr_2^{\cott}(\boldsymbol{\sigma})\cdot\boldsymbol{n}_{F,e}\right)\\        &=2\partial_{\boldsymbol{t}_{F,e}}\left(\boldsymbol{n}_{F,e}\cdot(\sym\curl\boldsymbol{\sigma})\cdot\boldsymbol{t}_{F,e}+\tfrac{1}{2}\partial_{\boldsymbol{t}_{F,e}}(\boldsymbol{t}_{F,e}\cdot\boldsymbol{\sigma}\cdot\boldsymbol{n})-\tfrac{1}{2}\partial_{\boldsymbol{n}_{F,e}}(\boldsymbol{n}_{F,e}\cdot\boldsymbol{\sigma}\cdot\boldsymbol{n})\right)\\
        &\quad\quad +\partial_{\boldsymbol{n}_{F,e}}\left(-\boldsymbol{t}_{F,e}\cdot(\sym\curl\boldsymbol{\sigma})\cdot\boldsymbol{t}_{F,e}+\partial_{\boldsymbol{t}_{F,e}}(\boldsymbol{n}_{F,e}\cdot\boldsymbol{\sigma}\cdot\boldsymbol{n})\right)\\       &=\boldsymbol{n}_{F,e}\cdot\left(2\partial_{\boldsymbol{t}_e}(\sym\curl\boldsymbol{\sigma})\cdot\boldsymbol{t}_e-\nabla\left(\boldsymbol{t}_e\cdot(\sym\curl\boldsymbol{\sigma})\cdot\boldsymbol{t}_e\right)\right).
    \end{align*}
    This proves identity \eqref{trace 2 edge trace 2}.
    
    Identity \eqref{trace 3 edge trace 1} is directly acquired from \eqref{new_form_trace_3}. Using \eqref{new_form_trace_3}, we also have
 $$\boldsymbol{n}_{F,e}\cdot\tr_3^{\cott}(\boldsymbol{\sigma})\cdot\boldsymbol{t}_{F,e}=-\boldsymbol{t}_{F,e}\cdot\nabla\times(\sym\curl\boldsymbol{\sigma})\cdot\boldsymbol{t}_{F,e}+\partial_{\boldsymbol{t}_{F,e}}\left(\boldsymbol{n}_{F,e}\cdot\left(\sym\curl\boldsymbol{\sigma}\right)\cdot\boldsymbol{n}\right).$$
Since 
$\boldsymbol{\sigma}|_e=0$ and  $\boldsymbol{t}_{F,e}\cdot\boldsymbol{\sigma}\cdot\boldsymbol{n}_{F,e}=-\boldsymbol{t}_{F,e}\cdot\tr_1^{\cott}(\boldsymbol{\sigma})|_F\cdot\boldsymbol{t}_{F,e}=0$ on $F$, it holds that
 $$\boldsymbol{t}_{F,e}\cdot\div\boldsymbol{\sigma}=\partial_{\boldsymbol{n}}(\boldsymbol{t}_{F,e}\cdot\boldsymbol{\sigma}\cdot \boldsymbol{n})+\partial_{\boldsymbol{n}_{F,e}}(\boldsymbol{t}_{F,e}\cdot\boldsymbol{\sigma}\cdot \boldsymbol{n}_{F,e})+\partial_{\boldsymbol{t}_{F,e}}(\boldsymbol{t}_{F,e}\cdot\boldsymbol{\sigma}\cdot \boldsymbol{t}_{F,e})=\partial_{\boldsymbol{n}}(\boldsymbol{t}_{F,e}\cdot\boldsymbol{\sigma}\cdot \boldsymbol{n}).$$
 A direct expansion shows that:
 \begin{align*}   \boldsymbol{n}_{F,e}\cdot\left(\sym\curl\boldsymbol{\sigma}\right)\cdot\boldsymbol{n}&=\tfrac{1}{2}\left(\partial_{\boldsymbol{t}_{F,e}}\left(\boldsymbol{n}\cdot\boldsymbol{\sigma}\cdot\boldsymbol{n}\right)-\partial_{\boldsymbol{n}}\left(\boldsymbol{t}_{F,e}\cdot\boldsymbol{\sigma}\cdot\boldsymbol{n}\right)\right)\\
 &\quad+\tfrac{1}{2}\left(\partial_{\boldsymbol{n}_{F,e}}\left(\boldsymbol{t}_{F,e}\cdot\boldsymbol{\sigma}\cdot\boldsymbol{n}_{F,e}\right)-\partial_{\boldsymbol{t}_{F,e}}\left(\boldsymbol{n}_{F,e}\cdot\boldsymbol{\sigma}\cdot\boldsymbol{n}_{F,e}\right)\right)\\
 &=-\tfrac{1}{2}\boldsymbol{t}_{F,e}\cdot\div\boldsymbol{\sigma}.
 \end{align*}
This proves the identity \eqref{trace 3 edge trace 2}.
\end{proof}
\begin{remark}
 \label{edge trace remark}
    Denote the edge traces of $\boldsymbol{\sigma}$ by
\begin{align}    
\tr_{e,1}^{\cott}(\boldsymbol{\sigma})&:=\boldsymbol{t}_e\cdot(\sym\curl\boldsymbol{\sigma})\cdot\boldsymbol{t}_e,\\
    \tr_{e,2}^{\cott}(\boldsymbol{\sigma})&:=2\partial_{\boldsymbol{t}_e}(\sym\curl\boldsymbol{\sigma})\cdot\boldsymbol{t}_e-\nabla\left(\boldsymbol{t}_e\cdot(\sym\curl\boldsymbol{\sigma})\cdot\boldsymbol{t}_e\right),\\
    \tr_{e,3}^{\cott}(\boldsymbol{\sigma})&:=-\boldsymbol{t}_e\cdot\nabla\times(\sym\curl\boldsymbol{\sigma})\cdot\boldsymbol{t}_e-\tfrac{1}{2}\partial_{\boldsymbol{t}_e}(\boldsymbol{t}_e\cdot\div\boldsymbol{\sigma}).
\end{align}
 If $\boldsymbol{\sigma}\in\mathbb B_k^{\tr_1}(K;\bST)\cap P_{k}^{(6)}(K;\bST)$ and $\boldsymbol{\sigma}|_e=0$ for each edge $e$, $\Tr\left(\tr_3^{\cott}(\boldsymbol{\sigma})\right)|_{\partial K}=0$ follows from Lemma~\ref{trace_couple}. Then it holds that
\begin{align*}
     \tr_2^{\cott}(\boldsymbol{\sigma})|_F&\in P^{(5)}_{k-1}(F;\bS_F\cap\bT_F)|_F,\\
          \tr_3^{\cott}(\boldsymbol{\sigma})|_F&\in P^{(4)}_{k-2}(F;\bS_F\cap\bT_F)|_F.
 \end{align*}Let $\boldsymbol{n}_{e\pm}$ be the two orthogonal unit vectors perpendicular to the tangential vector $\boldsymbol{t}_e$ of the edge $e$. If we further assume that $\tr_{e,1}^{\cott}(\boldsymbol{\sigma})|_e=\boldsymbol{n}_{e\pm}\cdot\tr_{e,2}^{\cott}(\boldsymbol{\sigma})|_e=\tr_{e,3}^{\cott}(\boldsymbol{\sigma})|_e=0$,  the above lemma establishes that
 \begin{align}
     \tr_2^{\cott}(\boldsymbol{\sigma})|_F&\in \mathbb B^{\div_F\div_F,(5)}_{k-1}(F;\mathbb S_F\cap\mathbb T_F)|_F.
 \end{align}
 Since $\tr_3^{\cott}(\boldsymbol{\sigma})\cdot\boldsymbol{t}_{F,e}|_{F,e}=0$ implies $\tr_3^{\cott}(\boldsymbol{\sigma})|_{F,e}=0$ (see the proof of Proposition~\ref{prop:rot_conforming_elements}), we have
 \begin{align}   
 \tr_3^{\cott}(\boldsymbol{\sigma})|_F&\in b_FP_{k-5}^{(2)}(F;\mathbb S_F\cap\mathbb T_F)|_F.
 \end{align}
 The above arguments shall be utilized below in the design of DOFs for the $H(\cott)$-conforming element.
\end{remark}
\subsection{Degrees of freedom}
Inspired by the DOFs given for two-dimensional elements, now we are ready to give a set of DOFs for the ${H}(\cott)$-conforming symmetric and traceless finite element space $\boldsymbol{\Sigma}_{k,h}^{\cott}$.
For $k\geq 13$, choose the shape function as $P_k(K;\bST)$ and the DOFs for $\boldsymbol{\Sigma}_{k,h}^{\cott}$ are given as follows:
\begin{subequations}
\label{cinc-DOFs}
\begin{align}
D^{\alpha}\boldsymbol{\sigma}(\delta)&,\quad \forall 0\leq|\alpha|\leq6,\quad\forall \delta\in \mathcal{V}(K).\label{cinc vertex 1}\\
\int_e\boldsymbol{\sigma}:\boldsymbol{q}&,\quad\forall q\in P_{k-14}(e;\bST),\quad\forall e\in\mathcal{E}(K).\label{cinc edge 1}\\ 
 \int_e\tr_{e,1}^{\cott}(\boldsymbol{\sigma}){q}&,\quad\forall {q}\in P_{k-13}(e;\mathbb{R}),\quad\forall e\in\mathcal{E}(K).\label{cinc edge 2}\\     \int_e\boldsymbol{n}_{e\pm}\cdot\tr_{e,2}^{\cott}(\boldsymbol{\sigma}){q}&,\quad\forall {q}\in P_{k-12}(e;\mathbb{R}),\quad\forall e\in\mathcal{E}(K).\label{cinc edge 3}\\   
  \int_e\tr_{e,3}^{\cott}(\boldsymbol{\sigma}){q}&,\quad\forall {q}\in P_{k-12}(e;\mathbb{R}),\quad\forall e\in\mathcal{E}(K).\label{cinc edge 4}\\  
\int_e\cott\boldsymbol{\sigma}:\boldsymbol{q}&,\quad\forall \boldsymbol{q}\in P_{k-11}(e;\bST),\quad\forall e\in\mathcal{E}(K).\label{cinc edge 5}\\
\int_F\tr_1^{\cott}(\boldsymbol{\sigma}):\boldsymbol{q}&,\quad\forall \boldsymbol{q}\in {P}_{k-3}^{(4)}(F;\mathbb{S}_F\cap\mathbb{T}_F),\quad\forall F\in\mathcal{F}(K).\label{cinc face 1}\\
 \int_F\boldsymbol{n}\cdot\cott\boldsymbol{\sigma}\cdot\boldsymbol{n}q&,\quad\forall q\in {P}_{k-6}^{(1)}(F;\mathbb{R}) \cap {P}_1^{+}(F;
\mathbb{R})^{\perp},\quad\forall F\in\mathcal{F}(K).\label{cinc face 2}\\
\int_F\tr_2^{\cott}(\boldsymbol{\sigma}):\Def_F\curl_F(b_F^2{q})&,\quad\forall {q}\in {P}_{k-5}^{(3)}(F;\mathbb{R}),\quad\forall F\in\mathcal{F}(K).\label{cinc face 3}\\
\int_F\tr_3^{\cott}(\boldsymbol{\sigma}):\boldsymbol{q}&,\quad\forall \boldsymbol{q}\in {P}_{k-8}^{(0)}(F;\mathbb{S}_F\cap\mathbb{T}_F),\quad\forall F\in\mathcal{F}(K).\label{cinc face 4}\\
\int_K\boldsymbol{\sigma}:\boldsymbol{q}&,\quad\forall \boldsymbol{q}\in \mathbb{B}_{k}^{\cott,(6)}(K;\bST).\label{cinc element 2}
\end{align}
\end{subequations}

\begin{theorem}
    For $k\geq 13$, the above DOFs are unisolvent for $P_k(K;\bST)$. Furthermore, $\boldsymbol{\Sigma}_{k,h}^{\cott}$ satisfies the ${H}(\cott)$-conformity.
\end{theorem}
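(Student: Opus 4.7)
The plan is to establish unisolvency through a standard vanishing argument proceeding from vertices to edges to faces to the interior, and then to deduce $H(\cinc)$-conformity as a direct consequence of Theorem \ref{sufficient conforming condition}. A preliminary dimension count, summing the cardinalities of (\ref{cinc vertex 1})--(\ref{cinc element 2}) using Proposition \ref{prop:geometric_decompositions} together with the dimension formula for $\mathbb{B}_k^{\cinc,(6)}(K;\bST)$ stated as a corollary to Theorem \ref{conformal_theorem_bubble_extra}, matches $\dim P_k(K;\bST)=5\binom{k+3}{3}$, so it suffices to prove the vanishing implication.

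Assuming vanishing of all DOFs, the vertex DOFs (\ref{cinc vertex 1}) immediately put $\boldsymbol{\sigma}\in P_k^{(6)}(K;\bST)$, and Proposition \ref{prop:geometric_decompositions} confines $\boldsymbol{\sigma}|_e$ to $b_e^7 P_{k-14}(e;\bST)|_e$; the moments (\ref{cinc edge 1}) then force $\boldsymbol{\sigma}|_e=\boldsymbol{0}$ for each edge by positivity of $b_e$. The differential quantities $\sym\curl\boldsymbol{\sigma}$, $\nabla\times\sym\curl\boldsymbol{\sigma}$, $\div\boldsymbol{\sigma}$ and $\cinc\boldsymbol{\sigma}$ all inherit vertex supersmoothness from $\boldsymbol{\sigma}$, and the just-established $\boldsymbol{\sigma}|_e=\boldsymbol{0}$ supplies additional structural vanishing at the endpoints; the same $b_e^{s+1}$-plus-moments recipe with (\ref{cinc edge 2})--(\ref{cinc edge 5}) then forces $\tr_{e,1}^{\cinc}(\boldsymbol{\sigma})$, $\boldsymbol{n}_{e\pm}\cdot\tr_{e,2}^{\cinc}(\boldsymbol{\sigma})$, $\tr_{e,3}^{\cinc}(\boldsymbol{\sigma})$, and $\cinc\boldsymbol{\sigma}$ to vanish on each edge.

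For the face step, $\tr_1(\boldsymbol{\sigma})|_F$ is a degree-$k$ polynomial in $\mathbb{S}_F\cap\mathbb{T}_F$ vanishing on $\partial F$ with vertex $\boldsymbol{C}^6$ smoothness, so the 2D geometric decomposition in Proposition \ref{prop:geometric_decompositions} writes $\tr_1(\boldsymbol{\sigma})|_F=b_F\boldsymbol{q}$ with $\boldsymbol{q}\in P_{k-3}^{(4)}(F;\mathbb{S}_F\cap\mathbb{T}_F)|_F$, and (\ref{cinc face 1}) closes this by a weighted-$L^2$ argument. Remark \ref{edge trace remark} now applies and confines
\[
\tr_2(\boldsymbol{\sigma})|_F\in\mathbb{B}_{k-1}^{\div_F\div_F,(5)}(F;\mathbb{S}_F\cap\mathbb{T}_F)|_F,\qquad \tr_3(\boldsymbol{\sigma})|_F\in b_F P_{k-5}^{(2)}(F;\mathbb{S}_F\cap\mathbb{T}_F)|_F.
\]
The trace-complex identity $\boldsymbol{n}\cdot\cinc\boldsymbol{\sigma}\cdot\boldsymbol{n}=-\div_F\div_F\tr_2(\boldsymbol{\sigma})$ from Theorem \ref{trace complexes} together with the 2D bubble exact sequence of Lemma \ref{lem:face-complex} shows that (\ref{cinc face 2}) forces $\div_F\div_F\tr_2(\boldsymbol{\sigma})=0$; then (\ref{cinc face 3}) kills the residual $\Def_F\curl_F(b_F^2\cdot)$ kernel and yields $\tr_2(\boldsymbol{\sigma})|_F=\boldsymbol{0}$. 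An analogous argument based on the second trace-complex identity and (\ref{cinc face 4}) forces $\tr_3(\boldsymbol{\sigma})|_F=\boldsymbol{0}$. At this point $\boldsymbol{\sigma}\in\mathbb{B}_k^{\cinc,(6)}(K;\bST)$, and (\ref{cinc element 2}) tested with $\boldsymbol{q}=\boldsymbol{\sigma}$ closes the argument. Conformity then follows because the preceding analysis shows that the vertex, edge and face DOFs shared across an interface uniquely determine $\boldsymbol{\sigma}|_e$ and the three traces $\tr_i(\boldsymbol{\sigma})|_F$, so the sufficient condition of Theorem \ref{sufficient conforming condition} is met.

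The main obstacle I anticipate is bridging the apparent gap between the test spaces used in (\ref{cinc face 2})--(\ref{cinc face 4}) and the natural image spaces arising from Lemma \ref{lem:face-complex} and Remark \ref{edge trace remark}. For instance, (\ref{cinc face 2}) tests against $P_{k-6}^{(1)}(F;\mathbb{R})\backslash P_1^+(F;\mathbb{R})$, which is strictly smaller than the $\div_F\div_F$ image $P_{k-3}^{(3)}(F;\mathbb{R})\backslash P_1^+(F;\mathbb{R})$ in the 2D bubble sequence, so one must carefully propagate the vanishing of the vertex DOFs on $\cinc\boldsymbol{\sigma}$ (which control its first three vertex derivatives) and the edge DOFs (\ref{cinc edge 5}) into extra supersmoothness and boundary vanishing for $\boldsymbol{n}\cdot\cinc\boldsymbol{\sigma}\cdot\boldsymbol{n}$, so that it actually lies in $b_F P_{k-6}^{(1)}(F;\mathbb{R})|_F$ where the test becomes unisolvent. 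A similar but more delicate propagation from edges into $\tr_3(\boldsymbol{\sigma})$ is needed to cast it into $b_F^2 P_{k-8}^{(0)}(F;\mathbb{S}_F\cap\mathbb{T}_F)|_F$ so that (\ref{cinc face 4}) suffices; here the identification of $P_1^+(F;\mathbb{R})$ as the face-level shadow of the ten-dimensional conformal Killing constraint $\boldsymbol{CK}$ provides the key bookkeeping identity that ties the dimensions together.
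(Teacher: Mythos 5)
Your proposal follows essentially the same route as the paper's proof: a dimension count followed by a vertices-to-edges-to-faces vanishing argument using the geometric decompositions, Remark~\ref{edge trace remark}, the trace-complex identities of Lemma~\ref{trace complex identity part 2}, and the two-dimensional bubble sequence of Lemma~\ref{lem:face-complex}. The ``main obstacle'' you flag at the end is not a genuine gap but is resolved exactly as you anticipate: the edge moments~\eqref{cinc edge 5} give $\boldsymbol{n}\cdot\cinc\boldsymbol{\sigma}\cdot\boldsymbol{n}|_e=0$ and $\boldsymbol{n}\times\cinc\boldsymbol{\sigma}\cdot\boldsymbol{n}|_e=\boldsymbol{0}$, which via Lemma~\ref{trace complex identity part 2} supply precisely the boundary vanishing needed to place $\div_F\div_F\tr_2(\boldsymbol{\sigma})|_F$ in $b_FP_{k-6}^{(1)}(F;\mathbb{R})|_F$ and $\tr_3(\boldsymbol{\sigma})|_F$ in $b_F^2P_{k-8}^{(0)}(F;\mathbb{S}_F\cap\mathbb{T}_F)|_F$, matching the test spaces in~\eqref{cinc face 2} and~\eqref{cinc face 4} (and the residual $P_1^+$ component in the former vanishes by integration by parts using that $\tr_2(\boldsymbol{\sigma})$ is traceless, rather than through a $\boldsymbol{CK}$-bookkeeping argument).
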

\begin{proof}
We first compute the dimension of $\mathbb{B}_{k}^{\cott,(6)}(K;\bST)$ using Theorem \ref{conformal_theorem_bubble_extra} and Theorem \ref{thm:conformal-bubble-seq3}:
    \begin{align*}
        \dim\mathbb{B}_{k}^{\cott,(6)}(K;\bST)&=\dim P_{k-3}^{(4)}(K;\mathbb{R}^3)+\dim\mathbb{B}^{\div, (3)}_{k-3}(K;\bST)\cap\ker(\div)\\
        & = \dim P_{k-3}^{(4)}(K;\mathbb{R}^3) + \dim \mathbb B_{k-8}^{2\cott}(K;\bST) - \dim P_{k-11}(K;\mathbb R^3)\\
        &=\frac{5k^3}{6} - 7k^2 + \frac{127k}{6} - 399.
    \end{align*}
    The number of local DOFs is
    \begin{align*}        
    &\quad 4\cdot5\cdot84+6(14k-160) \\&\quad+4\left[2\left[\binom{k-1}{2}-3\binom{6}{2}\right]+\binom{k-4}{2}-4-3\binom{3}{2}+\binom{k-3}{2}-3\binom{5}{2}+2\left[\binom{k-6}{2}-3\right]\right]\\
    &\quad+\frac{5k^3}{6} - 7k^2 + \frac{127k}{6} - 399=\dim P_{k}(K;\bST).
    \end{align*}
For $\boldsymbol{\sigma}\in P_{k}(K;\bST)$, we aim to demonstrate that the vanishing of DOFs \eqref{cinc vertex 1}-\eqref{cinc face 4} implies $\boldsymbol{\sigma}\in \mathbb{B}_k^{\cott,(6)}(K;\bST).$ The proof is divided into two parts:

\textbf{Step 1: $\tr_1^{\cott}(\boldsymbol{\sigma
    })|_{\partial K}$ vanishes.} From the vanishing of \eqref{cinc vertex 1}, \eqref{cinc edge 1} and \eqref{cinc face 1}, $\boldsymbol{\sigma}|_e=0$ for each edge $e$ and $\boldsymbol{\sigma}\in \mathbb{B}_{k}^{\tr_1}(K;\bST)\cap P_{k}^{(6)}(K;\bST)$. 

\textbf{Step 2: $\tr_2^{\cott}(\boldsymbol{\sigma
    })|_{\partial K}$ and $\tr_3^{\cott}(\boldsymbol{\sigma
    })|_{\partial K}$ vanish.} From the vanishing of \eqref{cinc edge 2}-\eqref{cinc edge 4} and Remark \ref{edge trace remark}, 
     \begin{align}    \tr_2^{\cott}(\boldsymbol{\sigma})|_F&\in \mathbb B^{\div_F\div_F,(5)}_{k-1}(F;\mathbb S_F\cap\mathbb T_F)|_F,\label{conformity trace 2 in bubble}\\
 \tr_3^{\cott}(\boldsymbol{\sigma})|_F&\in b_FP_{k-5}^{(2)}(F;\mathbb S_F\cap\mathbb T_F)|_F.\label{conformity trace 3 in bubble}
 \end{align}
 From the vanishing of \eqref{cinc edge 5} and Lemma~\ref{trace complex identity part 2}, for each edge $e\subset F$ we have
 \begin{align}
0=\boldsymbol{n}\cdot\cott\boldsymbol{\sigma} \cdot\boldsymbol{n}|_e&=-\div_F\div_F\tr_2^{\cott}\left(\boldsymbol{\sigma}\right)|_{F,e},\label{conformity trace 2 edge vanish}\\    0=\boldsymbol{n}\times\cott\boldsymbol{\sigma} \cdot\boldsymbol{n}|_e&=\operatorname{rot}_F\tr_3^{\cott}\left(\boldsymbol{\sigma}\right)|_{F,e}.\label{conformity trace 3 edge vanish}
\end{align}
With Lemma~\ref{two dim divdiv ST bubble complex extra smoothness}, we have
\begin{align}   \div_F\div_F\tr_2^{\cott}(\boldsymbol{\sigma})|_F\in b_FP_{k-6}^{(1)}(F;\mathbb R)|_F\cap {P}_1^{+}(F;
\mathbb{R})|_F^{\perp}.\label{conformity divdiv in bubble}
\end{align}
Combining \eqref{conformity trace 2 in bubble} and \eqref{conformity divdiv in bubble} with the vanishing of \eqref{cinc face 2}, we obtain that
 \begin{align}
 \div_F\div_F\tr_2^{\cott}(\boldsymbol{\sigma})|_F=\boldsymbol{n}\cdot\cott\boldsymbol{\sigma}\cdot\boldsymbol{n}|_F=0
 \end{align}
 for each face $F$.
Hence,
\begin{align}    \tr_2^{\cott}(\boldsymbol{\sigma})|_F&\in \mathbb B^{\div_F\div_F,(5)}_{k-1}(F;\mathbb S_F\cap\mathbb T_F)|_F\cap \ker(\div_F\div_F).
 \end{align}
 Moreover, \eqref{conformity trace 3 in bubble} and \eqref{conformity trace 3 edge vanish} imply that
 \begin{align}     \tr_3^{\cott}(\boldsymbol{\sigma})|_F&\in b_F^2P_{k-8}^{(0)}(F;\mathbb S_F\cap\mathbb T_F)|_F.
 \end{align}
 It follows from Lemma~\ref{two dim divdiv ST bubble complex extra smoothness} and the vanishing of \eqref{cinc face 3} and \eqref{cinc face 4} that $\tr_2^{\cott}(\boldsymbol{\sigma})|_{\partial K}=\tr_3^{\cott}(\boldsymbol{\sigma})|_{\partial K}=0$.

The preceding arguments demonstrate that $\boldsymbol{\sigma}\in \mathbb{B}^{\cott,(6)}_k(K;\bST)$, therefore, unisolvency is established. Moreover, it is noteworthy that all traces are uniquely determined by the DOFs on sub-simplices, and any function in $\boldsymbol{\Sigma}_{k,h}^{\cott}$ is single-valued on edges. Consequently, in line with Theorem~\ref{sufficient conforming condition}, we conclude that $\boldsymbol{\Sigma}_{k,h}^{\cott}$ satisfies the ${H}(\cott)$-conformity. This completes the proof.
\end{proof}
\section{A Finite Element Conformal Complex}
\label{finite element complex section}
We are finally able to present a discrete finite element conformal complex. The ${H}(\cott)$-conforming finite element space $\boldsymbol{\Sigma}_{k,h}^{\cott}$ (DOFs \eqref{cinc vertex 1}-\eqref{cinc element 2}), ${H}(\div)$-conforming finite element space $\boldsymbol{\Sigma}_{k-3,h}^{\div}$ (DOFs \eqref{div_1}-\eqref{div_4}) and discontinuous finite element space $\boldsymbol{V}_{k-4,h}$ (DOFs \eqref{Discontinous1}-\eqref{Discontinous2}) were defined in the previous sections. To ensure the complex property, we choose the ${H}^1$-conforming finite element space $\boldsymbol{U}_{k+1,h}$ as the Neilan Stokes element \cite{neilan2015discrete} with $C^7$ smoothness at vertices. For $k\geq 14$, choose the shape function as $P_{k+1}(K;\mathbb{R}^3)$ and the DOFs for $\boldsymbol{U}_{k+1,h}$ are defined as follows:
\begin{subequations}
     \begin{align}
      D^{\alpha}\boldsymbol{u}(\delta),&\quad 0\leq|\alpha|\leq7,\quad \forall \delta\in \mathcal{V}(K).\\
        \int_e \boldsymbol{u}\cdot\boldsymbol{q},&\quad \forall \boldsymbol{q}\in P_{k-15}(e;\mathbb{R}^3),\quad \forall e\in\mathcal{E}(K).\\
        \int_e \partial_{\boldsymbol{n}_{e{\pm}}}\left(\boldsymbol{u}\right)\cdot\boldsymbol{q},&\quad \forall \boldsymbol{q}\in P_{k-14}(e;\mathbb{R}^3),\quad \forall e\in\mathcal{E}(K).\\
        \int_F \boldsymbol{u}\cdot\boldsymbol{q},&\quad \forall \boldsymbol{q}\in P_{k-5}^{(3)}(F;\mathbb{R}^3),\quad \forall F\in\mathcal{F}(K).\\       \int_K\boldsymbol{u}\cdot\boldsymbol{q},&\quad \forall \boldsymbol{q}\in P_{k-3}^{(4)}(K;\mathbb{R}^3).
    \end{align}
    \end{subequations}
    Note that any $\boldsymbol{u}\in\boldsymbol{U}_{k+1,h}$ has $C^7$ continuity at vertices, $C^1$ continuity on edges and $C^0$ continuity globally.

    {We note that the finite element space $\boldsymbol{U}_{k+1,h}$ satisfies the exactness property $\dev\Def \boldsymbol{U}_{k+1,h} = \boldsymbol{\Sigma}_{k,h}^{\cott}\cap \ker(\cott)$. This will be verified in Theorem~\ref{thm:finite-element-complex} below, using the exactness of the continuous and polynomial conformal complexes \eqref{eq:sec1:conformal-complex}, \eqref{polynomial-complex} with argument for the vertex and edge supersmoothness. The exactness of the entire complex is a result of a dimension count.}
    
    \begin{lemma}
        Let $\boldsymbol{u}$ be a sufficiently smooth three-dimensional vector. It holds that
        \label{cinc edge trace identities}
        \begin{align}
            \tr_{e,1}^{\cott}(\dev\Def\boldsymbol{u})&=\tfrac{1}{2}\partial_{\boldsymbol{t}_e}\left(\boldsymbol{t}_e\cdot\curl\boldsymbol{u}\right),\label{trace e 1 cinc devdef}\\
            \tr_{e,2}^{\cott}(\dev\Def\boldsymbol{u})&=\tfrac{1}{2}\partial^2_{\boldsymbol{t}_e}\left(\curl\boldsymbol{u}\right),\\
            \tr_{e,3}^{\cott}(\dev\Def\boldsymbol{u})&=-\tfrac{1}{3}\partial^2_{\boldsymbol{t}_e}\left(\div\boldsymbol{u}\right).
        \end{align}
    \end{lemma}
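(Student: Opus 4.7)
The plan is to derive all three identities by direct substitution into the definitions of $\tr_{e,1}^{\cinc}$, $\tr_{e,2}^{\cinc}$, and $\tr_{e,3}^{\cinc}$ given in Remark \ref{edge trace remark}, relying on two basic ingredients. The central one is the identity
\[
\sym\curl(\dev\Def\boldsymbol{u}) \;=\; \tfrac{1}{2}\Def(\curl\boldsymbol{u}),
\]
which is exactly \eqref{trace identity symcurl} from Lemma \ref{trace complex identities part 1}; this is what makes the first two traces see only $\curl\boldsymbol{u}$. The second ingredient is the elementary fact that for a constant vector $\boldsymbol{t}_e$ and any smooth vector field $\boldsymbol{w}$,
\[
\Def\boldsymbol{w}\cdot\boldsymbol{t}_e \;=\; \tfrac{1}{2}\partial_{\boldsymbol{t}_e}\boldsymbol{w} + \tfrac{1}{2}\nabla(\boldsymbol{t}_e\cdot\boldsymbol{w}),\qquad \boldsymbol{t}_e\cdot\Def\boldsymbol{w}\cdot\boldsymbol{t}_e \;=\; \partial_{\boldsymbol{t}_e}(\boldsymbol{t}_e\cdot\boldsymbol{w}),
\]
which are immediate from $(\Def\boldsymbol{w})_{ij}=\tfrac12(\partial_j w_i+\partial_i w_j)$.

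With these in hand, the first identity will follow in one line: combining the two displays above gives $\tr_{e,1}^{\cinc}(\dev\Def\boldsymbol{u})=\tfrac{1}{2}\boldsymbol{t}_e\cdot\Def(\curl\boldsymbol{u})\cdot\boldsymbol{t}_e=\tfrac{1}{2}\partial_{\boldsymbol{t}_e}(\boldsymbol{t}_e\cdot\curl\boldsymbol{u})$. For the second identity, I would apply the same two ingredients with $\boldsymbol{w}=\curl\boldsymbol{u}$ and differentiate along $\boldsymbol{t}_e$ to obtain
\[
2\,\partial_{\boldsymbol{t}_e}\sym\curl(\dev\Def\boldsymbol{u})\cdot\boldsymbol{t}_e \;=\; \tfrac{1}{2}\partial^2_{\boldsymbol{t}_e}(\curl\boldsymbol{u}) \;+\; \tfrac{1}{2}\partial_{\boldsymbol{t}_e}\nabla(\boldsymbol{t}_e\cdot\curl\boldsymbol{u}),
\]
and the second summand is exactly cancelled by the gradient piece $\nabla\tr_{e,1}^{\cinc}(\dev\Def\boldsymbol{u})=\tfrac12\nabla\partial_{\boldsymbol{t}_e}(\boldsymbol{t}_e\cdot\curl\boldsymbol{u})$ from the definition of $\tr_{e,2}^{\cinc}$, after commuting $\partial_{\boldsymbol{t}_e}$ and $\nabla$.

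The third identity requires two auxiliary computations. A short index calculation yields $(\nabla\times\Def\boldsymbol{w})_{ij}=\tfrac{1}{2}\partial_j(\curl\boldsymbol{w})_i$ (the other half vanishes by Clairaut, since $\epsilon_{ikl}\partial_k\partial_l=0$), and then setting $\boldsymbol{w}=\curl\boldsymbol{u}$ and using the vector Laplacian identity $\curl\curl\boldsymbol{u}=\nabla\div\boldsymbol{u}-\Delta\boldsymbol{u}$ produces
\[
\boldsymbol{t}_e\cdot\nabla\times\sym\curl(\dev\Def\boldsymbol{u})\cdot\boldsymbol{t}_e \;=\; \tfrac{1}{4}\bigl(\partial^2_{\boldsymbol{t}_e}\div\boldsymbol{u} - \partial_{\boldsymbol{t}_e}\Delta(\boldsymbol{t}_e\cdot\boldsymbol{u})\bigr).
\]
Separately, expanding the row-wise divergence gives $\div(\dev\Def\boldsymbol{u})=\tfrac{1}{2}\Delta\boldsymbol{u}+\tfrac{1}{6}\nabla\div\boldsymbol{u}$, so that $-\tfrac{1}{2}\partial_{\boldsymbol{t}_e}(\boldsymbol{t}_e\cdot\div(\dev\Def\boldsymbol{u}))$ breaks into a $\partial_{\boldsymbol{t}_e}\Delta(\boldsymbol{t}_e\cdot\boldsymbol{u})$ piece and a scalar multiple of $\partial^2_{\boldsymbol{t}_e}\div\boldsymbol{u}$. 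Adding the two contributions, the $\partial_{\boldsymbol{t}_e}\Delta(\boldsymbol{t}_e\cdot\boldsymbol{u})$ cross-terms cancel and only a multiple of $\partial^2_{\boldsymbol{t}_e}\div\boldsymbol{u}$ survives.

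The only real bookkeeping step is tracking the numerical coefficient in (c): one must split $\tr_{e,3}^{\cinc}$ cleanly into its $\nabla\times\sym\curl$ and row-wise $\div$ pieces, verify the Laplacian cross-terms have opposite signs, and sum the surviving coefficients. Conceptually nothing deep is involved once \eqref{trace identity symcurl} and $\curl\curl=\nabla\div-\Delta$ are on hand; the proof is purely mechanical index calculus, with no geometric or topological input needed.
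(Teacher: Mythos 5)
Your treatment of identities (a) and (b) follows exactly the paper's route and is correct: both reduce to the key identity $\sym\curl(\dev\Def\boldsymbol{u})=\tfrac12\Def(\curl\boldsymbol{u})$ of Lemma~\ref{trace complex identities part 1}, and then your two elementary facts $\boldsymbol{t}_e\cdot\Def\boldsymbol{w}\cdot\boldsymbol{t}_e=\partial_{\boldsymbol{t}_e}(\boldsymbol{t}_e\cdot\boldsymbol{w})$ and $\Def\boldsymbol{w}\cdot\boldsymbol{t}_e=\tfrac12\partial_{\boldsymbol{t}_e}\boldsymbol{w}+\tfrac12\nabla(\boldsymbol{t}_e\cdot\boldsymbol{w})$ finish both; the cancellation of the $\nabla\partial_{\boldsymbol{t}_e}(\boldsymbol{t}_e\cdot\curl\boldsymbol{u})$ terms in $\tr_{e,2}^{\cinc}$ is exactly as you say.

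For (c), however, there is a genuine gap: you describe the bookkeeping as mechanical but never actually sum the coefficients, and if you do, your (entirely correct) ingredients do \emph{not} produce the stated constant $-\tfrac{1}{12}$. From your first display,
\[
-\boldsymbol{t}_e\cdot\nabla\times\bigl(\tfrac12\Def\curl\boldsymbol{u}\bigr)\cdot\boldsymbol{t}_e
=-\tfrac14\partial^2_{\boldsymbol{t}_e}\div\boldsymbol{u}+\tfrac14\partial_{\boldsymbol{t}_e}\Delta(\boldsymbol{t}_e\cdot\boldsymbol{u}),
\]
and from your correct expansion $\div(\dev\Def\boldsymbol{u})=\tfrac12\Delta\boldsymbol{u}+\tfrac16\nabla\div\boldsymbol{u}$,
\[
-\tfrac12\partial_{\boldsymbol{t}_e}\bigl(\boldsymbol{t}_e\cdot\div(\dev\Def\boldsymbol{u})\bigr)
=-\tfrac14\partial_{\boldsymbol{t}_e}\Delta(\boldsymbol{t}_e\cdot\boldsymbol{u})-\tfrac{1}{12}\partial^2_{\boldsymbol{t}_e}\div\boldsymbol{u}.
\]
The Laplacian cross-terms cancel as you predict, but the surviving coefficient is $-\tfrac14-\tfrac{1}{12}=-\tfrac13$, not $-\tfrac{1}{12}$. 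A spot check confirms this: for $\boldsymbol{u}=(x_1^3,0,0)$ and $\boldsymbol{t}_e=\boldsymbol{e}_1$ one has $\sym\curl(\dev\Def\boldsymbol{u})=\boldsymbol{0}$ and $\boldsymbol{t}_e\cdot\div(\dev\Def\boldsymbol{u})=4x_1$, so $\tr_{e,3}^{\cinc}(\dev\Def\boldsymbol{u})=-2$, while $\partial^2_{\boldsymbol{t}_e}\div\boldsymbol{u}=6$; only the coefficient $-\tfrac13$ is consistent. The paper's displayed proof has the very slip your expansion avoids: it implicitly uses $\boldsymbol{t}_e\cdot\Def\boldsymbol{u}\cdot\nabla=\tfrac12\boldsymbol{t}_e\cdot\Delta\boldsymbol{u}$, dropping the $\tfrac12\partial_{\boldsymbol{t}_e}\div\boldsymbol{u}$ piece of $\div\Def\boldsymbol{u}=\tfrac12\Delta\boldsymbol{u}+\tfrac12\nabla\div\boldsymbol{u}$, which is precisely what shifts $-\tfrac13$ to $-\tfrac{1}{12}$. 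So your blueprint is sound, and your intermediate identities are in fact more careful than the paper's, but you must actually carry out the final sum; doing so shows the lemma's stated constant should read $-\tfrac13$, not $-\tfrac{1}{12}$. (This does not affect the later use of the lemma, which only needs that $\tr_{e,3}^{\cinc}(\dev\Def\boldsymbol{u})$ is a fixed multiple of $\partial^2_{\boldsymbol{t}_e}\div\boldsymbol{u}$.)
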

    \begin{proof}
        The first two identities follow directly from \eqref{trace identity symcurl}.  
        As for the third identity, since $\curl\curl\boldsymbol{u}=\nabla(\div\boldsymbol{u})-\Delta\boldsymbol{u}$, we have
        \begin{align*}
            \tr_{e,3}^{\cott}(\dev\Def\boldsymbol{u})&=-\boldsymbol{t}_e\cdot\nabla\times\left(\tfrac{1}{2}\Def\left(\curl\boldsymbol{u}\right)\right)\cdot\boldsymbol{t}_e-\tfrac{1}{2}\partial_{\boldsymbol{t}_e}\left(\boldsymbol{t}_e\cdot\left(\Def\boldsymbol{u}-\tfrac{1}{3}(\div\boldsymbol{u})\mathbf{I}\right)\cdot\nabla\right)\\
            &=-\tfrac{1}{4}\partial_{\boldsymbol{t}_e}\left(\curl\curl\boldsymbol{u}+\Delta\boldsymbol{u}\right)\cdot\boldsymbol{t}_e-\tfrac{1}{4}\partial_{\boldsymbol{t}_e}^2\left(\div\boldsymbol{u}\right)+\tfrac{1}{6}\partial^2_{\boldsymbol{t}_e}\left(\div\boldsymbol{u}\right)\\
            &=-\tfrac{1}{3}\partial^2_{\boldsymbol{t}_e}\left(\div\boldsymbol{u}\right).
        \end{align*}
        This completes the proof.
    \end{proof}
    \begin{theorem}
        \label{thm:finite-element-complex}
        The following sequence is exact for $k\geq 14$:
        \begin{align}
            \boldsymbol{CK} \xrightarrow[]{\subset} \boldsymbol{U}_{k+1,h}\xrightarrow{\dev \Def} \boldsymbol{\Sigma}^{\cott}_{k,h} \xrightarrow[]{\cott} \boldsymbol{\Sigma}^{\div}_{k-3,h}\xrightarrow[]{\div} \boldsymbol{V}_{k-4,h}\xrightarrow[]{}0.  \label{finite element complex}
        \end{align}
    \end{theorem}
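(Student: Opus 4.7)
The plan is to verify the sequence is a complex, handle the two endpoint exactness statements, establish the key identity $\dev\Def(\boldsymbol{U}_{k+1,h})=\boldsymbol{\Sigma}^{\cinc}_{k,h}\cap\ker(\cinc)$ by lifting through the continuous and polynomial conformal complexes, and finally deduce exactness at $\boldsymbol{\Sigma}^{\div}_{k-3,h}$ from a dimension count. The supersmoothness hierarchy ($\boldsymbol{C}^7, \boldsymbol{C}^6, \boldsymbol{C}^3, \boldsymbol{C}^2$) built into the four spaces is what makes the glueing of local pieces into the global complex possible.

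\textbf{Complex property and endpoints.} The compositions $\cinc\circ\dev\Def$ and $\div\circ\cinc$ vanish cellwise by pointwise identities; the content is that the images lie in the next finite element space. Here Theorem \ref{sufficient conforming condition}, Theorem \ref{trace complexes} and Lemma \ref{cinc edge trace identities} combine: for $\boldsymbol{u}\in\boldsymbol{U}_{k+1,h}$, each $\tr_i(\dev\Def\boldsymbol{u})$ and each edge trace $\tr_{e,i}^{\cinc}(\dev\Def\boldsymbol{u})$ is expressed in terms of $\boldsymbol{u}\cdot\boldsymbol{n}$, $\boldsymbol{u}\times\boldsymbol{n}$, $\curl\boldsymbol{u}$ and $\div\boldsymbol{u}$ (and their tangential derivatives along edges), which are single-valued thanks to the prescribed supersmoothness. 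The analogous argument for $\cinc:\boldsymbol{\Sigma}^{\cinc}_{k,h}\to\boldsymbol{\Sigma}^{\div}_{k-3,h}$ uses \eqref{divdiv trace 2}--\eqref{rot trace 3}. For endpoints: if $\boldsymbol{u}\in\boldsymbol{U}_{k+1,h}\subset H^1$ has $\dev\Def\boldsymbol{u}=0$, then Lemma \ref{lemma:characterizations of CK} gives $\boldsymbol{u}|_K\in\boldsymbol{CK}$ cellwise, and global continuity on the connected $\Omega$ forces a single $\boldsymbol{CK}$ element; surjectivity of $\div$ onto $\boldsymbol{V}_{k-4,h}$ is Theorem \ref{div_stability_FE}.

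\textbf{The key identity.} Take $\boldsymbol{\sigma}\in\boldsymbol{\Sigma}^{\cinc}_{k,h}$ with $\cinc\boldsymbol{\sigma}=0$. By exactness of \eqref{eq:sec1:conformal-complex} on the contractible $\Omega$, there exists $\boldsymbol{u}\in H^1(\Omega;\mathbb{R}^3)$ with $\dev\Def\boldsymbol{u}=\boldsymbol{\sigma}$, unique modulo $\boldsymbol{CK}$. Because $\boldsymbol{\sigma}|_K\in P_k(K;\bST)$, the polynomial conformal exactness (Lemma \ref{polynomial_conformal}) allows us to modify $\boldsymbol{u}|_K$ by an element of $\boldsymbol{CK}$ so that $\boldsymbol{u}|_K\in P_{k+1}(K;\mathbb{R}^3)$. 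It remains to verify that $\boldsymbol{u}$ carries the supersmoothness built into $\boldsymbol{U}_{k+1,h}$. Vertex derivatives of $\boldsymbol{u}$ of order $\geq 3$ are linearly expressed in terms of vertex derivatives of $\dev\Def\boldsymbol{u}=\boldsymbol{\sigma}$ by Lemma \ref{devdef lemma}, and the $\boldsymbol{C}^6$ vertex smoothness of $\boldsymbol{\sigma}$ therefore yields $\boldsymbol{C}^7$ vertex smoothness of $\boldsymbol{u}$ up to an error in the $10$-dimensional vertex Taylor space of order $\leq 2$, which is precisely covered by cellwise $\boldsymbol{CK}$-corrections (Lemma \ref{lemma:characterizations of CK}). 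For the $\boldsymbol{C}^1$ edge continuity, Lemma \ref{cinc edge trace identities} identifies $\tfrac{1}{2}\partial_{\boldsymbol{t}_e}(\boldsymbol{t}_e\cdot\curl\boldsymbol{u})$, $\tfrac{1}{2}\partial^2_{\boldsymbol{t}_e}(\curl\boldsymbol{u})$ and $-\tfrac{1}{12}\partial^2_{\boldsymbol{t}_e}(\div\boldsymbol{u})$ with the edge traces of $\boldsymbol{\sigma}$, all of which are single-valued; integrating these along each edge with vertex values as initial conditions shows that $\curl\boldsymbol{u}$ and $\div\boldsymbol{u}$ are themselves edge-continuous, which combined with the $H^1$-tangential continuity of $\boldsymbol{u}$ along edges produces the full $\boldsymbol{C}^1$ edge smoothness. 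Making the cellwise $\boldsymbol{CK}$-adjustments globally consistent — so as not to destroy continuity achieved at earlier simplices — is the main obstacle, and is handled by propagating corrections in the order vertex $\to$ edge $\to$ face, using that each adjustment at a lower simplex admits freedom that does not reintroduce jumps at higher-dimensional sub-simplices already stitched together.

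\textbf{Dimension count and conclusion.} With the key identity established, exactness at $\boldsymbol{\Sigma}^{\div}_{k-3,h}$ reduces to the Euler-characteristic relation
\begin{equation*}
\dim\boldsymbol{U}_{k+1,h}-\dim\boldsymbol{\Sigma}^{\cinc}_{k,h}+\dim\boldsymbol{\Sigma}^{\div}_{k-3,h}-\dim\boldsymbol{V}_{k-4,h}=\dim\boldsymbol{CK}=10
\end{equation*}
on a contractible triangulation, which is verified by expanding each global dimension into its vertex, edge, face, and cell DOF contributions and invoking $V-E+F-T=1$. The bubble conformal complex Theorem \ref{conformal_theorem_bubble_extra} together with the two-dimensional face bubble complex Lemma \ref{lem:face-complex} and the edge trace characterizations Lemma \ref{lem:traces-of-traces} guarantee that the local DOF counts assemble correctly so that the alternating sum collapses to $\dim\boldsymbol{CK}$. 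Combining this with the complex property, the endpoint exactness, and the middle identity established above yields exactness of \eqref{finite element complex}.
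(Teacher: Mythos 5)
Your overall architecture matches the paper's: verify the complex property, prove $\dev\Def(\boldsymbol{U}_{k+1,h})=\boldsymbol{\Sigma}^{\cinc}_{k,h}\cap\ker(\cinc)$ by passing through the continuous and polynomial conformal complexes, then close with the Euler-characteristic dimension count. But there is a genuine gap in how you handle the key middle identity, and it centers on the ``cellwise $\boldsymbol{CK}$-adjustment'' machinery you introduce, which is both unnecessary and, as you state it, unproven.

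When you take $\boldsymbol{u}\in H^1(\Omega;\mathbb{R}^3)$ from the continuous exactness with $\dev\Def\boldsymbol{u}=\boldsymbol{\sigma}$, no cellwise modification is needed: on each $K$ the polynomial exactness gives some $\boldsymbol{v}_K\in P_{k+1}(K;\mathbb{R}^3)$ with $\dev\Def\boldsymbol{v}_K=\boldsymbol{\sigma}|_K$, so $\boldsymbol{u}|_K-\boldsymbol{v}_K\in\boldsymbol{CK}\subset P_2(K;\mathbb{R}^3)$, which already shows $\boldsymbol{u}|_K\in P_{k+1}(K;\mathbb{R}^3)$. Thus the single global $H^1$ potential $\boldsymbol{u}$ is automatically piecewise polynomial and globally continuous, and the only task is to verify it carries the $\boldsymbol{C}^1$ edge and $\boldsymbol{C}^7$ vertex supersmoothness. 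Your framing instead treats $\boldsymbol{u}|_K$ as needing a per-cell $\boldsymbol{CK}$-correction, and then declares that gluing these corrections consistently ``is the main obstacle'' which ``is handled by propagating corrections in the order vertex $\to$ edge $\to$ face, using that each adjustment at a lower simplex admits freedom that does not reintroduce jumps at higher-dimensional sub-simplices.'' This is not an argument --- it is the entire difficulty restated as an assertion. If one really had only cell-local potentials differing by arbitrary $\boldsymbol{CK}$ elements, stitching them into a single $\boldsymbol{C}^7/\boldsymbol{C}^1/\boldsymbol{C}^0$ function would require a cohomological argument that you do not supply; fortunately, as noted above, the problem does not arise at all.

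Your supersmoothness bootstrap also has a gap in its ordering. You say ``integrating these [edge traces] along each edge with vertex values as initial conditions shows that $\curl\boldsymbol{u}$ and $\div\boldsymbol{u}$ are themselves edge-continuous,'' but the single-valuedness of $\curl\boldsymbol{u}$ at the vertices --- the very initial conditions you integrate from --- is not a given; it must itself be deduced. The paper establishes it by first observing from the identity $\grad\boldsymbol{u}=\dev\Def\boldsymbol{u}+\tfrac{1}{2}\mskw(\curl\boldsymbol{u})+\tfrac{1}{3}(\div\boldsymbol{u})\boldsymbol{I}$, contracted with $\boldsymbol{t}_e$, that $\curl\boldsymbol{u}\times\boldsymbol{t}_e$ and $\div\boldsymbol{u}$ are already single-valued on each edge (since $\partial_{\boldsymbol{t}_e}\boldsymbol{u}$ and $\dev\Def\boldsymbol{u}\cdot\boldsymbol{t}_e$ are), then combining two edges meeting at a vertex to pin down $\curl\boldsymbol{u}(\delta)$, and only afterwards invoking $\tr^{\cinc}_{e,1}(\boldsymbol{\sigma})=\tfrac{1}{2}\partial_{\boldsymbol{t}_e}(\boldsymbol{t}_e\cdot\curl\boldsymbol{u})$ to integrate along edges. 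Similarly, for the $\boldsymbol{C}^7$ vertex smoothness, Lemma \ref{devdef lemma} covers orders $3$ through $7$, but orders $0$, $1$, $2$ of $\boldsymbol{u}$ at vertices need a separate argument (in the paper this comes from the edge-$\boldsymbol{C}^1$ continuity together with \eqref{def identity} applied to $\Def\boldsymbol{u}$, using single-valuedness of $\nabla(\div\boldsymbol{u})$ at vertices). Your remark that the low-order error ``is precisely covered by cellwise $\boldsymbol{CK}$-corrections'' again appeals to the unproven gluing and does not settle the low-order vertex continuity of the one global $\boldsymbol{u}$.

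The complex-property and dimension-count portions of your proposal are in line with the paper. The fix for the middle step is conceptually simple --- abandon the cellwise-correction framing, observe directly that the global $H^1$ potential is piecewise polynomial, and then carefully bootstrap supersmoothness in the order edge tangential derivative $\to$ vertex value of $\curl\boldsymbol{u}$ $\to$ edge continuity of $\curl\boldsymbol{u}$, $\div\boldsymbol{u}$, $\grad\boldsymbol{u}$ $\to$ vertex derivatives of orders $1,2$ $\to$ Lemma \ref{devdef lemma} for orders $3$ through $7$.
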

    \begin{proof}
First, we verify that \eqref{finite element complex} indeed forms a complex. We first note that $\div\boldsymbol{\Sigma}^{\div}_{k-3,h} = \boldsymbol{V}_{k-4,h}$ was established in Theorem~\ref{div_stability_FE}. For each $\boldsymbol{\sigma}\in \boldsymbol{\Sigma}^{\cott}_{k,h}$, since $\boldsymbol{\sigma}$ has $C^6$ continuity at vertices (DOFs \eqref{cinc vertex 1}), $\cott\boldsymbol{\sigma}|_e$ is single-valued (DOFs \eqref{cinc edge 5}), the inclusion $\cott \boldsymbol{\Sigma}^{\cott}_{k,h} \subset \boldsymbol{\Sigma}^{\div}_{k-3,h}$ follows from the conformity of $\boldsymbol{\Sigma}^{\cott}_{k,h}$ and Lemma~\ref{trace complex identity part 2}. Furthermore, the inclusion $\dev\Def \boldsymbol{U}_{k+1,h} \subset \boldsymbol{\Sigma}_{k,h}^{\cott}$ follows from Lemma~\ref{trace complex identities part 1} and Lemma~\ref{cinc edge trace identities}.

Next we show that {$\dev\Def \boldsymbol{U}_{k+1,h}=\boldsymbol{\Sigma}_{k,h}^{\cott}\cap\ker(\cott)$.} Utilizing the conformal complex at the continuous level \eqref{eq:sec1:conformal-complex} and the polynomial conformal complex \eqref{polynomial_conformal}, we know that for any $\boldsymbol{\sigma}\in \boldsymbol{\Sigma}_{k,h}^{\cott}\cap\ker(\cott)$ there exists $\boldsymbol{u}$ satisfying $\boldsymbol{u}|_K\in P_{k+1}(K;\mathbb{R}^3)$, globally continuous and $\boldsymbol{\sigma}=\dev\Def\boldsymbol{u}$. It remains to show extra smoothness at edges ($C^1$) and vertices  ($C^7$).
        
From the identity    
\begin{align}            \boldsymbol{u}\nabla^T=\grad\boldsymbol{u}=\dev\Def\boldsymbol{u}+\tfrac{1}{2}\operatorname{mskw}(\curl\boldsymbol{u})+\tfrac{1}{3}(\div\boldsymbol{u})\mathbf{I},
        \label{gradient identity}
        \end{align}
        we obtain that
        \begin{align}
            \tfrac{1}{2}\curl\boldsymbol{u}\times\boldsymbol{t}_e+\tfrac{1}{3}(\div\boldsymbol{u})\boldsymbol{t}_e=\partial_{\boldsymbol{t}_e}(\boldsymbol{u})-\left(\dev\Def\boldsymbol{u}\right)\cdot\boldsymbol{t}_e.\label{eq:exactness_identity}
        \end{align}
        Since both $\boldsymbol{u}$ and $\boldsymbol{\sigma}=\dev\Def\boldsymbol{u}$ are single-valued on all edges (DOFs \eqref{cinc edge 1}), it follows that $\curl\boldsymbol{u}\times\boldsymbol{t}_e$ and ${\div}\boldsymbol{u}$ are single-valued on all edges. Next, consider two adjacent elements, $K$ and $K'$ that share a face $F$. For any $\delta\in\mathcal{V}(F)$, since $\curl\boldsymbol{u}\times\boldsymbol{t}_e|_e$ is single-valued for each edge $e$, we have $(\curl\boldsymbol{u})(\delta)|_K = (\curl\boldsymbol{u})(\delta)|_{K'}$, which establishes that $\curl\boldsymbol{u}$ is single-valued at all vertices. 
        
        Note that $\tr^{\cott}_{e,1}(\boldsymbol{\sigma})=\tr_{e,1}^{\cott}(\dev\Def\boldsymbol{u})=\tfrac{1}{2}\partial_{\boldsymbol{t}_e}(\boldsymbol{t}_e\cdot\curl\boldsymbol{u})$ is single-valued on all edges (DOFs \eqref{trace e 1 cinc devdef}). Since $\curl\boldsymbol{u}$ is single-valued on all vertices,  $\boldsymbol{t}_e\cdot\curl\boldsymbol{u}$ is single-valued on all edges.  
        
        Thus, $\curl\boldsymbol{u}$ and $\div\boldsymbol{u}$ are both single-valued on all edges. By \eqref{gradient identity}, $\grad\boldsymbol{u}$ is single-valued on all edges. This establishes $C^1$ smoothness of $\boldsymbol{u}$ on the edges and at the vertices. It remains to show extra vertex smoothness.

        Suppose $K$ and $K'$ share an edge $e$ and vertex $\delta\subset e$. Following \eqref{eq:exactness_identity}, we have
        \begin{align}
            \tfrac{1}{2}\nabla\left(\curl\boldsymbol{u}\times\boldsymbol{t}_e\right)+\tfrac{1}{3}\nabla\left((\div\boldsymbol{u})\boldsymbol{t}_e\right)=\nabla\partial_{\boldsymbol{t}_e}(\boldsymbol{u})-\nabla\left(\dev\Def\boldsymbol{u}\cdot\boldsymbol{t}_e\right). \label{eq:exactness_vertex1}
        \end{align}
        Since $\nabla\boldsymbol{u}$ is single-valued on all edges, we have $\nabla\partial_{\boldsymbol{t}_e}\boldsymbol{u}|_K=\nabla\partial_{\boldsymbol{t}_e}\boldsymbol{u}|_{K'}$ on $e$. Additionally, $\nabla\left(\dev\Def\boldsymbol{u}\cdot \boldsymbol{t}_e\right)=\nabla(\boldsymbol{\sigma}\cdot\boldsymbol{t}_e)$ is single-valued at all vertices. Thus, it follows from \eqref{eq:exactness_vertex1} that
        $$\nabla(\div\boldsymbol{u})(\delta)|_{K}=\nabla(\div\boldsymbol{u})(\delta)|_{K'}.$$
        This argument shows that $\nabla(\div\boldsymbol{u})$ is single-valued at all vertices. Hence, $D^{\alpha}(\Def\boldsymbol{u})$ is single-valued at vertices for all multi-indices $|\alpha|=1$. By \eqref{def identity}, we obtain that $D^{\alpha}\boldsymbol{u}$ is single-valued for $\forall|\alpha|=2.$  Since $D^{\alpha}\dev\Def\boldsymbol{u}=D^{\alpha}\boldsymbol{\sigma}$ is single-valued for $2\leq|\alpha|\leq 6$ (DOFs \eqref{cinc vertex 1}), utilizing Lemma~\ref{devdef lemma} we conclude that $D^{\alpha}\boldsymbol{u}$ is single-valued for $3\leq|\alpha|\leq 7.$ Therefore, $\boldsymbol{u}$ has $C^7$ smoothness at all vertices and $\boldsymbol{u}\in\boldsymbol{U}_{k+1,h}.$

        We have checked the surjective property of the divergence operator. To prove the exactness of the complex, it suffices to adopt a counting argument. Let $|\mathcal{V}|$, $|\mathcal{E}|$, $|\mathcal{F}|$ and $|\mathcal{T}|$ be the number of vertices, edges, faces and tetrahedrons in the triangulation respectively. Counting the dimensions of the finite element spaces, we have
        \begin{align*}
            \dim \boldsymbol{U}_{k+1,h}&=360|\mathcal{V}|+(9k-120)|\mathcal{E}|+(\frac{3k^2}{2}-\frac{21k}{2}-72)|\mathcal{F}|+(\frac{k^3}{2} - \frac{3k^2}{2}+ k - 420)|\mathcal{T}|,\\
            \dim \boldsymbol{\Sigma}_{k,h}^{\cott}&=420|\mathcal{V}|+(14k-160)|\mathcal{E}|+(3k^2 - 24k - 79)|\mathcal{F}|+(\frac{5k^3 }{6}-7k^2 +\frac{127k}{6}-399)|\mathcal{T}|,\\
            \dim \boldsymbol{\Sigma}_{k-3,h}^{\div}&=100|\mathcal{V}|+5(k-10)|\mathcal{E}|+(\frac{3k^2 }{2}-\frac{27k}{2}+3)|\mathcal{F}|+(\frac{5k^3 }{6}-\frac{17k^2 }{2}+\frac{77k}{3}-112)|\mathcal{T}|,\\
            \dim \boldsymbol{V}_{k-4,h}&=30|\mathcal{V}|+\left(\frac{(k-1)(k-2)(k-3)}{2} - 120\right)|\mathcal{T}|.
        \end{align*}
        By Euler's formula, we obtain
        $$\dim\boldsymbol{U}_{k+1,h}-\dim \boldsymbol{\Sigma}_{k,h}^{\cott}+\dim \boldsymbol{\Sigma}_{k-3,h}^{\div}-\dim \boldsymbol{V}_{k-4,h}=10(|\mathcal{V}|-|\mathcal{E}|+|\mathcal{F}|-|\mathcal{T}|)=\dim\boldsymbol{CK}.$$
        This finishes the proof.
    \end{proof}

\section{Conclusions and Outlook}
In this paper, we constructed a conforming finite element conformal complex. In particular, we investigated the intrinsic supersmoothness of a conforming, inf-sup stable, and balanced $H(\div,\Omega;\bST)$-$L^2(\mathbb R^3)$ finite element pair through conformal bubble complexes. The bubble version of the BGG diagrams can also be applied to study the intrinsic supersmoothness of other tensor-valued finite element pairs, such as $H^1(\mathbb M)$-$L^2(\mathbb R^3)$ with $\mathbb M = \bT, \bS, \bST$, which require higher smoothness than Neilan’s Stokes pair \cite{neilan2015discrete}.

Although high-order polynomials and supersmoothness need not pose an obstacle for practical computation with appropriate numerical schemes (cf.~\cite{ainsworth2023computing} for a discussion of the Argyris element), we hope that the technical tools developed in this work, including integration-by-parts formulas and bubble complexes, will be useful for future developments of simpler discretizations of the conformal deformation and conformal Hessian complexes \cite{arnold2021complexes,vcap2023bgg}. In particular, for the elasticity (Riemannian deformation) complex, Regge calculus \cite{regge1961general} can be interpreted as a finite element method fitting into a discrete complex \cite{christiansen2011linearization}, and schemes inspired by discrete mechanics appear in \cite{hauret2007diamond}. These constructions involve distributions with clear discrete geometric and topological interpretations, and they hold promise for broader applications. 

At the same time, significant challenges remain in constructing a distributional version of the conformal complex (see Remark~\ref{rmk:distributional} for the piecewise constant case). Nevertheless, the approach developed here, potentially in connection with discrete conformal geometry, opens up avenues for further investigation. In particular, it remains open whether a low-order distributional conformal complex with clear geometric and     
  topological interpretation, analogous to Regge calculus for the elasticity complex, can be constructed. 

\section*{Acknowledgements}
The work of KH was supported by a Royal Society University Research Fellowship (URF$\backslash$R1$\backslash$221398) and an ERC Starting Grant (project 101164551, GeoFEM). The work of T.L. was supported by NSFC project 123B2014. The authors would like to thank Prof. Jun Hu at Peking University for his helpful discussions.
    \appendix
    \section{Proofs}
    \label{technical proofs appendix}
    \subsection{Characterization of the conformal Killing fields}
    \label{appendix:characterization_of_CK}
\begin{proposition}
    It holds that
        \[
\boldsymbol{CK}:=\ker(\dev\Def)
=\bigl\{(\boldsymbol{x}\!\cdot\!\boldsymbol{x})\boldsymbol{a}
-2(\boldsymbol{a}\!\cdot\!\boldsymbol{x})\boldsymbol{x}
+\boldsymbol{b}\times\boldsymbol{x}
+c\,\boldsymbol{x}
+\boldsymbol{d}:\;
\boldsymbol{a},\boldsymbol{b},\boldsymbol{d}\in\mathbb{R}^3,\ c\in\mathbb{R}\bigr\}.
\]
\end{proposition}
\begin{proof}
Set \(\phi:=\tfrac13\div \boldsymbol u\). Then \(\dev\Def \boldsymbol u=0\) is equivalent to
\begin{equation}\label{eq:CK1-new}
\Def \boldsymbol u=\phi\,\mathbf I .
\end{equation}

\textbf{Step 1: Affineness of \(\phi\).}
Taking divergence of \eqref{eq:CK1-new} yields
\[
\tfrac12(\Delta \boldsymbol u+\nabla\div \boldsymbol u)=\nabla\phi,
\quad\text{hence}\quad
\Delta \boldsymbol u=-\nabla\phi
\]
since \(\div \boldsymbol u=3\phi\).
Apply \(\Def\) to the first equation and \(\Delta\) to \eqref{eq:CK1-new}; using the commutation of \(\sym\nabla\) and \(\Delta\),
\[
-\nabla^2\phi=\Delta\phi\,\mathbf I .
\]
Taking the trace gives \(-\Delta\phi=3\,\Delta\phi\), so \(\Delta\phi=0\) and therefore \(\nabla^2\phi=0\).
Thus \(\phi\) is affine:
\begin{equation}\label{eq:phi-affine-new}
\phi(\boldsymbol{x})=c-2\,\boldsymbol{a}\!\cdot\!\boldsymbol{x}
\quad\text{for some }c\in\mathbb R,\ \boldsymbol a\in\mathbb R^3.
\end{equation}

\textbf{Step 2: Explicit reconstruction.}
Define
\[
\boldsymbol u_{\boldsymbol{a},\boldsymbol{b},c,\boldsymbol{d}}(\boldsymbol{x})
:=(\boldsymbol{x}\!\cdot\!\boldsymbol{x})\boldsymbol{a}
-2(\boldsymbol{a}\!\cdot\!\boldsymbol{x})\boldsymbol{x}
+\boldsymbol{b}\times\boldsymbol{x}
+c\,\boldsymbol{x}
+\boldsymbol{d}.
\]
A direct computation shows
\[
\sym\grad\!\big((\boldsymbol{x}\!\cdot\!\boldsymbol{x})\boldsymbol{a}
-2(\boldsymbol{a}\!\cdot\!\boldsymbol{x})\boldsymbol{x}\big)
=-2(\boldsymbol{a}\!\cdot\!\boldsymbol{x})\,\mathbf I,\]
\[
\sym\grad(\boldsymbol{b}\times\boldsymbol{x})=\mathbf 0,\quad
\sym\grad(c\,\boldsymbol{x})=c\,\mathbf I,\quad
\sym\grad\boldsymbol{d}=\mathbf 0,
\]
hence
\[
\sym\grad u_{\boldsymbol{a},\boldsymbol{b},c,\boldsymbol{d}}
=\bigl(c-2\,\boldsymbol{a}\!\cdot\!\boldsymbol{x}\bigr)\mathbf I,
\]
which matches \eqref{eq:CK1-new}–\eqref{eq:phi-affine-new}. Conversely, given a solution \(u\) of \eqref{eq:CK1-new}, pick \(\boldsymbol a,c\) from \eqref{eq:phi-affine-new} and set
\(w:=u-u_{\boldsymbol{a},\mathbf 0,c,\mathbf 0}\).
Then \(\sym\grad w=0\), so \(w\) is a rigid motion:
\(w(\boldsymbol x)=\boldsymbol{b}\times\boldsymbol{x}+\boldsymbol{d}\).
Therefore, \(u\) has precisely the stated form.
\end{proof}
    \subsection{Proof of Lemma~\ref{integration by parts}}
    \label{integration by parts appendix}
    First, we introduce a simple lemma to aid our proof:
    \begin{lemma}
    Suppose $\boldsymbol{u}$, $\boldsymbol{v}$ are three-dimensional vectors defined on a tetrahedron $K$. Let $\boldsymbol{n}_+$ and $\boldsymbol{n}_-$ be the outward pointing unit vectors of face $F_+$ and $F_-$ respectively. Let $e=F_+\cap F_-$. Then it holds that:
    \label{lemma for Green's identity}
\begin{align*}
    \int_e \left(\boldsymbol{n}_{+}\cdot\boldsymbol{u}\right)\left(\boldsymbol{n}_{F_+,e}\cdot\boldsymbol{v}\right)+\left(\boldsymbol{n}_{-}\cdot\boldsymbol{u}\right)\left(\boldsymbol{n}_{F_-,e}\cdot\boldsymbol{v}\right)=\int_e \left(\boldsymbol{n}_{F_+,e}\cdot\boldsymbol{u}\right)\left(\boldsymbol{n}_{+}\cdot\boldsymbol{v}\right)+\left(\boldsymbol{n}_{F_-,e}\cdot\boldsymbol{u}\right)\left(\boldsymbol{n}_{-}\cdot\boldsymbol{v}\right).
\end{align*}
\end{lemma}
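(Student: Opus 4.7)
The integrands are algebraic in $\boldsymbol{u}, \boldsymbol{v}$ (no derivatives appear), so the claim is really a pointwise bilinear identity on the edge $e$; the integral sign plays no role. The plan is to rewrite LHS minus RHS as a single triple product and show that its vector prefactor vanishes.

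Grouping the $F_+$ and $F_-$ contributions to LHS minus RHS separately, each piece has the shape $(\boldsymbol{a}\cdot\boldsymbol{u})(\boldsymbol{b}\cdot\boldsymbol{v}) - (\boldsymbol{b}\cdot\boldsymbol{u})(\boldsymbol{a}\cdot\boldsymbol{v})$, which by the Binet--Cauchy identity equals $(\boldsymbol{a}\times\boldsymbol{b})\cdot(\boldsymbol{u}\times\boldsymbol{v})$. Therefore
\[
\text{LHS} - \text{RHS} \;=\; \bigl[\boldsymbol{n}_+\times\boldsymbol{n}_{F_+,e} + \boldsymbol{n}_-\times\boldsymbol{n}_{F_-,e}\bigr]\cdot(\boldsymbol{u}\times\boldsymbol{v}).
\]
Using $\boldsymbol{n}_{F,e} = \boldsymbol{t}_{F,e}\times\boldsymbol{n}$ together with the BAC--CAB expansion and $\boldsymbol{t}_{F,e}\perp\boldsymbol{n}$, one computes $\boldsymbol{n}\times\boldsymbol{n}_{F,e} = \boldsymbol{n}\times(\boldsymbol{t}_{F,e}\times\boldsymbol{n}) = \boldsymbol{t}_{F,e}$. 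Thus the bracket collapses to $\boldsymbol{t}_{F_+,e}+\boldsymbol{t}_{F_-,e}$, and it remains to prove the purely geometric identity $\boldsymbol{t}_{F_+,e}+\boldsymbol{t}_{F_-,e}=\boldsymbol{0}$.

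This last identity is the boundary-orientation compatibility: when each of the adjacent faces $F_\pm$ carries the orientation induced by the outward normal of $K$, the boundary orientations they induce on the shared edge $e$ are opposite. To make this concrete, the plan is to project into the plane $P$ orthogonal to $\boldsymbol{t}_e$: the cross-section $K\cap P$ is a wedge with apex $e\cap P$ and radial sides $F_\pm\cap P$, and the right-handed convention $(\boldsymbol{n}_{F,e},\boldsymbol{t}_{F,e},\boldsymbol{n})$ forces the two wedge-side tangents $\boldsymbol{t}_{F_\pm,e}$ to traverse the apex in opposite senses around $\boldsymbol{t}_e$, so $\boldsymbol{t}_{F_+,e}=-\boldsymbol{t}_{F_-,e}$. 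The only substantive point is this orientation bookkeeping; the algebraic reduction via Binet--Cauchy is immediate, and once the tangent identity is in hand the pointwise---and hence the integrated---equality follows at once.
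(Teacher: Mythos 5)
Your proof is correct, and it takes a genuinely different route from the paper's. The paper proves the lemma by expanding $\boldsymbol{n}_{F_+,e}$ and $\boldsymbol{n}_{F_-,e}$ in the basis $\{\boldsymbol{n}_+,\boldsymbol{n}_-\}$ of the plane orthogonal to $\boldsymbol{t}_e$, observing that the two expansions have \emph{swapped} coefficients, $\boldsymbol{n}_{F_+,e}=k_1\boldsymbol{n}_++k_2\boldsymbol{n}_-$ and $\boldsymbol{n}_{F_-,e}=k_2\boldsymbol{n}_++k_1\boldsymbol{n}_-$, and then verifying the bilinear identity by direct substitution. You instead use the Binet--Cauchy identity to collapse each antisymmetrized pair to a triple product, reducing the whole statement to
\[
\bigl(\boldsymbol{n}_+\times\boldsymbol{n}_{F_+,e}+\boldsymbol{n}_-\times\boldsymbol{n}_{F_-,e}\bigr)\cdot(\boldsymbol{u}\times\boldsymbol{v})
=\bigl(\boldsymbol{t}_{F_+,e}+\boldsymbol{t}_{F_-,e}\bigr)\cdot(\boldsymbol{u}\times\boldsymbol{v})=0,
\]
where the last step is the standard boundary-orientation compatibility for two faces of the same cell sharing an edge (the geometric incarnation of $\partial\partial K=0$). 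Both routes hinge on one genuine geometric input: for the paper it is the swapped-coefficient relation, for you it is $\boldsymbol{t}_{F_+,e}=-\boldsymbol{t}_{F_-,e}$; these are in fact equivalent pieces of information about how the two face-frames are hinged along $e$. Your version has the advantage of isolating the algebra (Binet--Cauchy, which is universal) from the geometry (orientation compatibility, which is exactly what makes Stokes-type cancellations work across $\partial K$), and so it is more transparently reusable; the paper's version is shorter once the coefficient swap is accepted, since no cross-product identities are invoked. One small caveat: your wedge-cross-section argument for $\boldsymbol{t}_{F_+,e}+\boldsymbol{t}_{F_-,e}=\boldsymbol{0}$ is stated somewhat informally -- it would be cleaner to simply invoke the chain-complex fact that the boundary orientations induced on a shared edge by two outward-oriented faces of the same simplex are opposite, which is exactly the relation the paper itself uses (in the two-cell form $\boldsymbol{t}_{F,e}=-\boldsymbol{t}'_{F,e}$) in the proof of Theorem \ref{sufficient conforming condition}.
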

\begin{proof}
Set $\boldsymbol{n}_{F_+,e}=k_1\boldsymbol{n}_++k_2\boldsymbol{n}_-$, then we must have $\boldsymbol{n}_{F_-,e}=k_2\boldsymbol{n}_++k_1\boldsymbol{n}_-$. The proof immediately follows if we replace $\boldsymbol{n}_{F_+,e}$ and $\boldsymbol{n}_{F_-,e}$ with expansions under basis $\boldsymbol{n}_+$ and $\boldsymbol{n}_-$.
\end{proof}

Now we are ready to prove Lemma~\ref{integration by parts}. With a slight abuse of notation, we use $\mathcal{K}(F)$, $\mathcal{E}(F)$, and $\mathcal{V}(F)$ to denote tetrahedrons sharing a common face $F$, edges belonging to $F$, and vertices incident to $F$ in the mesh $\mathcal{T}_h$. Similarly, $\mathcal{K}(e)$, $\mathcal{F}(e)$, and $\mathcal{V}(e)$ refer to tetrahedrons sharing an edge $e$, faces that share an edge $e$, and vertices associated with edge $e$ within the mesh $\mathcal{T}_h.$
    \begin{proof}[Proof of Lemma~\ref{integration by parts}]
Through Green's identity of the $\curl$ operator, we have
\begin{align}
    \int_K \cott\boldsymbol{\sigma}:\boldsymbol{\tau}=\int_K\curl S^{-1}\inc\boldsymbol{\sigma}:\boldsymbol{\tau}=\int_KS^{-1}\inc\boldsymbol{\sigma}:\curl\boldsymbol{\tau}+\int_{\partial K}S^{-1}\inc\boldsymbol{\sigma}:\boldsymbol{\tau}\times\boldsymbol{n}.
    \label{int_by_parts curl}
\end{align}
We note that $\boldsymbol{\tau}$ is symmetric, therefore it holds that $\tr(\curl\boldsymbol{\tau})=\tr(\boldsymbol{\tau}\times\boldsymbol{n})=0$. Since $\inc\boldsymbol{\sigma}=\curl\left(\curl\boldsymbol{\sigma}\right)^T$ is symmetric, it follows
\begin{align}   \int_K\cott\boldsymbol{\sigma}:\boldsymbol{\tau}&=\int_K \curl\left(\curl\boldsymbol{\sigma}\right)^T:\left(\curl\boldsymbol{\tau}\right)^T+\int_{\partial K}\inc\boldsymbol{\sigma}:\boldsymbol{\tau}\times\boldsymbol{n}\nonumber\\
&=\int_K\left(\curl\boldsymbol{\sigma}\right)^T:\inc\boldsymbol{\tau}+\int_{\partial K}\left(\curl\boldsymbol{\sigma}\right)^T:\left(\curl\boldsymbol{\tau}\right)^T\times\boldsymbol{n}+\int_{\partial K}\inc\boldsymbol{\sigma}:\boldsymbol{\tau}\times\boldsymbol{n}.
\end{align}
Hence by symmetry, we obtain
\begin{align}   &\quad\int_K\cott\boldsymbol{\sigma}:\boldsymbol{\tau}-\int_K\cott\boldsymbol{\tau}:\boldsymbol{\sigma}\nonumber\\
&={\int_{\partial K}\left(\curl\boldsymbol{\sigma}\right)^T:\left(\curl\boldsymbol{\tau}\right)^T\times\boldsymbol{n}}+{\int_{\partial K}\inc\boldsymbol{\sigma}:\boldsymbol{\tau}\times\boldsymbol{n}}{-\int_{\partial K}\inc\boldsymbol{\tau}:\boldsymbol{\sigma}\times\boldsymbol{n}}\nonumber.\\
&= \underbrace{\int_{\partial K}\left(\boldsymbol{n}\cdot\left(\nabla\times\boldsymbol{\sigma}\right)\Pi_F\right)\cdot\left(\boldsymbol{n}\cdot\left(\nabla\times\boldsymbol{\tau}\right)\times\boldsymbol{n}\right)}_{I_{1,n}}+\underbrace{\int_{\partial K}\boldsymbol{n}\times\left(\nabla\times\boldsymbol{\sigma}\right)\Pi_F:\boldsymbol{n}\times\left(\nabla\times\boldsymbol{\tau}\right)\times\boldsymbol{n}}_{I_{1,t}}\nonumber\\
&\quad+ \underbrace{\int_{\partial K}\left(\boldsymbol{n}\cdot\left(-\nabla\times\boldsymbol{\sigma}\times\nabla\right)\Pi_F\right)\cdot\left(\boldsymbol{n}\cdot\boldsymbol{\tau}\times\boldsymbol{n}\right)}_{I_{2,n}}+\underbrace{\int_{\partial K}\boldsymbol{n}\times\left(-\nabla\times\boldsymbol{\sigma}\times\nabla\right)\Pi_F:\boldsymbol{n}\times\boldsymbol{\tau}\times\boldsymbol{n}}_{I_{2,t}}\nonumber\\
&\quad\underbrace{-\int_{\partial K}\left(\boldsymbol{n}\cdot\left(-\nabla\times\boldsymbol{\tau}\times\nabla\right)\Pi_F\right)\cdot\left(\boldsymbol{n}\cdot\boldsymbol{\sigma}\times\boldsymbol{n}\right)}_{I_{3,n}}\quad\underbrace{-\int_{\partial K}\Pi_F\inc\boldsymbol{\tau}\Pi_F:\Pi_F\boldsymbol{\sigma}\times\boldsymbol{n}}_{I_{3,t}},\nonumber
\end{align}
where the last equality follows from splitting integration terms on $\partial K$ into normal and tangential parts.
Next, we deal with all the normal parts through Stokes' formula on planes. Note that
\begin{align*}
    I_{1,n}&=-\int_{\partial K}\left(\boldsymbol{n}\cdot\left(\nabla\times\boldsymbol{\sigma}\right)\Pi_F\right)\cdot\left(\nabla_F\cdot\left(\boldsymbol{n}\times\boldsymbol{\tau}\times\boldsymbol{n}\right)\right)\\
    &=\underbrace{\int_{\partial K}\nabla_F\left(\boldsymbol{n}\cdot\left(\nabla\times\boldsymbol{\sigma}\right)\Pi_F\right):\boldsymbol{n}\times\boldsymbol{\tau}\times\boldsymbol{n}}_{I_{1,n}'}\\
    &\quad-\underbrace{\sum_{F\in\mathcal{F}(K)}\sum_{e\in\mathcal{E}(F)}\int_e \left(\boldsymbol{n}\cdot\left(\nabla\times\boldsymbol{\sigma}\right)\Pi_F\right)\cdot\left(\boldsymbol{n}_{F,e}\cdot\left(\boldsymbol{n}\times\boldsymbol{\tau}\times\boldsymbol{n}\right)\right)}_{E_1},
\end{align*}
\begin{align*}
    I_{2,n}&=\int_{\partial K}\left(\nabla_F\cdot\left(\boldsymbol{n}\times\left(\boldsymbol{\sigma}\times\nabla\right)\Pi_F\right)\right)\cdot\left(\boldsymbol{n}\cdot\boldsymbol{\tau}\times\boldsymbol{n}\right)\\
    &=\underbrace{-\int_{\partial K} \boldsymbol{n}\times\left(\boldsymbol{\sigma}\times\nabla\right)\Pi_F:\nabla_F\left(\boldsymbol{n}\cdot\boldsymbol{\tau}\times\boldsymbol{n}\right)}_{I_{2,n}'}\\
    &\quad+\underbrace{\sum_{F\in\mathcal{F}(K)}\sum_{e\in\mathcal{E}(F)}\int_e\left(\boldsymbol{n}_{F,e}\cdot\left(\boldsymbol{n}\times\left(\boldsymbol{\sigma}\times\nabla\right)\Pi_F\right)\right)\cdot\left(\boldsymbol{n}\cdot\boldsymbol{\tau}\times\boldsymbol{n}\right)}_{E_2},
\end{align*}
and by similar arguments, 
\begin{align*}
    I_{3,n}
    &=\underbrace{\int_{\partial K} \boldsymbol{n}\times\left(\boldsymbol{\tau}\times\nabla\right)\Pi_F:\nabla_F\left(\boldsymbol{n}\cdot\boldsymbol{\sigma}\times\boldsymbol{n}\right)}_{I_{3,n}'}\\
    &\quad\underbrace{-\sum_{F\in\mathcal{F}(K)}\sum_{e\in\mathcal{E}(F)}\int_e\left(\boldsymbol{n}_{F,e}\cdot\left(\boldsymbol{n}\times\left(\boldsymbol{\tau}\times\nabla\right)\Pi_F\right)\right)\cdot\left(\boldsymbol{n}\cdot\boldsymbol{\sigma}\times\boldsymbol{n}\right)}_{E_3}.
\end{align*}
With this notation, $\int_K\cott\boldsymbol{\sigma}:\boldsymbol{\tau}-\int_K\cott\boldsymbol{\tau}:\boldsymbol{\sigma}=\sum_{i=1}^3(I_{i,n}'+E_i+I_{i,t})$. By the symmetry of $\inc\boldsymbol{\sigma}$, it holds that
\begin{align}
    I_{3,t}=-\int_{\partial K}\sym\left(\Pi_F\boldsymbol{\sigma}\times\boldsymbol{n}\right):\Pi_F\inc\boldsymbol{\tau}\Pi_F=-\int_{\partial K}\tr_1^{\cott}(\boldsymbol{\sigma}):\Pi_F\inc\boldsymbol{\tau}\Pi_F. \label{int term trace1}
\end{align}
Notice that $I_{1,n}'$ and $I_{2,t}$ share a symmetric term $\boldsymbol{n}\times\boldsymbol{\tau}\times\boldsymbol{n}$. Then from \eqref{cross_product_identity_1} we have
\begin{align}
    &\quad I_{1,n}'+I_{2,t}\nonumber\\
    &=\int_{\partial K}-\nabla_F\left(\boldsymbol{n}\cdot\left(\boldsymbol{\sigma}\times\nabla\right)\Pi_F\right)+\partial_{\boldsymbol{n}}\left(\Pi_F\left(\boldsymbol{\sigma}\times\nabla\right)\Pi_F\right)+\nabla_F\left(\boldsymbol{n}\cdot\left(\nabla\times\boldsymbol{\sigma}\right)\Pi_F\right):\boldsymbol{n}\times\boldsymbol{\tau}\times\boldsymbol{n}\nonumber\\
    &=\int_{\partial K}2\Def_F\left(\boldsymbol{n}\cdot\sym\curl\boldsymbol{\sigma}\Pi_F\right)-\partial_{\boldsymbol{n}}\left(\Pi_F\sym\curl\boldsymbol{\sigma}\Pi_F\right):\boldsymbol{n}\times\boldsymbol{\tau}\times\boldsymbol{n}=\int_{\partial K}\tr_3^{\cott}(\boldsymbol{\sigma}):\boldsymbol{n}\times\boldsymbol{\tau}\times\boldsymbol{n}.\label{int term trace3}
\end{align}
Let's consider the rest of the terms on faces. Through a simple rotation ($
  \boldsymbol{U}:\boldsymbol{V}\times\boldsymbol{n}=-\boldsymbol{U}\times\boldsymbol{n}:\boldsymbol{V}$ for any matrix $\boldsymbol{U}$ and $\boldsymbol{V}$) and transposition, we have
\begin{align*}
    I_{2,n}'&=\int_{\partial K}\Pi_F\left(\nabla\times\boldsymbol{\sigma}\right)\times\boldsymbol{n}:\left(\boldsymbol{n}\times\boldsymbol{\tau}\cdot\boldsymbol{n}\right)\nabla_F^T,\\
    I_{3,n}'&=\int_{\partial K}\boldsymbol{n}\times(\nabla\times\boldsymbol{\tau})\times\boldsymbol{n}:(\Pi_F\boldsymbol{\sigma}\cdot\boldsymbol{n})\nabla_F^T.
\end{align*}
Therefore, we have
\begin{align}
    \nonumber&\quad I_{1,t}+I_{2,n}'+I_{3,n}'\\
    &\nonumber=\int_{\partial K}\boldsymbol{n}\times\left(\nabla\times\boldsymbol{\sigma}\right)\Pi_F+\left(\Pi_F\boldsymbol{\sigma}\cdot\boldsymbol{n}\right)\nabla_F^T:\boldsymbol{n}\times\left(\nabla\times\boldsymbol{\tau}\right)\times\boldsymbol{n}\\
    &\quad +\int_{\partial K}\Pi_F\left(\nabla\times\boldsymbol{\sigma}\right)\times\boldsymbol{n}:\left(\boldsymbol{n}\times\boldsymbol{\tau}\cdot\boldsymbol{n}\right)\nabla_F^T\nonumber\\
    &=\nonumber\int_{\partial K}\boldsymbol{n}\times\left(\nabla\times\boldsymbol{\sigma}\right)\Pi_F+\left(\Pi_F\boldsymbol{\sigma}\cdot\boldsymbol{n}\right)\nabla_F^T:\boldsymbol{n}\times\left(\nabla\times\boldsymbol{\tau}\right)\times\boldsymbol{n}+\left(\Pi_F\boldsymbol{\tau}\cdot\boldsymbol{n}\right)\nabla_F^T\times\boldsymbol{n}\\
    &\quad-\int_{\partial K}\left(\Pi_F\boldsymbol{\sigma}\cdot\boldsymbol{n}\right)\nabla_F^T:\left(\Pi_F\boldsymbol{\tau}\cdot\boldsymbol{n}\right)\nabla_F^T\times\boldsymbol{n}.\nonumber
\end{align}
Note that by \eqref{cross_product_identity_1}, 
\begin{align}
\boldsymbol{n}\times\left(\nabla\times\boldsymbol{\tau}\right)\Pi_F+\left(\Pi_F\boldsymbol{\tau}\cdot\boldsymbol{n}\right)\nabla_F^T=2\Def_F\left(\boldsymbol{n}\cdot\boldsymbol{\tau}\Pi_F\right)-\partial_{\boldsymbol{n}}\left(\Pi_F\boldsymbol{\tau}\Pi_F\right)
\end{align}
yields a symmetric formulation. Since $\nabla_F\cdot\left(\nabla_F\times\boldsymbol{n}\right)=0$, it holds that
\begin{align}
    \nonumber &\quad I_{1,t}+I_{2,n}'+I_{3,n}'\\
    &=-\int_{\partial K}\tr_2^{\cott}(\boldsymbol{\sigma}):2\Def_F\left(\boldsymbol{n}\cdot\boldsymbol{\tau}\Pi_F\right)-\partial_{\boldsymbol{n}}\left(\Pi_F\boldsymbol{\tau}\Pi_F\right)\label{int term trace2}\\
    &\quad \underbrace{-\sum_{F\in\mathcal{F}(K)}\sum_{e\in\mathcal{E}(F)}\int_e\left(\Pi_F\boldsymbol{\sigma}\cdot\boldsymbol{n}\right)\cdot\left(\Pi_F\boldsymbol{\tau}\cdot\boldsymbol{n}\right)\nabla_F^T\times\boldsymbol{n}\cdot\boldsymbol{n}_{F,e}}_{E_4}\nonumber.
\end{align}
Hence by \eqref{int term trace1}, \eqref{int term trace3} and \eqref{int term trace2}, we obtain face integral terms in the lemma. It remains to deal with edge integrals $E_i$, $i=1,2,3,4$.

On a fixed face $F$, It holds that
\begin{align*}
    \sum_{e\in\mathcal{E}(F)}\int_e\left(\Pi_F\boldsymbol{\sigma}\cdot\boldsymbol{n}\right)\cdot\left(\Pi_F\partial_{\boldsymbol{t}_{F,e}}\boldsymbol{\tau}\cdot\boldsymbol{n}\right)=-\sum_{e\in\mathcal{E}(F)}\int_e\left(\Pi_F\partial_{\boldsymbol{t}_{F,e}}\boldsymbol{\sigma}\cdot\boldsymbol{n}\right)\cdot\left(\Pi_F\boldsymbol{\tau}\cdot\boldsymbol{n}\right),
\end{align*}
due to the cancellations of the vertex value terms.
Thus, we have
\begin{align}
    E_4=\sum_{F\in\mathcal{F}(K)}\sum_{e\in\mathcal{E}(F)}\int_e\left(\Pi_F\partial_{\boldsymbol{t}_{F,e}}\boldsymbol{\sigma}\cdot\boldsymbol{n}\right)\cdot\left(\Pi_F\boldsymbol{\tau}\cdot\boldsymbol{n}\right),
\end{align}
which yields term \eqref{int edge 1} in the lemma.

By decomposing terms into tangential parts and normal parts of faces, we can obtain
\begin{align}   
&\quad E_1+E_2\nonumber\\
    &=\sum_{F\in\mathcal{F}(K)}\sum_{e\in \mathcal{E}(F)}\int_e\left(\boldsymbol{n}\cdot(\nabla\times\boldsymbol{\sigma})\cdot\boldsymbol{n}_{F,e}\right)\left(\boldsymbol{t}_{F,e}\cdot\boldsymbol{\tau}\cdot\boldsymbol{t}_{F,e}\right) -\left(\boldsymbol{n}\cdot(\nabla\times\boldsymbol{\sigma})\cdot\boldsymbol{t}_{F,e}\right)\left(\boldsymbol{n}_{F,e}\cdot\boldsymbol{\tau}\cdot\boldsymbol{t}_{F,e}\right)\nonumber \\ 
      &\quad+\sum_{F\in\mathcal{F}(K)}\sum_{e\in \mathcal{E}(F)}\int_e\left(\boldsymbol{n}_{F,e}\cdot(\nabla\times\boldsymbol{\sigma})\cdot\boldsymbol{t}_{F,e}\right)\left(\boldsymbol{n}\cdot\boldsymbol{\tau}\cdot\boldsymbol{t}_{F,e}\right)-\left(\boldsymbol{t}_{F,e}\cdot(\nabla\times\boldsymbol{\sigma})\cdot\boldsymbol{t}_{F,e}\right)\left(\boldsymbol{n}_{F,e}\cdot\boldsymbol{\tau}\cdot\boldsymbol{n}\right) \nonumber \\
      &= \sum_{F\in\mathcal{F}(K)}\sum_{e\in \mathcal{E}(F)}\int_e\left(\boldsymbol{n}\cdot(\nabla\times\boldsymbol{\sigma})\cdot\boldsymbol{n}_{F,e}\right)\left(\boldsymbol{t}_{F,e}\cdot\boldsymbol{\tau}\cdot\boldsymbol{t}_{F,e}\right)-\left(\boldsymbol{t}_{F,e}\cdot(\nabla\times\boldsymbol{\sigma})\cdot\boldsymbol{t}_{F,e}\right)\left(\boldsymbol{n}_{F,e}\cdot\boldsymbol{\tau}\cdot\boldsymbol{n}\right), \label{E1+E2}
\end{align}
where the last identity follows from Lemma~\ref{lemma for Green's identity}. Note that \eqref{E1+E2} is exactly term \eqref{int edge 2} and \eqref{int edge 3}.

Finally, a decomposition of $E_3$ gives 
\begin{align}
    E_3=\sum_{F\in\mathcal{F}(K)}\sum_{e\in \mathcal{E}(F)}\int_e\left(\boldsymbol{n}_{F,e}\cdot\boldsymbol{\sigma}\cdot\boldsymbol{n}\right)\left(\boldsymbol{t}_{F,e}\cdot(\nabla\times\boldsymbol{\tau})\cdot\boldsymbol{t}_{F,e}\right)-(\boldsymbol{n}\cdot\boldsymbol{\sigma}\cdot\boldsymbol{t}_{F,e})(\boldsymbol{n}_{F,e}\cdot(\nabla\times\boldsymbol{\tau})\cdot\boldsymbol{t}_{F,e}).
\end{align}
Utilizing Lemma~\ref{lemma for Green's identity} again, we obtain \eqref{int edge 4} and \eqref{int edge 5}. This completes the proof.
\end{proof}
\subsection{Proof of Lemma~\ref{edge_basis}}
\label{edge basis appendix}
\begin{proof}
Let $\boldsymbol{\sigma} \in \bST$ be expressed as a linear combination of the five basis tensors:
\[
\boldsymbol{\sigma} = 
u_1\,\sym(\boldsymbol{t}_e\boldsymbol{n}_+^T) +
u_2\,\sym(\boldsymbol{t}_e\boldsymbol{n}_-^T) +
u_3\,\dev(\boldsymbol{n}_+\boldsymbol{n}_+^T) +
u_4\,\dev(\boldsymbol{n}_-\boldsymbol{n}_-^T) +
u_5\,\dev\sym(\boldsymbol{n}_+\boldsymbol{n}_-^T),
\]
where $u_1, \dots, u_5$ are scalar coefficients. We aim to show that if $\boldsymbol{\sigma} = 0$, then all $u_i = 0$.

We utilize the following vanishing inner products, each of which must hold if $\boldsymbol{\sigma} = 0$:
\[
\boldsymbol{v} \cdot \boldsymbol{\sigma} \cdot \boldsymbol{w} = 0,
\quad \text{for } \boldsymbol{v}, \boldsymbol{w} \in \left\{
\boldsymbol{t}_e, \boldsymbol{n}_{F_+,e}, \boldsymbol{n}_{F_-,e}
\right\}.
\]

From these conditions, we derive the following system:
\begin{alignat*}{2}
&\text{From } \boldsymbol{t}_e \cdot \boldsymbol{\sigma} \cdot \boldsymbol{n}_{F_+,e} = 0 
&&\quad\Rightarrow\quad \tfrac{1}{2} (\boldsymbol{n}_- \cdot \boldsymbol{n}_{F_+,e})\, u_2 = 0, \\
&\text{From } \boldsymbol{t}_e \cdot \boldsymbol{\sigma} \cdot \boldsymbol{n}_{F_-,e} = 0 
&&\quad\Rightarrow\quad \tfrac{1}{2} (\boldsymbol{n}_+ \cdot \boldsymbol{n}_{F_-,e})\, u_1 = 0, \\
&\text{From } \boldsymbol{t}_e \cdot \boldsymbol{\sigma} \cdot \boldsymbol{t}_e = 0 
&&\quad\Rightarrow\quad u_3 + u_4 + (\boldsymbol{n}_+ \cdot \boldsymbol{n}_-)\, u_5 = 0, \\
&\text{From } \boldsymbol{n}_{F_+,e} \cdot \boldsymbol{\sigma} \cdot \boldsymbol{n}_{F_+,e} = 0 
&&\quad\Rightarrow\quad u_3 + \left(1 - 3(\boldsymbol{n}_- \cdot \boldsymbol{n}_{F_+,e})^2\right) u_4 + (\boldsymbol{n}_+ \cdot \boldsymbol{n}_-)\, u_5 = 0, \\
&\text{From } \boldsymbol{n}_{F_-,e} \cdot \boldsymbol{\sigma} \cdot \boldsymbol{n}_{F_-,e} = 0 
&&\quad\Rightarrow\quad \left(1 - 3(\boldsymbol{n}_+ \cdot \boldsymbol{n}_{F_-,e})^2\right) u_3 + u_4 + (\boldsymbol{n}_+ \cdot \boldsymbol{n}_-)\, u_5 = 0, \\
&\text{From } \boldsymbol{n}_{F_+,e} \cdot \boldsymbol{\sigma} \cdot \boldsymbol{n}_{F_-,e} = 0 
&&\quad\Rightarrow\quad \left(\boldsymbol{n}_{F_+,e} \cdot \boldsymbol{n}_{F_-,e} \right) u_3
+ \left(\boldsymbol{n}_{F_+,e} \cdot \boldsymbol{n}_{F_-,e} \right) u_4\\&&&\quad \quad
\quad\quad+ \left[ (\boldsymbol{n}_{F_+,e} \cdot \boldsymbol{n}_{F_-,e})(\boldsymbol{n}_+ \cdot \boldsymbol{n}_-)
- \tfrac{3}{2} (\boldsymbol{n}_{F_+,e} \cdot \boldsymbol{n}_-) (\boldsymbol{n}_{F_-,e} \cdot \boldsymbol{n}_+)
\right] u_5 = 0.
\end{alignat*}

Since the angle $\theta$ between $F_+$ and $F_-$ satisfies $\sin \theta \neq 0$, it follows that
\[
\boldsymbol{n}_- \cdot \boldsymbol{n}_{F_+,e} \neq 0,
\quad \boldsymbol{n}_+ \cdot \boldsymbol{n}_{F_-,e} \neq 0.
\]
Thus, the first two equations immediately imply
\[
u_1 = u_2 = 0.
\]

Next, using the remaining three equations, we subtract and combine them to eliminate $u_5$ and find
\[
u_3 = u_4 = 0.
\]

Finally, if $\theta \neq \pi/2$, then $\boldsymbol{n}_+ \cdot \boldsymbol{n}_- \neq 0$, so the relation
\[
u_3 + u_4 + (\boldsymbol{n}_+ \cdot \boldsymbol{n}_-) u_5 = 0
\]
gives $u_5 = 0$. If $\theta = \pi/2$, we may instead use the $\boldsymbol{n}_{F_+,e} \cdot \boldsymbol{\sigma} \cdot \boldsymbol{n}_{F_-,e}$ relation (the sixth equation) to deduce $u_5 = 0$.

Hence, all coefficients vanish, and $\boldsymbol{\sigma} = 0$ implies $u_i = 0$ for all $i$. This completes the proof.
\end{proof}

\subsection{Proof of Lemma~\ref{face_basis}}
\label{face basis appendix}
\begin{proof}
We follow the same routine as in the proof of Lemma~\ref{edge_basis}. Suppose
\[
\boldsymbol{\sigma}
= u_1\,\sym(\boldsymbol{n}\boldsymbol{t}_{F,1}^T)
+ u_2\,\sym(\boldsymbol{n}\boldsymbol{t}_{F,2}^T)
+ u_3\,\sym(\boldsymbol{t}_{F,1}\boldsymbol{t}_{F,2}^T)
+ u_4\,(\boldsymbol{t}_{F,1}\boldsymbol{t}_{F,1}^T - \boldsymbol{t}_{F,2}\boldsymbol{t}_{F,2}^T)
+ u_5\,\dev(\boldsymbol{n}\boldsymbol{n}^T)
\]
for some constants \( u_i \in \mathbb{R} \), \( 1 \leq i \leq 5 \). We aim to show \( \boldsymbol{\sigma} = 0 \) implies \( u_i = 0 \) for all \( i \).

Evaluating the six linearly independent scalar quantities
\[
\boldsymbol{t}_{F,1}\cdot\boldsymbol{\sigma}\cdot\boldsymbol{t}_{F,2},\quad
\boldsymbol{t}_{F,1}\cdot\boldsymbol{\sigma}\cdot\boldsymbol{t}_{F,1},\quad
\boldsymbol{t}_{F,2}\cdot\boldsymbol{\sigma}\cdot\boldsymbol{t}_{F,2},\quad
\boldsymbol{n}\cdot\boldsymbol{\sigma}\cdot\boldsymbol{n},\quad
\boldsymbol{t}_{F,1}\cdot\boldsymbol{\sigma}\cdot\boldsymbol{n},\quad
\boldsymbol{t}_{F,2}\cdot\boldsymbol{\sigma}\cdot\boldsymbol{n},
\]
and using that \( \boldsymbol{\sigma} = 0 \), we obtain the system:
\begin{alignat*}{2}
\boldsymbol{t}_{F,1} \cdot \boldsymbol{\sigma} \cdot \boldsymbol{t}_{F,2} &= 0 &\quad&\Rightarrow\quad u_3 = 0, \\
\boldsymbol{t}_{F,1} \cdot \boldsymbol{\sigma} \cdot \boldsymbol{t}_{F,1} &= 0 &\quad&\Rightarrow\quad u_4 - \tfrac{1}{3} u_5 = 0, \\
\boldsymbol{t}_{F,2} \cdot \boldsymbol{\sigma} \cdot \boldsymbol{t}_{F,2} &= 0 &\quad&\Rightarrow\quad -u_4 - \tfrac{1}{3} u_5 = 0, \\
\boldsymbol{n} \cdot \boldsymbol{\sigma} \cdot \boldsymbol{n} &= 0 &\quad&\Rightarrow\quad \tfrac{2}{3} u_5 = 0, \\
\boldsymbol{t}_{F,1} \cdot \boldsymbol{\sigma} \cdot \boldsymbol{n} &= 0 &\quad&\Rightarrow\quad u_1 = 0, \\
\boldsymbol{t}_{F,2} \cdot \boldsymbol{\sigma} \cdot \boldsymbol{n} &= 0 &\quad&\Rightarrow\quad u_2 = 0.
\end{alignat*}

Solving the above system yields \( u_i = 0 \) for all \( 1 \leq i \leq 5 \), completing the proof.
\end{proof}

\subsection{Proof of Lemma~\ref{devdef lemma}}
\label{appendix: devdef lemma}
\begin{proof}
Note that $\Def \boldsymbol{u}=\tfrac12(\nabla \boldsymbol{u}+\nabla \boldsymbol{u}^\top)$ so that $\tr(\Def \boldsymbol{u})=\div \boldsymbol{u}$ and
$\dev\Def \boldsymbol{u}=\Def \boldsymbol{u}-\tfrac13(\div \boldsymbol{u})\mathbf I$.
Assume
\[
D^\beta(\dev\Def \boldsymbol{u})=0\qquad\text{for all }|\beta|=2.
\]
We will prove $D^\beta(\div \boldsymbol{u})=0$ for all $|\beta|=2$. Then
$D^\beta(\Def \boldsymbol{u})=0$ for $|\beta|=2$, and a final differentiation of the identity in Step~1 yields $D^\alpha \boldsymbol{u}=0$ for all $|\alpha|=3$.

\medskip\noindent\textbf{Step 1:}
For any $i,j,m\in\{1,2,3\}$,
\begin{equation}\label{eq:basic}
\partial_{ij}u_m
=\partial_i(\Def \boldsymbol{u})_{mj}
+\partial_j(\Def \boldsymbol{u})_{mi}
-\partial_m(\Def \boldsymbol{u})_{ij}.
\end{equation}
This follows by expanding $(\Def \boldsymbol{u})_{ab}=\tfrac12(\partial_a u_b+\partial_b u_a)$ and commuting partial derivatives.

\medskip\noindent\textbf{Step 2:}
Insert $\Def \boldsymbol{u}=\dev\Def \boldsymbol{u}+\tfrac13(\div \boldsymbol{u})\mathbf I$ into \eqref{eq:basic}:
\[
\partial_{ij}u_m
=\partial_i(\dev\Def \boldsymbol{u})_{mj}
+\partial_j(\dev\Def \boldsymbol{u})_{mi}
-\partial_m(\dev\Def \boldsymbol{u})_{ij}
+\tfrac13\!\big(\partial_i(\div \boldsymbol{u})\delta_{mj}
+\partial_j(\div \boldsymbol{u})\delta_{mi}
-\partial_m(\div \boldsymbol{u})\delta_{ij}\big).
\]
Differentiating once more shows that third derivatives of $\boldsymbol{u}$ depend only on second derivatives of $\div \boldsymbol{u}$, because all second derivatives of $\dev\Def \boldsymbol{u}$ vanish by hypothesis. Hence, it suffices to prove $D^\beta(\div \boldsymbol{u})=0$ for all $|\beta|=2$.

\medskip\noindent{\textbf{Step 3}: Off–diagonal second derivatives of $\div \boldsymbol{u}$ vanish.}
Let $(i,j,m)$ be a permutation of $(1,2,3)$, so $j\neq m$. Using
$\div \boldsymbol{u}=3\big((\Def \boldsymbol{u})_{ii}-(\dev\Def \boldsymbol{u})_{ii}\big)$, we get
\[
\partial_{jm}\div \boldsymbol{u}
=3\Big(\partial_{jm}(\Def \boldsymbol{u})_{ii}-\partial_{jm}(\dev\Def \boldsymbol{u})_{ii}\Big)
=3\,\partial_i(\partial_{jm}u_i),
\]
since $(\Def \boldsymbol{u})_{ii}=\partial_i u_i$ and $D^\beta(\dev\Def \boldsymbol{u})=0$ for $|\beta|=2$.
From \eqref{eq:basic} with indices $(j,m,i)$ and then applying $\partial_i$,
\[
\partial_i\partial_{jm}u_i
=\partial_{ij}(\Def \boldsymbol{u})_{im}
+\partial_{im}(\Def \boldsymbol{u})_{ij}
-\partial_{ii}(\Def \boldsymbol{u})_{jm}.
\]
Replace $\Def \boldsymbol{u}$ by $\dev\Def \boldsymbol{u}+\tfrac13(\div \boldsymbol{u})\mathbf I$. The $\dev\Def$-terms vanish after two derivatives by the hypothesis; the trace terms involve $\delta_{im},\delta_{ij},\delta_{jm}$, all zero for a permutation. Hence $\partial_i\partial_{jm}u_i=0$, so $\partial_{jm}\div \boldsymbol{u}=0$ for $j\neq m$.

\medskip\noindent{\textbf{Step 4}: Diagonal second derivatives of $\div \boldsymbol{u}$ vanish.}
Fix distinct $i,j,m$. The identities
\begin{align*}
(\partial_{ii}+\partial_{jj})\,\partial_i u_i
&=\partial_{ii}(\partial_i u_i-\partial_j u_j)
  +\partial_{ij}(\partial_j u_i+\partial_i u_j),\\
(\partial_{jj}+\partial_{mm})\,\partial_i u_i
&=\partial_{jm}(\partial_j u_m+\partial_m u_j)
 +\partial_{jj}(\partial_i u_i-\partial_m u_m)
 +\partial_{mm}(\partial_i u_i-\partial_j u_j)
\end{align*}
rewrite, using $\partial_\ell u_r+\partial_r u_\ell=2(\Def \boldsymbol{u})_{\ell r}$ and
$\partial_i u_i-\partial_j u_j=(\dev\Def \boldsymbol{u})_{ii}-(\dev\Def \boldsymbol{u})_{jj}$, as linear combinations of second derivatives of $\dev\Def \boldsymbol{u}$; hence each vanishes by the hypothesis. By symmetry (replace $j$ by $m$),
$(\partial_{ii}+\partial_{mm})\,\partial_i u_i=0$ as well. Therefore,
\[
\partial_{ii}\partial_i u_i
=\tfrac12\!\left[(\partial_{ii}+\partial_{jj})\partial_i u_i
               +(\partial_{ii}+\partial_{mm})\partial_i u_i
               -(\partial_{jj}+\partial_{mm})\partial_i u_i\right]=0.
\]
Finally,
\[
\partial_{ii}\div \boldsymbol{u}
=3\Big(\partial_{ii}\partial_i u_i-\partial_{ii}(\dev\Def \boldsymbol{u})_{ii}\Big)
=3\,\partial_{ii}\partial_i u_i=0,
\]
since $D^\beta(\dev\Def \boldsymbol{u})=0$ for $|\beta|=2$. Together with Step~3, this gives $D^\beta(\div \boldsymbol{u})=0$ for all $|\beta|=2$.

\medskip
As noted at the start, $D^\beta(\Def \boldsymbol{u})=0$ for all $|\beta|=2$, and differentiating \eqref{eq:basic} once more yields $D^\alpha \boldsymbol{u}=0$ for all $|\alpha|=3$. This completes the proof.
\end{proof}

\subsection{Proof of Lemma~\ref{two dim divdiv ST bubble complex extra smoothness}}
\label{appendix: two dim divdiv ST bubble complex extra smoothness}
\begin{proof}
We first verify the above sequence is indeed a complex. Note that for any $u\in b_F^2P_{k-4}^{(3)}(F;\mathbb R)$, $$\tr(\Def_F\curl_Fu)=\div_F\curl_Fu=0.$$
Furthermore, $\grad_F u\in b_FP_{k-2}^{(4)}(F;\Pi_F\mathbb R^3)$. Therefore, from Lemma~\ref{two dim divdiv S bubble complex}, it follows that $$\Def_F\curl_F u=\sym\curl_F(\grad_Fu)\in \mathbb B_k^{\div_F\div_F,(5)}(F;\mathbb S_F\cap\mathbb T_F).$$

 Suppose $\boldsymbol{\sigma}\in \mathbb{B}^{\div_F\div_F,(5)}_k(F;\mathbb{S}_F\cap\mathbb{T}_F)|_F$. By integration by parts (cf.~\cite{chen2022divdivanddiv}), we obtain 
$$\int_F\div_F\div_F\boldsymbol\sigma{q}=\int_F\Pi_F\boldsymbol{\sigma}\Pi_F:\nabla_F^2{q},\quad \forall q\in P_1^{+}(F;
\mathbb{R})|_F. $$
On face $F$, it holds that $$\nabla_F^2P_1(F;\mathbb{R})|_F=0,\quad\nabla_F^2\left((\Pi_F\boldsymbol{x})\cdot(\Pi_F\boldsymbol{x})\right)=2\Pi_F\mathbf{I}\Pi_F.$$
Since $\Pi_F\boldsymbol{\sigma}\Pi_F$ is traceless, we have
$$\Pi_F\boldsymbol{\sigma}\Pi_F:\Pi_F\mathbf{I}\Pi_F=\Pi_F\boldsymbol{\sigma}\Pi_F:\mathbf{I}=\tr\left(\Pi_F\boldsymbol{\sigma}\Pi_F\right)=0.$$
Hence
$$\int_F\div_F\div_F\boldsymbol\sigma{q}=0,\quad \forall {q}\in P_1^{+}(F;
\mathbb{R})|_F $$and the above sequence is a complex.

    If $\div_F\div_F\boldsymbol{\sigma}=0$, by Lemma~\ref{two dim divdiv S bubble complex}, there exists $\boldsymbol{v}\in b_FP_{k-2}(F;\Pi_F\mathbb{R}^3)|_F$ such that $\boldsymbol{\sigma}=\sym\curl_F\boldsymbol{v}$. Since $\boldsymbol{\sigma}$ is traceless, $\operatorname{rot}_F\boldsymbol{v}=0$. Therefore, from Lemma~\ref{two dim stokes bubble complex}, there exists $u\in P_{k-4}(F;\mathbb{R})|_F$ such that $$\boldsymbol{v}=\grad_F(b_F^2u),\quad \boldsymbol{\sigma}=\Def_F\curl_F (b_F^2u).$$    
    By definition,    $$D_F^{\alpha}\Def_F\curl_F(b_F^2u)(\delta)=0,\quad \forall 2\leq|\alpha|\leq5,\quad\forall\delta\in\mathcal{V}(F),$$ and thus
    $$D_F^{\alpha}(b_F^2u)(\delta)=0,\quad \forall 4\leq|\alpha|\leq7,\quad\forall\delta\in\mathcal{V}(F).$$ Hence, $u\in P^{(3)}_{k-4}(F;\mathbb{R})$ and it follows that $$\mathbb{B}^{\div_F\div_F,(5)}_k(F;\mathbb{S}_F\cap\mathbb{T}_F)|_F\cap\ker(\div_F\div_F)=\Def_F\curl_F\left[b_F^2P_{k-4}^{(3)}(F;\mathbb{R})|_F\right].$$

    It remains to show $\div_F\div_F$ is surjective. This is a consequence of a dimension count. Suppose that $\boldsymbol{\tau}$ satisfies the following conditions:
    \begin{align*}
        D_F^{\alpha}\boldsymbol{\tau}(\delta)&=0,\quad \forall 0\leq|\alpha|\leq5,\quad\forall \delta\in \mathcal{V}(F).\\
        \int_e\tr_{e,1}^{\div_F\div_F}(\boldsymbol{\tau})q&=0,\quad\forall q\in P_{k-12}(e;\mathbb{R}),\quad\forall e\in\mathcal{E}(F).\\
        \int_e\tr_{e,2}^{\div_F\div_F}(\boldsymbol{\tau})q&=0,\quad\forall q\in P_{k-11}(e;\mathbb{R}),\quad\forall e\in\mathcal{E}(F).
        \end{align*}
        Then $\boldsymbol{\tau}\in\mathbb{B}^{\div_F\div_F,(5)}_k(F;\mathbb{S}_F\cap\mathbb{T}_F)|_F$. It follows that
        \begin{align*}           &\quad\dim\div_F\div_F\mathbb{B}^{\div_F\div_F,(5)}_k(F;\mathbb{S}_F\cap\mathbb{T}_F)|_F\\
        &=\dim\mathbb{B}^{\div_F\div_F,(5)}_k(F;\mathbb{S}_F\cap\mathbb{T}_F)|_F-\dim P_{k-4}^{(3)}(F;\mathbb{R})|_F\\
        &\geq \dim P_k(F;\mathbb{S}_F\cap\mathbb{T}_F)-6\binom{7}{2}-3(2k-21)-\dim P_{k-4}^{(3)}(F;\mathbb{R})|_F\\
        &=\dim {P}_{k-2}^{(3)}(F;\mathbb{R})|_F \cap {P}_1^{+}(F;\mathbb{R})|_F^{\perp},
        \end{align*}
        which completes the proof. 
\end{proof}
 \bibliographystyle{plain}
\bibliography{ref}
\end{document}